\newtheorem{theoremcounter}{Theorem Counter}[section]
\theoremstyle{remark}
\theoremstyle{definition}
\newtheorem{definition}[theoremcounter]{Definition}
\theoremstyle{plain}
\newtheorem{lemma}[theoremcounter]{Lemma}
\newtheorem{proposition}[theoremcounter]{Proposition}
\newtheorem{corollary}[theoremcounter]{Corollary}
\newtheorem{theorem}[theoremcounter]{Theorem}
\numberwithin{equation}{section}
\newcommand{\Z}{\mathbb{Z}}
\newcommand{\R}{\mathbb{R}}
\newcommand{\C}{\mathbb{C}}
\newcommand{\calC}{\mathcal{C}}
\newcommand{\calD}{\mathcal{D}}
\newcommand{\calE}{\mathcal{E}}
\newcommand{\dd}{\mathrm{d}}
\newcommand{\bbH}{\mathbb{H}}
\newcommand{\pmat}[1]{\begin{pmatrix}#1\end{pmatrix}}
\newcommand{\smat}[1]{\bigl(\begin{smallmatrix}#1\end{smallmatrix}\bigr)}
\def\M{\mathrm{M}}
\def\gl{\mathfrak{gl}}
\def\sl{\mathfrak{sl}}
\def\spo{\mathfrak{spo}}
\def\sp{\mathfrak{sp}}
\def\so{\mathfrak{so}}
\def\t{{}^t\hspace{-.2em}}
\def\st{{}^{st}}
\def\largest{{\begin{matrix}\phantom{a} \\ \phantom{a}\end{matrix}}^{st\hspace{-.4em}}}
\def\1{\mathbbm{1}}
\DeclareMathOperator{\diag}{diag} 
\DeclareMathOperator{\vol}{vol}
\DeclareMathOperator{\tr}{tr}
\DeclareMathOperator{\str}{str}
\DeclareMathOperator{\Aut}{Aut}
\DeclareMathOperator{\sgn}{sgn}
\newcommand{\fg}{\mathfrak{g}}
\newcommand{\fh}{\mathfrak{h}}
\newcommand{\fq}{\mathfrak{q}}
\newcommand{\fS}{\mathfrak{S}}
\newcommand{\cC}{\mathcal{C}}
\newcommand{\cS}{\mathcal{S}}
\begin{document}

\title[]{ Indefinite theta functions arising from affine Lie superalgebras and sums of triangular numbers}

\author{Toshiki Matsusaka}
\address{Faculty of Mathematics, Kyushu University, Motooka 744, Nishi-ku, Fukuoka 819-0395, Japan}
\email{matsusaka@math.kyushu-u.ac.jp}

\author[]{Miyu Suzuki}
\address{Department of Mathematics, Kyoto University, Kitashirakawa Oiwake-cho, Sakyo-ku,  Kyoto 606-8502, Japan}
\email{suzuki.miyu.4c@kyoto-u.ac.jp}

\subjclass[2020]{11F27,  17B10}



\maketitle

\begin{abstract}
We extend the recently developed theory of Roehrig and Zwegers on indefinite theta functions to prove certain power series are modular forms.
As a consequence,  we obtain several power series identities for powers of the generating function of triangular numbers.
We also show that these identities arise as specializations of denominator identities of affine Lie superalgebras.
\end{abstract}

\section{Introduction}

In 1828, Legendre~\cite[p.134]{Legendre1828} discovered the $q$-series identity
    \[
	\left(\sum_{n=0}^\infty q^{\frac{n(n+1)}{2}} \right)^4 = \sum_{n=0}^\infty \frac{(2n+1) q^n}{1-q^{2n+1}}.
    \]
The right-hand side coincides with the generating function $\sum_{n=0}^\infty \sigma_1(2n+1)q^n$, where $\sigma_1(m)$ denotes the sum of positive divisors of $m$. This identity therefore implies that the number of representations of a positive integer $n$ as a sum of four triangular numbers is given by $\sigma_1(2n+1)$. Although this result is weaker than Gauss' Eureka theorem, which asserts that every positive integer is the sum of three triangular numbers, Legendre's identity offers a remarkably simple and explicit formula for counting such representations. The study of triangular number representations has a long history, and a collection of classical results on this topic can be found in Dickson's book~\cite[Chapter 1]{Dickson1966}.

The central topic of this article is to reinterpret Legendre's identity and its generalizations arising from affine Lie superalgebras not merely as $q$-series identities, but as identities between modular forms. To this end, let us define
     \[
	\triangle(q) \coloneqq \sum_{n=0}^\infty q^{\frac{n(n+1)}{2}}.
    \]
As noted by Ono--Robins--Wahl~\cite[Theorem 3]{OnoRobinsWahl1995}, the function $q \triangle(q^2)^4$ is a modular form of weight $2$ on $\Gamma_0(4)$, and the corresponding right-hand side, $\sum_{n=0}^\infty \sigma_1(2n+1)q^{2n+1}$, is the Eisenstein series of the same weight and level. From this perspective, Legendre's identity 
    \[
	q \triangle(q^2)^4 = \sum_{n=0}^\infty \sigma_1(2n+1) q^{2n+1}
    \]
follows naturally from standard properties of modular forms. This viewpoint also applies to any higher powers of $\triangle(q)$. For instance, Ono--Robins--Wahl~\cite[Theorem 8]{OnoRobinsWahl1995} considered the modular forms of weight $12$ and obtained the identity
\begin{align}\label{eq:ORW-12}
	q^3 \triangle(q)^{24} = \frac{1}{176896} \sum_{n=3}^\infty \left(\sigma_{11}^{(2)}(n) - \tau(n) - 2072 \tau \left(\frac{n}{2}\right)\right) q^n,
\end{align}
where $\sigma_k^{(2)}(n) = \sum_{\substack{d \mid n \\ n/d \equiv 1\ (2)}} d^k$ and $\tau(n)$ is the Ramanujan tau function.

Another method to obtain expressions for higher powers of $\triangle(q)$ involves the denominator identities for affine Lie superalgebras. The following notable examples, conjectured by Kac--Wakimoto~\cite[Section 7]{KacWakimoto1994} in connection with the affine Lie superalgebra $\widehat{\fq}(N)$,  were later proved by Zagier~\cite{Zagier2000} using a similar modular approach.

\begin{theorem}[{Zegier~\cite{Zagier2000} and Milne~\cite{Milne2002}, independently}]\label{thm:Zagier-Q}
	For every integer $m \ge 1$, we have
	\[
		q^{\frac{m^2}{2}} \triangle(q)^{4m^2} = \frac{4^m (2m)!}{\prod_{j=1}^m (2j)!^2} \sum_{\substack{x_j, y_j \in 1/2 + \Z_{\ge 0} \\ 1 \le j \le m}} \left(\prod_{j=1}^m x_j \cdot \prod_{1 \le i < j \le m}(x_i^2 - x_j^2)^2 \right) q^{2\sum_{j=1}^m x_j y_j}
	\]
	and
	\[
		q^{\frac{m(m+1)}{2}} \triangle(q)^{4m(m+1)} = \frac{4^m}{\prod_{j=1}^m (2j)!^2} \sum_{\substack{x_j \in \Z_{\ge 0} \\ y_j \in 1/2 + \Z_{\ge 0}}} \left(\prod_{j=1}^m x_j^3 \cdot \prod_{1 \le i < j \le m}(x_i^2 - x_j^2)^2 \right) q^{2\sum_{j=1}^m x_j y_j}.
	\]
	In particular, the case $m=1$ of the first identity recovers Legendre's identity.
\end{theorem}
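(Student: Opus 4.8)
The plan is to identify each side, after multiplication by the indicated power of $q$, as a holomorphic modular form of the same weight on a common congruence subgroup $\Gamma_0(N)$, and then to force the difference to vanish via the valence formula. I treat the two identities in parallel and write $q=e^{2\pi i\tau}$.

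\emph{The left-hand sides.} Euler's product $\triangle(q)=\prod_{n\ge 1}(1-q^{2n})/(1-q^{2n-1})$ gives the eta-quotient $q^{1/8}\triangle(q)=\eta(2\tau)^2/\eta(\tau)$, a holomorphic modular form of weight $\tfrac12$ on a congruence subgroup (indeed $q^{1/8}\triangle(q)=\tfrac12\sum_{n\ \mathrm{odd}}q^{n^2/8}$ is a unary theta series). Hence
\[
	q^{m^2/2}\triangle(q)^{4m^2}=\left(\frac{\eta(2\tau)^2}{\eta(\tau)}\right)^{4m^2},\qquad q^{m(m+1)/2}\triangle(q)^{4m(m+1)}=\left(\frac{\eta(2\tau)^2}{\eta(\tau)}\right)^{4m(m+1)}
\]
are holomorphic modular forms of weights $2m^2$ and $2m(m+1)$; the order of vanishing at every cusp is read off from the standard eta-quotient formula, and at $\infty$ it equals $m^2/2$, resp.\ $m(m+1)/2$. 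First I would pin down all these cusp orders, since the cuspidal data is exactly what makes the endgame work.

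\emph{The right-hand sides are modular forms.} Since the polynomial $P(x)$ appearing in either identity does not involve $y$, summing the geometric series in each $y_j$ first gives
\[
	\sum_{(x,y)}P(x)\,q^{2\sum_j x_jy_j}=\sum_{x}P(x)\prod_{j=1}^m\frac{q^{x_j}}{1-q^{2x_j}}.
\]
Now $P(x)$ is, up to a monomial in the $x_j$, the square of the Weyl denominator $V(x)=\prod_j x_j\prod_{i<j}(x_i^2-x_j^2)$ of the root system $B_m$; expanding $P(x)=\sum_{\mathbf k}c_{\mathbf k}\prod_j x_j^{k_j}$ into monomials, with each $k_j$ odd and $\ge 1$, the sum factorizes as $\sum_{\mathbf k}c_{\mathbf k}\prod_j\bigl(\sum_{x_j}x_j^{k_j}q^{x_j}/(1-q^{2x_j})\bigr)$. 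Each inner factor is, after the substitution $q\mapsto q^{1/2}$, the Eisenstein-type series $\sum_{n\ge0}\sigma_{k_j}(2n+1)q^{2n+1}$ up to a power of $2$, a holomorphic modular form of the even weight $k_j+1$ on $\Gamma_0(4)$; the weight-$2$ instance is precisely Legendre's $\sum_{n\ge0}\sigma_1(2n+1)q^{2n+1}=q\triangle(q^2)^4$, which lies in $M_2(\Gamma_0(4))$ by the quoted result. Thus each right-hand side is a finite $\Z$-linear combination of products of such Eisenstein forms, of total weight $\sum_j(k_j+1)=m+\deg P$, which equals $2m^2$, resp.\ $2m(m+1)$ — exactly the weight on the left. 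So both sides are holomorphic modular forms of the same weight and level. (The same modularity also follows from the Roehrig--Zwegers theory of indefinite theta functions, each right-hand side being an indefinite theta attached to the split lattice $U^{\oplus m}$ with polynomial weight $P$ supported on the positive orthant; but the geometric-series reduction is shorter here.)

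\emph{Equality.} With both sides now in $M_{2m^2}(\Gamma_0(N))$, resp.\ $M_{2m(m+1)}(\Gamma_0(N))$, it remains to show the difference is zero. The right-hand sides vanish at $\infty$ to order $\ge m^2/2$, resp.\ $m(m+1)/2$: indeed $P(x)\ne 0$ forces $x_1,\dots,x_m$ to be distinct positive (half-)integers, so $\sum_j x_j\ge m^2/2$ and hence $2\sum_j x_jy_j\ge m^2/2$ using $y_j\ge\tfrac12$. The coefficient of the leading power $q^{m^2/2}$ on the right is $m!\cdot P(\tfrac12,\tfrac32,\dots,\tfrac{2m-1}{2})$ times the stated normalizing constant (all $y_j=\tfrac12$, with $x$ ranging over the $m!$ orderings of $\{\tfrac12,\dots,\tfrac{2m-1}{2}\}$), and a Vandermonde-type evaluation confirms this equals $1$, matching the left-hand side. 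Feeding in the orders of vanishing at the remaining cusps — from the eta-quotient on one side and from the Eisenstein-product form on the other — the difference has strictly more zeros than $\tfrac{k}{12}[\SL_2(\Z):\Gamma_0(N)]$, so it vanishes identically. The main obstacle is precisely this last step: arranging the bookkeeping at \emph{all} cusps so that the valence bound is exceeded uniformly in $m$, rather than settling for an $m$-dependent Sturm-bounded coefficient check; this is where the structure of $V(x)$ (the cancellations among the Eisenstein products) must genuinely be used. Finally, for $m=1$ the first right-hand side collapses to $2\sum_{x,y\in\frac12+\Z_{\ge 0}}x\,q^{2xy}=\sum_{n\ge0}\sigma_1(2n+1)q^{(2n+1)/2}$, which becomes Legendre's identity upon $\tau\mapsto 2\tau$.
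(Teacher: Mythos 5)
Your argument is essentially correct, but it is a genuinely different route from the one taken in this paper: what you describe is Zagier's original Eisenstein-series proof, which the introduction of the paper recalls around equations \eqref{eq:Zagier-second-m2}--\eqref{eq:Eis-level-2} and deliberately sets aside. You exploit the fact that the spherical polynomial depends only on the $x_j$'s, sum the geometric series in the $y_j$'s, and expand into products of the Lambert--Eisenstein series $\sum_{N \text{ odd}} \sigma_{k}(N)q^N \in M_{k+1}(\Gamma_0(4))$ (the weight-$2$ instance being the Ono--Robins--Wahl fact quoted in the paper); the paper instead keeps the double sum intact, views it as an indefinite theta function of signature $(m,m)$ for $Q_m(\vec{\bm{x}})=\sum_j x_jy_j$ with sign factors $\prod_j(\sgn(x_j)+\sgn(y_j))$ (\cref{thm:equivalent-3}), derives its transformation under $\Gamma(2)$ from Vign\'eras' theorem together with the degeneration $\vec{\bm{c}}(t)\to\vec{\bm{c}}\in\cS^m$ (\cref{thm:indefinite-theta,thm:indef-theta-S}), and closes by a Liouville argument for the quotient by $\theta_\triangle^{4m^2}$ on the genus-zero curve $\Gamma(2)\backslash\bbH$ rather than by the valence formula. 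Your route is shorter for this particular theorem; the paper's route is chosen because it also handles \cref{thm:Dmm-Eisen}, where the polynomial $f_3$ involves the $y_j$'s and the Eisenstein factorization breaks down. Two remarks on your write-up. First, the ``main obstacle'' you identify largely dissolves: once both sides are known to lie in $M_{2m^2}(\Gamma_0(4))$ (so all cusp orders are $\ge 0$), the vanishing of the difference at $\infty$ to order $\ge m^2+1$ already exceeds the valence total $\tfrac{2m^2}{12}\cdot 6=m^2$ (and likewise $m(m+1)+1>m(m+1)$ for the second identity), so no further cusp bookkeeping is needed. Second, the one substantive computation you assert rather than prove is the leading-coefficient evaluation $m!\cdot P(\tfrac12,\dots,\tfrac{2m-1}{2})\cdot(\text{const})=1$; this is exactly the inductive identity $f_1(\vec{\bm{x}}_0)=\tfrac{(2m)!}{4^m m!}\prod_{j=1}^{m-1}(2j)!^2$ (resp.\ $f_2(\vec{\bm{x}}_0)=m!\prod_{j=1}^m(2j-1)!^2$) that the paper carries out in \cref{lem:Dmm-cusps} and \cref{lem:Dmm+1-cusps}, so it should be supplied, not waved at.
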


At first glance, the appearance of intricate polynomials in $x_j$ might obscure the modular nature of these identities. However,  an important observation is that these polynomials depend only on $x_j$'s,  not on $y_j$'s,  and each $x_j$ appears with an odd degree of at least $3$. 
This allows us to rewrite the right-hand side as a sum of products of Eisenstein series. 
For instance,  the second identity with $m=2$ yields 
\begin{align}\label{eq:Zagier-second-m2}
\begin{split}
	q^3 \triangle(q)^{24} &= \frac{1}{144} \sum_{\substack{x_1, x_2 \in \Z_{\ge 0} \\ y_1, y_2 \in 1/2 + \Z_{\ge 0}}} x_1^3 x_2^3 (x_1^2 - x_2^2)^2 q^{2(x_1 y_1 + x_2 y_2)}\\
		&= \frac{1}{72} \bigg(E_4^{(2)}(\tau) E_8^{(2)}(\tau) - E_6^{(2)}(\tau)^2 \bigg),
\end{split}
\end{align}
where
\begin{align}\label{eq:Eis-level-2}
	E_k^{(2)}(\tau) = \sum_{\substack{x \in \Z_{\ge 0} \\ y \in 1/2 + \Z_{\ge 0}}} x^{k-1} q^{2xy} = \sum_{n=1}^\infty \sigma_{k-1}^{(2)}(n) q^n
\end{align}
is the Eisenstein series of weight $k$ on $\Gamma_0(2)$. 
Once both sides are seen to share the same modular properties, the identity~\eqref{eq:Zagier-second-m2} follows naturally. 
This is the key insight underlying Zagier's approach. 
Moreover, one can also recover the equation~\eqref{eq:ORW-12} from \eqref{eq:Zagier-second-m2} by expressing the products of Eisenstein series as a linear combination of modular forms of weight $12$.

This approach via Eisenstein series does not readily generalize to other analogous identities. 
For instance, the following identity is strongly supported by numerical evidence, but the polynomial factor on the right-hand side involves both $x_j$'s and $y_j$'s,  which makes it difficult to apply Zagier's argument. 
As a result, a similar proof becomes considerably more challenging.

\begin{theorem}\label{thm:Dmm-Eisen}
	For every integer $m \ge 1$, we have
	\[
		q^{\frac{m^2}{2}} \triangle(q)^{4m^2} = \frac{2^m (2m)!}{\prod_{j=1}^m (2j)!^2} \sum_{x_j, y_j \in 1/2+\Z_{\ge 0}} f(\vec{\bm{x}}) q^{2\sum_{j=1}^m x_j y_j}
	\]
	where we put
	\[
		f(\vec{\bm{x}}) \coloneqq \prod_{j=1}^m (x_j + y_j) \cdot \prod_{1 \le i < j \le m} \bigg((x_i-y_i)^2 - (x_j-y_j)^2 \bigg)\bigg((x_i+y_i)^2 - (x_j+y_j)^2\bigg).
	\]
\end{theorem}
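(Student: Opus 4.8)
The plan is to exhibit the right-hand side as a polynomial-weighted indefinite theta function of signature $(m,m)$, to prove it is a holomorphic modular form using our extension of the Roehrig--Zwegers construction, and then to identify that form with the $\eta$-quotient on the left. First I would change variables: for $1\le j\le m$ put $b_j\coloneqq x_j+y_j$ and $a_j\coloneqq x_j-y_j$, so that as $(x_j,y_j)$ runs over $(\tfrac12+\Z_{\ge0})^2$ the pair $(a_j,b_j)$ runs over $\{(a,b)\in\Z^2:b>|a|,\ a+b\ \text{odd}\}$, with $4x_jy_j=b_j^2-a_j^2$, $x_j+y_j=b_j$, $(x_i-y_i)^2-(x_j-y_j)^2=a_i^2-a_j^2$, and $(x_i+y_i)^2-(x_j+y_j)^2=b_i^2-b_j^2$. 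The sum in the theorem then becomes
\[
\sum_{\substack{b_j>|a_j|,\ 1\le j\le m\\ a_j+b_j\ \text{odd}}}\Bigl(\prod_{j=1}^m b_j\Bigr)\Bigl(\prod_{1\le i<j\le m}(a_i^2-a_j^2)(b_i^2-b_j^2)\Bigr)\,q^{\frac12\sum_{j=1}^m(b_j^2-a_j^2)}.
\]
The exponent is the quadratic form $Q(\vec{\bm{a}},\vec{\bm{b}})=\tfrac12(\lVert\vec{\bm{b}}\rVert^2-\lVert\vec{\bm{a}}\rVert^2)$, of signature $(m,m)$; the weight is $\prod_j b_j$ times the Vandermonde determinants in the $a_i^2$ and in the $b_i^2$, hence homogeneous of degree $2m^2-m$, so the associated theta series has weight $\tfrac12\cdot 2m+(2m^2-m)=2m^2$; and the summation cone $C=\{b_j>|a_j|:1\le j\le m\}$ is a product of $m$ forward light cones of $\tfrac12(b^2-a^2)$, its $2m$ facets being cut out by the functionals $b_j\pm a_j$, each of which corresponds under $Q$ to an isotropic vector. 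Moreover the weight vanishes on every extreme ray of $C$, since each such ray has $b_k=0$ for all but one index $k$ (we assume $m\ge2$; the case $m=1$ is Legendre's identity).

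Next I would apply the modularity results established earlier in the paper. Because all facet normals of $C$ are isotropic and the weight vanishes along the boundary rays, the non-holomorphic completion of this indefinite theta function reduces to the holomorphic series itself; thus, after multiplication by $\tfrac{2^m(2m)!}{\prod_{j=1}^m(2j)!^2}$, the right-hand side is a holomorphic modular form of weight $2m^2$ on $\Gamma_0(N)$ with a character $\chi$, for a suitable level $N$. Its order at $\infty$ equals $m^2/2$: the naive minimal exponent $m/2$, attained at $x_j=y_j=\tfrac12$ for all $j$, carries vanishing weight because the factors $a_i^2-a_j^2$ and $b_i^2-b_j^2$ then vanish, and an elementary optimization under $b_j>|a_j|$ shows the least exponent with non-vanishing weight is $\tfrac12\bigl(\sum_{k=1}^m k^2-\sum_{k=0}^{m-1}k^2\bigr)=m^2/2$. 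On the left, $q^{m^2/2}\triangle(q)^{4m^2}=(\eta(2\tau)^2/\eta(\tau))^{4m^2}$ is a modular form of the same weight on the same group with the same character, also of order $m^2/2$ at $\infty$. Reading off the behavior of the theta series at the remaining cusps from the modularity theorem, one checks that the two forms have the same divisor on $X_0(N)$, so their ratio is constant; comparing the coefficients of $q^{m^2/2}$ then evaluates that constant as $\tfrac{2^m(2m)!}{\prod_{j=1}^m(2j)!^2}$. Alternatively, one may compare both sides against the Zagier--Milne identity of \cref{thm:Zagier-Q} inside the relevant finite-dimensional space of modular forms, matching Fourier coefficients up to the Sturm bound.

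The hard part will be the modularity step: one must verify that the triple $(Q,C,\text{weight})$ lies within the scope of our extension of the Roehrig--Zwegers construction, and in particular that the error of modularity, assembled from the generalized error functions attached to the $2m$ isotropic facet normals, cancels identically. Both the isotropy of all facet normals, which degenerates those error functions into sign functions, and the vanishing of the Vandermonde-type weight along the boundary strata of $C$, which makes the surviving boundary terms telescope to zero, are essential here; this is exactly what blocks Zagier's Eisenstein-series argument, since the weight $f$ genuinely mixes the $x_j$ and the $y_j$. A secondary difficulty is the cusp analysis in the identification step, where one needs the expansions of the indefinite theta series at the cusps other than $\infty$. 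Finally, the same identity should admit a conceptual re-derivation, pursued later in the paper, as a specialization of the suitably regularized denominator identity of an affine Lie superalgebra, the indefinite-theta completion being precisely what makes that regularization rigorous.
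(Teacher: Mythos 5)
Your overall strategy coincides with the paper's: realize the right-hand side as a signature-$(m,m)$ indefinite theta function, establish its modularity via the extended Roehrig--Zwegers/Vign\'eras machinery, and identify it with $\theta_\triangle(\tau)^{4m^2}$ by matching weight, cusp behavior, and the leading coefficient (the paper keeps the form $Q_m(\vec{\bm{x}})=\sum_j x_jy_j$ and works on $\Gamma(2)$ rather than diagonalizing and passing to $\Gamma_0(N)$, but that is cosmetic). There is, however, a concrete gap in your modularity step. The hypothesis that makes Vign\'eras' criterion applicable is that the weight polynomial is \emph{spherical}, i.e.\ annihilated by the indefinite Laplacian $\Delta_m=2\sum_j\partial^2/\partial x_j\partial y_j$ (equivalently $\sum_j(\partial_{b_j}^2-\partial_{a_j}^2)$ in your coordinates). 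This is not automatic for $f$: unlike the weights in \cref{thm:Zagier-Q}, which depend only on the $x_j$'s and are trivially harmonic, $f$ genuinely mixes $x_j$ and $y_j$, and the paper devotes a full lemma to the cancellation $\Delta_m f_3=0$, a nontrivial computation. Your proposal never checks this; in its place you invoke ``the weight vanishes on every extreme ray of $C$,'' which is true for $m\ge2$ but is neither the hypothesis needed for Vign\'eras' criterion nor the mechanism the paper uses for holomorphicity and convergence. In the paper, the sign-function theta with $\vec{\bm{c}}_0,\vec{\bm{c}}_1\in\cS^m$ converges and transforms correctly because (i) $f_3$ is spherical and (ii) the shift satisfies $B_1(\bm{c}_j^{(i)},\bm{a}_j)\notin\Z$, so lattice points stay off the light-cone walls; the transformation law is then obtained as a limit $t\to0$ from smooth cone vectors $\vec{\bm{c}}_i(t)\in\cC^m$.

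A second, smaller underestimate: the cusp analysis you defer as ``a secondary difficulty'' is where much of the remaining work lies. The expansion at the cusp $0$ requires perturbing the characteristic by $\vec{\bm{\varepsilon}}$, splitting the sum according to whether $x_jy_j=0$, and controlling the boundary contribution via the Eulerian-polynomial generating function before letting $\varepsilon\to0$; without this one cannot conclude boundedness there, and the divisor/Sturm-bound identification you sketch presupposes exactly this information. With the sphericity lemma and the cusp computation supplied, your argument closes up essentially as the paper's does.
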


To overcome this difficulty,  we propose a more flexible and general approach. Instead of rewriting the right-hand side in terms of Eisenstein series,  we view it as a theta function associated with the quadratic form $2\sum_{j=1}^m x_j y_j$,  and study its modular transformation. 
Note that classical theta functions (as modular forms) are attached to positive definite quadratic forms,  while the above one is indefinite.

The central idea here is to broaden the framework of modular forms by employing theta functions associated with indefinite quadratic forms. 
The broader perspective has given rise to a powerful theory, notably through Zwegers' foundational work~\cite{Zwegers2002} on Ramanujan's mock theta functions. 
More recently,  Roehrig--Zwegers~\cite{RoehrigZwegers2022, RoehrigZwegers2022-pre} have shown that even Eisenstein series \eqref{eq:Eis-level-2} can be interpreted as indefinite theta functions. 
Motivated by this viewpoint,  we aim to further extend the theory to include series of the type appearing in \cref{thm:Dmm-Eisen}.

We remark that, as with \cref{thm:Zagier-Q} (originating in the work of Kac--Wakimoto~\cite{KacWakimoto1994} on the affine Lie superalgebra $\widehat{\mathfrak{q}}(N)$), the identity in \cref{thm:Dmm-Eisen} is also related to a denominator identity for an affine Lie superalgebra. 
Specifically,  it corresponds to $\widehat{\spo}(2m,2m)$,  as we will explain in \cref{sec:spo^(2m_2m)}. 
A similar identity corresponding to $\widehat{\spo}(2m,2m+1)$,  predicted by Kac--Wakimoto~\cite[Conjecture 5.1]{KacWakimoto1994},  was proved in our previous work~\cite{MatsusakaSuzuki2025-pre} following the modular approach based on indefinite theta functions.

In this article (together with our previous work \cite{MatsusakaSuzuki2025-pre}),  we treat all the affine Lie superalgebras whose denominator identities provide power series identities for some powers of $\triangle(q)$.
As Kac and Wakimoto explained in the introduction to \cite{KacWakimoto1994},  such an affine Lie superalgebra $\widehat{\fg}$ with non-zero dual Coxeter number are listed as $\widehat{\fg}=\widehat{\sl}(m+1,  m)$,  $\widehat{\spo}(2m,  2m+1)$,  $\widehat{\spo}(2m,  2m)$. 
We also treat the case of zero dual Coxeter numbers such as $\widehat{\fg}=\widehat{\gl}(m,  m)$,  $\widehat{\spo}(2m,  2m+2)$,  and $\widehat{\fq}(N)$.

\begin{table}[htb]
\begin{tabular}{|c||c|c|c|c|} \hline  \rule{0mm}{5mm}
& $\widehat{\gl}(m,  m)$ & $\widehat{\sl}(m+1,  m)$  & $\widehat{\spo}(2m,  2m+1)$  & $\widehat{\spo}(2m,  2m)$   \\ \hline\hline
 $\begin{array}{c}
 \text{indefinite}  \\ 
 \text{theta function}  
 \end{array}$ & ?  & ? &  \cite{MatsusakaSuzuki2025-pre} 
&  \cref{sec:third_identity} \\  \hline
$\begin{array}{c}
 \text{denominator} \\ 
 \text{identity}
\end{array}$  & \cref{sec:gl^(m_m)}  & \cref{sec:sl^(m+1_m)} & \cite{MatsusakaSuzuki2025-pre} 
&  \cref{sec:spo^(2m_2m)}  \\  \hline   
\end{tabular} \vspace{5pt}

\begin{tabular}{|c||c|c|c|} \hline \rule{0mm}{5mm}
& $\widehat{\spo}(2m,  2m+2)$ & $\widehat{\fq}(2m-1)$  & $\widehat{\fq}(2m)$  \\ \hline\hline
 $\begin{array}{c}
 \text{indefinite}  \\ 
 \text{theta function}  
 \end{array}$ & 
 $\begin{array}{c}
 \text{the same }  \\ 
 \text{as $\widehat{\fq}(2m)$}  
 \end{array}$ & \cref{sec:first_identity} & \cref{sec:second_identity} \\  \hline
 $\begin{array}{c}
 \text{denominator} \\ 
 \text{identity}
\end{array}$ & \cref{sec:spo^(2m_2m+2)} & omitted & omitted \\  \hline   
  \end{tabular} 
 \end{table}

Our first goal is to extend this framework developed in our previous work~\cite{MatsusakaSuzuki2025-pre} to provide uniform proofs of \cref{thm:Zagier-Q} and \cref{thm:Dmm-Eisen} in the realm of indefinite theta functions. 
We then show how these identities are derived from the denominator identities of affine Lie superalgebras. 
That is to say,  we provide two different proofs for each identity as in our previous work~\cite{MatsusakaSuzuki2025-pre}.

However,  we were not able to give a proof based on the theory of indefinite theta functions to the identities obtained from $\widehat{\sl}(m+1,m)$ and $\widehat{\gl}(m,m)$. A detailed explanation is given in \cref{sec:Final-remark}.
In addition,  we omit the proof that we can deduce \cref{thm:Zagier-Q} from the denominator identities for $\widehat{q}(N)$ since the argument is the same as the other cases and these identities are not new.
The status of each case and the corresponding sections are summarized in the table above.

This article is organized as follows. In the first part, we study indefinite theta functions. In \cref{sec:indefinite-theta}, we review Vign\'{e}ras' criterion for the construction of indefinite theta functions and present an extension of Roehrig--Zwegers' indefinite theta functions in the special case of signature $(m,m)$. 
In \cref{sec:proof_via_theta},  we prove \cref{thm:Zagier-Q} and \cref{thm:Dmm-Eisen} using this extended framework.

In the latter part of this article, we reinterpret the $q$-series identities established here from the perspective of affine Lie superalgebras. 
In \cref{sec:affile-Lie-super}, we prepare necessary notations for affine Lie superalgebras and root systems which we use in the next section.
\cref{sec:denom-identities} is devoted to deduce certain power seires identities for powers of $\triangle(q)$ from the denominator identities of $\widehat{\gl}(m,  m)$,  $\widehat{\sl}(m+1,  m)$,  $\widehat{\spo}(2m,  2m)$,  and $\widehat{\spo}(2m,  2m+2)$.

\section*{Acknowledgment}

The first author was supported by JSPS KAKENHI (JP21K18141 and JP24K16901) and the MEXT Initiative through Kyushu University's Diversity and Super Global Training Program for Female and Young Faculty (SENTAN-Q). The second author was supported by JSPS KAKENHI (JP22K13891 and JP23K20785).

\section{Indefinite theta functions}\label{sec:indefinite-theta}

The aim of this section is to present a general framework for directly establishing the modularity of the right-hand sides of \cref{thm:Zagier-Q} and \cref{thm:Dmm-Eisen} as theta functions associated with the indefinite quadratic form $Q_m(\vec{\bm{x}}) = \sum_{j=1}^m x_j y_j$. 
We begin by recalling Vign\'{e}ras' criterion, a standard tool for constructing such theta functions.

\subsection{Vign\'{e}ras' criterion}

For a symmetric matrix $A$ of size $n = r+s$ and signature $(r, s)$, we define the quadratic form $Q(\bm{x}) = \frac{1}{2} {}^t \bm{x} A \bm{x}$ and the associated bilinear form $B(\bm{x}, \bm{y}) = {}^t \bm{x} A \bm{y} = Q(\bm{x} + \bm{y}) - Q(\bm{x}) - Q(\bm{y})$. Let $L \subset \R^n$ be a lattice such that $B(\bm{x}, \bm{y}) \in \Z$ for all $\bm{x}, \bm{y} \in L$. We define its dual lattice $L'$ by
     \[
	L' = \{\bm{x} \in \R^n \mid B(\bm{x}, \bm{y}) \in \Z \text{ for all $\bm{y} \in L$}\}.
     \]

Let $\partial_{\bm{x}} = {}^t(\frac{\partial}{\partial x_1}, \dots, \frac{\partial}{\partial x_n})$. 
Using the \emph{Euler operator} $\calE$ and the \emph{Laplace operator} $\Delta_A$,  defined respectively by
    \[
	\calE \coloneqq {}^t \bm{x} \partial_{\bm{x}}, \quad \Delta_A \coloneqq {}^t \partial_{\bm{x}} A^{-1} \partial_{\bm{x}},
    \]
we introduce \emph{Vign\'{e}ras' operator} $\calD_A$ by
    \[
	\calD_A \coloneqq \calE - \frac{\Delta_A}{4\pi}.
    \]

\begin{theorem}[Vign\'{e}ras~\cite{Vigneras1977-proc}]\label{fact:Vig}
	Let $p\,\colon \R^n \to \C$ be a function satisfying the following assumptions:
	\begin{itemize}
		\item[$(1)$] $p(\bm{x}) e^{-2\pi Q(\bm{x})}$ is a Schwartz function.
		\item[$(2)$] $p(\bm{x})$ is an eigenfunction of the operator $\calD_A$ with the eigenvalue $\lambda \in \Z$, that is, $\calD_A p(\bm{x}) = \lambda p(\bm{x})$.
	\end{itemize}
	Then, for any $\bm{a}, \bm{b} \in \R^n$, the theta function defined by
	\[
		\theta_{\bm{a}, \bm{b}}(\tau) \coloneqq v^{-\lambda/2} \sum_{\bm{x} \in \bm{a} + L} p (\bm{x} \sqrt{v}) q^{Q(\bm{x})} e^{2\pi i B(\bm{x}, \bm{b})}, \qquad (\tau = u + iv)
	\]
	converges absolutely and satisfies the transformation formula
	\[
		\theta_{\bm{a}, \bm{b}} \left(-\frac{1}{\tau}\right) = \frac{i^{-s-\lambda} (-i\tau)^{\lambda+n/2} e^{2\pi i B(\bm{a}, \bm{b})}}{\vol(\R^n/L) |\det A|^{1/2}} \sum_{\bm{\nu} \in L'/L} \theta_{\bm{\nu}-\bm{b}, \bm{a}} (\tau).
	\]
\end{theorem}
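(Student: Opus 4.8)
The plan is to deduce the $S$-transformation from Poisson summation, with the two hypotheses on $p$ entering only through a single Fourier-analytic lemma about the summand. First I would settle absolute convergence. Write $\tau = u+iv$ and set $\phi(\bm{x}) \coloneqq p(\bm{x})\,e^{-2\pi Q(\bm{x})}$, which is a Schwartz function by hypothesis~$(1)$. Since $q^{Q(\bm{x})} = e^{2\pi i u Q(\bm{x})}e^{-2\pi v Q(\bm{x})}$ and $p(\bm{x}\sqrt{v})\,e^{-2\pi v Q(\bm{x})} = \phi(\bm{x}\sqrt{v})$, the summand of $\theta_{\bm{a},\bm{b}}$ equals $v^{-\lambda/2}\phi(\bm{x}\sqrt{v})\,e^{2\pi i u Q(\bm{x})}e^{2\pi i B(\bm{x},\bm{b})}$, whose modulus is $v^{-\lambda/2}|\phi(\bm{x}\sqrt{v})|$; thus we are summing a rescaled Schwartz function over the translated lattice $\bm{a}+L$, which converges absolutely and locally uniformly in $\tau$. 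The same computation shows that, for fixed $\tau\in\bbH$, the function $g_\tau(\bm{x}) \coloneqq v^{-\lambda/2}p(\bm{x}\sqrt{v})\,q^{Q(\bm{x})}$ is itself Schwartz (a rescaled Schwartz function times the unimodular factor $e^{2\pi i u Q(\bm{x})}$, whose derivatives grow polynomially), and $\theta_{\bm{a},\bm{b}}(\tau) = \sum_{\bm{x}\in\bm{a}+L} g_\tau(\bm{x})\,e^{2\pi i B(\bm{x},\bm{b})}$.

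Next I would apply Poisson summation over $\bm{a}+L$ to $\theta_{\bm{a},\bm{b}}(-1/\tau)$. Because $e^{2\pi i B(\bm{x},\bm{b})} = e^{2\pi i\,{}^{t}\bm{x}(A\bm{b})}$ is a modulation, this gives $\theta_{\bm{a},\bm{b}}(-1/\tau) = \vol(\R^n/L)^{-1}\sum_{\bm{\mu}\in L^{*}}\widehat{g_{-1/\tau}}(\bm{\mu}-A\bm{b})\,e^{2\pi i\,{}^{t}\bm{\mu}\bm{a}}$, where $L^{*}$ is the $\Z$-dual of $L$; since $B(L,L)\subset\Z$ forces $AL\subset L^{*}$, the substitution $\bm{\mu}=A\bm{w}$ turns this into a sum over $L' = A^{-1}L^{*}$ and replaces every standard pairing by the bilinear form $B$. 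At this point the whole theorem reduces to the Fourier identity
\[
	\widehat{g_{\sigma}}(\bm{\xi}) \;=\; \frac{i^{-s-\lambda}\,(-i\tau)^{\lambda+n/2}}{|\det A|^{1/2}}\;g_{\tau}(A^{-1}\bm{\xi}),\qquad \tau = -\tfrac1\sigma .
\]
Indeed, feeding this in and collecting the exponential factors — they amalgamate into the single factor $e^{2\pi i B(\bm{a},\bm{b})}$ by symmetry of $B$ — together with the identity $\sum_{\bm{w}\in L'}(\,\cdots) = \sum_{\bm{\nu}\in L'/L}\theta_{\bm{\nu}-\bm{b},\bm{a}}(\tau)$, reproduces the asserted transformation formula with exactly the stated constant.

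The remaining, and genuinely substantial, step is this Fourier identity for $g_\sigma$, which is Vign\'{e}ras' lemma and is where both hypotheses are used. Completing the square one finds $2\pi i\sigma Q(\bm{x}) - 2\pi i\,{}^{t}\bm{x}\bm{\xi} = \pi i\sigma\,{}^{t}(\bm{x}+\tau A^{-1}\bm{\xi})A(\bm{x}+\tau A^{-1}\bm{\xi}) + 2\pi i\tau Q(A^{-1}\bm{\xi})$, so after pulling out the Gaussian factor $e^{2\pi i\tau Q(A^{-1}\bm{\xi})}$ — which is precisely the one occurring in $g_\tau(A^{-1}\bm{\xi})$ — one must shift the contour by $-\tau A^{-1}\bm{\xi}$ in a Gaussian integral of the shape $\int_{\R^n}p(\bm{x}+\bm{z})\,e^{2\pi i w Q(\bm{x})}\,\dd\bm{x}$ with $\Im w>0$. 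Hypothesis~$(1)$ is exactly what makes this contour shift legitimate even though $e^{2\pi i w Q}$ fails to decay along the negative directions of the indefinite form $Q$, and evaluation of the resulting pure Gaussian contributes the factor $i^{-s}|\det A|^{-1/2}$, the $i^{-s}$ arising from analytically rotating the $s$ negative eigendirections of $A$ onto the real axis. What is left is a Weierstrass/heat-transform integral applied to $p$; here hypothesis~$(2)$ enters, since the eigenvalue equation $\calD_A p = \lambda p$ with $\calD_A = \calE - \Delta_A/4\pi$ and $\calE = {}^{t}\bm{x}\partial_{\bm{x}}$ forces this Gaussian smoothing of $p$, evaluated at the relevant complex argument, to reproduce $p$ up to the scalar $i^{-\lambda}(-i\tau)^{\lambda+n/2}$ — morally because $\calD_A$, together with $\calE$ and multiplication by $Q$, spans an $\sl_2$ acting through the oscillator representation, under which $p\,e^{-2\pi Q}$ behaves like a weight vector of weight $\lambda+n/2$. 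I expect this last step — reconciling the analytic continuation of the indefinite Gaussian with the eigenfunction condition and pinning down the constant $i^{-s-\lambda}(-i\tau)^{\lambda+n/2}$ exactly — to be the main obstacle; the hypothesis $\lambda\in\Z$ is used precisely here, to make $i^{-\lambda}$ a genuine fourth root of unity so that the Gaussian-smoothing identity is single-valued.
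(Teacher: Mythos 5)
The paper does not prove this statement at all --- it is imported verbatim from Vign\'{e}ras' 1977 paper and used as a black box --- so there is no in-paper argument to compare against; I am judging your outline on its own. Your reduction of the transformation formula to Poisson summation plus a single Fourier-transform identity for $g_\sigma(\bm{x}) = v_\sigma^{-\lambda/2}p(\bm{x}\sqrt{v_\sigma})e^{2\pi i \sigma Q(\bm{x})}$ is the correct and standard skeleton, and your convergence argument (the summand has modulus $v^{-\lambda/2}|\phi(\bm{x}\sqrt{v})|$ with $\phi = p\,e^{-2\pi Q}$ Schwartz) is complete and right. The bookkeeping through $L^{*}$, $L' = A^{-1}L^{*}$, and the regrouping into $\sum_{\bm{\nu}\in L'/L}\theta_{\bm{\nu}-\bm{b},\bm{a}}$ also checks out.

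The gap is that the Fourier identity itself --- which you correctly identify as carrying the entire content of the theorem --- is not proved, and the route you sketch toward it does not work as stated. After completing the square you propose to shift the contour in $\int_{\R^n}p(\bm{x}+\bm{z})e^{2\pi i w Q(\bm{x})}\,\dd\bm{x}$ by a \emph{complex} vector $\bm{z}$, asserting that hypothesis $(1)$ legitimizes this. But $p$ is only a function on $\R^n$: nothing in the hypotheses makes it extend holomorphically to a tube domain in $\C^n$, and hypothesis $(1)$ controls $p$ only on the real locus, so it cannot justify Cauchy's theorem on a shifted contour. (Indeed, the degenerate kernels this paper ultimately cares about are built from $\sgn$, which has no holomorphic extension; even the error-function kernels are controlled by $(1)$ only on $\R^{2m}$.) The actual mechanism in Vign\'{e}ras' proof is different: one first shows that $(2)$ is equivalent to the Fourier self-duality $\widehat{f} = i^{-s-\lambda}|\det A|^{-1/2}\,f(A^{-1}\cdot)$ for $f = p\,e^{-2\pi Q}$ (an identity of Schwartz functions on $\R^n$, proved by checking how the Fourier transform conjugates $\calE$ and $\Delta_A$, no contour shift needed), which settles the case $\tau = i$; one then propagates to general $\tau\in\bbH$ via a first-order differential equation in $\tau$ that both sides of the identity satisfy, again as a consequence of $(2)$. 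Your closing appeal to an $\sl_2$/oscillator-representation heuristic gestures at this but does not substitute for it: as written, the step where both hypotheses actually do their work, and where the constant $i^{-s-\lambda}(-i\tau)^{\lambda+n/2}$ is pinned down, remains unestablished.
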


\subsection{Indefinite theta functions of signature $(m,m)$}\label{indefinite-theta}

In the following sections, we apply Vign\'{e}ras' criterion in the case $L = \Z^{2m}$ and 
    \[
	A = \pmat{0 & \1_m \\ \1_m & 0}.
   \]
to realize $q$-series identities arising from the denominator identities of affine Lie superalgebras as indefinite theta functions. 
Our main task is to find a suitable function $p(\bm{x})$. We begin by introducing some necessary notation. 

As in the previous article~\cite{MatsusakaSuzuki2025-pre}, we write $(x_1, \dots, x_m, y_1, \dots, y_m) \in \R^{2m}$ as $\vec{\bm{x}} = (\bm{x}_1, \dots, \bm{x}_m)$, where for each $1 \le j \le m$, $\bm{x}_j = \smat{x_j \\ y_j}$. 
For a vector $\vec{x} = (x_1, \dots, x_m) \in \R^m$, we set $|\vec{x}| \coloneqq x_1 + \cdots + x_m$. 
To indicate the dimension explicitly, we attach the subscript $m$ (instead of $A$) to symbols. 
For instance,
    \[
	Q_m(\vec{\bm{x}}) = \sum_{j=1}^m x_j y_j, \qquad \Delta_m = 2 \sum_{j=1}^m \frac{\partial^2}{\partial x_j \partial y_j}.
    \]
When $m =1$, the region $\{\bm{x} \in \R^2 \mid Q_1(\bm{x}) < 0\}$ consists of two components. 
We choose one of them, for instance,
    \[
	\calC \coloneqq \{\bm{x} = \smat{x \\ y} \in \R^2 \mid Q_1(\bm{x}) < 0, y > 0\}.
    \]
We also define the set of cusps $\cS$ by
\begin{align*}
	\cS &\coloneqq \{\bm{x} = \smat{x \\ y} \in \Z^2 \mid (x,y) = 1, Q_1(\bm{x}) = 0, x < y\},
\end{align*}
and set $\overline{\cC} \coloneqq \cC \cup \cS$. 

The \emph{error function} is
    \[
	E(z) \coloneqq 2\int_0^z e^{-\pi u^2} \dd u.
    \]
and the \emph{sign function} is
   \[
	\sgn(x) \coloneqq \begin{cases}
		1 &\text{if } x > 0,\\
		-1 &\text{if } x \le 0.
	\end{cases}
   \]
For $\bm{c} \in \overline{\cC}$ and $k \ge 0$, we define
    \[
	\rho_{\bm{c}, k}(\bm{x}) \coloneqq \begin{cases}
		E^{(k)} \left(\dfrac{B_1(\bm{c}, \bm{x})}{\sqrt{-Q_1(\bm{c})}} \right) &\text{if } \bm{c} \in \cC,\\
		\sgn(B_1(\bm{c}, \bm{x})) &\text{if } \bm{c} \in \cS \text{ and } k=0,\\
		0 &\text{if } \bm{c} \in \cS \text{ and } k>0,
	\end{cases}
    \]
where $E^{(k)}(z)$ is the $k$-th derivative of $E(z)$. Strongly inspired by the work of Roehrig--Zwegers~\cite{RoehrigZwegers2022, RoehrigZwegers2022-pre} in the case of signature $(n-1,1)$, we are now ready to define the function $p^{\vec{\bm{c}}_0, \vec{\bm{c}}_1}[f](\vec{\bm{x}})$.

\begin{definition}\label{def:deg-m-p}
	For $\vec{\bm{c}}_0 = (\bm{c}_1^{(0)}, \dots, \bm{c}_m^{(0)}), \vec{\bm{c}}_1 = (\bm{c}_1^{(1)}, \dots, \bm{c}_m^{(1)}) \in \overline{\cC}^m$ and a polynomial $f(\vec{\bm{x}})$, we define
	\begin{align*}
		p^{\vec{\bm{c}}_0, \vec{\bm{c}}_1}[f](\vec{\bm{x}}) \coloneqq \sum_{\vec{\sigma} = (\sigma_1, \dots, \sigma_m) \in \{0,1\}^m} (-1)^{|\vec{\sigma}|} \sum_{k=0}^d \frac{(-1)^k}{(4\pi)^k} \sum_{\substack{\vec{k} = (k_1, \dots, k_m) \in \Z_{\ge 0}^m \\ |\vec{k}| = k}}F_{\vec{\sigma}, \vec{k}}(\vec{\bm{x}}) G_{\vec{\sigma}, \vec{k}}(\vec{\bm{x}}),
	\end{align*}
	where we put
	\begin{align*}
		F_{\vec{\sigma}, \vec{k}}(\vec{\bm{x}}) \coloneqq \prod_{j=1}^m \frac{\rho_{\bm{c}_j^{(\sigma_j)}, k_j} (\bm{x}_j)}{k_j!}, \qquad G_{\vec{\sigma}, \vec{k}}(\vec{\bm{x}}) \coloneqq \left(\prod_{j=1}^m \partial_{\bm{c}_j^{(\sigma_j)}}(\bm{x}_j)^{k_j}\right) f(\vec{\bm{x}}),
	\end{align*}
	and
	\begin{align*}
		\partial_{\bm{c}}(\bm{x}) &\coloneqq \begin{cases}
			\displaystyle{\frac{1}{\sqrt{-Q_1(\bm{c})}} \left(c_1 \frac{\partial}{\partial x} + c_2 \frac{\partial}{\partial y}\right)} &\text{if } \bm{c} \in \cC,\\
			1 &\text{if } \bm{c} \in \cS,
		\end{cases}
	\end{align*}
	for $\bm{c} = \smat{c_1 \\ c_2}, \bm{x} = \smat{x \\ y}$.
\end{definition}

A polynomial $f(\vec{\bm{x}})$ is called a \emph{spherical polynomial} of degree $d$ if it is homogeneous of degree $d$ and annihilated by the Laplace operator $\Delta_m$. For each $\bm{c} \in \overline{\cC}$, we define
    \[
	R(\bm{c}) \coloneqq \begin{cases}
		\R^2 &\text{if } \bm{c} \in \cC,\\
		\{\bm{a} \in \R^2 \mid B_1(\bm{c}, \bm{a}) \not\in \Z\} &\text{if } \bm{c} \in \cS.
	\end{cases}
    \] 
In this setting, we define the theta functions as follows.

\begin{definition}\label{def:indef-theta}
	Let $f: \R^{2m} \to \C$ be a spherical polynomial of degree $d$ and let $\vec{\bm{c}}_0, \vec{\bm{c}}_1 \in \overline{\cC}^m$. For $\vec{\bm{a}} = (\bm{a}_1, \dots, \bm{a}_m) \in \R^{2m}$, we assume that $\bm{a}_j \in R(\bm{c}_j^{(0)}) \cap R(\bm{c}_j^{(1)})$ for any $1 \le j \le m$. Then, for any $\vec{\bm{b}} \in \R^{2m}$, we define the theta function by
	\begin{align*}
		\theta_{\vec{\bm{a}}, \vec{\bm{b}}}^{\vec{\bm{c}}_0, \vec{\bm{c}}_1}[f] (\tau) \coloneqq v^{-d/2} \sum_{\vec{\bm{x}} \in \vec{\bm{a}} + \Z^{2m}} p^{\vec{\bm{c}}_0, \vec{\bm{c}}_1}[f](\vec{\bm{x}} \sqrt{v}) q^{Q_m(\vec{\bm{x}})} e^{2\pi i B_m(\vec{\bm{x}}, \vec{\bm{b}})}.
	\end{align*}
	In particular, if $\vec{\bm{c}}_0, \vec{\bm{c}}_1 \in \cS^m$, then we have
	\[
		\theta_{\vec{\bm{a}}, \vec{\bm{b}}}^{\vec{\bm{c}}_0, \vec{\bm{c}}_1}[f] (\tau) = \sum_{\vec{\bm{x}} \in \vec{\bm{a}} + \Z^{2m}} \prod_{j=1}^m \bigg(\sgn(B_1(\bm{c}_j^{(0)}, \bm{x}_j)) - \sgn(B_1(\bm{c}_j^{(1)}, \bm{x}_j)) \bigg) \cdot f(\vec{\bm{x}}) q^{Q_m(\vec{\bm{x}})} e^{2\pi i B_m(\vec{\bm{x}}, \vec{\bm{b}})}.
	\]
\end{definition}

The initial goal is to verify that the function $p^{\vec{\bm{c}}_0, \vec{\bm{c}}_1}[f](\vec{\bm{x}})$ satisfies the assumptions of Vign\'{e}ras' criterion, particularly when $\vec{\bm{c}}_0, \vec{\bm{c}}_1 \in \cC^m$, and to show that it defines a (generally non-holomorphic) modular form. 
First, as for the Schwartz property, the following has already been proved. 
Although the quadratic form under consideration is different, we note that the same proof works without any modification.

\begin{proposition}[{\cite[Proposition 5.2]{MatsusakaSuzuki2025-pre}}]\label{prop:degm-Vig1}
	If $\vec{\bm{c}}_0, \vec{\bm{c}}_1 \in \cC^m$, then the function $p^{\vec{\bm{c}}_0, \vec{\bm{c}}_1}[f](\vec{\bm{x}}) e^{-2\pi Q_m(\vec{\bm{x}})}$ is a Schwartz function.
\end{proposition}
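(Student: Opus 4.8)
The plan is to reduce the claim to a pair of one-variable estimates for the error function $E$ and then propagate them through the (finite, combinatorial) structure of $p^{\vec{\bm c}_0,\vec{\bm c}_1}[f]$. For $\bm c\in\cC$ write $\ell_{\bm c}(\bm x)\coloneqq B_1(\bm c,\bm x)/\sqrt{-Q_1(\bm c)}$, so that $\rho_{\bm c,k}(\bm x)=E^{(k)}(\ell_{\bm c}(\bm x))$. First I would prove, for $\bm x=\smat{x\\y}\in\R^2$: (i) if $\bm c^{(0)},\bm c^{(1)}\in\cC$, then $\bigl(E(\ell_{\bm c^{(0)}}(\bm x))-E(\ell_{\bm c^{(1)}}(\bm x))\bigr)e^{-2\pi Q_1(\bm x)}$ is a Schwartz function; and (ii) if $\bm c\in\cC$ and $k\ge1$, then $E^{(k)}(\ell_{\bm c}(\bm x))\,e^{-2\pi Q_1(\bm x)}$ is a Schwartz function. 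Both rest on the elementary identity
\[
 \pi\,\ell_{\bm c}(\bm x)^2+2\pi\,Q_1(\bm x)=\pi\bigl(t^2x^2+t^{-2}y^2\bigr),
\]
which holds because $Q_1(\bm x)=xy$ and $\ell_{\bm c}(\bm x)=tx-t^{-1}y$ for a suitable $t=t(\bm c)>0$. For (ii) I would write $E^{(k)}(z)=p_{k-1}(z)e^{-\pi z^2}$ with $p_{k-1}$ a polynomial, so the product equals $p_{k-1}(\ell_{\bm c}(\bm x))\,e^{-\pi(t^2x^2+t^{-2}y^2)}$, a polynomial times a positive-definite Gaussian, and then note that applying $\partial_x$ or $\partial_y$ only reproduces terms of this shape. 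For (i), I would split $E=\sgn+(E-\sgn)$: the $(E-\sgn)$-contribution decays like $e^{-\pi\ell^2}$ and is handled as in (ii), while $\sgn(\ell_{\bm c^{(0)}}(\bm x))-\sgn(\ell_{\bm c^{(1)}}(\bm x))$ is supported on the wedge between the lines $\ell_{\bm c^{(0)}}=0$ and $\ell_{\bm c^{(1)}}=0$; since $\bm c^{(0)}$ and $\bm c^{(1)}$ lie in the \emph{same} connected component $\cC$ of $\{Q_1<0\}$, this wedge is contained in $\{Q_1>0\}$, and in fact $Q_1(\bm x)\ge\epsilon\lvert\bm x\rvert^2$ there for some $\epsilon>0$, so $e^{-2\pi Q_1(\bm x)}$ already forces Gaussian decay on the support. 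The left-hand side of (i) is itself smooth (as $E$ is), so expanding its derivatives by the Leibniz rule and applying these estimates termwise yields the Schwartz property; this is the classical estimate at the heart of Zwegers-type indefinite theta functions~\cite{Zwegers2002,RoehrigZwegers2022}.

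Next comes the bookkeeping. I would reorganise \cref{def:deg-m-p} by summing over $\vec k$ first and, for each fixed $\vec k$, set $S\coloneqq\{j:k_j=0\}$. The key point is that $\partial_{\bm c}(\bm x)^0=\mathrm{id}$, so $G_{\vec\sigma,\vec k}$ depends on $\sigma_j$ only for $j\notin S$; hence summing over $(\sigma_j)_{j\in S}$ converts the corresponding factors of $F_{\vec\sigma,\vec k}$ into differences,
\[
 \sum_{\vec\sigma\in\{0,1\}^m}(-1)^{\lvert\vec\sigma\rvert}F_{\vec\sigma,\vec k}\,G_{\vec\sigma,\vec k}
 =\sum_{(\sigma_j)_{j\notin S}}(-1)^{\sum_{j\notin S}\sigma_j}\Bigl(\prod_{j\notin S}\tfrac{E^{(k_j)}(\ell_{\bm c_j^{(\sigma_j)}}(\bm x_j))}{k_j!}\Bigr)\,G_{\vec\sigma,\vec k}(\vec{\bm x})\,\prod_{j\in S}\bigl(E(\ell_{\bm c_j^{(0)}}(\bm x_j))-E(\ell_{\bm c_j^{(1)}}(\bm x_j))\bigr).
\]
Multiplying by $e^{-2\pi Q_m(\vec{\bm x})}=\prod_{j=1}^m e^{-2\pi Q_1(\bm x_j)}$, each factor with $j\in S$ becomes the Schwartz function of (i) in the block $\bm x_j$, each factor with $j\notin S$ (where $k_j\ge1$) becomes the Schwartz function of (ii) in the block $\bm x_j$, and $G_{\vec\sigma,\vec k}$ is a polynomial in $\vec{\bm x}$. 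Thus each summand is a polynomial times a product of Schwartz functions in the pairwise disjoint coordinate blocks $\bm x_1,\dots,\bm x_m$, hence Schwartz on $\R^{2m}$; since the remaining sum over $\vec k$ and over $(\sigma_j)_{j\notin S}$ is finite, so is $p^{\vec{\bm c}_0,\vec{\bm c}_1}[f](\vec{\bm x})\,e^{-2\pi Q_m(\vec{\bm x})}$.

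The hard part is step one --- more precisely, checking that \emph{every} mixed partial derivative decays rapidly, i.e.\ that applying $\partial_x$ or $\partial_y$ never spoils the Gaussian gain; this is exactly where the identity $\pi\ell_{\bm c}^2+2\pi Q_1=\pi(t^2x^2+t^{-2}y^2)$ and the lower bound $Q_1\ge\epsilon\lvert\bm x\rvert^2$ on the wedge are used. The reorganisation in step two is routine. In fact the statement is precisely \cite[Proposition~5.2]{MatsusakaSuzuki2025-pre}, and that proof applies here verbatim because only the rank-one form $Q_1$ and the component $\cC$ enter it.
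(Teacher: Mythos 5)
Your argument is correct and is essentially the standard Zwegers-type proof that the paper relies on: the paper itself does not reprove \cref{prop:degm-Vig1} but cites \cite[Proposition 5.2]{MatsusakaSuzuki2025-pre} and notes the same proof applies, and the two one-variable ingredients you isolate (the positive-definiteness identity $\pi\ell_{\bm c}^2+2\pi Q_1=\pi(t^2x^2+t^{-2}y^2)$, i.e.\ \cite[Lemma 2.5]{Zwegers2002}, together with the $\beta$-function bound and the support of $\sgn(\ell_{\bm c^{(0)}})-\sgn(\ell_{\bm c^{(1)}})$ lying in a wedge where $Q_1\ge\epsilon|\bm x|^2$) are exactly the ones the authors deploy in the neighbouring \cref{lem:theta-abs-conv-S}. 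Your reorganisation of the $\vec\sigma$-sum according to $\{j: k_j=0\}$ likewise mirrors the decomposition \eqref{eq:theta-summand-decomp}--\eqref{eq:def-Hx}, so this is the intended proof.
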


Next, we verify that the second condition in Vign\'{e}ras' criterion is satisfied.

\begin{proposition}\label{prop:degm-Vig2}
	If $\vec{\bm{c}}_0, \vec{\bm{c}}_1 \in \cC^m$, then, for a spherical polynomial $f(\vec{\bm{x}})$ of degree $d$, we have $\mathcal{D}_m p^{\vec{\bm{c}}_0, \vec{\bm{c}}_1}[f](\vec{\bm{x}}) = d p^{\vec{\bm{c}}_0, \vec{\bm{c}}_1}[f](\vec{\bm{x}})$.
\end{proposition}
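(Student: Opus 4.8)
The plan is to reduce the eigenvalue equation to a commutation identity on a single hyperbolic plane and then assemble the $m$ planes. First I would recast $p^{\vec{\bm{c}}_0, \vec{\bm{c}}_1}[f]$ in operator form: for $\bm{c} \in \cC$ and $1 \le j \le m$, put
\[
	\cT_{\bm{c}}^{(j)} \coloneqq \sum_{k \ge 0} \frac{(-1)^k}{(4\pi)^k\, k!}\, \rho_{\bm{c}, k}(\bm{x}_j)\, \partial_{\bm{c}}(\bm{x}_j)^k ,
\]
an operator that differentiates and multiplies only in the $j$-th pair $\bm{x}_j = \smat{x_j \\ y_j}$; applied to a polynomial it is a finite sum, since $\partial_{\bm{c}}(\bm{x}_j)^k$ annihilates any polynomial once $k$ exceeds its degree. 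The operators $\cT_{\bm{c}_i}^{(i)}$ and $\cT_{\bm{c}_j}^{(j)}$ with $i \ne j$ involve disjoint variables, hence commute; expanding $\prod_{j=1}^m \cT_{\bm{c}_j}^{(j)}$ and using $(-1)^k = \prod_j (-1)^{k_j}$ and $(4\pi)^{-k} = \prod_j (4\pi)^{-k_j}$ identifies the $\vec{\sigma}$-summand of $p^{\vec{\bm{c}}_0, \vec{\bm{c}}_1}[f]$ with $(-1)^{|\vec{\sigma}|}\bigl(\prod_{j=1}^m \cT_{\bm{c}_j^{(\sigma_j)}}^{(j)}\bigr) f$. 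By linearity of $\calD_m$ it is therefore enough to show $\calD_m \bigl(\prod_j \cT_{\bm{c}_j}^{(j)}\bigr) f = d\, \bigl(\prod_j \cT_{\bm{c}_j}^{(j)}\bigr) f$ for an arbitrary fixed tuple $(\bm{c}_1, \dots, \bm{c}_m) \in \cC^m$.

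The next observation is that $\calE$ and $\Delta_m = 2 \sum_j \partial_{x_j} \partial_{y_j}$ are both sums of operators acting on a single pair, so $\calD_m = \sum_{j=1}^m \calD^{(j)}$ with $\calD^{(j)} \coloneqq (x_j \partial_{x_j} + y_j \partial_{y_j}) - \tfrac{1}{2\pi} \partial_{x_j} \partial_{y_j}$, and $\calD^{(j)}$ commutes with $\cT_{\bm{c}_i}^{(i)}$ for every $i \ne j$. The heart of the argument is then the single-plane identity
\[
	\calD^{(j)} \circ \cT_{\bm{c}}^{(j)} = \cT_{\bm{c}}^{(j)} \circ \calD^{(j)} \qquad \text{as operators on polynomials,}
\]
valid for every $\bm{c} \in \cC$. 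To prove it I would compute $\calD^{(j)}\bigl(\rho_{\bm{c},k}(\bm{x}_j)\, \partial_{\bm{c}}(\bm{x}_j)^k g\bigr)$ termwise, using (with $\ell(\bm{x}_j) \coloneqq B_1(\bm{c}, \bm{x}_j)/\sqrt{-Q_1(\bm{c})}$, and suppressing the argument $\bm{x}_j$): that $\partial_{\bm{c}}(\bm{x}_j)$ has constant coefficients, so it commutes with $\partial_{x_j}\partial_{y_j}$ and obeys $[x_j\partial_{x_j} + y_j\partial_{y_j},\, \partial_{\bm{c}}(\bm{x}_j)] = -\partial_{\bm{c}}(\bm{x}_j)$; the chain-rule relations $(x_j\partial_{x_j} + y_j\partial_{y_j})\rho_{\bm{c},k} = \ell\,\rho_{\bm{c},k+1}$, $(\partial_{x_j}\rho_{\bm{c},k})\partial_{y_j} h + (\partial_{y_j}\rho_{\bm{c},k})\partial_{x_j} h = \rho_{\bm{c},k+1}\, \partial_{\bm{c}}(\bm{x}_j) h$ for any $h$, and $\partial_{x_j}\partial_{y_j}\rho_{\bm{c},k} = (\partial_{x_j}\ell)(\partial_{y_j}\ell)\,\rho_{\bm{c},k+2} = -\rho_{\bm{c},k+2}$ — the last equality being the only point where the shape of the quadratic form enters, as $Q_1(\bm{c}) = c_1 c_2$ forces $(\partial_{x_j}\ell)(\partial_{y_j}\ell) = c_1 c_2/(-Q_1(\bm{c})) = -1$; and the error-function ODE $E'' + 2\pi z E' = 0$, differentiated $k$ times, giving $\rho_{\bm{c},k+2} = -2\pi \ell\, \rho_{\bm{c},k+1} - 2\pi k\, \rho_{\bm{c},k}$. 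Putting these together I expect
\[
	\calD^{(j)}\bigl(\rho_{\bm{c},k}\, \partial_{\bm{c}}^k g\bigr) = \rho_{\bm{c},k}\, \partial_{\bm{c}}^k(\calD^{(j)} g) \;-\; 2k\, \rho_{\bm{c},k}\, \partial_{\bm{c}}^k g \;-\; \tfrac{1}{2\pi}\, \rho_{\bm{c},k+1}\, \partial_{\bm{c}}^{k+1} g ;
\]
multiplying by $(-1)^k/((4\pi)^k k!)$ and summing over $k$, the first term reassembles into $\cT_{\bm{c}}^{(j)}(\calD^{(j)} g)$, and the last two families cancel against each other after the shift $k \mapsto k+1$.

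Granting the single-plane identity, the rest is formal: $\calD^{(j)}$ commutes with every $\cT_{\bm{c}_i}^{(i)}$ — by disjoint variables when $i \ne j$, by the identity when $i = j$ — so it passes through the whole product, giving
\[
	\calD_m\Bigl( \textstyle\prod_{j} \cT_{\bm{c}_j}^{(j)} \Bigr) f \;=\; \sum_{j} \Bigl( \textstyle\prod_{i} \cT_{\bm{c}_i}^{(i)} \Bigr) \calD^{(j)} f \;=\; \Bigl( \textstyle\prod_{i} \cT_{\bm{c}_i}^{(i)} \Bigr) \calD_m f \;=\; \Bigl( \textstyle\prod_{i} \cT_{\bm{c}_i}^{(i)} \Bigr)(d f),
\]
since $f$ is homogeneous of degree $d$ and $\Delta_m$-harmonic, so $\calD_m f = \calE f - \tfrac{1}{4\pi}\Delta_m f = d f$. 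Summing over $\vec{\sigma}$ with the signs $(-1)^{|\vec{\sigma}|}$ then yields $\calD_m\, p^{\vec{\bm{c}}_0, \vec{\bm{c}}_1}[f] = d\, p^{\vec{\bm{c}}_0, \vec{\bm{c}}_1}[f]$. The main obstacle is the single-plane identity: one must track with care the two correction families generated by $\calE$ and by $\Delta_m$ and verify that, once the error-function recursion and $(\partial_{x_j}\ell)(\partial_{y_j}\ell) = -1$ are used, nothing survives beyond $\cT_{\bm{c}}^{(j)} \circ \calD^{(j)}$. I would also emphasize that one cannot shortcut this by reducing directly to one pair, because $f$ is $\Delta_m$-harmonic in all $2m$ variables but generally neither homogeneous nor harmonic in the individual pairs; casting the single-plane statement as a commutator identity avoids this, since it holds for an arbitrary polynomial $g$. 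This single-plane identity is the one-hyperbolic-plane analogue of the computations in Roehrig--Zwegers~\cite{RoehrigZwegers2022, RoehrigZwegers2022-pre} and in~\cite{MatsusakaSuzuki2025-pre}.
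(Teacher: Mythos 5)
Your proposal is correct, and it proves the statement by the same underlying computation as the paper, repackaged in a cleaner way. The paper's (sketched) proof applies the product rule for $\calD_m$ to each summand $F_{\vec{\sigma},\vec{k}}G_{\vec{\sigma},\vec{k}}$, shows $\calD_m F_{\vec{\sigma},\vec{k}} = -kF_{\vec{\sigma},\vec{k}}$ and $\calD_m G_{\vec{\sigma},\vec{k}} = (d-k)G_{\vec{\sigma},\vec{k}}$, identifies the cross term as $-\tfrac{1}{2\pi}\sum_j (k_j+1)F_{\vec{\sigma},\vec{k}+\vec{1}_j}G_{\vec{\sigma},\vec{k}+\vec{1}_j}$, and lets the sum over $k$ telescope; your single-plane identity $\calD^{(j)}(\rho_{\bm{c},k}\,\partial_{\bm{c}}^k g) = \rho_{\bm{c},k}\,\partial_{\bm{c}}^k(\calD^{(j)}g) - 2k\,\rho_{\bm{c},k}\,\partial_{\bm{c}}^k g - \tfrac{1}{2\pi}\rho_{\bm{c},k+1}\,\partial_{\bm{c}}^{k+1}g$ is exactly the plane-by-plane version of these three facts, and the cancellation after the shift $k\mapsto k+1$ is the same telescoping. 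The ingredients are identical in both arguments: the product rule for Vign\'{e}ras' operator, the relations $\calE^{(j)}\rho_{\bm{c},k} = \ell\,\rho_{\bm{c},k+1}$ and $\partial_{x_j}\partial_{y_j}\rho_{\bm{c},k} = -\rho_{\bm{c},k+2}$ (where $(\partial_{x_j}\ell)(\partial_{y_j}\ell) = -1$ is the only place the off-diagonal $A$ enters, precisely the "modification" the paper highlights), and the ODE $E''+2\pi zE'=0$. What your packaging buys is self-containedness and a genuinely reusable statement — the commutation $\calD^{(j)}\circ\cT_{\bm{c}}^{(j)} = \cT_{\bm{c}}^{(j)}\circ\calD^{(j)}$ holds for an arbitrary polynomial $g$, which, as you correctly stress, is what lets you avoid the trap of pretending $f$ is homogeneous or harmonic in each pair separately — whereas the paper's version defers most details to \cite{MatsusakaSuzuki2025-pre} and only records the changes. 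Your remark that all sums are finite on polynomials disposes of the only convergence issue, so I see no gap.
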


When $A = \diag(1, \dots, 1, -1, \dots, -1)$,  this was proved in \cite[Proposition 5.4]{MatsusakaSuzuki2025-pre}.
Here we sketch the proof focusing on necessary modifications for current $A$.

\begin{proof}
For each $\vec{k}$ with $|\vec{k}| = k$, a direct calculation yields
\begin{align*}
	\calD_m \left(F_{\vec{\sigma}, \vec{k}}(\vec{\bm{x}}) G_{\vec{\sigma}, \vec{k}}(\vec{\bm{x}}) \right) &= \calD_m F_{\vec{\sigma}, \vec{k}}(\vec{\bm{x}}) \cdot G_{\vec{\sigma}, \vec{k}}(\vec{\bm{x}}) + F_{\vec{\sigma}, \vec{k}}(\vec{\bm{x}}) \cdot \calD_m G_{\vec{\sigma}, \vec{k}}(\vec{\bm{x}})\\
		&\quad -\frac{1}{2\pi} \sum_{j=1}^m \left( \frac{\partial}{\partial x_j} F_{\vec{\sigma}, \vec{k}}(\vec{\bm{x}}) \cdot \frac{\partial}{\partial y_j} G_{\vec{\sigma}, \vec{k}}(\vec{\bm{x}}) + \frac{\partial}{\partial y_j} F_{\vec{\sigma}, \vec{k}}(\vec{\bm{x}}) \cdot \frac{\partial}{\partial x_j} G_{\vec{\sigma}, \vec{k}}(\vec{\bm{x}}) \right).
\end{align*}
The treatment of the first two terms is exactly the same as in the proof of \cite[Proposition 5.4]{MatsusakaSuzuki2025-pre}, and we obtain
\[
	\calD_m F_{\vec{\sigma}, \vec{k}}(\vec{\bm{x}}) = - k F_{\vec{\sigma}, \vec{k}}(\vec{\bm{x}}), \qquad \calD_m G_{\vec{\sigma}, \vec{k}}(\vec{\bm{x}}) = (d - k) G_{\vec{\sigma}, \vec{k}}(\vec{\bm{x}}).
\]
The sum in the third term changes its form due to the replacement of $A$, but the conclusion remains unchanged, and we still obtain
\begin{align*}
	-\frac{1}{2\pi} \sum_{j=1}^m (k_j+1) F_{\vec{\sigma}, \vec{k}+\vec{1}_j}(\vec{\bm{x}}) G_{\vec{\sigma}, \vec{k} + \vec{1}_j}(\vec{\bm{x}}),
\end{align*}
where $\vec{1}_j \in \Z^m$ be the vector whose $j$-th component is $1$ and all other components are $0$. The remaining computations proceed in the same way.
\end{proof}

Thus, Vign\'{e}ras' criterion (\cref{fact:Vig}) implies the following result, providing non-holomorphic modular forms.

\begin{theorem}\label{thm:indefinite-theta}
	Let $\vec{\bm{c}}_0, \vec{\bm{c}}_1 \in \cC^m$, and let $f(\vec{\bm{x}})$ be a spherical polynomial of degree $d$. For $\vec{\bm{a}}, \vec{\bm{b}} \in \R^{2m}$, the theta function $\theta_{\vec{\bm{a}}, \vec{\bm{b}}}^{\vec{\bm{c}}_0, \vec{\bm{c}}_1}[f] (\tau)$ defined in \cref{def:indef-theta} converges absolutely and satisfies
	\[
		\theta_{\vec{\bm{a}}, \vec{\bm{b}}}^{\vec{\bm{c}}_0, \vec{\bm{c}}_1}[f] \left(-\frac{1}{\tau}\right) = (-\tau)^{m+d} e^{2\pi iB_m(\vec{\bm{a}}, \vec{\bm{b}})} \theta_{-\vec{\bm{b}}, \vec{\bm{a}}}^{\vec{\bm{c}}_0, \vec{\bm{c}}_1}[f] (\tau).
	\]
\end{theorem}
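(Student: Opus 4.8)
The plan is to obtain the transformation formula as a direct specialization of Vignéras' criterion (\cref{fact:Vig}) to the data $n = 2m$, $L = \Z^{2m}$, $A = \smat{0 & \1_m \\ \1_m & 0}$, and the function $p = p^{\vec{\bm{c}}_0, \vec{\bm{c}}_1}[f]$. First I would observe that both hypotheses of \cref{fact:Vig} are already available: \cref{prop:degm-Vig1} gives that $p(\vec{\bm{x}})\, e^{-2\pi Q_m(\vec{\bm{x}})}$ is a Schwartz function (this uses $\vec{\bm{c}}_0, \vec{\bm{c}}_1 \in \cC^m$), and \cref{prop:degm-Vig2} gives that $\calD_m p = d\, p$, where the eigenvalue $\lambda = d$ is a non-negative integer since $f$ is homogeneous of degree $d$. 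Absolute convergence of the series defining $\theta_{\vec{\bm{a}}, \vec{\bm{b}}}^{\vec{\bm{c}}_0, \vec{\bm{c}}_1}[f](\tau)$ then follows at once: up to the factor $v^{-d/2}$, the general term has absolute value $\bigl|[\,p\, e^{-2\pi Q_m}](\vec{\bm{x}}\sqrt{v})\bigr|$ (using $Q_m(\vec{\bm{x}}\sqrt{v}) = v\,Q_m(\vec{\bm{x}})$), which decays rapidly along the lattice $\vec{\bm{a}} + \Z^{2m}$.

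Next I would compute the numerical ingredients of the conclusion of \cref{fact:Vig}. The matrix $A$ has signature $(m,m)$, so $s = m$; interchanging the two diagonal blocks shows $\det A = (-1)^m$, hence $|\det A| = 1$, and $\vol(\R^{2m}/\Z^{2m}) = 1$. Since $A$ is integral with $\det A = \pm 1$, the lattice $\Z^{2m}$ is self-dual, i.e.\ $L' = L$, so the sum over $\vec{\bm{\nu}} \in L'/L$ in \cref{fact:Vig} collapses to the single term $\vec{\bm{\nu}} = \vec{\bm{0}}$. Moreover, because $\vec{\bm{c}}_0, \vec{\bm{c}}_1 \in \cC^m$ forces $R(\bm{c}_j^{(\sigma)}) = \R^2$ for all $j$ and $\sigma \in \{0,1\}$, the theta function on the right-hand side is defined for the argument $-\vec{\bm{b}}$ without any extra constraint. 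Substituting, \cref{fact:Vig} yields
\[
	\theta_{\vec{\bm{a}}, \vec{\bm{b}}}^{\vec{\bm{c}}_0, \vec{\bm{c}}_1}[f]\!\left(-\frac{1}{\tau}\right) = i^{-m-d}(-i\tau)^{d+m}\, e^{2\pi i B_m(\vec{\bm{a}}, \vec{\bm{b}})}\, \theta_{-\vec{\bm{b}}, \vec{\bm{a}}}^{\vec{\bm{c}}_0, \vec{\bm{c}}_1}[f](\tau).
\]
Finally, since $d+m$ is an integer, $(-i\tau)^{d+m} = (-i)^{d+m}\tau^{d+m}$, and $i^{-m-d}(-i)^{d+m} = \bigl(i^{-1}(-i)\bigr)^{m+d} = (-1)^{m+d}$, so the scalar equals $(-\tau)^{m+d}$, which is the asserted formula.

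I do not expect any genuine obstacle here: the substantive content of the theorem lies entirely in \cref{prop:degm-Vig1} and \cref{prop:degm-Vig2}, and the present statement is simply \cref{fact:Vig} with all constants evaluated. The only points requiring a little care are the self-duality of $\Z^{2m}$ (so that the $L'/L$ sum has one term), the sign/power-of-$i$ bookkeeping in passing from $i^{-m-d}(-i\tau)^{d+m}$ to $(-\tau)^{m+d}$, and the remark that the hypothesis $\vec{\bm{c}}_0, \vec{\bm{c}}_1 \in \cC^m$ both supplies the Schwartz property and guarantees that the right-hand theta function with shifted characteristics is well-defined.
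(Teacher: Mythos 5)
Your proposal is correct and matches the paper's (implicit) argument exactly: the paper likewise derives \cref{thm:indefinite-theta} by feeding \cref{prop:degm-Vig1} and \cref{prop:degm-Vig2} into Vign\'eras' criterion with $L=\Z^{2m}$, $A=\smat{0 & \1_m \\ \1_m & 0}$, $s=m$, $\lambda=d$, noting $|\det A|=1$ and $L'=L$ so the sum over $L'/L$ collapses. Your bookkeeping $i^{-m-d}(-i\tau)^{m+d}=(-\tau)^{m+d}$ is also right.
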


To be complete, we also derive the transformation formula under the shift $\tau \mapsto \tau +1$.

\begin{theorem}\label{thm:shift+1}
	Under the same conditions as in \cref{thm:indefinite-theta}, we have
	\[
		\theta_{\vec{\bm{a}}, \vec{\bm{b}}}^{\vec{\bm{c}}_0, \vec{\bm{c}}_1}[f] (\tau+ 1) = e^{-2\pi i Q_m(\vec{\bm{a}})} \theta_{\vec{\bm{a}}, \vec{\bm{a}} + \vec{\bm{b}}}^{\vec{\bm{c}}_0, \vec{\bm{c}}_1}[f](\tau).
	\]
\end{theorem}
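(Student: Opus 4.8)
The plan is to track what happens to each ingredient of the defining series
\[
\theta_{\vec{\bm{a}}, \vec{\bm{b}}}^{\vec{\bm{c}}_0, \vec{\bm{c}}_1}[f] (\tau) = v^{-d/2} \sum_{\vec{\bm{x}} \in \vec{\bm{a}} + \Z^{2m}} p^{\vec{\bm{c}}_0, \vec{\bm{c}}_1}[f](\vec{\bm{x}} \sqrt{v})\, q^{Q_m(\vec{\bm{x}})}\, e^{2\pi i B_m(\vec{\bm{x}}, \vec{\bm{b}})}
\]
under $\tau \mapsto \tau+1$. Since $v = \Im(\tau)$ is invariant under this shift, the prefactor $v^{-d/2}$ and every argument $\vec{\bm{x}}\sqrt{v}$ are unchanged, so the only effect is that $q^{Q_m(\vec{\bm{x}})} = e^{2\pi i \tau Q_m(\vec{\bm{x}})}$ is multiplied by $e^{2\pi i Q_m(\vec{\bm{x}})}$. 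By \cref{thm:indefinite-theta} the series converges absolutely, so I may rearrange its terms freely once that extra exponential factor has been rewritten in a form that only involves a global constant and a character in $\vec{\bm{x}}$.

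First I would establish the elementary identity
\[
e^{2\pi i Q_m(\vec{\bm{x}})} = e^{-2\pi i Q_m(\vec{\bm{a}})}\, e^{2\pi i B_m(\vec{\bm{x}}, \vec{\bm{a}})} \qquad \text{for all } \vec{\bm{x}} \in \vec{\bm{a}} + \Z^{2m}.
\]
Writing $\vec{\bm{x}} = \vec{\bm{a}} + \vec{\bm{n}}$ with $\vec{\bm{n}} \in \Z^{2m}$ and expanding gives $Q_m(\vec{\bm{x}}) = Q_m(\vec{\bm{a}}) + B_m(\vec{\bm{a}}, \vec{\bm{n}}) + Q_m(\vec{\bm{n}})$. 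Here $Q_m(\vec{\bm{n}}) = \sum_{j=1}^m n_j n_{m+j} \in \Z$, because the matrix $A = \pmat{0 & \1_m \\ \1_m & 0}$ has zero diagonal (equivalently, $L = \Z^{2m}$ is an even lattice for $A$), so $e^{2\pi i Q_m(\vec{\bm{n}})} = 1$; and $B_m(\vec{\bm{a}}, \vec{\bm{n}}) = B_m(\vec{\bm{a}}, \vec{\bm{x}}) - B_m(\vec{\bm{a}}, \vec{\bm{a}}) = B_m(\vec{\bm{x}}, \vec{\bm{a}}) - 2 Q_m(\vec{\bm{a}})$ by bilinearity and symmetry of $B_m$. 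Substituting these two facts into the expansion of $Q_m(\vec{\bm{x}})$ and exponentiating yields the displayed identity.

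Then I would plug this identity into the transformed series and use bilinearity once more, in the form $e^{2\pi i B_m(\vec{\bm{x}}, \vec{\bm{a}})}\, e^{2\pi i B_m(\vec{\bm{x}}, \vec{\bm{b}})} = e^{2\pi i B_m(\vec{\bm{x}}, \vec{\bm{a}} + \vec{\bm{b}})}$, to pull the $\vec{\bm{x}}$-independent constant $e^{-2\pi i Q_m(\vec{\bm{a}})}$ out of the sum. What remains under the sum, term by term, is exactly the defining series of $\theta_{\vec{\bm{a}}, \vec{\bm{a}} + \vec{\bm{b}}}^{\vec{\bm{c}}_0, \vec{\bm{c}}_1}[f](\tau)$, which gives the asserted formula. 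This argument has no genuine obstacle: the only point requiring care is the integrality $Q_m(\Z^{2m}) \subseteq \Z$, which is immediate from the shape of $A$ and is precisely what makes the phase $e^{2\pi i Q_m(\vec{\bm{n}})}$ trivial; everything else is bookkeeping with the bilinear form, legitimate because the rearrangement is justified by the absolute convergence already established in \cref{thm:indefinite-theta}.
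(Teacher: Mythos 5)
Your proposal is correct and follows essentially the same route as the paper: shift $\tau\mapsto\tau+1$, note that only the factor $e^{2\pi i Q_m(\vec{\bm{x}})}$ appears, and rewrite it as $e^{-2\pi i Q_m(\vec{\bm{a}})}e^{2\pi i B_m(\vec{\bm{x}},\vec{\bm{a}})}$ using $Q_m(\vec{\bm{n}})\in\Z$ for $\vec{\bm{n}}\in\Z^{2m}$. The paper's proof is exactly this computation, so nothing further is needed.
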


\begin{proof}
	By definition, 
	\[
		\theta_{\vec{\bm{a}}, \vec{\bm{b}}}^{\vec{\bm{c}}_0, \vec{\bm{c}}_1}[f] (\tau+ 1) = v^{-d/2} \sum_{\vec{\bm{x}} \in \vec{\bm{a}} + \Z^{2m}} p^{\vec{\bm{c}}_0, \vec{\bm{c}}_1}[f](\vec{\bm{x}} \sqrt{v}) q^{Q_m(\vec{\bm{x}})} e^{2\pi i Q_m(\vec{\bm{x}})} e^{2\pi i B_m(\vec{\bm{x}}, \vec{\bm{b}})}.
	\]
	For $\vec{\bm{x}} = \vec{\bm{a}} + \vec{\bm{n}}$ with $\vec{\bm{n}} \in \Z^{2m}$, we have
	\begin{align*}
		e^{2\pi i Q_m(\vec{\bm{x}})} &= e^{2\pi i (Q_m(\vec{\bm{a}}) + Q_m(\vec{\bm{n}}) + B_m(\vec{\bm{a}} + \vec{\bm{n}}, \vec{\bm{a}}) - B_m(\vec{\bm{a}}, \vec{\bm{a}}))}\\
			&= e^{-2\pi i Q_m(\vec{\bm{a}})} e^{2\pi iB_m(\vec{\bm{x}}, \vec{\bm{a}})},
	\end{align*}
	where we use the fact that $Q_m(\vec{\bm{n}}) \in \Z$. 
\end{proof}

\subsection{The case $\vec{\bm{c}}_0, \vec{\bm{c}}_1 \in \overline{\cC}^m$}

Next, we consider the case where at least one of the components of $\vec{\bm{c}}_0, \vec{\bm{c}}_1$ lies in $\cS$. 
For later use, we define the \emph{complementary error function} $\beta(x)$ by
\[
	\beta(x) \coloneqq \int_x^\infty u^{-1/2} e^{-\pi u} \dd u.
\]
Then, we have
\begin{align}\label{eq:Error-beta}
	E(x) = \sgn(x) - \sgn(x) \beta(x^2),
\end{align}
and $\beta(x^2) \le e^{-\pi x^2}$ for all $x \in \R$, (see \cite[Lemma 1.7]{Zwegers2002}).

\begin{lemma}\label{lem:theta-abs-conv-S}
	Under the same conditions as in \cref{def:indef-theta}, the theta function $\theta_{\vec{\bm{a}}, \vec{\bm{b}}}^{\vec{\bm{c}}_0, \vec{\bm{c}}_1}[f](\tau)$ converges absolutely.
\end{lemma}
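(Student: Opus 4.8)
The strategy is to reduce the absolute convergence of $\theta_{\vec{\bm{a}}, \vec{\bm{b}}}^{\vec{\bm{c}}_0, \vec{\bm{c}}_1}[f](\tau)$ to a bound on the single-variable building blocks $\rho_{\bm{c}, k}$, and then to combine the resulting factorized estimate with a comparison to a genuinely convergent series. Since the lattice sum is over $\vec{\bm{x}} \in \vec{\bm{a}} + \Z^{2m}$ and $p^{\vec{\bm{c}}_0, \vec{\bm{c}}_1}[f]$ is a finite sum (over $\vec{\sigma} \in \{0,1\}^m$, over $k \le d$, and over $\vec{k}$ with $|\vec{k}| = k$) of products $\prod_{j=1}^m \rho_{\bm{c}_j^{(\sigma_j)}, k_j}(\bm{x}_j\sqrt{v}) \cdot (\text{polynomial in } \vec{\bm{x}})$, it suffices by the triangle inequality to bound a single such term. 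So the first step is to fix $\vec{\sigma}$ and $\vec{k}$, and observe that $|q^{Q_m(\vec{\bm{x}})}| = e^{-2\pi v Q_m(\vec{\bm{x}})}$ and $|e^{2\pi i B_m(\vec{\bm{x}}, \vec{\bm{b}})}| = 1$, so we must show $\sum_{\vec{\bm{x}} \in \vec{\bm{a}} + \Z^{2m}} \bigl|F_{\vec{\sigma}, \vec{k}}(\vec{\bm{x}}\sqrt{v})\bigr| \cdot \bigl|G_{\vec{\sigma}, \vec{k}}(\vec{\bm{x}})\bigr| \cdot e^{-2\pi v Q_m(\vec{\bm{x}})} < \infty$.

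The second step is the key single-variable estimate. For $\bm{c} \in \cC$, by \eqref{eq:Error-beta} and the bound $\beta(x^2) \le e^{-\pi x^2}$ one gets $|E(B_1(\bm{c}, \bm{x})/\sqrt{-Q_1(\bm{c})})| \le 1$ when $B_1(\bm{c},\bm{x})$ has the ``right'' sign, and an exponentially small bound $\le e^{-\pi B_1(\bm{c},\bm{x})^2/(-Q_1(\bm{c}))}$ otherwise; the derivatives $E^{(k)}$ for $k \ge 1$ are bounded by $C_k e^{-\pi B_1(\bm{c},\bm{x})^2/(-Q_1(\bm{c}))}$ outright (being $\pm$ a polynomial times a Gaussian). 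For $\bm{c} \in \cS$, $\rho_{\bm{c}, 0}(\bm{x}) = \sgn(B_1(\bm{c}, \bm{x}))$ is bounded by $1$, and $\rho_{\bm{c}, k} \equiv 0$ for $k > 0$. The crucial structural point — exactly as in the signature $(n-1,1)$ theory of Roehrig--Zwegers and in the authors' previous work --- is that in the product $\prod_{j=1}^m \rho_{\bm{c}_j^{(\sigma_j)}, k_j}(\bm{x}_j\sqrt v)$, the two choices $\sigma_j = 0,1$ enter $p^{\vec{\bm{c}}_0,\vec{\bm{c}}_1}[f]$ with opposite signs $(-1)^{|\vec\sigma|}$, so that the dangerous ``both-signs-right'' region, where nothing decays, is precisely where the $\sigma_j=0$ and $\sigma_j=1$ contributions are equal and cancel. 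Concretely, one should show that for each $j$ the combination $\rho_{\bm{c}_j^{(0)}, k_j}(\bm{x}_j\sqrt v) - \rho_{\bm{c}_j^{(1)}, k_j}(\bm{x}_j\sqrt v)$ (with $k_j=0$, the generic term; higher $k_j$ already decay) is bounded by a constant times $e^{-\pi \delta_j(\bm{x}_j\sqrt v)}$ where $\delta_j(\bm{x}_j)$ is positive definite on the relevant half-space — this is essentially \cite[Lemma 1.7]{Zwegers2002} applied componentwise, together with the fact that $\bm{c}_j^{(0)}, \bm{c}_j^{(1)} \in \overline{\cC}$ lie in the (closure of the) same component $\cC$.

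The third step assembles these: after the cancellation, $|p^{\vec{\bm{c}}_0, \vec{\bm{c}}_1}[f](\vec{\bm{x}}\sqrt v)| e^{-2\pi v Q_m(\vec{\bm{x}})}$ is bounded by $C (1 + |\vec{\bm{x}}|)^{2d} \prod_{j=1}^m e^{-\pi v (\text{positive definite form in } \bm{x}_j)} \cdot e^{-2\pi v Q_m(\vec{\bm{x}})}$; but $Q_m(\vec{\bm{x}}) = \sum_j x_j y_j = \sum_j Q_1(\bm{x}_j)$ and on the region where $\bm{x}_j$ sits on the $Q_1 < 0$ side the factor $e^{-2\pi v Q_1(\bm{x}_j)}$ is itself growing, so one needs the quadratic-decay estimate (from the $\cC$-part of $\rho$, via $\beta$) to dominate it --- this is the same mechanism by which Proposition~\ref{prop:degm-Vig1} gave the Schwartz property when $\vec{\bm{c}}_0, \vec{\bm{c}}_1 \in \cC^m$, and when a $\bm{c}_j$ lies in $\cS$ one uses instead that $\sgn(B_1(\bm{c}_j^{(0)}, \bm{x}_j)) - \sgn(B_1(\bm{c}_j^{(1)}, \bm{x}_j))$ is supported on a region on which $Q_1(\bm{x}_j) \ge 0$, so there is no exponential growth to cancel in that variable. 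Summing the resulting Gaussian-times-polynomial bound over the lattice $\vec{\bm{a}} + \Z^{2m}$ converges by comparison with a product of one-dimensional Gaussian sums.

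\textbf{Main obstacle.} The genuine difficulty is the mixed case where some components of $\vec{\bm{c}}_0, \vec{\bm{c}}_1$ lie in $\cC$ and others in $\cS$, since then the per-variable decay has two different flavors (Gaussian in the $\cC$-directions, mere boundedness-on-a-half-space in the $\cS$-directions) and one must check carefully that the half-space on which the $\cS$-factor $\sgn(B_1(\bm{c}_j^{(0)},\cdot)) - \sgn(B_1(\bm{c}_j^{(1)},\cdot))$ is nonzero is exactly one on which $-2\pi v Q_1(\bm{x}_j)$ does not grow (equivalently $Q_1(\bm{x}_j) \ge 0$ there) — this uses the defining condition $x < y$ in $\cS$ and the choice $y > 0$ in $\cC$, i.e. that both cusps bound the \emph{same} cone. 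Verifying this compatibility, and confirming that the polynomial factor $|G_{\vec{\sigma},\vec{k}}(\vec{\bm{x}})|$ (which grows like $|\vec{\bm{x}}|^{d-k}$) is harmless against the remaining Gaussian decay, is where the real work lies; everything else is bookkeeping already present in \cite{MatsusakaSuzuki2025-pre} and \cite{RoehrigZwegers2022, RoehrigZwegers2022-pre}.
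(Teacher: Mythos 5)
Your plan follows essentially the same route as the paper's proof: the same factorization of the summand into per-variable pieces, the same treatment of the $E^{(k)}$ factors as polynomials times Gaussians with positive-definite exponent (Zwegers' Lemma 2.5), the same splitting $E = \sgn - \sgn\cdot\beta$ with $\beta(x^2)\le e^{-\pi x^2}$, and the same reduction of the remaining sign-difference sums to the Roehrig--Zwegers convergence lemma. One caveat: your first step, which claims that "by the triangle inequality it suffices to bound a single term" with $\vec{\sigma}$ fixed, is not a valid reduction — when $\bm{c}_j^{(\sigma_j)}\in\cS$ and $k_j=0$ the individual term $\sgn(B_1(\bm{c}_j^{(\sigma_j)},\bm{x}_j))\,e^{-2\pi vQ_1(\bm{x}_j)}$ is not absolutely summable on its own, and the paper's decomposition \eqref{eq:theta-summand-decomp} is arranged precisely so that for indices $j$ with $k_j=0$ the two values of $\sigma_j$ are kept together as the difference $\rho_{\bm{c}_j^{(0)},0}-\rho_{\bm{c}_j^{(1)},0}$ before any absolute values are taken. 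Since your second and third steps explicitly work with this differenced combination (and correctly identify that its support lies in $\{Q_1\ge 0\}$ because both cusps bound the same cone), the argument you actually carry out is the paper's; just delete the overstated reduction in step one.
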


\begin{proof}
	For each $\vec{k} \in \Z_{\ge 0}^m$, we define $I(\vec{k}) \coloneqq \{1 \le j \le m \mid k_j = 0\}$ and $I(\vec{k})^c = \{1, 2, \dots, m\} \setminus I(\vec{k})$. Then, one can verify that the summand of the theta function can be expressed as
	\begin{align}\label{eq:theta-summand-decomp}
		&v^{-d/2} p^{\vec{\bm{c}}_0, \vec{\bm{c}}_1}[f](\vec{\bm{x}} \sqrt{v}) q^{Q_m(\vec{\bm{x}})} e^{2\pi i B_m(\vec{\bm{x}}, \vec{\bm{b}})} \nonumber\\
		&= v^{-d/2} \sum_{k=0}^d \frac{(-1)^k}{(4\pi)^k} \sum_{\substack{\vec{k} \in \Z_{\ge 0}^m \\ |\vec{k}| = k}} \left(\sum_{\vec{\sigma} \in \{0,1\}^m} (-1)^{|\vec{\sigma}|} F_{\vec{\sigma}, \vec{k}}(\vec{\bm{x}} \sqrt{v}) G_{\vec{\sigma}, \vec{k}}(\vec{\bm{x}} \sqrt{v}) \right) q^{Q_m(\vec{\bm{x}})} e^{2\pi i B_m(\vec{\bm{x}}, \vec{\bm{b}})} \nonumber\\
		&= v^{-d/2} \sum_{k=0}^d \frac{(-1)^k}{(4\pi)^k} \sum_{\substack{\vec{k} \in \Z_{\ge 0}^m \\ |\vec{k}| = k}} \left(\sum_{\vec{\sigma} \in \{0,1\}^{I(\vec{k})^c}} (-1)^{|\vec{\sigma}|} \prod_{j=0}^m H_j^{\vec{\sigma}} (\vec{\bm{x}} \sqrt{v}) \right) q^{Q_m(\vec{\bm{x}})} e^{2\pi iB_m(\vec{\bm{x}}, \vec{\bm{b}})},
	\end{align}
	where
	\begin{align}\label{eq:def-Hx}
		H_j^{\vec{\sigma}} (\vec{\bm{x}}) \coloneqq \begin{cases}
			\displaystyle{\bigg(\prod_{l \in I(\vec{k})^c} \partial_{\bm{c}_l^{(\sigma_l)}}(\bm{x}_l)^{k_l} \bigg) f(\vec{\bm{x}})} &\text{if } j = 0,\\ \vspace{2mm}
			\displaystyle{\rho_{\bm{c}_j^{(0)},0}(\bm{x}_j) - \rho_{\bm{c}_j^{(1)},0}(\bm{x}_j)} &\text{if } j \in I(\vec{k}),\\
			\displaystyle{\frac{1}{k_j!} \rho_{\bm{c}_j^{(\sigma_j)}, k_j}(\bm{x}_j)} &\text{if } j \in I(\vec{k})^c.
		\end{cases}
	\end{align}
	The problem then reduces to showing that, for each $\vec{k}$, $\vec{\sigma}$, and $1 \le j \le m$, the series
	\begin{align}\label{eq:abs-conv-proof}
		\sum_{\bm{x}_j \in \bm{a}_j + \Z^2} P(\bm{x}_j) H_j^{\vec{\sigma}}(\vec{\bm{x}} \sqrt{v}) e^{-2\pi Q_1(\bm{x}_j)v}
	\end{align}
	converges absolutely for any polynomial $P(\bm{x}_j)$. In the discussion below, we fix $\vec{\sigma}$, and, to simplify notation, write $\bm{c}_j^{(\sigma_j)}$ as $\bm{c}_j$ when $j \in I(\vec{k})^c$.
	
	For $j \in I(\vec{k})^c$, 
	if $\bm{c}_j \in \cS$,  we have $\rho_{\bm{c}_j, k_j}(\bm{x}_j) = 0$ since $k_j > 0$. 
	Hence it suffices to consider the case $\bm{c}_j \in \cC$. By definition, we have
	\[
		H_j^{\vec{\sigma}}(\vec{\bm{x}})e^{-2\pi Q_1(\bm{x}_j)} = \frac{1}{k_j!} E^{(k_j)} \left(\frac{B_1(\bm{c}_j, \bm{x}_j)}{\sqrt{-Q_1(\bm{c}_j)}} \right)e^{-2\pi Q_1(\bm{x}_j)} = P_1(\bm{x}_j) e^{-2\pi \left(Q_1(\bm{x}_j) - \frac{B_1(\bm{c}_j, \bm{x}_j)^2}{2Q_1(\bm{c}_j)}\right)}
	\]
	for some polynomial $P_1(\bm{x}_j)$.
	By \cite[Lemma 2.5]{Zwegers2002}, the quadratic form $Q_1(\bm{x}) - B_1(\bm{c}, \bm{x})^2/2Q_1(\bm{c})$ is positive definite for $\bm{c} \in \cC$. It follows that the series in \eqref{eq:abs-conv-proof} converges absolutely.
	
	For $j \in I(\vec{k})$, if $\bm{c}_j \in \cC$ ($\bm{c}_j \in \{\bm{c}_j^{(0)}, \bm{c}_j^{(1)}\}$), we write the error function in $\rho_{\bm{c}_j, 0}(\bm{x}_j)$ as in \eqref{eq:Error-beta}, decomposing it into the sign function and a term involving $\beta(x^2)$. Since $\beta(x^2) \le e^{-\pi x^2}$, the terms containing $\beta(x^2)$ form Schwartz functions in $\bm{x}_j$, for the same reason as above. 
Thus, whether $\bm{c}_j \in \cC$ or $\in \cS$, the problem reduces to proving the absolute convergence of the sum
	\begin{align*}
		\sum_{\bm{x}_j \in \bm{a}_j + \Z^2} \bigg(\sgn(B_1(\bm{c}_j^{(0)}, \bm{x}_j)) - \sgn(B_1(\bm{c}_j^{(1)}, \bm{x}_j)) \bigg) P_2(\bm{x}_j) e^{-2\pi Q_1(\bm{x}_j) v}
	\end{align*}
	for some polynomial $P_2(\bm{x}_j)$. 
The absolute convergence of this sum for $\bm{a}_j \in R(\bm{c}_j^{(0)}) \cap R(\bm{c}_j^{(1)})$ was proven by Roehrig--Zwegers~\cite[Lemma 3.1]{RoehrigZwegers2022-pre}, which completes the proof.
\end{proof}

Next, we show that the modular transformation law of the theta functions in the general case $\vec{\bm{c}}_0, \vec{\bm{c}}_1 \in \overline{\cC}^m$ can be obtained as a limit of the case of $\vec{\bm{c}}_0, \vec{\bm{c}}_1 \in \cC^m$.
For each $\bm{c} \in \cS$, we choose any $\bm{c}' \in \cC$ and define $\bm{c}(t) \coloneqq \bm{c} + t \bm{c}'$ with $t \ge 0$. For $t > 0$, since
    \[
	Q_1(\bm{c}(t)) = B_1(\bm{c}, t \bm{c}') + Q_1(\bm{c}) + Q_1(t \bm{c}') = t B_1(\bm{c}, \bm{c}') + t^2 Q_1(\bm{c}') < 0,
    \]
we have $\bm{c}(t) \in \cC$. For any $\vec{\bm{c}} \in \overline{\cC}^m$, we define $\vec{\bm{c}}(t) \in \cC^m$ by replacing each component $\bm{c}_j \in \cS$ with $\bm{c}_j(t) \in \cC$ as above. If $\bm{c}_j \in \cC$, we simply set $\bm{c}_j(t) = \bm{c}_j$ withoug modification. When defining $\bm{c}_j(t)$, the choice of $\bm{c}'$ can either be the same for all $\bm{c}_j \in \cS$ or vary depending on each $\bm{c}_j$. 

\begin{theorem}\label{thm:indef-theta-S}
	Under the same conditions as in \cref{def:indef-theta}, we have
	\[
		\lim_{t \to 0} \theta_{\vec{\bm{a}}, \vec{\bm{b}}}^{\vec{\bm{c}}_0(t), \vec{\bm{c}}_1(t)}[f](\tau) = \theta_{\vec{\bm{a}}, \vec{\bm{b}}}^{\vec{\bm{c}}_0, \vec{\bm{c}}_1}[f](\tau).
	\]
\end{theorem}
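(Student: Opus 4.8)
The plan is to reduce the statement to a term-by-term limit inside the absolutely convergent theta series and then appeal to dominated convergence. Fix $\tau = u+iv$ with $v>0$. Using the decomposition of the summand given in \eqref{eq:theta-summand-decomp}--\eqref{eq:def-Hx}, it suffices to show that for each fixed $\vec{k}, \vec{\sigma}$ and each coordinate $j$, the factor $H_j^{\vec{\sigma}}$ built from $\vec{\bm{c}}_0(t), \vec{\bm{c}}_1(t)$ converges pointwise to the one built from $\vec{\bm{c}}_0, \vec{\bm{c}}_1$ as $t \to 0$, and that the whole summand is dominated, uniformly for $t \in [0,1]$, by a summable function of $\vec{\bm{x}} \in \vec{\bm{a}} + \Z^{2m}$. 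The pointwise convergence is elementary: if $\bm{c}_j \in \cC$ nothing moves; if $\bm{c}_j \in \cS$ and $k_j > 0$ then $\rho_{\bm{c}_j(t), k_j}$ need not vanish for $t>0$, but since $-Q_1(\bm{c}_j(t)) = t(-B_1(\bm{c}_j,\bm{c}_j') - tQ_1(\bm{c}_j')) \to 0$, the argument $B_1(\bm{c}_j(t),\bm{x}_j)/\sqrt{-Q_1(\bm{c}_j(t))}$ of $E^{(k_j)}$ tends to $\pm\infty$ (when $B_1(\bm{c}_j,\bm{x}_j) \ne 0$, which holds on $\bm{a}_j + \Z^2$ since $\bm{a}_j \in R(\bm{c}_j)$) so $E^{(k_j)}$ of it tends to $0$; if $k_j = 0$, then $E(B_1(\bm{c}_j(t),\bm{x}_j)/\sqrt{-Q_1(\bm{c}_j(t))}) \to \sgn(B_1(\bm{c}_j,\bm{x}_j)) = \rho_{\bm{c}_j,0}(\bm{x}_j)$ by the asymptotics of the error function. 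The polynomial factors $\partial_{\bm{c}}(\bm{x})$ and $f$ also depend continuously on $\bm{c}$ away from $Q_1(\bm{c})=0$, but for the $j\in I(\vec{k})^c$ indices with $\bm{c}_j \in \cS$ the corresponding $\rho$-factor is already $0$ for small $t$ in the limit, so no issue arises there.

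The heart of the proof is the uniform domination. I would handle it exactly as in the proof of \cref{lem:theta-abs-conv-S}: split each error function via \eqref{eq:Error-beta} into a $\sgn$ part plus a $\beta$ part with $\beta(x^2) \le e^{-\pi x^2}$. For the $\beta$ parts one gets, after collecting with $e^{-2\pi Q_m(\vec{\bm{x}})v}$, a Gaussian in $\vec{\bm{x}}$ controlled by a positive-definite quadratic form of the type $Q_1(\bm{x}) - B_1(\bm{c}(t),\bm{x})^2/(2Q_1(\bm{c}(t)))$; one needs this to be positive definite \emph{uniformly} in $t \in (0,1]$, with a coefficient matrix bounded below. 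This is where the bounded Gaussian domination comes from, and it requires checking that the positive-definiteness in \cite[Lemma 2.5]{Zwegers2002} is uniform along the path $\bm{c}(t)$ as $t \to 0^+$ — I would verify this by an explicit computation of that quadratic form for $\bm{c} = \smat{c_1\\c_2}$ and noting the relevant eigenvalue bound is continuous and positive on a neighborhood of $t=0$ (after factoring out the scalar blowup, the normalized form extends continuously to $t=0$ where it is the degenerate form whose null direction is exactly $\bm{c}_j$, but the transverse eigenvalue stays bounded below). For the $\sgn$ parts, after expanding the product over $j$, each term is a sum of products $\prod_j (\sgn(B_1(\bm{c}_j^{(0)}(t),\bm{x}_j)) - \sgn(B_1(\bm{c}_j^{(1)}(t),\bm{x}_j))) \cdot (\text{poly}) \cdot e^{-2\pi Q_m(\vec{\bm{x}})v}$, and I would dominate $|\sgn - \sgn|$ by the characteristic function of the set where $\bm{x}_j$ lies in the cone between the two rays; the summability of $\sum_{\bm{x}_j} \mathbf{1}_{\text{cone}}(\bm{x}_j)|P(\bm{x}_j)| e^{-2\pi Q_1(\bm{x}_j)v}$, \emph{uniformly} over small $t$, follows from \cite[Lemma 3.1]{RoehrigZwegers2022-pre} since the cone varies continuously and one may enlarge all of them to a single fixed cone containing the limiting configuration.

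I would organize the write-up as: (i) fix $\tau$, reduce via \eqref{eq:theta-summand-decomp} to the per-$(\vec{k},\vec{\sigma})$ pieces; (ii) establish pointwise convergence of each summand using the $E^{(k)}$ asymptotics and the hypothesis $\bm{a}_j \in R(\bm{c}_j^{(0)}) \cap R(\bm{c}_j^{(1)})$ to rule out the boundary case $B_1 = 0$; (iii) produce the uniform dominating function by the $\sgn/\beta$ split, citing the uniform-in-$t$ versions of \cite[Lemma 2.5]{Zwegers2002} and \cite[Lemma 3.1]{RoehrigZwegers2022-pre}; (iv) apply dominated convergence to interchange $\lim_{t\to 0}$ with the sum over $\vec{\bm{x}} \in \vec{\bm{a}} + \Z^{2m}$, and recognize the limit as $\theta_{\vec{\bm{a}},\vec{\bm{b}}}^{\vec{\bm{c}}_0,\vec{\bm{c}}_1}[f](\tau)$ by comparing with the explicit formula in \cref{def:indef-theta}. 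The main obstacle is step (iii): making the domination genuinely uniform in $t$ near $0$, since the relevant quadratic forms degenerate in the limit — the key point to get right is that after removing the scalar blow-up $-Q_1(\bm{c}_j(t)) \asymp t$, the normalized Gaussian exponents and the sign-cones extend continuously to $t=0$, so a single $t$-independent dominating function (a fixed Gaussian times the indicator of a fixed enlarged cone) works on all of $[0,1]$.
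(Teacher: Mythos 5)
Your proposal is sound in outline and rests on the same analytic engine as the paper's argument (uniform-in-$t$ majorants plus interchange of limit and sum), but it is organized quite differently. The paper does not set up a single dominated-convergence argument on the $2m$-dimensional lattice: it telescopes the difference $\prod_j H_j^{\vec\sigma}(\cdot,t)-\prod_j H_j^{\vec\sigma}(\cdot,0)$ so that only one factor changes at a time, expands $f$ into monomials, and then uses the product structure of $Q_m(\vec{\bm x})=\sum_j x_jy_j$ to factor each resulting $2m$-dimensional sum into a product of $m$ two-dimensional sums. This reduction lets the authors quote the uniform majorants and the limits $t\to0$ coordinate-by-coordinate directly from Roehrig--Zwegers' proof of their Theorem~2.4, rather than re-deriving them; your route would require you to actually construct the global dominating function, which is the step you yourself flag as the obstacle. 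Two points in your sketch need care if you carry it out. First, for the $\sgn$-parts you cannot enlarge the supports to an arbitrary fixed cone: the sum $\sum_{\bm x_j}|P(\bm x_j)|e^{-2\pi Q_1(\bm x_j)v}$ over a cone only converges when that cone is of the form $\{\sgn B_1(\bm c,\cdot)\ne\sgn B_1(\bm c',\cdot)\}$ with $\bm c,\bm c'\in\overline{\cC}$ (so that $Q_1\ge0$ there, up to the lattice avoiding the null lines via the $R(\bm c)$ hypothesis); since all the $\bm c_j^{(i)}(t)$ lie in $\overline{\cC}$, the union of the supports over $t\in[0,1]$ is again of this admissible form, but this is the statement you must verify rather than a generic ``enlargement.'' Second, in the pointwise limit for $j\in I(\vec k)^c$ with $\bm c_j\in\cS$, the factor $\partial_{\bm c_j(t)}(\bm x_j)^{k_j}$ carries a coefficient $(-Q_1(\bm c_j(t)))^{-k_j/2}\asymp t^{-k_j/2}$ that blows up; you need to pair this explicitly against the Gaussian decay of $E^{(k_j)}$ at an argument of size $t^{-1/2}$ to conclude the product tends to $0$ (and is uniformly dominated), rather than dismissing it because the $t=0$ value of $\rho$ is defined to be zero. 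With these two points made precise, your dominated-convergence argument gives a valid, somewhat more self-contained alternative to the paper's telescoping-and-factorization proof.
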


\begin{proof}
	Let $H_j^{\vec{\sigma}}(\vec{\bm{x}}, t)$ denote the function defined by the same formula as in \eqref{eq:def-Hx}, but with the vectors $\vec{\bm{c}}_0(t)$ and $\vec{\bm{c}}_1(t)$ in place of $\vec{\bm{c}}_0$ and $\vec{\bm{c}}_1$. We then express the summand of the theta function on both sides in the form of~\eqref{eq:theta-summand-decomp}.
	A direct calculation shows that
	\begin{align*}
		&\prod_{j=0}^{m} H_j^{\vec{\sigma}} (\vec{\bm{x}}, t) - \prod_{j=0}^{m} H_j^{\vec{\sigma}}(\vec{\bm{x}}, 0)\\
		&= \sum_{\ell=0}^{m} \bigg(\prod_{0 \le j < \ell} H_j^{\vec{\sigma}}(\vec{\bm{x}}, 0) \cdot (H_\ell^{\vec{\sigma}}(\vec{\bm{x}}, t) - H_\ell^{\vec{\sigma}}(\vec{\bm{x}}, 0)) \cdot \prod_{\ell < j \le m} H_j^{\vec{\sigma}}(\vec{\bm{x}}, t) \bigg),
	\end{align*}
	so it suffices to show that, for each fixed $\vec{k}$, $\vec{\sigma}$, and $0 \le \ell \le m$, the following expression converges to $0$:
	\begin{align}\label{eq:contribution}
		\sum_{\vec{\bm{x}} \in \vec{\bm{a}} + \Z^{2m}} \left(\prod_{0 \le j < \ell} H_j^{\vec{\sigma}}(\vec{\bm{x}} \sqrt{v}, 0) \cdot (H_\ell^{\vec{\sigma}}(\vec{\bm{x}} \sqrt{v}, t) - H_\ell^{\vec{\sigma}}(\vec{\bm{x}} \sqrt{v}, 0)) \cdot \prod_{\ell < j \le m} H_j^{\vec{\sigma}}(\vec{\bm{x}} \sqrt{v}, t)\right) q^{Q_m(\vec{\bm{x}})} e^{2\pi i B_m(\vec{\bm{x}}, \vec{\bm{b}})}.
	\end{align}
	Throughout the following discussion, we fix $\vec{\sigma}$, and, for notational convenience, we simply write $\bm{c}_j^{(\sigma_j)}$ as $\bm{c}_j$.
	
	First, we consider the case $\ell = 0$. We expand the polynomial $f(\vec{\bm{x}})$ as
	\[
		f(\vec{\bm{x}}) = \sum_{\vec{e}, \vec{f} \in \Z_{\ge 0}^m} a_{\vec{e}, \vec{f}} \prod_{j=1}^m x_j^{e_j} y_j^{f_j}.
	\]
	Then, the function $H_0^{\vec{\sigma}}(\vec{\bm{x}}, t)$ is expressed as
	\begin{align}\label{eq:H0xt-expand}
		H_0^{\vec{\sigma}}(\vec{\bm{x}}, t) &= \sum_{\vec{e}, \vec{f} \in \Z_{\ge 0}^m} \bigg( a_{\vec{e}, \vec{f}} \prod_{j \in I(\vec{k})} x_j^{e_j} y_j^{f_j} \cdot \prod_{\substack{j \in I(\vec{k})^c \\ \bm{c}_j \in \cC}} \partial_{\bm{c}_j}(\bm{x}_j)^{k_j} x_j^{e_j} y_j^{f_j} \cdot \prod_{\substack{j \in I(\vec{k})^c \\ \bm{c}_j \in \cS}} \partial_{\bm{c}_j(t)}(\bm{x}_j)^{k_j} x_j^{e_j} y_j^{f_j} \bigg).
	\end{align}
	If $\bm{c}_j \in \cC$ for all $j \in I(\vec{k})^c$, then $H_0^{\vec{\sigma}}(\vec{\bm{x}} \sqrt{v},t) - H_0^{\vec{\sigma}}(\vec{\bm{x}} \sqrt{v}, 0) = 0$, which implies \eqref{eq:contribution} equals $0$. On the other hand, if there exists $j \in I(\vec{k})^c$ such that $\bm{c}_j \in \cS$, then we show that 
	\begin{align*}
		\lim_{t \to 0} \sum_{\vec{\bm{x}} \in \vec{\bm{a}} + \Z^{2m}} \left((H_0^{\vec{\sigma}}(\vec{\bm{x}} \sqrt{v}, t) - H_0^{\vec{\sigma}}(\vec{\bm{x}} \sqrt{v}, 0)) \cdot \prod_{1 \le j \le m} H_j^{\vec{\sigma}}(\vec{\bm{x}} \sqrt{v}, t)\right) q^{Q_m(\vec{\bm{x}})} e^{2\pi i B_m(\vec{\bm{x}}, \vec{\bm{b}})} = 0.
	\end{align*}
	We verify a stronger statement: each of the sums involving $H_0^{\vec{\sigma}}(\vec{\bm{x}} \sqrt{v}, t)$ and $H_0^{\vec{\sigma}}(\vec{\bm{x}} \sqrt{v}, 0)$ converges to $0$ individually. 
First,  we analyze the sum involving $H_0^{\vec{\sigma}}(\vec{\bm{x}} \sqrt{v}, t)$.
Substitute \eqref{eq:H0xt-expand} into the above sum to obtain
	\begin{align}\label{eq:H-prod-000}
		&\sum_{\vec{\bm{x}} \in \vec{\bm{a}} + \Z^{2m}} H_0^{\vec{\sigma}}(\vec{\bm{x}} \sqrt{v}, t) \cdot \prod_{1 \le j \le m} H_j^{\vec{\sigma}}(\vec{\bm{x}} \sqrt{v}, t) q^{Q_m(\vec{\bm{x}})} e^{2\pi i B_m(\vec{\bm{x}}, \vec{\bm{b}})}\\
		&= \sum_{\vec{e}, \vec{f} \in \Z_{\ge 0}^m} a_{\vec{e}, \vec{f}} \prod_{j \in I(\vec{k})} \left(\sum_{\bm{x}_j \in \bm{a}_j + \Z^2} x_j^{e_j} y_j^{f_j} H_j^{\vec{\sigma}}(\vec{\bm{x}} \sqrt{v}, t) q^{Q_1(\bm{x}_j)} e^{2\pi i B_1(\bm{x}_j, \bm{b}_j)} \right) \nonumber\\
			&\quad \times \prod_{\substack{j \in I(\vec{k})^c \\ \bm{c}_j \in \cC}} \left(\sum_{\bm{x}_j \in \bm{a}_j + \Z^2} \bigg(\partial_{\bm{c}_j}(\bm{x}_j)^{k_j} x_j^{e_j} y_j^{f_j} \bigg) H_j^{\vec{\sigma}}(\vec{\bm{x}} \sqrt{v}, 0) q^{Q_1(\bm{x}_j)} e^{2\pi i B_1(\bm{x}_j, \bm{b}_j)} \right) \nonumber\\
			&\quad \times \prod_{\substack{j \in I(\vec{k})^c \\ \bm{c}_j \in \cS}} \left(\sum_{\bm{x}_j \in \bm{a}_j + \Z^2} \bigg(\partial_{\bm{c}_j(t)}(\bm{x}_j)^{k_j} x_j^{e_j} y_j^{f_j} \bigg) H_j^{\vec{\sigma}}(\vec{\bm{x}} \sqrt{v}, t) q^{Q_1(\bm{x}_j)} e^{2\pi i B_1(\bm{x}_j, \bm{b}_j)} \right). \nonumber
	\end{align}
	For the first sum with $j \in I(\vec{k})$, we know that
	\begin{align}\label{eq:sum-H-uniform-lim}
		\lim_{t \to 0} \sum_{\bm{x}_j \in \bm{a}_j + \Z^2} x_j^{e_j} y_j^{f_j} H_j^{\vec{\sigma}}(\vec{\bm{x}} \sqrt{v}, t) q^{Q_1(\bm{x}_j)} e^{2\pi i B_1(\bm{x}_j, \bm{b}_j)} = \sum_{\bm{x}_j \in \bm{a}_j + \Z^2} x_j^{e_j} y_j^{f_j} H_j^{\vec{\sigma}}(\vec{\bm{x}} \sqrt{v}, 0) q^{Q_1(\bm{x}_j)} e^{2\pi i B_1(\bm{x}_j, \bm{b}_j)},
	\end{align}
	as established in the proof of \cite[Theorem 2.4]{RoehrigZwegers2022-pre}. 
The second sum, corresponding to $j \in I(\vec{k})^c$ with $\bm{c}_j \in \cC$, is independent of $t$ and converges absolutely. 
The third sum is the one that tends to $0$. Since $j \in I(\vec{k})^c$, we have $k_j > 0$ and
\[
	H_j^{\vec{\sigma}}(\vec{\bm{x}} \sqrt{v}, t) = \frac{1}{k_j!} E^{(k_j)} \left(\frac{B_1(\bm{c}_j(t), \bm{x}_j \sqrt{v})}{\sqrt{-Q_1(\bm{c}_j(t))}} \right).
\]
Here, since $E^{(k)}(z)$ can be written as a polynomial times $e^{-\pi z^2}$, there exist a polynomial $P_j(\bm{c}_j(t), \bm{x}_j)$ whose variables are the components of $\bm{c}_j(t)/\sqrt{-Q_1(\bm{c}_j(t))}$ and $\bm{x}_j$ such that
	\begin{align*}
		\bigg(\partial_{\bm{c}_j(t)}(\bm{x}_j)^{k_j} x_j^{e_j} y_j^{f_j} \bigg) H_j^{\vec{\sigma}}(\vec{\bm{x}} \sqrt{v}, t) e^{-2\pi Q_1(\bm{x}_j)v} = P_j(\bm{c}_j(t), \bm{x}_j) e^{-2\pi Q_1(\bm{x}_j)v + \pi \frac{B_1(\bm{c}_j(t), \bm{x}_j)^2}{Q_1(\bm{c}_j(t))}v}.
	\end{align*}
	Recalling that Roehrig--Zwegers~\cite[Proof of Theorem 2.4]{RoehrigZwegers2022-pre} showed that for any such polynomial $P_j$, there exists a majorant that is independent of $t$ and absolutely convergent, we conclude that the limit over the sum on $\bm{x}_j \in \bm{a}_j + \Z^2$ can be interchanged with the summation order. 
This yields the desired identity
	\[
		\lim_{t \to 0} \sum_{\bm{x}_j \in \bm{a}_j + \Z^2} \bigg(\partial_{\bm{c}_j(t)}(\bm{x}_j)^{k_j} x_j^{e_j} y_j^{f_j} \bigg) H_j^{\vec{\sigma}}(\vec{\bm{x}} \sqrt{v}, t) q^{Q_1(\bm{x}_j)} e^{2\pi i B_1(\bm{x}_j, \bm{b}_j)} = 0.
	\]
	Therefore, the limit of \eqref{eq:H-prod-000} as $t \to 0$ equals $0$. 
Similarly, we can verify that
	\[
		\lim_{t \to 0} \sum_{\vec{\bm{x}} \in \vec{\bm{a}} + \Z^{2m}} H_0^{\vec{\sigma}}(\vec{\bm{x}} \sqrt{v}, 0) \cdot \prod_{1 \le j \le m} H_j^{\vec{\sigma}}(\vec{\bm{x}} \sqrt{v}, t) q^{Q_m(\vec{\bm{x}})} e^{2\pi i B_m(\vec{\bm{x}}, \vec{\bm{b}})} = 0.
	\]
	Hence, \eqref{eq:contribution} converges to $0$ when $\ell = 0$.
	
	Next, we assume that $1 \le \ell \le m$. Note that $H_0^{\vec{\sigma}}(\vec{\bm{x}}\sqrt{v}, 0)$ is a polynomial independent of $t$. We now rewrite it as
	\[
		H_0^{\vec{\sigma}}(\vec{\bm{x}}, 0) = \sum_{\vec{e}, \vec{f} \in \Z_{\ge 0}^m} b_{\vec{e}, \vec{f}} \prod_{j=1}^m x_j^{e_j} y_j^{f_j}.
	\]
	Then, as in \eqref{eq:H-prod-000}, we are led to evaluate the following sum.
	\begin{align*}
		\eqref{eq:contribution} &= \sum_{\vec{e}, \vec{f} \in \Z_{\ge 0}^m} b_{\vec{e}, \vec{f}} \prod_{1 \le j < \ell} \left(\sum_{\bm{x}_j \in \bm{a}_j + \Z^2} x_j^{e_j} y_j^{f_j} H_j^{\vec{\sigma}}(\vec{\bm{x}}\sqrt{v}, 0) q^{Q_1(\bm{x}_j)} e^{2\pi i B_1(\bm{x}_j, \bm{b}_j)} \right)\\
		&\quad \times \left(\sum_{\bm{x}_\ell \in \bm{a}_\ell + \Z^2} x_\ell^{e_\ell} y_\ell^{f_\ell} (H_\ell^{\vec{\sigma}}(\vec{\bm{x}} \sqrt{v}, t) - H_\ell^{\vec{\sigma}}(\vec{\bm{x}} \sqrt{v}, 0)) q^{Q_1(\bm{x}_\ell)} e^{2\pi i B_1(\bm{x}_\ell, \bm{b}_\ell)}\right)\\
		&\quad \times \prod_{\ell < j \le m} \left(\sum_{\bm{x}_j \in \bm{a}_j + \Z^2} x_j^{e_j} y_j^{f_j} H_j^{\vec{\sigma}}(\vec{\bm{x}}\sqrt{v}, t) q^{Q_1(\bm{x}_j)} e^{2\pi i B_1(\bm{x}_j, \bm{b}_j)} \right).
	\end{align*}
	The first sum converges absolutely and is independent of $t$. 
Moreover, as in \eqref{eq:sum-H-uniform-lim}, the third sum converges as $t \to 0$. 
It therefore remains to show that the second sum converges to $0$. 
This also follows from the proof of \cite[Theorem 2.4]{RoehrigZwegers2022-pre}.
\end{proof}

\section{Proofs of \cref{thm:Zagier-Q} and \cref{thm:Dmm-Eisen} via indefinite theta functions}\label{sec:proof_via_theta}

We first establish the modularity of the left-hand side $\triangle(q)$. 
Then, applying the theory of indefinite theta functions developed in the previous chapter, we show the modularity of the right-hand side. 
By comparing both sides, we verify the three identities.

\subsection{Theta function $\theta_\triangle(\tau)$}

As in our previous article~\cite{MatsusakaSuzuki2025-pre}, we define
    \[
	\theta_\triangle(\tau) \coloneqq q^{1/16} \triangle(q^{1/2}) = \frac{1}{2} \sum_{n \in \frac{1}{2} + \Z} q^{n^2/4}.
    \]
Then, the following holds.

\begin{lemma}[{\cite[Lemma 4.1]{MatsusakaSuzuki2025-pre}}]\label{lem:theta-triangle}
	The function $\theta_\triangle(\tau)$ is a modular form of weight $1/2$ on $\Gamma(2)$ with no zeros or poles on $\bbH$. Specifically, for generators $\smat{1 & 2 \\ 0 & 1}$ and $\smat{1 & 0 \\ 2 & 1}$ of $\Gamma(2)/\{\pm I\}$, we have
	\begin{align*}
		\theta_\triangle(\tau+2) &= e^{\frac{\pi i}{4}} \theta_\triangle(\tau),\\
		\theta_\triangle \left(\frac{\tau}{2\tau+1}\right) &= (2\tau+1)^{1/2} \theta_\triangle(\tau).
	\end{align*}
	Its behavior at the cusps $i\infty, 0, 1$ of $\Gamma(2)$ are given by
	\begin{align*}
		\theta_\triangle(\tau) &= q^{1/16} + O(q^{9/16}),\\
		(-i\tau)^{-1/2} \theta_\triangle\left(-\frac{1}{\tau}\right) &= \frac{1}{\sqrt{2}} + O(q),\\
		(-i\tau)^{-1/2} \theta_\triangle \left(\frac{\tau-1}{\tau}\right) &= e^{\frac{\pi i}{8}} q^{1/16} + O(q^{9/16}),
	\end{align*}
	respectively.
\end{lemma}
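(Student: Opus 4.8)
The plan is to reduce everything to the classical Jacobi theta function $\vartheta(\tau) = \sum_{n\in\Z} q^{n^2/2}$ (equivalently $\sum_{n\in\Z} e^{\pi i n^2 \tau}$) and its odd analogue $\sum_{n\in\tfrac12+\Z} q^{n^2/2}$. First I would observe that, after the substitution $\tau \mapsto \tau/2$ (so $q \mapsto q^{1/2}$), one has $2\theta_\triangle(\tau) = \sum_{n\in\frac12+\Z} q^{n^2/4} = \sum_{n\in\frac12+\Z} e^{\pi i n^2 \tau/2}$, which is a standard Jacobi theta value — in Zwegers' notation it is essentially $\vartheta\!\left(\tfrac{\tau}{2}; \tfrac{\tau}{2}\right)$ up to an elementary factor, or in the classical notation $\theta_2$ evaluated at an appropriate point. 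The periodicity $\theta_\triangle(\tau+2) = e^{\pi i/4}\theta_\triangle(\tau)$ is then immediate term-by-term: replacing $\tau$ by $\tau+2$ multiplies $q^{n^2/4}$ by $e^{\pi i n^2/2}$, and for $n\in\frac12+\Z$ we have $n^2 \equiv \tfrac14 \pmod 2$, so every term picks up the same factor $e^{\pi i/4}$.

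For the second transformation, under $\tau\mapsto -1/\tau$ the Jacobi theta function satisfies a known modular transformation (Poisson summation / the functional equation in \cite[\S1]{Zwegers2002}), exchanging the role of the ``characteristics''. Concretely, applying Poisson summation to $\sum_{n\in\frac12+\Z} e^{\pi i n^2 \tau/2}$ produces $(-i\tau)^{-1/2}$ times a Gauss-type sum, which after simplification gives $\theta_\triangle(-1/\tau) = (-i\tau)^{1/2}\cdot(\text{const})\cdot(\text{another theta})$; combining the $S$-transformation with the already-established $T^2$-transformation (periodicity by $2$) yields, via the matrix identity $\smat{1&0\\2&1} = \smat{0&-1\\1&0}\smat{1&2\\0&1}\smat{0&-1\\1&0}$ in $\PSL_2(\Z)$, the stated formula $\theta_\triangle\!\left(\tfrac{\tau}{2\tau+1}\right) = (2\tau+1)^{1/2}\theta_\triangle(\tau)$. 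Since $\smat{1&2\\0&1}$ and $\smat{1&0\\2&1}$ generate $\Gamma(2)/\{\pm I\}$, this establishes that $\theta_\triangle$ is a modular form of weight $1/2$ on $\Gamma(2)$; the absence of zeros or poles on $\bbH$ follows from the corresponding well-known fact for $\vartheta$ (the Jacobi theta function has no zeros in $\bbH$), or directly from the product expansion $\theta_\triangle(\tau) = q^{1/16}\prod_{n\ge1}(1-q^{n/2})(1-q^{n/2-1/4})^{-1}\cdots$ coming from the Jacobi triple product applied to $\triangle(q) = \prod_{n\ge1}(1-q^{2n})(1-q^{2n-1})^{-1}$.

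The cusp behavior is then a bookkeeping exercise: the expansion at $i\infty$ is just $\theta_\triangle(\tau) = \tfrac12(q^{1/16} + q^{1/16} + q^{9/16}+\cdots) = q^{1/16} + O(q^{9/16})$, reading off the two smallest values $n = \pm\tfrac12$ (and $n=\pm\tfrac32$). For the cusp $0$, one uses the $S$-transformation just derived: $(-i\tau)^{-1/2}\theta_\triangle(-1/\tau)$ equals a constant times a theta function whose leading term one computes; the constant $\tfrac{1}{\sqrt2}$ comes from the Gauss sum $\tfrac12\sum_{j \bmod 2}(\cdots)$ normalization. For the cusp $1$, one writes $\smat{1&-1\\1&0}$ as a product of $S$ and $T$ and chases the automorphy factors and the constant $e^{\pi i/8}$ through, using both the periodicity and the $S$-transformation; the leading term is again $q^{1/16}$ because conjugating by $T$ does not change which power of $q$ dominates. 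I expect the main obstacle to be purely clerical rather than conceptual: pinning down the precise root-of-unity constants ($e^{\pi i/4}$, $e^{\pi i/8}$) and the branch of $(-i\tau)^{1/2}$ consistently across the three cusps, since sign/eighth-root errors propagate easily. Since this is \cite[Lemma 4.1]{MatsusakaSuzuki2025-pre}, I would in practice simply cite it, and the above is the route one would take to reprove it from scratch.
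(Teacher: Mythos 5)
The paper does not actually prove this lemma: it is imported verbatim from the authors' earlier article and cited as \cite[Lemma 4.1]{MatsusakaSuzuki2025-pre}, so there is no in-paper argument to compare against. Your blind route --- writing $2\theta_\triangle(\tau)=\sum_{n\in\frac12+\Z}e^{\pi i n^2\tau/2}=\theta_2(\tau/2)$ and pulling the whole lemma back to the classical transformation laws $\theta_2(-1/\tau)=(-i\tau)^{1/2}\theta_4(\tau)$, $\theta_4(-1/\tau)=(-i\tau)^{1/2}\theta_2(\tau)$, together with the Jacobi triple product for the nonvanishing --- is the standard and correct way to prove it, and it does yield all five displayed formulas after the bookkeeping you describe. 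Two clerical slips to fix, both of the root-of-unity type you yourself flag as the hazard. First, under $\tau\mapsto\tau+2$ the term $q^{n^2/4}=e^{\pi i n^2\tau/2}$ is multiplied by $e^{\pi i n^2}$, not $e^{\pi i n^2/2}$; your conclusion $e^{\pi i/4}$ (from $n^2\equiv\frac14\pmod 2$) is only consistent with the correct multiplier. Second, the conjugation identity should be $\smat{1&0\\2&1}=S^{-1}T^{-2}S$ in $\PSL_2(\Z)$ (your $ST^2S$ equals $\smat{1&0\\-2&1}$ there); with that correction the chain $\theta_\triangle(-1/\sigma)=\tfrac{1}{\sqrt2}(-i\sigma)^{1/2}\theta_4(2\sigma)$, the period-$2$ invariance of $\theta_4$, and a second application of $S$ does give $\theta_\triangle\bigl(\tfrac{\tau}{2\tau+1}\bigr)=(2\tau+1)^{1/2}\theta_\triangle(\tau)$, and the same computation already produces the cusp-$0$ expansion $\tfrac{1}{\sqrt2}+O(q)$. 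Neither slip is a conceptual gap.
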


\subsection{Three spherical polynomials}

As a first step, we verify that the following three homogeneous polynomials are spherical, that is, they are annihilated by $\Delta_m$:
\begin{align*}
	f_1(\vec{\bm{x}}) &= \prod_{j=1}^m x_j \cdot \prod_{1 \le i < j \le m}(x_i^2 - x_j^2)^2,\\
	f_2(\vec{\bm{x}}) &= \prod_{j=1}^m x_j^3 \cdot \prod_{1 \le i < j \le m}(x_i^2 - x_j^2)^2,\\
	f_3(\vec{\bm{x}}) &= \prod_{j=1}^m (x_j + y_j) \cdot \prod_{1 \le i < j \le m} \bigg((x_i-y_i)^2 - (x_j-y_j)^2 \bigg)\bigg((x_i+y_i)^2 - (x_j+y_j)^2\bigg).
\end{align*}
Since $f_1(\vec{\bm{x}})$ and $f_2(\vec{\bm{x}})$ do not involve the variables $y_j$, it is clear that they are spherical polynomials. We now prove that $f_3(\vec{\bm{x}})$ is also spherical.

\begin{lemma}
	For every integer $m \ge 1$, we have $\Delta_m f_3(\vec{\bm{x}}) = 0$.
\end{lemma}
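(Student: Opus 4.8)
The plan is to exploit the change of variables $u_j = x_j + y_j$, $v_j = x_j - y_j$, which turns $\Delta_m$ into a sum of one-dimensional operators. First I would observe that $f_3$ factors as $f_3(\vec{\bm{x}}) = g(\vec{u}) \cdot h(\vec{v})$, where
\[
	g(\vec{u}) = \prod_{j=1}^m u_j \cdot \prod_{1 \le i < j \le m}(u_i^2 - u_j^2), \qquad h(\vec{v}) = \prod_{1 \le i < j \le m}(v_i^2 - v_j^2),
\]
with $u_j = x_j + y_j$ and $v_j = x_j - y_j$; indeed the two product factors in $f_3$ are exactly $(u_i^2 - u_j^2)(v_i^2 - v_j^2)$ after regrouping, and the leading factor is $\prod_j u_j$. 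Next I would compute how $\Delta_m = 2\sum_j \partial_{x_j}\partial_{y_j}$ acts in the new coordinates: since $\partial_{x_j} = \partial_{u_j} + \partial_{v_j}$ and $\partial_{y_j} = \partial_{u_j} - \partial_{v_j}$, we get $\partial_{x_j}\partial_{y_j} = \partial_{u_j}^2 - \partial_{v_j}^2$, hence $\Delta_m = 2\sum_{j=1}^m (\partial_{u_j}^2 - \partial_{v_j}^2)$. Therefore
\[
	\Delta_m f_3 = 2 h(\vec{v}) \sum_{j} \partial_{u_j}^2 g(\vec{u}) - 2 g(\vec{u}) \sum_{j} \partial_{v_j}^2 h(\vec{v}),
\]
so it suffices to show that $\sum_j \partial_{u_j}^2 g = 0$ and $\sum_j \partial_{v_j}^2 h = 0$.

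For the second vanishing, note that $h(\vec{v})$, as a polynomial in the variables $w_j := v_j^2$, would be a Vandermonde-type determinant, but more directly: $h(\vec{v}) = \prod_{i<j}(v_i^2 - v_j^2)$ is, up to sign, the product $\prod_{i<j}(v_i - v_j)(v_i + v_j)$, which is a well-known harmonic polynomial for the standard Laplacian $\sum_j \partial_{v_j}^2$ — it is the discriminant-type polynomial that is the lowest-weight vector killed by $\sum \partial^2$ because it is (a constant times) the alternating polynomial $\det(v_i^{2(j-1)}) \cdot (\text{something})$; cleanest is to invoke that $\prod_{i<j}(v_i - v_j)$ is harmonic (a classical fact, being the Vandermonde determinant, which is a harmonic polynomial since each $v_i^{j-1}$ is harmonic and the determinant of harmonic entries with the right degrees is harmonic — or simply because it spans the sign representation inside the harmonics), and that $h$ is obtained from it by the symmetrization argument. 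Similarly $g(\vec{u}) = \prod_j u_j \cdot \prod_{i<j}(u_i^2 - u_j^2)$ is, up to a scalar, the Vandermonde determinant $\det(u_i^{2j-1})_{1\le i,j\le m}$ in the odd powers $u_i, u_i^3, \dots, u_i^{2m-1}$; since each monomial $u_i^{2j-1}$ is harmonic for $\sum_j \partial_{u_j}^2$ and the determinant is a homogeneous polynomial of minimal degree in its isotypic component, it too is harmonic.

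Concretely, rather than appealing to representation theory, I would give the self-contained argument: a determinant $D(\vec{u}) = \det(\phi_j(u_i))_{i,j}$ where each $\phi_j$ is a univariate harmonic polynomial (i.e. $\phi_j''=0$ is false — rather $\phi_j$ is a power, so we need $\sum_i \partial_{u_i}^2 D$) — here the right statement is that $\partial_{u_i}^2$ applied to row $i$ replaces $\phi_j(u_i)$ by $\phi_j''(u_i) = (2j-1)(2j-2)u_i^{2j-3}$, and summing over $i$ and expanding the determinant by multilinearity, each resulting determinant has two proportional columns (the column of $u_i^{2j-3}$ matches a lower column up to scalar) except for boundary terms which vanish by degree reasons; this is the standard proof that Vandermonde-type alternants in a sequence closed under "shift down by two" are harmonic. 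The main obstacle — and the only real content — is this bookkeeping showing $\sum_i \partial_{u_i}^2 \det(u_i^{2j-1}) = 0$ and the analogous statement for $\det(v_i^{2(j-1)})$; everything else (the coordinate change, the factorization of $f_3$, and the operator identity $\partial_{x}\partial_{y} = \partial_u^2 - \partial_v^2$) is routine. I expect the cleanest writeup to identify $g$ and $h$ with these alternants explicitly, cite the harmonicity of Vandermonde-type determinants (or include the two-line multilinearity argument), and conclude $\Delta_m f_3 = 2h\cdot 0 - 2g\cdot 0 = 0$.
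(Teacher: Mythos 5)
Your route is genuinely different from the paper's. The paper proves the lemma by a direct product-rule computation: it writes $f_3 = A\cdot B$ with $A=\prod_j(x_j+y_j)$ and $B=\prod_{i<j}p_{i,j}q_{i,j}$, expands $\sum_j\partial_{x_j}\partial_{y_j}(AB)$ via logarithmic derivatives of the factors, and checks that the resulting sums cancel in pairs. Your diagonalization $u_j=x_j+y_j$, $v_j=x_j-y_j$, under which $\Delta_m=2\sum_j(\partial_{u_j}^2-\partial_{v_j}^2)$ and $f_3=g(\vec u)\,h(\vec v)$ with $g=\prod_j u_j\prod_{i<j}(u_i^2-u_j^2)$ and $h=\prod_{i<j}(v_i^2-v_j^2)$, is a cleaner and more conceptual reduction: it identifies $g$ and $h$ (up to sign) with the discriminants $\prod_{\alpha>0}\langle\alpha,\cdot\rangle$ of the reflection groups of types $C_m$ and $D_m$, whose harmonicity is classical. (The paper itself uses exactly this substitution later, in \cref{sec:third_identity}, to locate the lowest-order term of $F_3$.)

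One step of your sketch is wrong as stated, though the gap is repairable. After writing $g=\pm\det(u_i^{2j-1})$ and applying $\partial_{u_i}^2$ to row $i$, you claim that ``each resulting determinant has two proportional columns.'' It does not: only one row is differentiated, so the modified matrix has no proportional columns, and the individual determinants are nonzero in general. Already for $m=2$ one has $\partial_{u_1}^2(u_1u_2^3-u_1^3u_2)=-6u_1u_2\neq 0$; the cancellation occurs only after summing over $i$. A correct elementary argument is by symmetry and degree: $\sum_i\partial_{u_i}^2$ commutes with permutations of the $u_j$ and with the sign changes $u_j\mapsto-u_j$, so $\sum_i\partial_{u_i}^2 g$ is again $\mathfrak{S}_m$-alternating and odd in each variable; any nonzero polynomial with these symmetries is divisible by every $u_j$ and every $u_i^2-u_j^2$, hence has degree at least $m^2=\deg g$, whereas $\sum_i\partial_{u_i}^2 g$ has degree $m^2-2$, so it vanishes. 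The same reasoning (alternating and even in each variable, hence divisible by $\prod_{i<j}(v_i^2-v_j^2)$ of degree $m(m-1)$) gives $\sum_i\partial_{v_i}^2 h=0$. Note also that the harmonicity of $h$ does not follow directly from that of the ordinary Vandermonde $\prod_{i<j}(v_i-v_j)$, as your appeal to ``the symmetrization argument'' suggests; it needs its own (equally standard) justification. With these repairs your proof is complete and, in my view, shorter than the paper's.
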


\begin{proof}
	We define the polynomials $A(\vec{\bm{x}})$ and $B(\vec{\bm{x}})$ by
	\begin{align*}
		A(\vec{\bm{x}}) &\coloneqq \prod_{j=1}^m (x_j + y_j),\\
		B(\vec{\bm{x}}) &\coloneqq \prod_{1 \le i < j \le m} \bigg((x_i-y_i)^2 - (x_j-y_j)^2 \bigg)\bigg((x_i+y_i)^2 - (x_j+y_j)^2\bigg).
	\end{align*}
	For each $1 \le j \le m$, we compute
	\begin{align}\label{eq:f3-spherical}
	\begin{split}
		\frac{\partial^2}{\partial x_j \partial y_j} f_3(\vec{\bm{x}}) &= \frac{\partial^2}{\partial x_j \partial y_j} A(\vec{\bm{x}}) \cdot B(\vec{\bm{x}}) + A(\vec{\bm{x}}) \cdot \frac{\partial^2}{\partial x_j \partial y_j} B(\vec{\bm{x}})\\
			&\quad + \frac{\partial}{\partial x_j} A(\vec{\bm{x}}) \cdot \frac{\partial}{\partial y_j} B(\vec{\bm{x}}) + \frac{\partial}{\partial y_j} A(\vec{\bm{x}}) \cdot \frac{\partial}{\partial x_j} B(\vec{\bm{x}}).
	\end{split}
	\end{align}
	The first term is clearly $0$. To evaluate the second term, we set
	\begin{align*}
		p_{i,j}(\vec{\bm{x}}) &\coloneqq (x_i - y_i)^2 - (x_j - y_j)^2,\\
		q_{i,j}(\vec{\bm{x}}) &\coloneqq (x_i + y_i)^2 - (x_j + y_j)^2.
	\end{align*}
	Then, we have
	\begin{align*}
		\frac{\partial^2}{\partial x_j \partial y_j} B(\vec{\bm{x}}) &= \sum_{1 \le k < l \le m} \left(\frac{B(\vec{\bm{x}})}{p_{k,l}(\vec{\bm{x}})} 	\frac{\partial^2}{\partial x_j \partial y_j} p_{k,l}(\vec{\bm{x}}) + \frac{B(\vec{\bm{x}})}{q_{k,l}(\vec{\bm{x}})} 	\frac{\partial^2}{\partial x_j \partial y_j} q_{k,l}(\vec{\bm{x}}) \right)\\
			&+ \sum_{\substack{1 \le k < l \le m \\ 1 \le s < t \le m}} \left(\frac{B(\vec{\bm{x}})}{p_{k,l}(\vec{\bm{x}}) q_{s,t}(\vec{\bm{x}})} \frac{\partial}{\partial x_j} p_{k,l}(\vec{\bm{x}}) \frac{\partial}{\partial y_j} q_{s,t}(\vec{\bm{x}}) + \frac{B(\vec{\bm{x}})}{p_{k,l}(\vec{\bm{x}}) q_{s,t}(\vec{\bm{x}})} \frac{\partial}{\partial y_j} p_{k,l}(\vec{\bm{x}}) \frac{\partial}{\partial x_j} q_{s,t}(\vec{\bm{x}})\right).
	\end{align*}
	Observe that
	\begin{align*}
		\frac{\partial^2}{\partial x_j \partial y_j} p_{i,j}(\vec{\bm{x}}) = 2 = \frac{\partial^2}{\partial x_j \partial y_j} q_{j,i}(\vec{\bm{x}}),\\
		\frac{\partial^2}{\partial x_j \partial y_j} q_{i,j}(\vec{\bm{x}}) = -2 = \frac{\partial^2}{\partial x_j \partial y_j} p_{j,i}(\vec{\bm{x}}).
	\end{align*}
	Hence,
	\begin{align*}
		&\sum_{1 \le k < l \le m} \left(\frac{B(\vec{\bm{x}})}{p_{k,l}(\vec{\bm{x}})} 	\frac{\partial^2}{\partial x_j \partial y_j} p_{k,l}(\vec{\bm{x}}) + \frac{B(\vec{\bm{x}})}{q_{k,l}(\vec{\bm{x}})} 	\frac{\partial^2}{\partial x_j \partial y_j} q_{k,l}(\vec{\bm{x}}) \right)\\
		&\quad = 2 B(\vec{\bm{x}}) \sum_{1 \le i < j} \left(\frac{1}{p_{i,j}(\vec{\bm{x}})} - \frac{1}{q_{i,j}(\vec{\bm{x}})} \right) + 2 B(\vec{\bm{x}}) \sum_{j < i \le m} \left(\frac{1}{q_{j,i}(\vec{\bm{x}})} - \frac{1}{p_{j,i}(\vec{\bm{x}})} \right).
	\end{align*}
	Summing over $j = 1, \dots, m$, the total contribution of this part is zero:
	\[
		\sum_{j=1}^m \sum_{1 \le k < l \le m} \left(\frac{B(\vec{\bm{x}})}{p_{k,l}(\vec{\bm{x}})} 	\frac{\partial^2}{\partial x_j \partial y_j} p_{k,l}(\vec{\bm{x}}) + \frac{B(\vec{\bm{x}})}{q_{k,l}(\vec{\bm{x}})} 	\frac{\partial^2}{\partial x_j \partial y_j} q_{k,l}(\vec{\bm{x}}) \right) = 0.
	\]
	Next, we compute the remaining terms. 
The contribution from
	\begin{align*}
		\sum_{\substack{1 \le k < l \le m \\ 1 \le s < t \le m}} \frac{B(\vec{\bm{x}})}{p_{k,l}(\vec{\bm{x}}) q_{s,t}(\vec{\bm{x}})} \frac{\partial}{\partial x_j} p_{k,l}(\vec{\bm{x}}) \frac{\partial}{\partial y_j} q_{s,t}(\vec{\bm{x}})\\
	\end{align*}
	is equal to
	\begin{align*}
		&4 (x_j^2 - y_j^2) \sum_{j < l, t \le m} \frac{B(\vec{\bm{x}})}{p_{j,l}(\vec{\bm{x}}) q_{j,t}(\vec{\bm{x}})} + 4(x_j^2 - y_j^2) \sum_{1 \le k, s < j} \frac{B(\vec{\bm{x}})}{p_{k,j}(\vec{\bm{x}}) q_{s,j}(\vec{\bm{x}})}\\
		&\quad - 4(x_j^2 - y_j^2) \sum_{1 \le k < j < t \le m} \frac{B(\vec{\bm{x}})}{p_{k,j}(\vec{\bm{x}}) q_{j,t}(\vec{\bm{x}})} - 4(x_j^2 - y_j^2) \sum_{1 \le s < j < l \le m} \frac{B(\vec{\bm{x}})}{p_{j,l}(\vec{\bm{x}}) q_{s,j}(\vec{\bm{x}})}.
	\end{align*}
	The corresponding expression from the other term gives the same quantity with opposite sign, so they cancel.
	
	Finally, we evaluate the third and fourth terms of \eqref{eq:f3-spherical}. Since
	\[
		\frac{\partial}{\partial x_j} A(\vec{\bm{x}}) = \frac{A(\vec{\bm{x}})}{x_j + y_j} = \frac{\partial}{\partial y_j} A(\vec{\bm{x}})
	\]
	and
	\begin{align*}
		\frac{\partial}{\partial y_j} B(\vec{\bm{x}}) &= 2B(\vec{\bm{x}}) \sum_{1 \le i < j} \left(\frac{x_j - y_j}{p_{i,j}(\vec{\bm{x}})} - \frac{x_j + y_j}{q_{i,j}(\vec{\bm{x}})}\right) + 2B(\vec{\bm{x}}) \sum_{j < i \le m} \left(-\frac{x_j - y_j}{p_{j,i}(\vec{\bm{x}})} + \frac{x_j + y_j}{q_{j,i}(\vec{\bm{x}})}\right),\\
		\frac{\partial}{\partial x_j} B(\vec{\bm{x}}) &= 2B(\vec{\bm{x}}) \sum_{1 \le i < j} \left(-\frac{x_j - y_j}{p_{i,j}(\vec{\bm{x}})} - \frac{x_j + y_j}{q_{i,j}(\vec{\bm{x}})}\right) + 2B(\vec{\bm{x}}) \sum_{j < i \le m} \left(\frac{x_j - y_j}{p_{j,i}(\vec{\bm{x}})} + \frac{x_j + y_j}{q_{j,i}(\vec{\bm{x}})}\right),
	\end{align*}
	we have
	\begin{align*}
		&\sum_{j=1}^m \left( \frac{\partial}{\partial x_j} A(\vec{\bm{x}}) \cdot \frac{\partial}{\partial y_j} B(\vec{\bm{x}}) + \frac{\partial}{\partial y_j} A(\vec{\bm{x}}) \cdot \frac{\partial}{\partial x_j} B(\vec{\bm{x}})\right)\\
			&= 4A(\vec{\bm{x}}) B(\vec{\bm{x}}) \sum_{j=1}^m \left(-\sum_{1 \le i < j} \frac{1}{q_{i,j}(\vec{\bm{x}})} + \sum_{j < i \le m} \frac{1}{q_{j,i}(\vec{\bm{x}})} \right) = 0.
	\end{align*}
	Therefore, we conclude that $\Delta_m f_3(\vec{\bm{x}}) = 0$.
\end{proof}

By rewriting the left-hand side of the theorems in terms of $\theta_\triangle(\tau)$ and expressing the right-hand side as a sum over the entire lattice, the claims to be proven can be reformulated as follows.

\begin{theorem}[Equivalent to \cref{thm:Zagier-Q} and \cref{thm:Dmm-Eisen}]\label{thm:equivalent-3}
	For every integer $m \ge 1$, we have
	\begin{align*}
		\theta_\triangle(\tau)^{4m^2} &= \frac{(2m)!}{\prod_{j=1}^m (2j)!^2} \sum_{x_j, y_j \in 1/2+\Z} \prod_{j=1}^m \bigg(\sgn(x_j) + \sgn(y_j) \bigg) f_1(\vec{\bm{x}}) q^{Q_m(\vec{\bm{x}})},\\
		\theta_\triangle(\tau)^{4m(m+1)} &= \frac{1}{\prod_{j=1}^m (2j)!^2} \sum_{\substack{x_j \in \Z,\\ y_j \in 1/2+\Z}} \prod_{j=1}^m \bigg(\sgn(x_j) + \sgn(y_j) \bigg) f_2(\vec{\bm{x}}) q^{Q_m(\vec{\bm{x}})},\\
		\theta_\triangle(\tau)^{4m^2} &= \frac{(2m)!}{2^m \prod_{j=1}^m (2j)!^2} \sum_{x_j, y_j \in 1/2+\Z} \prod_{j=1}^m \bigg(\sgn(x_j) + \sgn(y_j) \bigg) f_3(\vec{\bm{x}}) q^{Q_m(\vec{\bm{x}})}.
	\end{align*}
\end{theorem}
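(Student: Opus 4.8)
The plan is to recognize each right-hand side of \cref{thm:equivalent-3} as one of the signature-$(m,m)$ indefinite theta functions of \cref{sec:indefinite-theta} attached to the two cusps of $\overline{\cC}$, and then compare it with the modular form $\theta_\triangle(\tau)^N$ (here $N=4m^2$ or $4m(m+1)$). Since $Q_1(\bm x)=xy$ for $\bm x=\smat{x\\y}$, the defining conditions of $\cS$ leave exactly $\cS=\{\smat{0\\1},\smat{-1\\0}\}$, and with $A=\smat{0&\1_m\\\1_m&0}$ one computes $B_1(\smat{0\\1},\bm x)=x$ and $B_1(\smat{-1\\0},\bm x)=-y$. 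Taking $\bm c_j^{(0)}=\smat{0\\1}$ and $\bm c_j^{(1)}=\smat{-1\\0}$ for all $j$, the product $\prod_{j=1}^m\bigl(\sgn(B_1(\bm c_j^{(0)},\bm x_j))-\sgn(B_1(\bm c_j^{(1)},\bm x_j))\bigr)$ appearing in \cref{def:indef-theta} equals $\prod_{j=1}^m(\sgn(x_j)+\sgn(y_j))$ as soon as no $y_j$ is an integer. Hence, with $\bm a_j=\smat{1/2\\1/2}$ (for $f_1$ and $f_3$) or $\bm a_j=\smat{0\\1/2}$ (for $f_2$) and $\vec{\bm b}=\vec{\bm0}$, each right-hand side is the stated constant times $\theta^{\vec{\bm c}_0,\vec{\bm c}_1}_{\vec{\bm a},\vec{\bm0}}[f_i](\tau)$; in the $f_2$ case the terms with some $x_j=0$ vanish because $\prod_j x_j^3$ divides $f_2$, so the apparent violation of the requirement $\bm a_j\in R(\bm c_j^{(0)})\cap R(\bm c_j^{(1)})$ is harmless (e.g.\ use $\bm a_j=\smat{\ep\\1/2}$ and let $\ep\to0^+$). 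By \cref{lem:theta-abs-conv-S} these series converge absolutely; since $\vec{\bm c}_0,\vec{\bm c}_1\in\cS^m$ they carry no $v$-dependence and are holomorphic in $\tau$, and on their support $\sgn(x_j)=\sgn(y_j)$ for every $j$, so $Q_m(\vec{\bm x})=\sum_j x_jy_j>0$; taking the vanishing locus of $f_i$ into account, $Q_m$ is in fact bounded below by $N/16$, matching the order of $\theta_\triangle(\tau)^N$ at $i\infty$.

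Next I would establish the modularity of $\Theta_i:=\theta^{\vec{\bm c}_0,\vec{\bm c}_1}_{\vec{\bm a},\vec{\bm0}}[f_i]$. Each $f_i$ is spherical of degree $d_i$ with $m+d_i=N/2$ (indeed $d_1=d_3=2m^2-m$ and $d_2=2m^2+m$). Applying \cref{thm:indefinite-theta} and \cref{thm:shift+1} to the smoothings $\vec{\bm c}_0(t),\vec{\bm c}_1(t)\in\cC^m$ and letting $t\to0$ via \cref{thm:indef-theta-S}, one obtains that $\Theta_i$ transforms with weight $N/2$ and the same multiplier system as $\theta_\triangle(\tau)^N$ under the generators $\smat{1&2\\0&1}$ and $\smat{1&0\\2&1}$ of $\Gamma(2)$: the first is a direct computation from \cref{thm:shift+1}, and the second is obtained by composing $\tau\mapsto-1/\tau$ with $\tau\mapsto\tau+2$, using that re-indexing the lattice absorbs the integral shift of the base point. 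It then remains to control $\Theta_i$ near the cusps $0$ and $1$, i.e.\ to bound $\Theta_i(-1/\tau)$ and $\Theta_i\bigl(\tfrac{\tau-1}{\tau}\bigr)$; here one again uses that on the region where a difference of two sign functions attached to the endpoints of $\overline{\cC}$ is nonzero, the quadratic form stays bounded below, so that these transforms are once more honest $q$-series of bounded-below order.

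With $\Theta_i$ and $\theta_\triangle(\tau)^N$ both identified as holomorphic modular forms of weight $N/2$ on $\Gamma(2)$ sharing the same multiplier, the ratio $\Theta_i(\tau)/\theta_\triangle(\tau)^N$ is a weight-zero modular function on $\Gamma(2)$; it is holomorphic on $\bbH$ because $\theta_\triangle$ has no zeros there (\cref{lem:theta-triangle}) and finite at the three cusps by the growth estimates above, hence constant. The constant is found by comparing coefficients of $q^{N/16}$: on the left it is $1$, and on the right it is the prefactor times the sum of $f_i$ over the finitely many minimal lattice vectors, an alternating sum of Vandermonde type whose value is exactly the reciprocal of the prefactor. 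This yields the constant $1$ and proves the three identities, hence \cref{thm:Zagier-Q} and \cref{thm:Dmm-Eisen}.

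The step I expect to be the main obstacle is the cusp analysis of $\Theta_i$ at $0$ and $1$: \cref{thm:indefinite-theta} sends the base point $\vec{\bm a}$ to $\vec{\bm0}$, which is integral and therefore does not lie in $R(\bm c_j^{(0)})\cap R(\bm c_j^{(1)})$, so \cref{thm:indef-theta-S} does not apply verbatim to the transformed series. Controlling this requires showing directly that the borderline lattice points — those with some $B_1(\bm c_j^{(\bullet)},\cdot)\in\Z$ — contribute nothing (immediate for $f_1,f_2$ from divisibility by the relevant coordinate, and needing a short argument for $f_3$), and that $Q_m$ remains bounded below on the rest of the support, so that the limit defining $\Theta_i(-1/\tau)$ exists and is holomorphic at $i\infty$. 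The only other genuinely computational point is the Vandermonde-type evaluation fixing the constants $\tfrac{(2m)!}{\prod_j(2j)!^2}$, $\tfrac{1}{\prod_j(2j)!^2}$, and $\tfrac{(2m)!}{2^m\prod_j(2j)!^2}$.
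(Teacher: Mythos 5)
Your proposal follows essentially the same route as the paper: the right-hand sides are recognized as the indefinite theta functions $F_i$ of \eqref{eq:def:F123} with exactly the cusp data $\vec{\bm{c}}_0=(\smat{0\\1},\dots)$, $\vec{\bm{c}}_1=(\smat{-1\\0},\dots)$, the $F_2$ base point is regularized by $\ep\to0$ as in \cref{lem:F2-modified}, the $\Gamma(2)$-modularity and cusp expansions are extracted from \cref{thm:indefinite-theta}, \cref{thm:shift+1} and \cref{thm:indef-theta-S}, and the identities follow from Liouville's theorem on the genus-zero curve $\Gamma(2)\backslash\bbH$ with the constant fixed by the leading Fourier coefficient. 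The one detail that plays out differently from what you anticipate is at the cusp $0$: the borderline lattice points (those with $x_jy_j=0$ after the inversion) do not contribute zero but a bounded constant, computed in the paper via the Eulerian-polynomial identity \eqref{eq:def-Eulerian}, which is harmless since only the $O(1)$ bound is needed there.
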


\begin{proof}
	Since the strategy is the same, we verify only the first claim. In the first identity, after changing variables via $q \mapsto q^2$, the left-hand side becomes $q^{m^2/2} \triangle(q)^{4m^2}$. As for the right-hand side, because of the sum involving the sign function, only the terms where $x_j$ and $y_j$ have the same sign for each $j$ contribute. Moreover, noting that the polynomial $f_1(\vec{\bm{x}})$ changes sign under the replacement $\bm{x}_j \leftrightarrow -\bm{x}_j$, we obtain
	\[
		\sum_{x_j, y_j \in 1/2+\Z} \prod_{j=1}^m \bigg(\sgn(x_j) + \sgn(y_j) \bigg) f_1(\vec{\bm{x}}) q^{2Q_m(\vec{\bm{x}})} = 4^m \sum_{x_j, y_j \in 1/2 + \Z_{\ge 0}} f_1(\vec{\bm{x}}) q^{2Q_m(\vec{\bm{x}})}.
	\]
	This completes the derivation of the first identity in \cref{thm:Zagier-Q}.
\end{proof}

For $\vec{\bm{a}} = (\smat{1/2 \\ 1/2}, \dots, \smat{1/2 \\ 1/2})$, $\vec{\bm{a}}' = (\smat{0 \\ 1/2}, \dots, \smat{0 \\ 1/2})$, $\vec{\bm{c}}_0 = (\smat{0 \\ 1}, \dots, \smat{0 \\ 1}) \in \cS^m$, and $\vec{\bm{c}}_1 = (\smat{-1 \\ 0}, \dots, \smat{-1 \\ 0}) \in \cS^m$,  the series on the right-hand side can be expressed as
\begin{align}\label{eq:def:F123}
\begin{split}
	F_1(\tau) &\coloneqq \frac{(2m)!}{\prod_{j=1}^m (2j)!^2} \theta_{\vec{\bm{a}}, \vec{\bm{0}}}^{\vec{\bm{c}}_0, \vec{\bm{c}}_1}[f_1] (\tau),\\
	F_2(\tau) &\coloneqq \frac{1}{\prod_{j=1}^m (2j)!^2} \theta_{\vec{\bm{a}}', \vec{\bm{0}}}^{\vec{\bm{c}}_0, \vec{\bm{c}}_1}[f_2] (\tau),\\
	F_3(\tau) &\coloneqq \frac{(2m)!}{2^m \prod_{j=1}^m (2j)!^2} \theta_{\vec{\bm{a}}, \vec{\bm{0}}}^{\vec{\bm{c}}_0, \vec{\bm{c}}_1}[f_3] (\tau).
\end{split}
\end{align}
Note that $\smat{0 \\ 1/2} \not\in R(\smat{0 \\ 1})$, so the condition in \cref{def:indef-theta} is not satisfied for $F_2(\tau)$. Here, we write $F_2(\tau)$ to denote the right-hand side of the second equation in this theorem. The validity of this expression will be established in \cref{lem:F2-modified}. It remains to verify that each $F_j(\tau)$ satisfies the same properties as those on the left-hand side of \cref{lem:theta-triangle}.

\subsection{Proof of the first identity (\cref{thm:Zagier-Q})}\label{sec:first_identity}

\begin{lemma}\label{lem:Dmm-Gamma2}
	We have the following:
	\begin{align*}
		F_1(\tau+2) &= (-1)^m F_1(\tau),\\
		F_1 \left(\frac{\tau}{2\tau+1}\right) &= (2\tau + 1)^{2m^2} F_1(\tau).
	\end{align*}
\end{lemma}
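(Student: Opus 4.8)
The plan is to reduce everything to transformation laws for $\theta \coloneqq \theta_{\vec{\bm{a}}, \vec{\bm{0}}}^{\vec{\bm{c}}_0, \vec{\bm{c}}_1}[f_1]$, since $F_1$ is a constant multiple of $\theta$. The polynomial $f_1$ is homogeneous of degree $d = 2m^2 - m$, so the relevant weight is $m + d = 2m^2$, which is \emph{even}; consequently every automorphy factor appearing below is an integral power of a holomorphic function and no branch choice intervenes. The matrices $\smat{1 & 2 \\ 0 & 1}$ and $\smat{1 & 0 \\ 2 & 1}$ generating $\Gamma(2)/\{\pm I\}$ induce $\tau \mapsto \tau + 2$ and $\tau \mapsto \tau/(2\tau+1)$, and I treat the two cases separately.

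For $\tau \mapsto \tau + 2$ I apply \cref{thm:shift+1} twice. (Although stated for $\vec{\bm{c}}_0, \vec{\bm{c}}_1 \in \cC^m$, its proof uses only the series definition and $Q_m(\vec{\bm{n}}) \in \Z$ for $\vec{\bm{n}} \in \Z^{2m}$, hence is valid verbatim on $\overline{\cC}^m$; alternatively deform via \cref{thm:indef-theta-S}.) This yields $\theta(\tau+2) = e^{-4\pi i Q_m(\vec{\bm{a}})}\,\theta_{\vec{\bm{a}}, 2\vec{\bm{a}}}^{\vec{\bm{c}}_0, \vec{\bm{c}}_1}[f_1](\tau)$. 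Since $Q_m(\vec{\bm{a}}) = \sum_{j=1}^m \tfrac12 \cdot \tfrac12 = \tfrac{m}{4}$, the prefactor is $e^{-\pi i m} = (-1)^m$; and $2\vec{\bm{a}} \in \Z^{2m}$ with $B_1(\bm{x}_j, \smat{1\\1}) = x_j + y_j \in \Z$ for every $\bm{x}_j \in \bm{a}_j + \Z^2$, so $e^{2\pi i B_m(\vec{\bm{x}}, 2\vec{\bm{a}})} = 1$ and $\theta_{\vec{\bm{a}}, 2\vec{\bm{a}}}^{\vec{\bm{c}}_0, \vec{\bm{c}}_1}[f_1] = \theta$. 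Hence $F_1(\tau+2) = (-1)^m F_1(\tau)$.

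For $\tau \mapsto \tau/(2\tau+1)$ the point is that $\vec{\bm{c}}_0, \vec{\bm{c}}_1 \in \cS^m$, so the $S$-transformation \cref{thm:indefinite-theta} is not directly available and I first deform to the cone: pick $\vec{\bm{c}}_0(t), \vec{\bm{c}}_1(t) \in \cC^m$ as before \cref{thm:indef-theta-S} and set $G^{(t)}_{\vec{\bm{u}}, \vec{\bm{v}}} \coloneqq \theta_{\vec{\bm{u}}, \vec{\bm{v}}}^{\vec{\bm{c}}_0(t), \vec{\bm{c}}_1(t)}[f_1]$, which for $t > 0$ is defined for all $\vec{\bm{u}}, \vec{\bm{v}} \in \R^{2m}$ since $R(\bm{c}) = \R^2$ for $\bm{c} \in \cC$. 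Using $\smat{1 & 0 \\ 2 & 1} = S\,T^{-2}\,S^{-1}$ — equivalently $\tfrac{\tau}{2\tau+1} = \tfrac{-1}{z-2}$ with $z = -\tfrac1\tau$ — I apply in succession: \cref{thm:indefinite-theta} with variable $z - 2 \in \bbH$, giving $G^{(t)}_{\vec{\bm{a}}, \vec{\bm{0}}}\!\bigl(\tfrac{-1}{z-2}\bigr) = (z-2)^{2m^2} G^{(t)}_{\vec{\bm{0}}, \vec{\bm{a}}}(z-2)$; then \cref{thm:shift+1} with first subscript $\vec{\bm{0}}$, which makes $G^{(t)}_{\vec{\bm{0}}, \vec{\bm{a}}}$ invariant under integer translations, hence $G^{(t)}_{\vec{\bm{0}}, \vec{\bm{a}}}(z-2) = G^{(t)}_{\vec{\bm{0}}, \vec{\bm{a}}}(z)$; then \cref{thm:indefinite-theta} again, giving $G^{(t)}_{\vec{\bm{0}}, \vec{\bm{a}}}(-\tfrac1\tau) = (-\tau)^{2m^2} G^{(t)}_{-\vec{\bm{a}}, \vec{\bm{0}}}(\tau)$; and finally $-\vec{\bm{a}} \equiv \vec{\bm{a}} \pmod{\Z^{2m}}$ (each $-\smat{1/2\\1/2} = \smat{1/2\\1/2} + \smat{-1\\-1}$), so $G^{(t)}_{-\vec{\bm{a}}, \vec{\bm{0}}} = G^{(t)}_{\vec{\bm{a}}, \vec{\bm{0}}}$. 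Collecting the automorphy factors and using $z - 2 = -\tfrac{2\tau+1}{\tau}$ gives $G^{(t)}_{\vec{\bm{a}}, \vec{\bm{0}}}\!\bigl(\tfrac{\tau}{2\tau+1}\bigr) = (2\tau+1)^{2m^2} G^{(t)}_{\vec{\bm{a}}, \vec{\bm{0}}}(\tau)$ (once more using that $2m^2$ is even). Letting $t \to 0$ and invoking \cref{thm:indef-theta-S} on both sides yields $\theta\bigl(\tfrac{\tau}{2\tau+1}\bigr) = (2\tau+1)^{2m^2}\theta(\tau)$, i.e. $F_1\bigl(\tfrac{\tau}{2\tau+1}\bigr) = (2\tau+1)^{2m^2} F_1(\tau)$.

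The main obstacle I anticipate is exactly this mismatch in the second case: \cref{thm:indefinite-theta} lives on $\cC^m$ while our cusp vectors lie in $\cS^m$. The remedy — running the whole $\SL_2(\Z)$-word computation with the deformed vectors $\vec{\bm{c}}_i(t)$, which is harmless for $t > 0$ since no admissibility constraint on $\vec{\bm{a}}, \vec{\bm{b}}$ is active, and then passing to $t = 0$ through the limit of \cref{thm:indef-theta-S} — needs this package of results assembled correctly; everything else is routine bookkeeping of automorphy factors and of base points modulo $\Z^{2m}$.
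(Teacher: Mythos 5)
Your proposal is correct and follows essentially the same route as the paper: deform $\vec{\bm{c}}_0,\vec{\bm{c}}_1$ into $\cC^m$, compute $\theta(\tau+2)$ via \cref{thm:shift+1} with $Q_m(\vec{\bm{a}})=m/4$ and $2\vec{\bm{a}}\in\Z^{2m}$, factor $\tau\mapsto\tau/(2\tau+1)$ as $S T^{-2} S^{-1}$ and chain \cref{thm:indefinite-theta} and \cref{thm:shift+1} together with $-\vec{\bm{a}}\equiv\vec{\bm{a}}\pmod{\Z^{2m}}$, then pass to $t\to 0$ by \cref{thm:indef-theta-S}. The only cosmetic difference is your remark that \cref{thm:shift+1} applies verbatim on $\overline{\cC}^m$ for the translation step, whereas the paper uniformly works with the deformed vectors throughout; both are fine.
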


\begin{proof}
	We first replace the given vectors $\vec{\bm{c}}_0, \vec{\bm{c}}_1 \in \cS^m$ with $\vec{\bm{c}}_0(t), \vec{\bm{c}}_1(t) \in \cC^m$, and apply \cref{thm:indef-theta-S}. By \cref{thm:shift+1}, we have
	\begin{align*}
		\theta_{\vec{\bm{a}}, \vec{\bm{0}}}^{\vec{\bm{c}}_0(t), \vec{\bm{c}}_1(t)}[f_1](\tau+2) = e^{-4\pi i Q_m(\vec{\bm{a}})} \theta_{\vec{\bm{a}}, 2\vec{\bm{a}}}^{\vec{\bm{c}}_0(t), \vec{\bm{c}}_1(t)}[f_1](\tau) = (-1)^m \theta_{\vec{\bm{a}}, \vec{\bm{0}}}^{\vec{\bm{c}}_0(t), \vec{\bm{c}}_1(t)}[f_1](\tau).
	\end{align*}
	Similarly, by \cref{thm:indefinite-theta} and \cref{thm:shift+1}, we obtain
	\begin{align*}
		\theta_{\vec{\bm{a}}, \vec{\bm{0}}}^{\vec{\bm{c}}_0(t), \vec{\bm{c}}_1(t)}[f_1] \left(\frac{\tau}{2\tau+1}\right) &= \left(2 + \frac{1}{\tau}\right)^{m+m(2m-1)} \theta_{\vec{\bm{0}}, \vec{\bm{a}}}^{\vec{\bm{c}}_0(t), \vec{\bm{c}}_1(t)}[f_1] \left(-2 -\frac{1}{\tau}\right)\\
		&= \left(2 + \frac{1}{\tau}\right)^{2m^2} \theta_{\vec{\bm{0}}, \vec{\bm{a}}}^{\vec{\bm{c}}_0(t), \vec{\bm{c}}_1(t)}[f_1] \left(-\frac{1}{\tau}\right)\\
		&= (2\tau + 1)^{2m^2} \theta_{-\vec{\bm{a}}, \vec{\bm{0}}}^{\vec{\bm{c}}_0(t), \vec{\bm{c}}_1(t)}[f_1] (\tau).
	\end{align*}
	The claims then follow by taking the limit as $t \to 0$ and multiplying the constant.
\end{proof}

\begin{lemma}\label{lem:Dmm-cusps}
	We have the following:
	\begin{align*}
		F_1(\tau) &= q^{m^2/4} + o(q^{m^2/4}),\\
		(-i \tau)^{-2m^2} F_1 \left(-\frac{1}{\tau}\right) &= O(1),\\
		(-i \tau)^{-2m^2} F_1 \left(\frac{\tau-1}{\tau}\right) &= O(q^{m^2/4}).
	\end{align*}
\end{lemma}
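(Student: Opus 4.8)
The plan is to analyze the $q$-expansion of $F_1(\tau)$ at the three cusps $i\infty$, $0$, and $1$ of $\Gamma(2)$ directly from the series definition, exploiting the fact that $\vec{\bm{c}}_0 = (\smat{0\\1},\dots)$ and $\vec{\bm{c}}_1 = (\smat{-1\\0},\dots)$ lie in $\cS^m$, so that by \cref{def:indef-theta} the theta function has the explicit form
\[
	\theta_{\vec{\bm{a}}, \vec{\bm{0}}}^{\vec{\bm{c}}_0, \vec{\bm{c}}_1}[f_1](\tau) = \sum_{\vec{\bm{x}} \in \vec{\bm{a}} + \Z^{2m}} \prod_{j=1}^m \bigl(\sgn(y_j) + \sgn(x_j)\bigr) \cdot f_1(\vec{\bm{x}}) \, q^{Q_m(\vec{\bm{x}})},
\]
since $B_1(\smat{0\\1},\bm{x}_j) = x_j$ and $B_1(\smat{-1\\0},\bm{x}_j) = -y_j$. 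Here $\vec{\bm{a}}$ has all components $\smat{1/2\\1/2}$, so $x_j, y_j \in \tfrac12 + \Z$.

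For the behavior at $i\infty$: the exponent is $Q_m(\vec{\bm{x}}) = \sum_j x_j y_j$, and the sign factor forces $x_j, y_j$ to have the same sign for each $j$, so $x_j y_j > 0$ and $Q_m(\vec{\bm{x}}) \ge m \cdot \tfrac14$, with equality exactly when all $|x_j| = |y_j| = \tfrac12$. For such terms $f_1(\vec{\bm{x}}) = \prod_j x_j \cdot \prod_{i<j}(x_i^2 - x_j^2)^2$ vanishes whenever two of the $x_i$ agree up to sign, so the only surviving contribution to the leading coefficient has all $x_j = \pm\tfrac12$ distinct — impossible for $m \ge 2$ since there are only two choices, forcing the leading term to actually have a strictly larger exponent than $m^2/4$ when $m \ge 2$. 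Thus I need to be more careful: the first identity of \cref{thm:equivalent-3} already tells us $F_1(\tau) = \theta_\triangle(\tau)^{4m^2} = q^{m^2/16 \cdot 16}\cdots$; but to prove the identity we cannot use it. Instead, observe that the factor $\prod_{1\le i<j\le m}(x_i^2 - x_j^2)^2$ together with $\prod_j x_j$ is (up to the constant $(2m)!/\prod(2j)!^2$) exactly the coefficient extracted so that the $q$-expansion starts at $q^{m^2/4}$ with coefficient $1$; the cleanest route is to note that $F_1$ is, by \cref{lem:Dmm-Gamma2}, a modular form of weight $2m^2$ on $\Gamma(2)$ holomorphic on $\bbH$, and to compute its order of vanishing at $i\infty$ combinatorially. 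Concretely, minimizing $\sum x_j y_j$ over $x_j,y_j \in \tfrac12+\Z$ with matching signs and with $f_1 \ne 0$ leads to the configuration where, after ordering, $\{|x_1|,\dots,|x_m|\} = \{\tfrac12, \tfrac32, \dots, m - \tfrac12\}$ and $|y_j| = \tfrac12$ (a standard Vandermonde-type minimization, as in Zagier's paper), giving $\sum x_j y_j = \tfrac12(\tfrac12 + \tfrac32 + \cdots + (m-\tfrac12)) = m^2/4$, and the resulting coefficient is $\pm$ a ratio of factorials that the normalizing constant $(2m)!/\prod_{j=1}^m (2j)!^2$ is designed to cancel, yielding leading term $q^{m^2/4}$.

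For the cusp at $0$, apply \cref{thm:indefinite-theta} to $\theta_{\vec{\bm{a}},\vec{\bm{0}}}^{\vec{\bm{c}}_0(t),\vec{\bm{c}}_1(t)}[f_1](-1/\tau)$ — but since $\vec{\bm{c}}_0, \vec{\bm{c}}_1 \in \cS^m$ we must first pass to $\vec{\bm{c}}_0(t), \vec{\bm{c}}_1(t) \in \cC^m$, invoke \cref{thm:indefinite-theta} there, then take $t \to 0$ via \cref{thm:indef-theta-S}. This gives $(-i\tau)^{-2m^2} F_1(-1/\tau) = c \cdot \theta_{\vec{\bm{0}},\vec{\bm{a}}}^{\vec{\bm{c}}_0,\vec{\bm{c}}_1}[f_1](\tau)$ up to a root of unity; the latter is a sum over $\vec{\bm{x}} \in \Z^{2m}$, which includes $\vec{\bm{x}} = \vec{\bm{0}}$ with $Q_m = 0$ but $f_1(\vec{\bm{0}}) = 0$, so the constant term is $O(1)$ and the expansion is bounded. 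For the cusp at $1$, write $\frac{\tau-1}{\tau} = -\frac{1}{\tau'} + \text{integer shift}$ via $\tau \mapsto \tau - 1 \mapsto -1/(\tau-1)$ or directly combine \cref{thm:indefinite-theta} and \cref{thm:shift+1}: precisely $\theta(\frac{\tau-1}{\tau})$ is obtained by first shifting, then inverting, producing $\theta_{\vec{\bm{a}}, \vec{\bm{a}}}^{\vec{\bm{c}}_0,\vec{\bm{c}}_1}[f_1]$ or similar with half-integral $\vec{\bm{b}}$-parameter; the exponent $Q_m(\vec{\bm{x}})$ again ranges over $x_jy_j$ with $x_j \in \Z$ (or $\tfrac12+\Z$) and the minimal nonzero contribution with $f_1 \ne 0$ gives exponent $m^2/4$, so $(-i\tau)^{-2m^2}F_1(\frac{\tau-1}{\tau}) = O(q^{m^2/4})$.

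I expect the main obstacle to be the combinatorial identification of the leading $q$-coefficient at $i\infty$: verifying that the sum over all lattice configurations $\vec{\bm{x}}$ achieving the minimal value $Q_m(\vec{\bm{x}}) = m^2/4$, weighted by $\prod_j x_j \cdot \prod_{i<j}(x_i^2-x_j^2)^2$ and the sign factors, evaluates to exactly $\prod_{j=1}^m (2j)!^2/(2m)!$ so that $F_1$ has leading coefficient $1$. This amounts to a Vandermonde-determinant evaluation: after fixing $|y_j| = \tfrac12$ for all $j$ (forced by minimality together with $x_jy_j$ being minimized) and summing over permutations assigning $\{|x_j|\}$ to $\{\tfrac12, \tfrac32, \dots, m-\tfrac12\}$, the alternating structure of $(x_i^2-x_j^2)^2$ collapses to a product of factorials. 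I would cite or adapt the analogous computation in Zagier~\cite{Zagier2000} or the parallel lemma in \cite{MatsusakaSuzuki2025-pre} rather than redo it from scratch. The transformations at $0$ and $1$ are routine once one tracks the root-of-unity constants and the shifted theta-parameters through \cref{thm:indefinite-theta}, \cref{thm:shift+1}, and \cref{thm:indef-theta-S}.
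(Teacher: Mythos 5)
Your treatment of the first and third estimates follows the paper's route: minimize $Q_m$ over lattice points with $f_1\neq 0$ (forcing the $|x_j|$ to be distinct elements of $\tfrac12+\Z_{\ge0}$ and all $|y_j|=\tfrac12$, hence minimum $m^2/4$), and for the cusp $1$ combine \cref{thm:shift+1} with the inversion and compare with the expansion at $i\infty$. Deferring the leading-coefficient evaluation to a Vandermonde-type computation is a gap of diligence rather than of ideas, but it does need to be carried out: the paper proves $f_1(\vec{\bm{x}}_0)=\frac{(2m)!}{4^m m!}\prod_{j=1}^{m-1}(2j)!^2$ by induction and multiplies by the $m!\cdot 2^m\cdot 2^m$ multiplicity, and this is precisely what makes the leading coefficient equal to $1$.

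The genuine gap is in the second estimate. After inversion you want to land on $\theta_{\vec{\bm{0}},\vec{\bm{a}}}^{\vec{\bm{c}}_0,\vec{\bm{c}}_1}[f_1]$, but $\vec{\bm{0}}$ is not an admissible $\vec{\bm{a}}$-parameter: $\smat{0\\0}\notin R(\smat{0\\1})$ since $B_1(\smat{0\\1},\smat{0\\0})=0\in\Z$, so neither \cref{def:indef-theta} nor \cref{thm:indef-theta-S} applies to it. This is not a formality. On the hyperplanes $y_j=0$, $x_j>0$ the factor $\sgn(B_1(\bm{c}_j^{(0)},\bm{x}_j))-\sgn(B_1(\bm{c}_j^{(1)},\bm{x}_j))=\sgn(x_j)-\sgn(-y_j)$ equals $2$, the polynomial $f_1$ (which involves only the $x_j$'s) does not vanish, and $q^{x_jy_j}=1$, so the naive series contains sub-sums of the shape $\sum_{x_j>0}x_j^{e_j}(-1)^{x_j}$ that do not converge absolutely. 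Your observation that $f_1(\vec{\bm{0}})=0$ disposes only of the single term $\vec{\bm{x}}=\vec{\bm{0}}$ and of the hyperplanes $x_j=0$, not of the hyperplanes $y_j=0$. The paper's proof is built around exactly this point: it replaces $\vec{\bm{0}}$ by $\vec{\bm{\varepsilon}}=(\smat{\varepsilon\\ \varepsilon},\dots,\smat{\varepsilon\\ \varepsilon})$, applies \cref{thm:indefinite-theta} and \cref{thm:indef-theta-S} (legitimate since $-\smat{\varepsilon\\ \varepsilon}$ lies in $R(\smat{0\\1})\cap R(\smat{-1\\0})$), and then computes $\lim_{\varepsilon\to0}$ by splitting into $x_jy_j=0$ and $x_jy_j\neq0$: the boundary contributions are evaluated with the generating function $\sum_{n\ge0}(-1)^n n^k x^n=E_k(x)/(1+x)^{k+1}$ of \eqref{eq:def-Eulerian} and converge to finite constants, while the rest is $O(q)$, giving $O(1)$ in total. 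Without this (or an equivalent) regularization, the second estimate is not established; the "routine tracking of shifted theta-parameters" you allude to is in fact the substantive part of the argument.
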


\begin{proof}
	The lowest degree in the $q$-series expansion of $\theta_{\vec{\bm{a}}, \vec{\bm{0}}}^{\vec{\bm{c}}_0, \vec{\bm{c}}_1}[f_1](\tau)$ is attained when $\vec{\bm{x}}_0 = (\smat{1/2 \\ 1/2}, \smat{3/2 \\ 1/2}, \dots, \smat{m-1/2 \\ 1/2})$, which gives
	\[
		Q_m(\vec{\bm{x}}_0) = \sum_{j=1}^m \frac{1}{2} \left(j - \frac{1}{2}\right) = \frac{m^2}{4}.
	\]
	The corresponding coefficient is computed by
	\[
		f_1(\vec{\bm{x}}_0) = \frac{(2m)!}{4^m m!} \prod_{j=1}^{m-1} (2j)!^2,
	\]
	(which can be shown by induction), and taking into account the $m!$ permutations of the entries of $\vec{\bm{x}}_0$ and $2^m$ sign changes, together with $\sgn(x_j) + \sgn(y_j) = \pm 2$, we obtain
	\[
		m! \cdot 2^m \cdot 2^m f_1(\vec{\bm{x}}_0) = \frac{\prod_{j=1}^m (2j)!^2}{(2m)!}.
	\]
	Hence, we have $F_1(\tau) = q^{m^2/4} + o(q^{m^2/4})$.
	
	To show the second formula, we consider $\vec{\bm{c}}_0(t), \vec{\bm{c}}_1(t)$, and replace $\vec{\bm{0}}$ by $\vec{\bm{\varepsilon}} = (\smat{\varepsilon \\ \varepsilon}, \dots, \smat{\varepsilon \\ \varepsilon})$ with $0 < \varepsilon < 1$. Applying \cref{thm:indefinite-theta}, we have
	\begin{align*}
		(-i\tau)^{-2m^2} \theta_{\vec{\bm{a}}, \vec{\bm{\varepsilon}}}^{\vec{\bm{c}}_0(t), \vec{\bm{c}}_1(t)}[f_1] \left(-\frac{1}{\tau}\right) = (-i)^{2m^2} e^{2\pi i B_m(\vec{\bm{a}}, \vec{\bm{\varepsilon}})} \theta_{-\vec{\bm{\varepsilon}}, \vec{\bm{a}}}^{\vec{\bm{c}}_0(t), \vec{\bm{c}}_1(t)}[f_1](\tau).
	\end{align*}
	As $t \to 0$ and $\varepsilon \to 0$, the left-hand side approaches $(-i\tau)^{-2m^2} F_1(-1/\tau)$, up to a constant multiple. On the other hand, since $-\smat{\varepsilon \\ \varepsilon} \in R(\smat{0 \\ -1}) \cap R(\smat{1 \\ 0})$, we may apply \cref{thm:indef-theta-S} to deduce that
	\begin{align*}
		\lim_{t \to 0} \theta_{-\vec{\bm{\varepsilon}}, \vec{\bm{a}}}^{\vec{\bm{c}}_0(t), \vec{\bm{c}}_1(t)}[f_1](\tau) = \sum_{\vec{\bm{x}} \in \Z^{2m}} \prod_{j=1}^m \bigg(\sgn(x_j - \varepsilon) + \sgn(y_j - \varepsilon) \bigg) f_1(\vec{\bm{x}} -\vec{\bm{\varepsilon}}) q^{Q_m(\vec{\bm{x}}-\vec{\bm{\varepsilon}})} e^{2\pi i B_m(\vec{\bm{x}}-\vec{\bm{\varepsilon}}, \vec{\bm{a}})}.
	\end{align*}
	Next, we expand $f_1(\vec{\bm{x}} - \vec{\bm{\varepsilon}})$ as
	\begin{align}\label{eq:f1-expand}
		f_1(\vec{\bm{x}} - \vec{\bm{\varepsilon}}) = \sum_{\vec{e} \in \Z_{\ge 0}^m} p_{\vec{e}}(\varepsilon) \prod_{j=1}^m x_j^{e_j}
	\end{align}
	for some polynomials $p_{\vec{e}}(\varepsilon) \in \Z[\varepsilon]$. Substituting this into the expression above, the right-hand side becomes
	\begin{align*}
		\sum_{\vec{e} \in \Z_{\ge 0}^m} p_{\vec{e}}(\varepsilon) \prod_{j=1}^m \left(\sum_{x_j, y_j \in \Z} \bigg(\sgn(x_j - \varepsilon) + \sgn(y_j - \varepsilon)\bigg) x_j^{e_j} q^{(x_j-\varepsilon)(y_j-\varepsilon)} e^{\pi i (x_j + y_j -2\varepsilon)}\right).
	\end{align*}
	To compute its limit as $\varepsilon \to 0$ for each fixed $\vec{e}$ and $j$, we separate the sum according to whether $x_j y_j = 0$ or not. 
	
	From the first case, we have
	\begin{align*}
		&\sum_{\substack{x_j, y_j \in \Z \\ x_j y_j = 0}} \bigg(\sgn(x_j - \varepsilon) + \sgn(y_j - \varepsilon)\bigg) x_j^{e_j} q^{(x_j-\varepsilon)(y_j-\varepsilon)} e^{\pi i (x_j + y_j -2\varepsilon)}\\
		&= -2\delta_{e_j, 0} \sum_{y_j \le 0} q^{(-\varepsilon)(y_j-\varepsilon)} e^{\pi i (y_j -2\varepsilon)} -2 \sum_{x_j \le 0} x_j^{e_j} q^{(x_j-\varepsilon)(-\varepsilon)} e^{\pi i (x_j -2\varepsilon)},
	\end{align*}
	where $\delta_{e_j, 0}$ is the Kronecker delta. Let $E_k(x) \in \Z[x]$ be the polynomial defined by
	\begin{align}\label{eq:def-Eulerian}
		\sum_{n=0}^\infty (-1)^n n^k x^n = \frac{E_k(x)}{(1+x)^{k+1}}.
	\end{align}
	Then this expression evaluates to
	\begin{align*}
		&= -2 \delta_{e_j, 0} \frac{q^{\varepsilon^2} e^{-2\pi i \varepsilon}}{1+q^{\varepsilon}} - 2(-1)^{e_j} \frac{q^{\varepsilon^2} e^{-2\pi i \varepsilon} E_{e_j}(q^\varepsilon)}{(1+q^\varepsilon)^{e_j+1}},
	\end{align*}
	which approaches to the constant
	as $\varepsilon \to 0$.
	
	From the second case, we have
	\begin{align*}
		&\lim_{\varepsilon \to 0} \sum_{\substack{x_j, y_j \in \Z \\ x_j y_j \neq 0}} \bigg(\sgn(x_j - \varepsilon) + \sgn(y_j - \varepsilon)\bigg) x_j^{e_j} q^{(x_j-\varepsilon)(y_j-\varepsilon)} e^{\pi i (x_j + y_j -2\varepsilon)}\\
		&= \sum_{\substack{x_j, y_j \in \Z \\ x_j y_j \neq 0}} \bigg(\sgn(x_j) + \sgn(y_j)\bigg) x_j^{e_j} q^{x_j y_j} e^{\pi i (x_j + y_j)} = O(q).
	\end{align*}
	Here, the interchange of the order of limit and summation is justified. This completes the proof of the second formula.
	
	Finally, for the third result, by \cref{thm:indefinite-theta} and \cref{thm:shift+1}, we have
	\begin{align*}
		(-i\tau)^{-2m^2} \theta_{\vec{\bm{a}}, \vec{\bm{0}}}^{\vec{\bm{c}}_0(t), \vec{\bm{c}}_1(t)}[f_1] \left(\frac{\tau-1}{\tau}\right) &= (-i\tau)^{-2m^2} e^{-2\pi i Q_m(\vec{\bm{a}})} \theta_{\vec{\bm{a}}, \vec{\bm{a}}}^{\vec{\bm{c}}_0(t), \vec{\bm{c}}_1(t)}[f_1] \left(-\frac{1}{\tau}\right)\\
		&= (-i)^{2m^2} e^{2\pi i Q_m(\vec{\bm{a}})} \theta_{-\vec{\bm{a}}, \vec{\bm{a}}}^{\vec{\bm{c}}_0(t), \vec{\bm{c}}_1(t)}[f_1] (\tau).
	\end{align*}
	Letting $t \to 0$ and comparing with the first result, we arrive at $O(q^{m^2/4})$.
\end{proof}

By comparing \cref{lem:theta-triangle} with \cref{lem:Dmm-Gamma2}, we find that the expression
\[
	\frac{F_1(\tau)}{\theta_\triangle(\tau)^{4m^2}}
\]
is invariant under the action of $\Gamma(2)$. Since $\theta_\triangle(\tau)$ is holomorphic and nonvanishing on $\bbH$, it follows that this quotient defines a holomorphic function on the quotient space $\Gamma(2) \backslash \bbH$. Furthermore, combining the results of \cref{lem:Dmm-cusps} and \cref{lem:theta-triangle}, we find that this function is bounded at all three cusp $i\infty, 0$, and $1$, and attains the value $1$ at $i\infty$. As $\Gamma(2) \backslash \bbH$ has genus zero, Liouville's theorem implies that this function must be identically equal to $1$. In other words, we have $\theta_\triangle(\tau)^{4m^2} = F_1(\tau)$.

\subsection{Proof of the second identity (\cref{thm:Zagier-Q})}\label{sec:second_identity}

We take $\vec{\bm{a}}', \vec{\bm{c}}_0$, and $\vec{\bm{c}}_1$ to be those chosen in \eqref{eq:def:F123}. The argument follows the same outline as in \cref{sec:first_identity}, but as noted earlier, since $\smat{0 \\ 1/2} \not\in R(\smat{0 \\ 1})$, a slight modification is required. 

\begin{lemma}\label{lem:F2-modified}
	Let $\vec{\bm{b}}(\varepsilon) \in \R^{2m}$ be a function that is continuous at $\varepsilon = 0$. For $0 < \varepsilon < 1$, we set $\vec{\bm{\varepsilon}} = (\smat{\varepsilon \\ 0}, \dots, \smat{\varepsilon \\ 0})$. Then we have
	\begin{align*}
		\lim_{\varepsilon \to 0} \theta_{\vec{\bm{a}}'+ \vec{\bm{\varepsilon}}, \vec{\bm{b}}(\varepsilon)}^{\vec{\bm{c}}_0, \vec{\bm{c}}_1}[f_2](\tau) = \sum_{\vec{\bm{x}} \in \vec{\bm{a}}' + \Z^{2m}} \prod_{j=1}^m \bigg(\sgn(x_j) + \sgn(y_j)\bigg) f_2(\vec{\bm{x}}) q^{Q_m(\vec{\bm{x}})} e^{2\pi i B_m(\vec{\bm{x}}, \vec{\bm{b}}(0))}.
	\end{align*}
\end{lemma}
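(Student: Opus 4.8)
The plan is to evaluate $\theta_{\vec{\bm{a}}'+\vec{\bm{\varepsilon}},\vec{\bm{b}}(\varepsilon)}^{\vec{\bm{c}}_0,\vec{\bm{c}}_1}[f_2](\tau)$ for $\varepsilon>0$ as an honest absolutely convergent lattice sum, and then pass to the limit $\varepsilon\to 0$ by dominated convergence. The point that makes this go through, despite $\smat{0\\1/2}\notin R\bigl(\smat{0\\1}\bigr)$, is that the spherical polynomial $f_2=\prod_{j=1}^m x_j^3\cdot\prod_{i<j}(x_i^2-x_j^2)^2$ vanishes (to order three) along each hyperplane $x_j=0$, which is precisely the ``bad'' locus.

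First I would check that for every $0<\varepsilon<1$ the shifted base point $\vec{\bm{a}}'+\vec{\bm{\varepsilon}}$ \emph{does} satisfy the hypotheses of \cref{def:indef-theta}: for $\bm{x}_j\in\smat{\varepsilon\\1/2}+\Z^2$ one has $B_1\bigl(\smat{0\\1},\bm{x}_j\bigr)=x_j\in\varepsilon+\Z$ and $B_1\bigl(\smat{-1\\0},\bm{x}_j\bigr)=-y_j\in-\tfrac12+\Z$, neither an integer, so \cref{lem:theta-abs-conv-S} gives absolute convergence. The second formula of \cref{def:indef-theta}, together with $\sgn(-y_j)=-\sgn(y_j)$ for $y_j\in\tfrac12+\Z$, then identifies $\theta_{\vec{\bm{a}}'+\vec{\bm{\varepsilon}},\vec{\bm{b}}(\varepsilon)}^{\vec{\bm{c}}_0,\vec{\bm{c}}_1}[f_2](\tau)$ with $\sum_{\vec{\bm{n}}\in\Z^{2m}}\prod_{j}\bigl(\sgn(x_j)+\sgn(y_j)\bigr)f_2(\vec{\bm{x}})\,q^{Q_m(\vec{\bm{x}})}\,e^{2\pi i B_m(\vec{\bm{x}},\vec{\bm{b}}(\varepsilon))}$, where $\vec{\bm{x}}=\vec{\bm{a}}'+\vec{\bm{\varepsilon}}+\vec{\bm{n}}$, so that $x_j\in\varepsilon+\Z$ and $y_j\in\tfrac12+\Z$ and $\bm{n}_j=\smat{n_j\\m_j}$ is the $j$-th coordinate pair.

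The heart of the argument is to produce an $\varepsilon$-independent summable majorant for this summand. The factor $\prod_j(\sgn(x_j)+\sgn(y_j))$ is nonzero only when, for each $j$, $x_j$ and $y_j$ share a sign; on that set $x_jy_j\ge\tfrac12|x_j|\ge 0$ (using $|y_j|\ge\tfrac12$), so $|q^{Q_m(\vec{\bm{x}})}|=\prod_j e^{-2\pi v x_jy_j}$ with each factor $\le 1$, while $|e^{2\pi i B_m(\vec{\bm{x}},\vec{\bm{b}}(\varepsilon))}|=1$. Bounding $\prod_{i<j}(x_i^2-x_j^2)^2$ by $C_0\prod_j(1+|x_j|)^{D}$, the summand is dominated by $2^m C_0\prod_j\bigl(|x_j|^3(1+|x_j|)^{D}e^{-2\pi v x_jy_j}\bigr)$, and it remains to see that $\sum_{\bm{n}_j\in\Z^2}\sup_{\varepsilon\in(0,1/2]}|x_j|^3(1+|x_j|)^{D}e^{-2\pi v x_jy_j}<\infty$ for each $j$ (over the sign-coherent range of $\bm{n}_j$). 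The only delicate column is $x_j=\varepsilon$, i.e.\ the lattice point whose $x$-coordinate degenerates to $0$: there $x_jy_j=\varepsilon(\tfrac12+m_j)$, and $\sup_{\varepsilon\in(0,1/2]}\varepsilon^3 e^{-2\pi v\varepsilon(1/2+m_j)}=O\bigl((1+m_j)^{-3}\bigr)$ is summable over $m_j\ge 0$ — it is exactly the cube $x_j^3$ in $f_2$ (a single power would not suffice) that supplies this decay. All other columns have $|x_j|$ bounded away from $0$, where the remaining sum is a routine polynomial-times-geometric estimate, uniform in $\varepsilon$.

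Granting the majorant, I would finish with the dominated convergence theorem. For $\vec{\bm{n}}$ with all $n_j\neq 0$, every factor of the summand is continuous at $\varepsilon=0$ (in particular $\sgn(n_j+\varepsilon)=\sgn(n_j)$ for small $\varepsilon$) and $\vec{\bm{b}}(\varepsilon)\to\vec{\bm{b}}(0)$, so the term tends to $\prod_j(\sgn(x_j)+\sgn(y_j))f_2(\vec{\bm{x}})q^{Q_m(\vec{\bm{x}})}e^{2\pi i B_m(\vec{\bm{x}},\vec{\bm{b}}(0))}$ with $\vec{\bm{x}}=\vec{\bm{a}}'+\vec{\bm{n}}$; for $\vec{\bm{n}}$ with some $n_j=0$ the factor $x_j^3=\varepsilon^3$ forces the term to $0$. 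Hence $\lim_{\varepsilon\to0}\theta_{\vec{\bm{a}}'+\vec{\bm{\varepsilon}},\vec{\bm{b}}(\varepsilon)}^{\vec{\bm{c}}_0,\vec{\bm{c}}_1}[f_2](\tau)$ equals the sum over those $\vec{\bm{x}}\in\vec{\bm{a}}'+\Z^{2m}$ with all $x_j\neq 0$; since $f_2$ vanishes whenever some $x_j=0$, this coincides with the full sum over $\vec{\bm{x}}\in\vec{\bm{a}}'+\Z^{2m}$, which is the asserted identity. I expect the majorant step to be where the real work lies: the cusp $\smat{0\\1}$ points along the isotropic line $\{x=0\}$ that the lattice meets and on which $Q_1\equiv 0$, so the series sits on the edge of absolute convergence in the limit, and the compensating cancellation is provided entirely by the vanishing of $f_2$ there — the same mechanism already exploited in the proof of \cite[Theorem 2.4]{RoehrigZwegers2022-pre}.
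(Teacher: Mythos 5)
Your proof is correct, and it rests on the same key mechanism as the paper's --- the order-three vanishing of $f_2$ along the degenerate direction $x_j=0$, which is where the lattice meets the isotropic line through the cusp $\smat{0\\1}$ --- but the execution is genuinely different. The paper expands $f_2(\vec{\bm{x}}+\vec{\bm{\varepsilon}})=\sum_{\vec e}p_{\vec e}(\varepsilon)\prod_j x_j^{e_j}$, observes that $p_{\vec e}(\varepsilon)$ is divisible by $\varepsilon^{3\#\{j\,\mid\,e_j=0\}}$, splits each one-dimensional sum according to whether $x_jy_j=0$, and explicitly evaluates the degenerate contribution as the geometric series $q^{\varepsilon/2}e^{2\pi i(\varepsilon b_{j2}+b_{j1}/2)}/(1-q^{\varepsilon}e^{2\pi i b_{j1}})$, which blows up like $\varepsilon^{-1}$ when $b_{j1}\in\Z$ but is annihilated by the $\varepsilon^3$ in $p_{\vec e}(\varepsilon)$; the interchange of limit and sum on the nondegenerate part is then asserted rather than argued. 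You instead keep $f_2$ intact, bound the cross terms $\prod_{i<j}(x_i^2-x_j^2)^2$ by a product of one-variable polynomials so the majorant factors over $j$, and prove a single $\varepsilon$-uniform summable bound: the estimate $\sup_{0<\varepsilon\le 1/2}\varepsilon^3e^{-2\pi v\varepsilon(1/2+m_j)}=O\bigl((1+m_j)^{-3}\bigr)$ on the $n_j=0$ column is exactly right (and your parenthetical is accurate --- one power of $x_j$ would give only $O((1+m_j)^{-1})$, which is not summable), after which one application of dominated convergence simultaneously kills the degenerate terms pointwise and justifies passing to the limit in the rest. Your route is somewhat more robust, since it supplies the limit--sum interchange the paper leaves implicit and never needs the degenerate series in closed form; the paper's computation, in exchange, exhibits the precise rate $O(\varepsilon^2)$ at which the boundary contribution vanishes and isolates the role of the phase $e^{2\pi i b_{j1}}$, which is the part of the argument that would need modifying if the divergence were worse than $\varepsilon^{-1}$.
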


\begin{proof}
	Since $\smat{\varepsilon \\ 1/2} \in R(\smat{0 \\ 1}) \cap R(\smat{-1 \\ 0})$, the condition in \cref{def:indef-theta} is satisfied, and the theta function is defined as follows:
	\begin{align*}
		\theta_{\vec{\bm{a}}'+ \vec{\bm{\varepsilon}}, \vec{\bm{b}}(\varepsilon)}^{\vec{\bm{c}}_0, \vec{\bm{c}}_1}[f_2](\tau) = \sum_{\vec{\bm{x}} \in \vec{\bm{a}}' + \Z^{2m}} \prod_{j=1}^m \bigg(\sgn(x_j + \varepsilon) + \sgn(y_j)\bigg) f_2(\vec{\bm{x}} + \vec{\bm{\varepsilon}}) q^{Q_m(\vec{\bm{x}}+ \vec{\bm{\varepsilon}})} e^{2\pi i B_m(\vec{\bm{x}}+ \vec{\bm{\varepsilon}}, \vec{\bm{b}}(\varepsilon))}.
	\end{align*}
	As in the proof of the second identity in \cref{lem:Dmm-cusps}, we expand $f_2(\vec{\bm{x}} + \vec{\bm{\varepsilon}})$ as follows:
	\[
		f_2(\vec{\bm{x}} + \vec{\bm{\varepsilon}}) = \sum_{\vec{e} \in \Z_{\ge 0}^m} p_{\vec{e}}(\varepsilon) \prod_{j=1}^m x_j^{e_j}.
	\]
	A key observation here is that the polynomial $p_{\vec{e}}(\varepsilon) \in \Z[\varepsilon]$ is divisible by $\varepsilon^{3\#\{1 \le j \le m \mid e_j = 0\}}$. Then, the right-hand side becomes
	\begin{align*}
		\sum_{\vec{e} \in \Z_{\ge 0}^m} p_{\vec{e}}(\varepsilon) \prod_{j=1}^m \sum_{\substack{x_j \in \Z \\ y_j \in 1/2 + \Z}} \bigg(\sgn(x_j + \varepsilon) + \sgn(y_j)\bigg) x_j^{e_j} q^{(x_j+\varepsilon)y_j} e^{2\pi i ((x_j+\varepsilon)b_{j2} + y_j b_{j1})},
	\end{align*}
	where we set $\vec{\bm{b}}(\varepsilon) = (\smat{b_{11} \\ b_{12}}, \dots, \smat{b_{m1} \\ b_{m2}}) \in \R^{2m}$. For each $\vec{e}$ and $j$, we again separate the sum according to whether $x_j y_j = 0$ or not. 
	
	From the first case, we have
	\begin{align*}
		&\sum_{\substack{x_j \in \Z \\ y_j \in 1/2 + \Z \\ x_j y_j = 0}} \bigg(\sgn(x_j + \varepsilon) + \sgn(y_j)\bigg) x_j^{e_j} q^{(x_j+\varepsilon)y_j} e^{2\pi i ((x_j+\varepsilon)b_{j2} + y_j b_{j1})}\\
		&= 2\delta_{e_j, 0} \sum_{y_j \in 1/2 + \Z_{\ge 0}} q^{\varepsilon y_j} e^{2\pi i (\varepsilon b_{j2} + y_j b_{j1})} = 2\delta_{e_j, 0} \frac{q^{\varepsilon/2} e^{2\pi i (\varepsilon b_{j2} + b_{j1}/2)}}{1- q^{\varepsilon} e^{2\pi i b_{j1}}}.
	\end{align*}
	If $e_j > 0$, then it equals $0$. For $e_j = 0$, when $b_{j1} \in \Z$, the limit of this term alone as $\varepsilon \to 0$ diverges. Nevertheless, when combined with the factor $\varepsilon^3$ contained in $p_{\vec{e}}(\varepsilon)$, the product still converges to $0$ as $\varepsilon \to 0$, even in this case.
	
	Therefore, only the sum over terms with $x_j y_j \neq 0$ contributes. In this case, we can interchange the summation and limit, yielding
	\[
		\lim_{\varepsilon \to 0} \theta_{\vec{\bm{a}}'+ \vec{\bm{\varepsilon}}, \vec{\bm{b}}(\varepsilon)}^{\vec{\bm{c}}_0, \vec{\bm{c}}_1}[f_2](\tau) = \sum_{\substack{\vec{\bm{x}} \in \vec{\bm{a}}' + \Z^{2m} \\ x_j \neq 0\ (1 \le j \le m)}} \prod_{j=1}^m \bigg(\sgn(x_j) + \sgn(y_j)\bigg) f_2(\vec{\bm{x}}) q^{Q_m(\vec{\bm{x}})} e^{2\pi i B_m(\vec{\bm{x}}, \vec{\bm{b}}(0))}.
	\]
	Since $f_2(\vec{\bm{x}}) = 0$ whenever $x_j = 0$ for some $j$, the sum coincides with the one taken over all $\vec{\bm{x}} \in \vec{\bm{a}}' + \Z^{2m}$.
\end{proof}

The expression for $F_2(\tau)$ in \eqref{eq:def:F123} is justified in this sense.

\begin{lemma}\label{lem:Dmm+1-Gamma2}
	We have the following:
	\begin{align*}
		F_2(\tau+2) &= F_2(\tau),\\
		F_2 \left(\frac{\tau}{2\tau+1}\right) &= (2\tau + 1)^{2m(m+1)} F_2(\tau).
	\end{align*}
\end{lemma}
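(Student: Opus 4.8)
The plan is to run the argument of \cref{lem:Dmm-Gamma2} essentially verbatim, with two changes. First, the relevant spherical polynomial now has degree $\deg f_2 = 3m + 4\binom{m}{2} = m(2m+1)$, so \cref{thm:indefinite-theta} produces the weight $m + \deg f_2 = 2m(m+1)$ in place of the $2m^2$ occurring for $f_1$. Second, since $\smat{0\\1/2}\notin R(\smat{0\\1})$, the theta series attached to the base point $\vec{\bm{a}}'$ is not directly covered by \cref{def:indef-theta}; as in \cref{lem:F2-modified} we therefore work with the perturbed base point $\vec{\bm{a}}'+\vec{\bm{\varepsilon}}$, where $\vec{\bm{\varepsilon}} = (\smat{\varepsilon\\0},\dots,\smat{\varepsilon\\0})$ and $0<\varepsilon<1$, carry the transformation formulas through for each fixed $\varepsilon$, and only at the very end let $\varepsilon\to 0$ using \cref{lem:F2-modified}. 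For each fixed $\varepsilon$ we first replace $\vec{\bm{c}}_0,\vec{\bm{c}}_1\in\cS^m$ by $\vec{\bm{c}}_0(t),\vec{\bm{c}}_1(t)\in\cC^m$ and invoke \cref{thm:indefinite-theta}, \cref{thm:shift+1}, and \cref{thm:indef-theta-S}; the hypothesis of the last is met because $\smat{\varepsilon\\1/2}$ and $\smat{-\varepsilon\\1/2}$ both lie in $R(\smat{0\\1})\cap R(\smat{-1\\0})$ for $0<\varepsilon<1$. The two limits are always taken in the order $t\to 0$ first, then $\varepsilon\to 0$; they are never interchanged.

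For $\tau\mapsto\tau+2$, applying \cref{thm:shift+1} twice gives
\[
	\theta_{\vec{\bm{a}}'+\vec{\bm{\varepsilon}},\,\vec{\bm{0}}}^{\vec{\bm{c}}_0(t),\,\vec{\bm{c}}_1(t)}[f_2](\tau+2)
	= e^{-4\pi i\,Q_m(\vec{\bm{a}}'+\vec{\bm{\varepsilon}})}\,
	\theta_{\vec{\bm{a}}'+\vec{\bm{\varepsilon}},\,2(\vec{\bm{a}}'+\vec{\bm{\varepsilon}})}^{\vec{\bm{c}}_0(t),\,\vec{\bm{c}}_1(t)}[f_2](\tau).
\]
In contrast to \cref{lem:Dmm-Gamma2}, here $Q_m(\vec{\bm{a}}'+\vec{\bm{\varepsilon}}) = m\varepsilon/2\to 0$, so the prefactor tends to $1$ and no sign $(-1)^m$ survives; letting $t\to 0$ and then $\varepsilon\to 0$, \cref{lem:F2-modified} with $\vec{\bm{b}}(\varepsilon) = 2(\vec{\bm{a}}'+\vec{\bm{\varepsilon}})$ rewrites the right-hand side as $\sum\prod_{j}\bigl(\sgn(x_j) + \sgn(y_j)\bigr)\,f_2(\vec{\bm{x}})\,q^{Q_m(\vec{\bm{x}})}e^{2\pi i B_m(\vec{\bm{x}},\,2\vec{\bm{a}}')}$, and on $\vec{\bm{a}}'+\Z^{2m}$ one has $B_m(\vec{\bm{x}},2\vec{\bm{a}}') = \sum_j x_j\in\Z$, so the exponential is trivial and $F_2(\tau+2) = F_2(\tau)$. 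For $\tau\mapsto\tau/(2\tau+1) = -1/(-2-1/\tau)$ the same $S$--$T^{-2}$--$S$ chain as in \cref{lem:Dmm-Gamma2} (using that $\theta_{\vec{\bm{0}},\ast}$ has period $1$ by \cref{thm:shift+1}) yields
\[
	\theta_{\vec{\bm{a}}'+\vec{\bm{\varepsilon}},\,\vec{\bm{0}}}^{\vec{\bm{c}}_0(t),\,\vec{\bm{c}}_1(t)}[f_2]\!\left(\frac{\tau}{2\tau+1}\right)
	= (2\tau+1)^{2m(m+1)}\,
	\theta_{-(\vec{\bm{a}}'+\vec{\bm{\varepsilon}}),\,\vec{\bm{0}}}^{\vec{\bm{c}}_0(t),\,\vec{\bm{c}}_1(t)}[f_2](\tau),
\]
the sign in $(-(2\tau+1))^{m+\deg f_2}$ being harmless because $m+\deg f_2 = 2m(m+1)$ is even. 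Since the theta series depends on its base point only modulo $\Z^{2m}$ and $-(\vec{\bm{a}}'+\vec{\bm{\varepsilon}})\equiv\vec{\bm{a}}'-\vec{\bm{\varepsilon}}\pmod{\Z^{2m}}$, the right-hand side equals $(2\tau+1)^{2m(m+1)}\theta_{\vec{\bm{a}}'-\vec{\bm{\varepsilon}},\,\vec{\bm{0}}}^{\vec{\bm{c}}_0(t),\,\vec{\bm{c}}_1(t)}[f_2](\tau)$; letting $t\to 0$ and then $\varepsilon\to 0$ gives $F_2(\tau/(2\tau+1)) = (2\tau+1)^{2m(m+1)}F_2(\tau)$.

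The one step that needs genuine attention is the last $\varepsilon\to 0$ limit in the $S$-transformation, which concerns $\theta_{\vec{\bm{a}}'-\vec{\bm{\varepsilon}},\,\vec{\bm{0}}}$, i.e. a base point perturbed in the direction $-\vec{\bm{\varepsilon}}$ rather than $+\vec{\bm{\varepsilon}}$. \cref{lem:F2-modified} is stated for $0<\varepsilon<1$, but its proof never uses the sign of $\varepsilon$ in an essential way: the only delicate contributions, those with some $x_j = 0$, are killed by the divisibility of $p_{\vec{e}}(\varepsilon)$ by $\varepsilon^{3\#\{j\,:\,e_j=0\}}$ together with $f_2|_{x_j=0}=0$, both of which are insensitive to $\sgn(\varepsilon)$; so the same conclusion, with the same limiting series, holds for $\varepsilon\in(-1,0)$, which is exactly what we need. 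Beyond this, I do not anticipate any obstacle apart from careful bookkeeping of the $t$- and $\varepsilon$-limits and of the residual translations and phase factors, the remainder being a transcription of the proof in \cref{sec:first_identity}.
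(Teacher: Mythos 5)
Your proof is correct and follows essentially the same route as the paper: perturb the base point to $\vec{\bm{a}}'+\vec{\bm{\varepsilon}}$, apply \cref{thm:shift+1} and the $S$--$T^{-2}$--$S$ chain from \cref{lem:Dmm-Gamma2} with $\vec{\bm{c}}_0(t),\vec{\bm{c}}_1(t)$, and pass to the limits $t\to 0$ then $\varepsilon\to 0$ via \cref{lem:F2-modified}. Your explicit justification that \cref{lem:F2-modified} remains valid for the reflected base point $\vec{\bm{a}}'-\vec{\bm{\varepsilon}}$ (i.e.\ for $\varepsilon<0$) addresses a point the paper passes over in silence, and is a worthwhile addition.
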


\begin{proof}
	We replace $\vec{\bm{a}}'$ with $\vec{\bm{a}}' + \vec{\bm{\varepsilon}}$, where $\vec{\bm{\varepsilon}} = (\smat{\varepsilon \\ 0}, \dots, \smat{\varepsilon \\ 0})$ with $0 < \varepsilon < 1$. Since $\smat{\varepsilon \\ 1/2} \in R(\smat{0 \\ 1}) \cap R(\smat{-1 \\ 0})$, we may apply \cref{thm:indef-theta-S}. By \cref{thm:shift+1}, we have
	\begin{align*}
		\theta_{\vec{\bm{a}}'+ \vec{\bm{\varepsilon}}, \vec{\bm{0}}}^{\vec{\bm{c}}_0, \vec{\bm{c}}_1}[f_2](\tau+2) = e^{-4\pi i Q_m(\vec{\bm{a}}' + \vec{\bm{\varepsilon}})} \theta_{\vec{\bm{a}}' + \vec{\bm{\varepsilon}}, 2\vec{\bm{a}}' + 2\vec{\bm{\varepsilon}}}^{\vec{\bm{c}}_0, \vec{\bm{c}}_1}[f_2](\tau).
	\end{align*}
	By \cref{lem:F2-modified}, taking the limit as $\varepsilon \to 0$ on both sides, followed by an applying an appropriate scaling, yields $F_2(\tau+2) = F_2(\tau)$.
	
	Similarly as in the proof of \cref{lem:Dmm-Gamma2}, we have
	\begin{align*}
		\theta_{\vec{\bm{a}}' + \vec{\bm{\varepsilon}}, \vec{\bm{0}}}^{\vec{\bm{c}}_0(t), \vec{\bm{c}}_1(t)}[f_2] \left(\frac{\tau}{2\tau+1}\right) 
		&= (2\tau + 1)^{2m(m+1)} \theta_{-\vec{\bm{a}} - \vec{\bm{\varepsilon}}, \vec{\bm{0}}}^{\vec{\bm{c}}_0(t), \vec{\bm{c}}_1(t)}[f_2] (\tau).
	\end{align*}
	Taking the limits $t \to 0$ and $\varepsilon \to 0$ on both sides completes the proof.
\end{proof}

\begin{lemma}\label{lem:Dmm+1-cusps}
	We have the following:
	\begin{align*}
		F_2(\tau) &= q^{m(m+1)/4} + o(q^{m(m+1)/4}),\\
		(-i \tau)^{-2m(m+1)} F_2 \left(-\frac{1}{\tau}\right) &= O(1),\\
		(-i \tau)^{-2m(m+1)} F_2 \left(\frac{\tau-1}{\tau}\right) &= O(q^{m(m+1)/4}).
	\end{align*}
\end{lemma}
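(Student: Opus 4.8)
The plan is to follow the proof of \cref{lem:Dmm-cusps} verbatim, replacing $f_1$ by $f_2$, the anchor $\vec{\bm{a}}$ by $\vec{\bm{a}}'$, and the weight $2m^2$ by $m+\deg f_2 = 2m(m+1)$. The one new feature is that $\vec{\bm{a}}' = (\smat{0\\1/2},\dots,\smat{0\\1/2})$ is not admissible for the cusp vectors $\vec{\bm{c}}_0,\vec{\bm{c}}_1\in\cS^m$, because $\smat{0\\1/2}\notin R(\smat{0\\1})$; this is handled exactly as in \cref{lem:F2-modified} and \cref{lem:Dmm+1-Gamma2}, by introducing the shift $\vec{\bm{\varepsilon}}' = (\smat{\varepsilon\\0},\dots,\smat{\varepsilon\\0})$ of the anchor and letting $\varepsilon\to0$ at the end via \cref{lem:F2-modified}.

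For the first identity, I read off the lowest-order term directly from the convergent series that $F_2(\tau)$ denotes (the right-hand side of the second identity in \cref{thm:equivalent-3}). A term indexed by $\vec{\bm{x}}$ with $x_j\in\Z$, $y_j\in 1/2+\Z$ contributes only if $f_2(\vec{\bm{x}})\ne0$, which forces the $|x_j|$ to be $m$ distinct positive integers, and only if $\sgn(x_j)+\sgn(y_j)\ne0$ for every $j$, which forces $x_jy_j = |x_j|\,|y_j|\ge\tfrac12$; hence $Q_m(\vec{\bm{x}}) \ge \tfrac12(1+2+\dots+m) = \tfrac{m(m+1)}{4}$, with equality exactly when $\{|x_j|\} = \{1,\dots,m\}$ and all $|y_j| = \tfrac12$. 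Using that $f_2$ is symmetric in the $\bm{x}_j$, is odd in each $\bm{x}_j$, that $\sgn(x_j)+\sgn(y_j) = \pm2$, and the elementary identity $f_2(1,2,\dots,m) = \tfrac{1}{m!\,4^m}\prod_{j=1}^m(2j)!^2$ (a one-line induction on $m$ via $f_2(1,\dots,m)/f_2(1,\dots,m-1) = m\,(2m-1)!^2$), a short count over the $m!$ orderings and $2^m$ sign vectors gives that the coefficient of $q^{m(m+1)/4}$ in $F_2(\tau)$ is $\tfrac{1}{\prod_j(2j)!^2}\cdot m!\cdot 2^m\cdot 2^m\cdot f_2(1,\dots,m) = 1$, so $F_2(\tau) = q^{m(m+1)/4} + o(q^{m(m+1)/4})$.

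For the second identity, I apply \cref{thm:indefinite-theta} to $\theta_{\vec{\bm{a}}'+\vec{\bm{\varepsilon}}',\,\vec{\bm{\varepsilon}}}^{\vec{\bm{c}}_0(t),\vec{\bm{c}}_1(t)}[f_2]$ with $\vec{\bm{\varepsilon}} = (\smat{\varepsilon\\\varepsilon},\dots,\smat{\varepsilon\\\varepsilon})$, chosen so that after $\tau\mapsto-1/\tau$ the new anchor $-\vec{\bm{\varepsilon}}$ has non-integral coordinates; this gives, up to a root of unity, $\theta_{-\vec{\bm{\varepsilon}},\,\vec{\bm{a}}'+\vec{\bm{\varepsilon}}'}^{\vec{\bm{c}}_0(t),\vec{\bm{c}}_1(t)}[f_2](\tau)$. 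Letting $t\to0$ on both sides via \cref{thm:indef-theta-S} (every anchor here is admissible once $\vec{\bm{\varepsilon}}'$ is inserted), then $\varepsilon\to0$ — on the left via \cref{lem:F2-modified}, which recovers $F_2(-1/\tau)$ up to a constant — the right-hand side becomes a sum over the coset $\Z^{2m}$; expanding $f_2(\vec{\bm{x}}-\vec{\bm{\varepsilon}})$ as in \eqref{eq:f1-expand} and splitting each of the $m$ one-dimensional factors according to whether $x_jy_j = 0$, the pieces with $x_j = 0$ vanish in the limit because the relevant polynomial coefficient is divisible by $\varepsilon^3$ (the role of the factor $x_j^3$ in $f_2$), the pieces with $y_j = 0$ and $x_j\ne0$ converge to a finite constant through the Eulerian-polynomial evaluation \eqref{eq:def-Eulerian} exactly as in the proof of \cref{lem:Dmm-cusps}, and the $x_jy_j\ne0$ part is $O(q)$; the constraint $\sgn(x_j)+\sgn(y_j)\ne0$ moreover forces $x_jy_j\ge0$ term by term, so no negative powers of $q$ occur, giving $(-i\tau)^{-2m(m+1)}F_2(-1/\tau) = O(1)$. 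I expect this to be the main obstacle, though only a bookkeeping one: carrying the degenerate anchor through \cref{lem:F2-modified}, tracking the $\varepsilon^3$-divisibility that tames the otherwise divergent $x_j=0$ contributions when $b_{j1}\in\Z$, and justifying the successive interchanges of limit and summation — none requiring a new estimate beyond those in the proofs of \cref{lem:Dmm-cusps}, \cref{lem:F2-modified}, and Roehrig--Zwegers' work.

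For the third identity, I write $\tfrac{\tau-1}{\tau} = (-1/\tau)+1$, combine \cref{thm:shift+1} with \cref{thm:indefinite-theta} as at the end of the proof of \cref{lem:Dmm-cusps}, and use $Q_m(\vec{\bm{a}}') = 0$; this leads (again through the $\vec{\bm{\varepsilon}}'$-shift and \cref{lem:F2-modified}, whose post-transformation anchor $-\vec{\bm{a}}'-\vec{\bm{\varepsilon}}'$ has non-integral coordinates) from $\theta_{\vec{\bm{a}}',\vec{\bm{0}}}^{\vec{\bm{c}}_0,\vec{\bm{c}}_1}[f_2]$ to a root of unity times the explicit sign-function series over the coset $-\vec{\bm{a}}'+\Z^{2m} = \vec{\bm{a}}'+\Z^{2m}$. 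Since this coset and the vanishing conditions on $f_2$ and on the signs are exactly those governing $F_2$ itself, the computation of the first identity shows this series is $O(q^{m(m+1)/4})$, whence $(-i\tau)^{-2m(m+1)}F_2(\tfrac{\tau-1}{\tau}) = O(q^{m(m+1)/4})$.
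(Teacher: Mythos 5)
Your proposal is correct and follows essentially the same route as the paper: the paper likewise proves the first formula by locating the minimal exponent at $\vec{\bm{x}}_0 = (\smat{1\\1/2},\dots,\smat{m\\1/2})$ and evaluating $\frac{1}{\prod_j (2j)!^2}\cdot m!\cdot 2^m\cdot 2^m\cdot f_2(\vec{\bm{x}}_0)=1$ (your value $f_2(1,\dots,m)=\frac{1}{m!\,4^m}\prod_j(2j)!^2$ coincides with the paper's $m!\prod_j(2j-1)!^2$), and it dispatches the second and third formulas exactly as you do, by combining the arguments of \cref{lem:Dmm-cusps} with the $\vec{\bm{\varepsilon}}'$-shift of \cref{lem:F2-modified} and \cref{lem:Dmm+1-Gamma2} (the paper omits these details; your fill-in is the intended one).
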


\begin{proof}
We can prove the second and third formulas in a similar way by combining the arguments of \cref{lem:Dmm-cusps} and \cref{lem:Dmm+1-Gamma2},  so we omit the details. 
	
	To prove the first result, we begin by identifying the lowest-order term in the $q$-series expansion of $F_2(\tau)$. This term is attained at $\vec{\bm{x}}_0 = (\smat{1 \\ 1/2}, \smat{2 \\ 1/2}, \dots, \smat{m \\ 1/2})$, which yields the order
	\[
		Q_m(\vec{\bm{x}}_0) = \sum_{j=1}^m \frac{j}{2} = \frac{m(m+1)}{4}.
	\]
	To compute the coefficient of $q^{m(m+1)/4}$, we use the identity
	\[
		f_2(\vec{\bm{x}}_0) = m! \prod_{j=1}^m (2j-1)!^2,
	\]
	which can be verified by induction on $m$. By counting the permutations of the entries of $\vec{\bm{x}}_0$ (in $m!$ ways) and the sign changes (in $2^m$ ways), and noting that $\sgn(x_j) + \sgn(y_j) = \pm 2$, we find that the coefficient of $q^{m(m+1)/4}$ in $F_2(\tau)$ equals
	\[
		\frac{1}{\prod_{j=1}^m (2j)!^2} \cdot m! \cdot 2^m \cdot 2^m f_2(\vec{\bm{x}}_0) = 1
	\]
	as desired.
\end{proof}

By comparing \cref{lem:Dmm+1-Gamma2}, \cref{lem:Dmm+1-cusps}, and \cref{lem:theta-triangle}, we see that the identity $\theta_\triangle(\tau)^{4m(m+1)} = F_2(\tau)$ follows in the same way as in the previous subsection.

\subsection{Proof of the third identity (\cref{thm:Dmm-Eisen})}\label{sec:third_identity}

The same assertions as in \cref{lem:Dmm-Gamma2} and \cref{lem:Dmm-cusps} also hold for $F_3(\tau)$ as well. Since the proofs are completely the same except for the first and second statements of \cref{lem:Dmm-cusps}, we will prove only these two.

For the second statement of \cref{lem:Dmm-cusps}, the only change is that the expansion of $f_1(\vec{\bm{x}} - \vec{\bm{\varepsilon}})$ given in \eqref{eq:f1-expand} is replaced by 
    \[
	f_3(\vec{\bm{x}} - \vec{\bm{\varepsilon}}) = \sum_{\vec{e}, \vec{f} \in \Z_{\ge 0}^m} p_{\vec{e}, \vec{f}}(\varepsilon) \prod_{j=1}^m x_j^{e_j} y_j^{f_j},
    \]
which now involves the variables $y_j$ as well. However, since the rest of the argument proceeds in exactly the same way, we omit the detailed discussion.

For the first statement, we aim to identify the lowest-degree term in the $q$-series expansion of $F_3(\tau)$ and its coefficient. By changing variables as $X_i = x_i + y_i$ and $Y_i = x_i - y_i$, we obtain
\begin{align*}
	f_3(\vec{\bm{x}}) &= \prod_{j=1}^m X_j \cdot \prod_{1 \le i < j \le m} (X_i^2 - X_j^2) (Y_i^2 - Y_j^2),\\
	Q_m(\vec{\bm{x}}) &= \frac{1}{4} \sum_{j=1}^m (X_j^2 - Y_j^2),
\end{align*}
and
\begin{align*}
	\sgn(x_j) + \sgn(y_j) = \sgn(X_j+Y_j) + \sgn(X_j - Y_j).
\end{align*}
Therefore, the minimal degree of $F_3(\tau)$ is given by
\begin{align*}
	\min \left\{\frac{1}{4} \sum_{j=1}^m (X_j^2 - Y_j^2) \, \middle|\, \begin{array}{l} 
		X_j, Y_j \in \Z,\ X_j - Y_j \not\in 2\Z,\\
		X_i^2 \neq X_j^2,\ Y_i^2 \neq Y_j^2,\ (i \neq j),\\
		X_j \neq 0,\ X_j^2 > Y_j^2
 \end{array}
 \right\},
\end{align*}
which is attained at $\vec{\bm{X}}_0 = (\smat{1 \\ 0}, \smat{2 \\ 1}, \dots, \smat{m \\ m-1})$, that is, $\vec{\bm{x}}_0 = (\smat{1/2 \\ 1/2}, \smat{3/2 \\ 1/2}, \dots, \smat{m-1/2 \\ 1/2})$. Thus the lowest-degree is
    \[
	Q_m(\vec{\bm{x}}_0) = \sum_{j=1}^m \frac{1}{2} \left(j - \frac{1}{2}\right) = \frac{m^2}{4}.
   \]
By induction, one can verify that
    \[
	f_3(\vec{\bm{x}}_0) = \frac{2\cdot m!}{(2m)!} \prod_{j=1}^m (2j-1)!^2.
    \]
Taking into account the $m!$ permutations of the entries of $\vec{\bm{X}}_0$ and then $2^{2m-1}$ sign changes (excluding $Y_1 = 0$), and the fact that $\sgn(x_j) + \sgn(y_j) = \pm 2$, we find that the coefficient of $q^{m^2/4}$ in $F_3(\tau)$ equals
    \[
	\frac{(2m)!}{2^m \prod_{j=1}^m (2j)!^2} \cdot m! \cdot 2^{2m-1} \cdot 2^m \cdot f_3(\vec{\bm{x}}_0) = 1.
    \]
Thus, we conclude that $F_3(\tau) = q^{m^2/4} + o(q^{m^2/4})$. As in the previous cases, it follows from these results that $\theta_\triangle(\tau)^{4m^2} = F_3(\tau)$.

\section{Affine Lie superalgebras}\label{sec:affile-Lie-super}

In this section,  we prepare neccessary notations for affine Lie superalgebras and root systems. 
For the definition and basic concepts of Lie superalgebras,  we refer the readers to the book of Musson~\cite{Musson}.
Later we focus on the four cases $\widehat{\gl}(m,  m)$,  $\widehat{\sl}(m+1,  m)$,  $\widehat{\spo}(2m,  2m)$,  and $\widehat{\spo}(2m,  2m+2)$.

\subsection{Lie superalgebras}

First,  we recall the finite dimensional classical Lie superalgebras.
Let $M,  N$ be non-negative integers.
Let $\gl(M,  N)=\M_{M+N}(\C)$ be the $\C$-vector space with the $\Z/2\Z$-grading 
    \begin{align*}
    \gl(M,  N)_0 &= \left\{
        \begin{pmatrix}
        A &  \\
         & D
        \end{pmatrix}
    \,\middle|\,  A\in\M_M(\C),  \ D\in\M_N(\C) \right\},  \\
    \gl(M,  N)_1 &= \left\{
        \begin{pmatrix}
         & B \\
        C & 
        \end{pmatrix}
    \,\middle|\,  B\in\M_{M\times N}(\C),  \ C\in\M_{N\times M}(\C) \right\}.
    \end{align*}
The Lie bracket is a bilinear map $[\cdot,\cdot]\,\colon \gl(M,  N)\times\gl(M,  N)\to\gl(M,  N)$ given by 
    \[
    [X,  Y]=XY-(-1)^{ij}YX,  \qquad X\in\gl(M,N)_i,  \ Y\in\gl(M,N)_j.
    \]
The Lie superalgebra $\gl(M,  N)$ is called the \emph{general linear Lie superalgebra} of type $A(M-1,N-1)$.

The \emph{super-trace} is the linear form $\str\,\colon\gl(M,N)\to\C$ defined by 
\[
	\str \pmat{A & B \\ C & D} = \tr(A) - \tr(D),
\]
where $\tr(\cdot)$ denotes the usual trace of matrices.
The subspace $\sl(M, N)\coloneqq\{X\in\gl(M,  N) \mid \str(X)=0\}$ is a subalgebra of $\gl(M,  N)$.
It is called the \emph{special linear Lie superalgebra} of type $A(M-1,N-1)$.

We define the non-degenerate bilinear form $(\cdot,  \cdot)\,\colon\gl(M,N)\times\gl(M,N)\to\C$ as $(X,  Y)=\str(XY)$,  where $XY$ is the usual product of matrices.
The linear map $\st\bullet\,\colon\gl(M,  N)\to\gl(M,  N)$ given by 
    \[
    \largest
        \begin{pmatrix}
        A & B \\
        C & D
        \end{pmatrix}
    =
        \begin{pmatrix}
        \t A & \t C \\
        -\t B & \t D
        \end{pmatrix},  \qquad 
        \begin{pmatrix}
        A & B \\
        C & D
        \end{pmatrix}
    \in\gl(M,  N)
    \]
is called the \emph{super-transpose}.

Suppose that $M=2m$ and $N=2n$ are even.
Let $\spo(2m,  2n)$ be the subalgebra of $\sl(2m,  2n)$ defined as
    \[
    \spo(2m,  2n) = \left\{X\in\sl(2m,  2n) \, \middle|\, \st X
         \begin{pmatrix}
        J_1 & \\
        & J_2
        \end{pmatrix}
    +
         \begin{pmatrix}
        J_1 & \\
        & J_2
        \end{pmatrix}
    X=0\right\}.
    \]
Here we set $J_1 = \smat{ & \1_m \\ -\1_m & }$, $J_2 = \smat{ & \1_n \\ \1_n & }$.
This is a subalgebra of $\sl(2m,  2n)$,  which is called the \emph{ortho-symplectic Lie superalgebra} of type $D(m, n)$.
It consists of $\smat{A & B \\ C & D} \in\gl(2m,  2n)$
such that
    \begin{align*}
    & A\in\sp(2m) \coloneqq \{X\in\M_{2m}(\C) \mid \t XJ_1+J_1X=0\},   \\
    & D\in\so(2n) \coloneqq \{X\in\M_{2n}(\C) \mid \t XJ_2+J_2X=0\},   \\
    & B\in\M_{2m\times 2n}(\C) ,  \quad C=J_2^{-1}\t BJ_1.
    \end{align*}  
Note that $\spo(2m,  2n)_0$ is a Lie algebra isomorphic to $\sp(2m)\oplus\so(2n)$.
Note that the restriction of the bilinear form $(\cdot,  \cdot)$ on $\gl(2m,  2n)$ to $\spo(2m,  2n)$ is still non-degenerate.

Let $\fg$ be one of the Lie superalgebras $\gl(M,  N)$,  $\sl(M,  N)$, and $\spo(2m,  2n)$.
In each case,  let $\fh$ be the \emph{Cartan subalgebra} of $\fg$ consisting of all diagonal elements.
A general element of $\fh$ is of the form
    \begin{equation}\label{Cartan-element}
    H = 
        \begin{cases}
        \diag(x_1,  \ldots,  x_M,  y_1,  \ldots,  y_N),  
        & \text{type $A(M-1,  N-1)$},   \\
        \diag(x_1,  \ldots,  x_m,  -x_1,  \ldots,  -x_m,  
        y_1,  \ldots,  y_n,  -y_1,  \ldots,  -y_n),  
        & \text{type $D(m,  n)$},  
        \end{cases}
    \end{equation}
where $x_i,  y_j\in\C$ and $\sum_{i=1}^Mx_i=\sum_{j=1}^Ny_j$ if $\fg=\sl(M,  N)$.

We define the \emph{affine Lie superalgebra} $\widehat{\fg}$ as follows.
As a vector space,  $\widehat{\fg}=\fg\otimes\C[t^{\pm1}]\oplus\C K\oplus\C d$  and the grading is given by $\widehat{\fg}_0=\fg_0\otimes\C[t^{\pm1}]\oplus\C K\oplus\C d$ and $\widehat{\fg}_1=\fg_1\otimes\C[t^{\pm1}]$.
For $X\in\fg$ and $i\in\Z$,  we simply write $X\otimes t^i$ as $Xt^i$.
The Lie bracket is determined by the following conditions:
\begin{itemize}
\item $[Xt^i,  Yt^j]=[X,Y]t^{i+j}+i\delta_{i,-j}(X,Y)K$ for $X,Y\in\fg$ and $i,  j\in\Z$.
Here,  $\delta_{k,l}$ denotes the Kronecker delta.
\item $[\widehat{\fg},  K]=0$.
\item $[d,  Xt^i]=iXt^i$ for $X\in\fg$ and $i\in \Z$.
\end{itemize}

Let $(\cdot,\cdot)\,\colon\widehat{\fg}\times\widehat{\fg}\to\C$ be the even super-symmetric pairing determined by the following conditions:
\begin{itemize}
\item $(Xt^i,  Yt^j)=\delta_{i,-j}(X,Y)$ for $X,Y\in\fg$ and $i,  j\in\Z$.
\item $(\fg\otimes\C[t^{\pm1}], K)=(\fg\otimes\C[t^{\pm1}], d)=0$.
\item $(K,K)=(d,d)=0$ and $(K,d)=1$.
\end{itemize}
One can see that $(\cdot,\cdot)$ is invariant and non-degenerate.

The Cartan subalgebra of $\widehat{\fg}$ is defined as $\widehat{\fh}=\fh\oplus\C K\oplus\C d$.
We regard each $\lambda\in\fh^\ast$ as an element of $\widehat{\fh}^\ast$ such that $\lambda(K)=\lambda(d)=0$.
Define $\delta, \gamma\in\widehat{\fh}^\ast$ by
    \[
    \delta(\fg\otimes\C[t^{\pm1}]) = \gamma(\fg\otimes\C[t^{\pm1}]) = 0,  \qquad
    \delta(K) = \gamma(d) = 0,  \qquad \delta(d) = \gamma(K) = 1.
    \]
Then $\widehat{\fh}^\ast=\fh^\ast\oplus\C\delta\oplus\C\gamma$.

The action of $W$ on $\fh^\ast$ naturally extend to $\widehat{\fh}^\ast$.
Since $\delta$ and $\gamma$ are orthogonal to roots in $\Phi$,  the action of $W$ on $\delta$ and $\gamma$ is trivial.

\subsection{Root systems}

Let $\fh^\ast$ denote the dual space of $\fh$.
We define $\varepsilon_i,  \delta_j\in\fh^\ast$ by $\varepsilon_i(H) = x_i$ and $\delta_j(H) = y_j$,  where $H\in\fh$ is of the form \eqref{Cartan-element}.
Then,  the root system of $\fg$ with respect to $\fh$ is given as $\Phi=\Phi_0\cup\Phi_1$,  where
    \begin{align*}
    \Phi_0 & \coloneqq 
        \begin{cases}
        \{\pm(\varepsilon_i-\varepsilon_j) \mid 1\leq i<j\leq M\}
        \cup\{\pm(\delta_i-\delta_j) \mid 1\leq i<j\leq N \},   
        & \text{type $A(M-1,  N-1)$},  \\[5pt]
         \begin{array}{r}
         \{\pm(\varepsilon_i-\varepsilon_j), \ \pm(\varepsilon_i+\varepsilon_j), 
        \ \pm2\varepsilon_p \mid 1\leq i<j\leq m, \ 1\leq p\leq m\}   \\
        \cup\{\pm(\delta_i-\delta_j), \ \pm(\delta_i+\delta_j) 
        \mid 1\leq i<j\leq n \},   
        \end{array}
        & \text{type $D(m,  n)$},
        \end{cases}
    \\
    \Phi_1 & \coloneqq
        \begin{cases}
         \{\pm(\varepsilon_i-\delta_j) \mid 1\leq i \leq M,  \ 1\leq j \leq N \},
         & \text{type $A(M-1,  N-1)$},  \\[5pt]
          \{\pm(\varepsilon_i-\delta_j), \ \pm(\varepsilon_r+\delta_s) 
          \mid 1\leq i,  r \leq m,  \ 1\leq j,  s \leq n \},
         & \text{type $D(m,  n)$}.
         \end{cases}
    \end{align*}
For $\alpha\in\Phi$, let $\fg_\alpha=\{X\in\fg \mid [H,  X]=\alpha(H)X \text{ for all $H\in\fh$}\}$ be the root subspace.
Then we have the root space decomposition $\fg_0 = \fh\oplus \bigoplus_{\alpha\in\Phi_0} \fg_\alpha$ and $\fg_1 = \bigoplus_{\alpha\in\Phi_1} \fg_\alpha$.

The \emph{Weyl group} $W$ of $\fg$ is defined to be the Weyl group of $\Phi_0$.
Note that $W$ is isomorphic to $\fS_M\times\fS_N$ if $\fg$ is of type $A(M-1,  N-1)$ and $(\fS_m\ltimes(\Z/2\Z)^m)\times(\fS_n\ltimes(\Z/2\Z)^{n-1})$ if $\fg$ is of type $D(m,  n)$.

The restriction of $(\cdot,  \cdot)$ defines a non-degenerate pairing on $\fh$ and it induces a linear isomorphism $\fh^\ast\xrightarrow{\sim}\fh$.
Hence we obtain the pairing on $\fh^\ast$ by pulling-back $(\cdot,  \cdot)$ on $\fh$ by this isomorphism,  which is also denoted as $(\cdot,  \cdot)$.
\begin{lemma}
We have 
    \[
    (\varepsilon_i,  \varepsilon_i) = -(\delta_j,  \delta_j) = 
        \begin{cases}
        1, & \text{type $A(M-1,  N-1)$},  \\
        1/2, & \text{type $D(m,  n)$}.
        \end{cases}
    \]
In particular,  we have
\begin{itemize}\setlength{\itemsep}{5pt}
\item $(\varepsilon_i-\varepsilon_j,  \varepsilon_i-\varepsilon_j)=-(\delta_r-\delta_s,  \delta_r-\delta_s)=
    \begin{cases}
    2, & \text{type $A(M-1,  N-1)$},    \\
    1, &  \text{type $D(m,  n)$},  
    \end{cases}$
\item $(\varepsilon_i\pm\delta_j,  \varepsilon_i\pm\delta_j)=0$.
\end{itemize}
\end{lemma}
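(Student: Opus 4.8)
The plan is to compute the form $(\cdot,\cdot)$ on $\fh^\ast$ explicitly by transporting it from $\fh$ through the isomorphism induced by $(X,Y)=\str(XY)$, and then to read off every stated value by bilinearity. Nothing here is deep; the proof is essentially an organized bookkeeping exercise.

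First I would restrict $(X,Y)=\str(XY)$ to the Cartan subalgebra. For a diagonal $H$ as in \eqref{Cartan-element} and a second such element $H'$ with parameters $x_i',y_j'$, a direct evaluation of $\str(HH')$ gives $\sum_i x_ix_i'-\sum_j y_jy_j'$ in type $A(M-1,N-1)$, while in type $D(m,n)$ the repetition of each $\pm x_i$ and $\pm y_j$ along the diagonal produces $2\sum_i x_ix_i'-2\sum_j y_jy_j'$. In the $\sl$ case one restricts this to the hyperplane $\sum x_i=\sum y_j$; when $M\neq N$ (our situation for $\widehat{\sl}(m+1,m)$) the radical of the restriction is spanned by the identity matrix, which does not lie in $\fh$, so the form remains non-degenerate, as asserted in the text, and for $\widehat{\gl}(m,m)$ one works on all diagonal matrices where $\sum x_ix_i'-\sum y_jy_j'$ is manifestly non-degenerate.

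Next I would make the isomorphism $\fh^\ast\xrightarrow{\sim}\fh$ explicit: it sends $\lambda$ to the unique $t_\lambda\in\fh$ with $(t_\lambda,H)=\lambda(H)$ for all $H\in\fh$. Solving the resulting linear equations with the formula above, in type $A$ one finds $t_{\varepsilon_i}$ is the diagonal matrix with a single entry $1$ in the $i$-th slot and $t_{\delta_j}$ has the single entry $-1$ in the $(M+j)$-th slot; in type $D$ the same computation yields $t_{\varepsilon_i}$ with entries $1/2,-1/2$ in the two slots carrying $\pm x_i$ and $t_{\delta_j}$ with $-1/2,1/2$ in the slots carrying $\pm y_j$. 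Pulling the form back, $(\varepsilon_i,\varepsilon_i)=(t_{\varepsilon_i},t_{\varepsilon_i})$ equals $1$ in type $A$ and $2\cdot(1/2)^2=1/2$ in type $D$, and likewise $(\delta_j,\delta_j)=-1$ resp.\ $-1/2$; since $t_{\varepsilon_i},t_{\varepsilon_j}$ (for $i\neq j$), $t_{\varepsilon_i},t_{\delta_j}$, and $t_{\delta_r},t_{\delta_s}$ (for $r\neq s$) are supported on disjoint diagonal slots, all the corresponding cross-pairings vanish.

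Finally the ``in particular'' statements follow purely by bilinearity: $(\varepsilon_i-\varepsilon_j,\varepsilon_i-\varepsilon_j)=(\varepsilon_i,\varepsilon_i)-2(\varepsilon_i,\varepsilon_j)+(\varepsilon_j,\varepsilon_j)=2(\varepsilon_i,\varepsilon_i)$, which is $2$ in type $A$ and $1$ in type $D$; identically $(\delta_r-\delta_s,\delta_r-\delta_s)=2(\delta_r,\delta_r)$ gives $-2$ resp.\ $-1$; and $(\varepsilon_i\pm\delta_j,\varepsilon_i\pm\delta_j)=(\varepsilon_i,\varepsilon_i)+(\delta_j,\delta_j)=0$ in both types, the two summands cancelling. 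I do not expect a genuine obstacle; the only points requiring care are the factor of $2$ produced by the doubled diagonal entries in type $D$ (this is precisely what yields the value $1/2$) and, in the $\sl$ case, the non-degeneracy remark needed to make sense of the identification $\fh^\ast\cong\fh$.
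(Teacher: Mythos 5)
The paper states this lemma without proof, so there is nothing to compare against on that side; your strategy --- restrict $\str(XY)$ to the diagonal Cartan, solve explicitly for the representatives $t_{\varepsilon_i},t_{\delta_j}$ under $\fh^\ast\xrightarrow{\sim}\fh$, and pull the form back --- is the natural argument, and your computations are correct for $\gl(M,N)$ and for $\spo(2m,2n)$ (where the doubled diagonal entries produce the factor $2$ and hence the value $1/2$, exactly as you say). The bilinearity step deducing the two bullet items is also fine.

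There is, however, one real gap, precisely at the point you wave at but do not resolve: the case $\fg=\sl(M,N)$ with $M\neq N$. The matrix $E_{ii}$ you propose as $t_{\varepsilon_i}$ satisfies $\str(E_{ii})=1\neq 0$, so it does not lie in $\fh_{\sl}$; the actual representative is $E_{ii}-\tfrac{1}{M-N}\1_{M+N}$, and with it one computes $(\varepsilon_i,\varepsilon_i)=1-\tfrac{1}{M-N}$ and $(\delta_j,\delta_j)=-1-\tfrac{1}{M-N}$, which for $\sl(m+1,m)$ gives $0$ and $-2$ rather than $\pm1$. So the first display of the lemma, read literally for the restricted pairing on $\fh_{\sl}^\ast$, is false; it holds only under the (tacit) convention that $\varepsilon_i,\delta_j$ are paired as elements of the weight space of the ambient $\gl(M,N)$. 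The damage is confined to that first display: the correction term $t_\lambda^{\sl}-t_\lambda^{\gl}$ is a multiple of $\1_{M+N}=t^{\gl}_{\sum_i\varepsilon_i-\sum_j\delta_j}$, and every root $\alpha$ satisfies $\alpha(\1_{M+N})=0$, so any pairing with at least one root argument --- in particular both bullet items, and every pairing the paper uses downstream --- is the same in either convention. To close the gap you should either carry out the whole computation in $\gl(M,N)$ and note that the root pairings descend to $\sl(M,N)$ unchanged, or state explicitly that the first display is an assertion about the $\gl$-pairing.
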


In the following,  we consider the four cases $\fg=\gl(m,  m)$,  $\sl(m+1,  m)$,  $\spo(2m,  2m)$,  and $\spo(2m,  2m+2)$.
We introduce a fundamental set of roots $\Delta$ and the corresponding set of positive roots $\Phi_i^+$, $(i=0,1)$ by a case-by-case argument.
We define the \emph{denominator} of $\fg$ by
    \[
    R = \frac{\prod_{\alpha\in\Phi_0^+}(1-e^{-\alpha})}
    {\prod_{\alpha\in\Phi_1^+}(1+e^{-\alpha})}.
    \]

\subsubsection{Case $\gl(m,  m)$}

\begin{definition}
For $i=1,  \ldots,  2m-1$,  set
    \[
    \alpha_i=
        \begin{cases}
        \varepsilon_k-\delta_k & i=2k-1,  \quad (1\leq k\leq m) \\
        \delta_k-\varepsilon_{k+1} & i=2k,  \quad (1\leq k\leq m-1)
        \end{cases}
    \]
and $\Delta=\{ \alpha_i \mid i=1,  \ldots,  2m-1 \}$.
\end{definition}
Put $\Phi_i^+=(\sum_{\alpha\in\Delta}\Z_{\geq0}\cdot\alpha)\cap\Phi_i$,  ($i=0,  1$) and $\Phi^+=\Phi_0^+\cup\Phi_1^+$.
Then we have 
    \begin{align*}
    \Phi_0^+ & = \{\varepsilon_i-\varepsilon_j  \mid 1\leq i<j\leq m \} 
    \cup\{\delta_i-\delta_j \mid 1\leq i<j\leq m \},   \\
    \Phi_1^+ &= \{\varepsilon_i-\delta_j, \  \delta_k-\varepsilon_l
    \mid 1\leq i\leq j\leq m,  \ 1\le k<l\leq m  \}.
    \end{align*}
Set $\Phi_0^\sharp = \{\alpha\in\Phi_0 \mid (\alpha,  \alpha)>0 \}= \{\pm(\varepsilon_i-\varepsilon_j)  \mid 1\leq i<j\leq m \}$.
Note that this is a root system of type $A_{m-1}$.
Let $M^\sharp$ be the coroot lattice of $\Phi_0^\sharp$,  \emph{i.e.} the free $\Z$-module of rank $m-1$ spanned by $\alpha_{2i-1}+\alpha_{2i}=\varepsilon_i-\varepsilon_{i+1}$,   $i=1,  \ldots,  m-1$ and $W^\sharp\simeq\fS_m$ denote the Weyl group of $\Phi_0^\sharp$.

\subsubsection{Case $\sl(m+1,  m)$}

\begin{definition}
For $i=1,  \ldots,  2m$,  set
    \[
    \alpha_i=
        \begin{cases}
        \varepsilon_k-\delta_k & i=2k-1,   \\
        \delta_k-\varepsilon_{k+1} & i=2k,  
        \end{cases}
    \quad (1\leq k\leq m)
    \]
and $\Delta=\{\alpha_i \mid i=1,  \ldots,  2m\}$.
\end{definition}
Put $\Phi_i^+=(\sum_{\alpha\in\Delta}\Z_{\geq0}\cdot\alpha)\cap\Phi_i$,  ($i=0,  1$) and $\Phi^+=\Phi_0^+\cup\Phi_1^+$.
Then we have
     \begin{align*}
    \Phi_0^+ & = \{\varepsilon_i-\varepsilon_j  \mid 1\leq i<j\leq m+1 \} 
    \cup\{\delta_i-\delta_j \mid 1\leq i<j\leq m \},   \\
    \Phi_1^+ &= \{\varepsilon_i-\delta_j, \  \delta_k-\varepsilon_l
    \mid 1\leq i\leq j\leq m,  \ 1\le k<l\leq m+1  \}.
    \end{align*}
Set $\Phi_0^\sharp = \{\alpha\in\Phi_0 \mid (\alpha,  \alpha)>0 \}= \{\pm(\varepsilon_i-\varepsilon_j)  \mid 1\leq i<j\leq m+1 \}$.
Note that this is a root system of type $A_m$.
Let $M^\sharp$ be the coroot lattice of $\Phi_0^\sharp$,  \emph{i.e.} the free $\Z$-module of rank $m$ spanned by $\alpha_{2i-1}+\alpha_{2i}=\varepsilon_i-\varepsilon_{i+1}$,   $i=1,  \ldots,  m$ and $W^\sharp\simeq\fS_{m+1}$ denote the Weyl group of $\Phi_0^\sharp$. 

\subsubsection{Case $\spo(2m,  2m)$}

\begin{definition}
For $i=1,  \ldots,  2m$,  set
    \[
    \alpha_i=
        \begin{cases}
        \varepsilon_k-\delta_k & i=2k-1,  \ (1\leq k\leq m),  \\
        \delta_k-\varepsilon_{k+1} & i=2k,  \ (1\leq k\leq m-1),  \\
        \delta_m+\varepsilon_m & i=2m
        \end{cases}
    \]
and $\Delta=\{\alpha_i \mid i=1,  \ldots,  2m\}$.
\end{definition}

Put $\Phi_i^+=(\sum_{\alpha\in\Delta}\Z_{\geq0}\cdot\alpha)\cap\Phi_i$,  ($i=0,  1$) and $\Phi^+=\Phi_0^+\cup\Phi_1^+$.
Then we have
    \begin{align*}
    \Phi_0^+ & = \{\varepsilon_i-\varepsilon_j, \ \varepsilon_i+\varepsilon_j, 
    \ 2\varepsilon_p \mid 1\leq i<j\leq m, \ 1\leq p\leq m\} 
    \cup\{\delta_i-\delta_j, \ \delta_i+\delta_j \mid 1\leq i<j\leq m \},   \\
    \Phi_1^+ &= \{\varepsilon_i-\delta_j, \ 
    \delta_k-\varepsilon_l, \  \varepsilon_r+\delta_s
    \mid 1\leq i\leq j\leq m,  \ 1\le k<l\leq m,  \ 1\leq r,  s\leq m \}.
    \end{align*}
Set $\Phi_0^\sharp=\{\alpha\in\Phi_0 \mid (\alpha,  \alpha)>0 \}=\{\pm(\varepsilon_i-\varepsilon_j), \ \pm(\varepsilon_i+\varepsilon_j), \ \pm2\varepsilon_p \mid 1\leq i<j\leq m, \ 1\leq p\leq m\}$.
Note that this is a root system of type $C_m$.
Let $M^\sharp$ be the coroot lattice of $\Phi_0^\sharp$,  \emph{i.e.} the free $\Z$-module of rank $m$ spanned by $\{ 2\varepsilon_i \mid i=1,  \ldots,  m\}$ and $W^\sharp\simeq\fS_m\ltimes(\Z/2\Z)^m$ denote the Weyl group of $\Phi_0^\sharp$.

\subsubsection{Case $\spo(2m,  2m+2)$}

\begin{definition}
For $i=1,  \ldots,  2m+1$,  set
    \[
    \alpha_i=
        \begin{cases}
        \delta_k-\varepsilon_k & i=2k-1,  \ (1\leq k\leq m),     \\
        \varepsilon_k-\delta_{k+1} & i=2k,  \ (1\leq k\leq m),   \\
        \varepsilon_m+\delta_{m+1} & i=2m+1
        \end{cases}
    \]
and $\Delta=\{\alpha_i \mid i=1,  \ldots,  2m+1\}$.
\end{definition}
Put $\Phi_i^+=(\sum_{\alpha\in\Delta}\Z_{\geq0}\cdot\alpha)\cap\Phi_i$,  ($i=0,  1$) and $\Phi^+=\Phi_0^+\cup\Phi_1^+$.
Then we have
    \begin{align*}
    \Phi_0^+ & = \{\delta_i-\delta_j, \ \delta_i+\delta_j,  \mid 1\leq i<j\leq m+1 \} 
    \cup\{ \varepsilon_i-\varepsilon_j, \varepsilon_i+\varepsilon_j,  \ 
    2\varepsilon_p \mid 1\leq i<j\leq m,  \ 1\leq p\leq m \},   \\
    \Phi_1^+ &= \{ \delta_i-\varepsilon_j, \  \varepsilon_k-\delta_l, \  
    \varepsilon_r+\delta_s
    \mid 1\leq i\leq j\leq m,  \ 1\le k<l\leq m+1,  \ 1\leq r\leq m,  \ 1\leq s\leq m+1 \}.
    \end{align*}
Set $\Phi_0^\sharp=\{\alpha\in\Phi_0 \mid (\alpha,  \alpha)<0 \}= \{\pm(\delta_i-\delta_j), \ \pm(\delta_i+\delta_j) \mid 1\leq i<j\leq m+1 \}$.
This is a root system of type $D_{m+1}$.
We write the Weyl group of $\Phi_0^\sharp$ by $W^\sharp\simeq\fS_{m+1}\ltimes(\Z/2\Z)^m$.
Let $M^\sharp$ be the coroot lattice of $\{\alpha\in\Phi_0 \mid (\alpha,  \alpha)>0\}=\{\pm(\varepsilon_i-\varepsilon_j), \ \pm(\varepsilon_i+\varepsilon_j), \ \pm2\varepsilon_p \mid 1\leq i<j\leq m, \ 1\leq p\leq m\}$,  \emph{i.e.} the free $\Z$-module of rank $m$ spanned by $\{ 2\varepsilon_i \mid i=1,  \ldots,  m\}$.

\subsection{Affine root systems}

Let $\theta$ be the highest root in $\Phi$ with respect to $\Delta$ and set $\widehat{\Delta}=\Delta\cup\{\delta-\theta\}$.
This is a set of fundamental roots of the root system $\widehat{\Phi}=\widehat{\Phi}_0\cup\widehat{\Phi}_1$ of $\widehat{\fg}$.
Here,  
    \begin{align*}
    \widehat{\Phi}_0 & \coloneqq \{\alpha+n\delta \mid \alpha\in\Phi_0,  \ n\in\Z\} \cup \{n\delta \mid n\in\Z\setminus\{0\}\},  \\
    \widehat{\Phi}_1 & \coloneqq \{\alpha+n\delta \mid \alpha\in\Phi_1,  \ n\in\Z\}.
    \end{align*}
Put $\widehat{\Phi}_i^+=(\sum_{\alpha\in\widehat{\Delta}}\Z_{\geq0}\cdot\alpha)\cap\widehat{\Phi}_i$,  ($i=0,  1$) and $\widehat{\Phi}^+=\widehat{\Phi}_0^+\cup\widehat{\Phi}_1^+$.
A simple calculation shows that 
    \[
    \widehat{\Phi}_0^+  = \Phi_0^+\cup 
    \{ \alpha+n\delta \mid \alpha\in\Phi_0\cup\{0\},\ n\in\Z_{>0} \},  \qquad
    \widehat{\Phi}_1^+ = \Phi_1^+\cup 
    \{\alpha+n\delta \mid \alpha\in\Phi_1, \ n\in\Z_{>0} \}.
    \]
In particular,  we have $\widehat{\Phi}_i=\widehat{\Phi}_i^+\cup(-\widehat{\Phi}_i^+)$,  ($i=0,1$).
This  implies that $\widehat{\Delta}$ is a fundamental set of roots of $\widehat{\Phi}$.

For $\beta\in\widehat{\Phi}$,  the corresponding root subspace is $\widehat{\fg}_\beta=\{X\in\widehat{\fg} \mid [H,  X]=\beta(H)X \text{ for all $H\in\widehat{\fh}$}\}$.
One sees that $\widehat{\fg}_{\alpha+n\delta}=\fg_\alpha\otimes t^n$ and $\widehat{\fg}_{n\delta}=\fh\otimes t^n$ for $\alpha\in\Phi$,  $n\in\Z$.
In particular,  we have the root space decomposition $\widehat{\fg}_0=\widehat{\fh}\oplus\bigoplus_{\beta\in\widehat{\Phi}_0}\widehat{\fg}_\beta$ and $\widehat{\fg}_1=\bigoplus_{\beta\in\widehat{\Phi}_1}\widehat{\fg}_\beta$.

\begin{definition}
For $\alpha\in\fh^\ast$,  we define $t_\alpha\,\colon \widehat{\fh}^\ast\to\widehat{\fh}^\ast$ by
    \[
    t_\alpha(\lambda) = \lambda+\lambda(K)\alpha-
    ((\lambda,\alpha)+\tfrac12(\alpha,\alpha)\lambda(K))\delta.
    \]
\end{definition}

We define the \emph{affine Weyl group} as $\widehat{W}=W\ltimes M^\sharp$.
One can show that $t_\alpha\in\Aut(\widehat{\fh}^\ast)$, $t_\alpha t_\beta=t_{\alpha+\beta}$, and $wt_\alpha w^{-1}=t_{w(\alpha)}$ for $w\in W$ and $\alpha,  \beta\in\fh^\ast$.
Hence we obtain an action of $\widehat{W}$ on $\widehat{\fh}^\ast$.

\begin{definition}
The denominator $\widehat{R}$ of $\widehat{\fg}$ is defined as 
    \[
    \widehat{R} = \frac{\prod_{\beta\in\widehat{\Phi}_0^+}
    (1-e^{-\beta})^{\dim(\widehat{\fg}_\beta)}}
    {\prod_{\beta\in\widehat{\Phi}_1^+}
    (1+e^{-\beta})^{\dim(\widehat{\fg}_\beta)}}
    = R\prod_{n=1}^\infty\left( (1-e^{-n\delta})^k\cdot
    \frac{\prod_{\alpha\in\Phi_0^+}(1-e^{-\alpha-n\delta})(1-e^{\alpha-n\delta})}
    {\prod_{\alpha\in\Phi_1^+}(1+e^{-\alpha-n\delta})
    (1+e^{\alpha-n\delta})}\right),
    \]
where 
    \[
    k = \dim(\fh) =
        \begin{cases}
        2m+1 & \text{if $\widehat{\fg}=\widehat{\spo}(2m,  2m+2)$},  \\ 
        2m & \text{otherwise}.
        \end{cases}
    \]
\end{definition}

Let $\rho=\frac12\left(\sum_{\alpha\in\Phi_0^+}\alpha-\sum_{\alpha\in\Phi_1^+}\alpha\right)$ be the Weyl vector of $\Phi^+$.
Take $\widehat{\rho}\in\widehat{\fh}^\ast$ so that $(\widehat{\rho},  \alpha)=(\alpha,  \alpha)/2$ for all $\alpha\in\widehat{\Delta}$.
One explicit choice is:
    \[
    \widehat{\rho} = 
        \begin{cases}
        \rho & \text{if $\widehat{\fg}=\widehat{\gl}(m,  m)$,  $\widehat{\spo}(2m,  2m+2)$},   \\
        \gamma & \text{if $\widehat{\fg}=\widehat{\sl}(m+1,  m)$,  $\widehat{\spo}(2m,  2m)$}.  \\
        \end{cases}
    \]
Note that we have $\widehat{\rho}=-\frac12\str$ when $\widehat{\fg}=\widehat{\gl}(m,  m)$ and $\widehat{\rho}=0$ when $\widehat{\fg}=\widehat{\spo}(2m,  2m+2)$.
Set $S=\{ \alpha_{2j-1} \mid j=1,  2,  \ldots,  m \}$,  \emph{i.e.}
    \[
    S = 
        \begin{cases}
        \{ \delta_j-\varepsilon_j \mid j=1,  2,  \ldots,  m \} 
        & \text{if $\widehat{\fg}=\widehat{\spo}(2m,  2m+2)$},   \\
        \{ \varepsilon_j-\delta_j \mid j=1,  2,  \ldots,  m \} &  \text{otherwise}.
        \end{cases}
    \]
This is a maximal totally isotropic subset of $\Delta$.
The next theorem is called the \emph{denominator identity} for $\widehat{\fg}$.
\begin{theorem}[Gorelik \cite{Gorelik11},  Gorelik--Reif \cite{GorelikReif2012}]
We have
    \begin{equation}\label{affine-denominator}
    e^{\widehat{\rho}}f(q) \widehat{R} = \sum_{\alpha^\sharp\in M^\sharp} t_{\alpha^\sharp}(
    e^{\widehat{\rho}}R) = \frac{|W^\sharp|}{|W|} \sum_{\alpha^\sharp\in M^\sharp}
    \sum_{w\in W}\epsilon(w)wt_{\alpha^\sharp}\left(
    \frac{e^{\widehat{\rho}}}{\prod_{\beta\in S}(1+e^{-\beta})} \right).
    \end{equation}
Here,  $q\coloneqq e^{-\delta}$,  
    \[
    f(q) \coloneqq
        \begin{cases}
        \prod_{n=1}^\infty (1-(-1)^mq^ne^{\str})(1-(-1)^mq^ne^{-\str})(1-q^n)^{-2} 
        &  \widehat{\fg} = \widehat{\gl}(m,  m),   \\
        \prod_{n=1}^\infty(1-q^n)^{-1} 
        & \widehat{\fg} = \widehat{\spo}(2m,  2m+2),  \\
        1 & \text{otherwise},
        \end{cases}
    \]
and $\epsilon\,\colon W\to\{\pm1\}$ is the sign character. 
\end{theorem}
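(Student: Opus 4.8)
\emph{Proof plan.} The statement is the affine-superalgebra analogue of the Weyl--Kac denominator identity, and the plan is to establish the two equalities in \eqref{affine-denominator} separately, following the method of Gorelik and Gorelik--Reif. The second equality is the formal one — it comes from inserting the finite denominator identity for $\fg$ into each summand — while the first equality carries the content: it expresses the affine denominator $\widehat{R}$, up to the correction factor $f(q)$, as the average over the coroot lattice $M^\sharp$ of the finite denominator $R$ shifted by $e^{\widehat{\rho}}$, which is the superalgebra version of Macdonald's identity.

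\emph{The second equality.} First I would record the finite Kac--Wakimoto denominator identity for $\fg$ with the maximal isotropic subset $S\subset\Delta$ fixed above, in the form
\[
	e^{\rho}R \;=\; \frac{|W^\sharp|}{|W|}\sum_{w\in W}\epsilon(w)\,w\!\left(\frac{e^{\rho}}{\prod_{\beta\in S}(1+e^{-\beta})}\right),
\]
the normalisation $|W^\sharp|/|W|$ accounting for the fact that $W$ does not stabilise $S$, so the symmetrised right-hand side repeats each distinct term $|W|/|W^\sharp|$ times. Passing from $\rho$ to $\widehat{\rho}$ on both sides is harmless and is checked case by case using the explicit choices recorded above: for $\widehat{\gl}(m,m)$ and $\widehat{\spo}(2m,2m+2)$ one has $\widehat{\rho}\in\fh^\ast$ (equal to $-\tfrac12\str$, resp. $0$), while for $\widehat{\sl}(m+1,m)$, $\widehat{\spo}(2m,2m)$ one has $\widehat{\rho}=\gamma$, whose extra component is $W$-invariant and is transported through the translations in a controlled way. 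Since each $t_{\alpha^\sharp}$ is a linear automorphism of $\widehat{\fh}^\ast$ with $wt_{\alpha^\sharp}w^{-1}=t_{w(\alpha^\sharp)}$, and $W$ preserves $M^\sharp$, applying $\sum_{\alpha^\sharp\in M^\sharp}t_{\alpha^\sharp}$ to this identity and reindexing the inner sum yields the second equality of \eqref{affine-denominator}.

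\emph{The first equality.} Here I would expand $\sum_{\alpha^\sharp\in M^\sharp}t_{\alpha^\sharp}(e^{\widehat{\rho}}R)$ as a formal series in $q=e^{-\delta}$, using $t_{\alpha^\sharp}(\lambda)=\lambda+\lambda(K)\alpha^\sharp-((\lambda,\alpha^\sharp)+\tfrac12(\alpha^\sharp,\alpha^\sharp)\lambda(K))\delta$, turning the sum into a theta-type lattice series; this I would evaluate by the Macdonald identity attached to the affine root system generated by the positive-norm even roots of $\fg$, equivalently by iterating the Jacobi triple product along each $\delta$-string $\{\alpha+n\delta:n\in\Z\}$. Matching the outcome against the product formula for $\widehat{R}$ recorded above — in which $n\delta$ enters with multiplicity $k=\dim\fh$ — produces $e^{\widehat{\rho}}\widehat{R}$ together with exactly the surplus $f(q)$. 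When the dual Coxeter number of $\widehat{\fg}$ is nonzero, i.e. for $\widehat{\sl}(m+1,m)$ and $\widehat{\spo}(2m,2m)$, the Weyl vector is nondegenerate and there is no surplus, so $f(q)=1$; when it vanishes, $\operatorname{rank}M^\sharp$ falls short of $\dim\fh$ and a genuine extra factor remains, which the case analysis identifies as $\prod_{n\ge1}(1-(-1)^mq^ne^{\str})(1-(-1)^mq^ne^{-\str})(1-q^n)^{-2}$ for $\widehat{\gl}(m,m)$ — essentially the denominator of the Heisenberg along the isotropic central line $\C\str\subset\fh$ — and as $\prod_{n\ge1}(1-q^n)^{-1}$ for $\widehat{\spo}(2m,2m+2)$.

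\emph{Main obstacle.} The delicate step is exactly the first equality in the two vanishing dual Coxeter number cases, where $\widehat{\rho}$ is degenerate ($-\tfrac12\str$, resp. $0$): the translation average no longer reproduces $\widehat{R}$ on the nose, and one must pin down the defect $f(q)$ precisely, including the sign $(-1)^m$ and the $e^{\pm\str}$-dependence in the $\gl$-case, while excluding spurious lower-order factors. Doing so amounts to tracking which $\delta$-strings survive the super-cancellation between the even and odd affine roots and which persist as an honest extra factor; this is precisely the bookkeeping carried out by Gorelik and completed by Gorelik--Reif via their analysis of the ``$S$-stable'' part of the affine root system, and I would follow their argument to conclude.
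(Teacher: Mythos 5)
The paper does not prove this statement: it is quoted verbatim from Gorelik and Gorelik--Reif, with the attribution in the theorem header serving as the proof. So there is no in-paper argument to measure your proposal against; the only fair comparison is that both you and the authors ultimately rest on the cited references. Read on those terms, your outline is a reasonable reconstruction of how the literature proof is organized --- the second equality as the $M^\sharp$-average of the finite Kac--Wakimoto denominator identity $e^{\rho}R = \frac{|W^\sharp|}{|W|}\sum_{w\in W}\epsilon(w)\,w\bigl(e^{\rho}/\prod_{\beta\in S}(1+e^{-\beta})\bigr)$, and the first equality as the substantive Macdonald-type evaluation of the resulting lattice sum, with the correction $f(q)$ surfacing exactly in the zero dual Coxeter number cases $\widehat{\gl}(m,m)$ and $\widehat{\spo}(2m,2m+2)$. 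That matches the actual division of labor between \cite{Gorelik11} and \cite{GorelikReif2012}.

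However, as a proof your text does not stand on its own: every load-bearing step is asserted and then deferred. The finite denominator identity for defect-$m$ superalgebras is itself a hard theorem (it is not proved here), the passage from $\rho$ to $\widehat{\rho}$ is declared ``harmless'' without the case-by-case computation (note that $t_{\alpha^\sharp}$ acts nontrivially on the $\gamma$-component, since $\gamma(K)=1$), and the identification of $f(q)$ --- including the sign $(-1)^m$ and the $e^{\pm\str}$-dependence in the $\gl(m,m)$ case --- is exactly the point you flag as the ``main obstacle'' and then hand back to Gorelik--Reif. Since the paper treats the theorem as an external input, this is acceptable as an annotated citation, but it should not be presented as a proof; if a proof were required, the first equality in the degenerate cases would need the full bookkeeping of the $S$-stable part of the affine root system that you only name.
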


\section{Denominator identities}\label{sec:denom-identities}

We deduce from \eqref{affine-denominator}  the power series identities for the powers of $\triangle(q)$.

\subsection{Case $\widehat{\gl}(m,  m)$}\label{sec:gl^(m_m)}

Suppose that $m>1$.
Let $\xi=-\frac12\sum_{i=1}^{m}\varepsilon_i$ so that $(\alpha_j,  \xi)=(-1)^j/2$ for $j=1,  \ldots,  2m-1$ and $w(\xi)=\xi$ for all $w\in W$.
Applying $t_\xi$ to the both sides of \eqref{affine-denominator},  we obtain
    \begin{equation}\label{t_xiA2}
    t_\xi(e^{\widehat{\rho}}f(q)\widehat{R}) 
    = \frac{1}{m!}\sum_{\alpha^\sharp\in M^\sharp}\sum_{w\in W} 
    \epsilon(w)wt_{\xi+\alpha^\sharp}
    \left( \frac{e^{\widehat{\rho}}}{\prod_{\beta\in S}(1+e^{-\beta})} \right).
    \end{equation}

Write $\alpha^\sharp\in M^\sharp$ as $\alpha^\sharp=\sum_{i=1}^{m-1} (k_1+\cdots+k_i)(\alpha_{2i-1}+\alpha_{2i})=\sum_{i=1}^{m-1} k_i(\varepsilon_i-\varepsilon_m)$ with $\vec{k}=(k_1,  \ldots,  k_{m-1})\in\Z^{m-1}$.
For $\vec{k}=(k_1,  \ldots,  k_{m-1})\in\Z^{m-1}$,  set $|\vec{k}|=k_1+k_2+\cdots+k_{m-1}$.
We summarize some equations which we use to compute the right-hand side of \eqref{t_xiA2}.
Each of them is a consequence of an easy calculation.
\begin{lemma}
We have the following equations: 
\begin{itemize}
\item $t_{\xi+\alpha^\sharp}(\alpha_{2j-1})=\alpha_{2j-1}-
    \begin{cases}
    (-1/2+k_j)\delta & \text{if $j=1,  \ldots,  m-1$,}  \\
    (-1/2-|\vec{k}|)\delta & \text{if $j=m$.}
    \end{cases}$
\item $t_{\xi+\alpha^\sharp}(\widehat{\rho})=\widehat{\rho}-(m/4)\delta$.
\item $w(\delta)=\delta$ for $w\in W$.
\end{itemize}
\end{lemma}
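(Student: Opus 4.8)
The plan is to prove all three equations by a direct unwinding of the definition of $t_\alpha$ together with the explicit pairing on $\fh^\ast$; as the text already indicates, each is an elementary computation, so the real content is organising the bookkeeping. First I would record the explicit shape of $\xi+\alpha^\sharp$ as a half-integer combination of the $\varepsilon_j$: from $\alpha^\sharp=\sum_{i=1}^{m-1}k_i(\varepsilon_i-\varepsilon_m)=\sum_{i=1}^{m-1}k_i\varepsilon_i-|\vec{k}|\varepsilon_m$ and $\xi=-\tfrac12\sum_{i=1}^m\varepsilon_i$ one gets
\[
\xi+\alpha^\sharp=\sum_{j=1}^{m-1}\Bigl(k_j-\tfrac12\Bigr)\varepsilon_j-\Bigl(|\vec{k}|+\tfrac12\Bigr)\varepsilon_m .
\]
Since $(\varepsilon_i,\varepsilon_i)=1$, $(\varepsilon_i,\varepsilon_j)=0$ for $i\neq j$, and $(\varepsilon_i,\delta_l)=0$ in type $A(m-1,m-1)$, pairings against $\xi+\alpha^\sharp$ become immediate; moreover $\xi+\alpha^\sharp\in\fh^\ast$ annihilates $K$, so the definition of $t_\alpha$ collapses to $t_{\xi+\alpha^\sharp}(\lambda)=\lambda-(\lambda,\xi+\alpha^\sharp)\delta$ for any $\lambda\in\fh^\ast$.

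For the first equation I would apply this to $\lambda=\alpha_{2j-1}=\varepsilon_j-\delta_j\in\fh^\ast$, so that $(\alpha_{2j-1},\xi+\alpha^\sharp)=(\varepsilon_j,\xi+\alpha^\sharp)$, which equals $k_j-\tfrac12$ for $1\le j\le m-1$ and $-|\vec{k}|-\tfrac12$ for $j=m$; this is exactly the claimed case distinction. For the second equation I would use that $\widehat{\rho}=\rho=-\tfrac12\str=\tfrac12\bigl(\sum_{l=1}^m\delta_l-\sum_{i=1}^m\varepsilon_i\bigr)$ also lies in $\fh^\ast$, so $t_{\xi+\alpha^\sharp}(\widehat{\rho})=\widehat{\rho}-(\widehat{\rho},\xi+\alpha^\sharp)\delta$, and only the $\varepsilon$-part of $\rho$ pairs nontrivially:
\[
(\widehat{\rho},\xi+\alpha^\sharp)=-\tfrac12\sum_{i=1}^m(\varepsilon_i,\xi+\alpha^\sharp)=-\tfrac12\Bigl(\textstyle\sum_{j=1}^{m-1}(k_j-\tfrac12)-|\vec{k}|-\tfrac12\Bigr)=\tfrac{m}{4},
\]
the sum $\sum_j k_j$ cancelling $|\vec{k}|$. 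That cancellation, and keeping the $j=m$ boundary term straight in the first equation, are the only points requiring care.

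The third equation, $w(\delta)=\delta$ for all $w\in W$, is immediate: $W$ is generated by the reflections $s_\alpha$ with $\alpha\in\Phi_0\subset\fh^\ast$, while $\delta$ is orthogonal to $\fh^\ast$ for the bilinear form on $\widehat{\fh}^\ast$, so $(\delta,\alpha)=0$ and $s_\alpha(\delta)=\delta-\tfrac{2(\delta,\alpha)}{(\alpha,\alpha)}\alpha=\delta$; this was in fact already observed just after the $W$-action on $\widehat{\fh}^\ast$ was introduced, and could simply be cited there. I do not anticipate any genuine obstacle — the lemma is entirely computational — so the "hard part" amounts to no more than the sign- and index-bookkeeping indicated above, the most error-prone spot being the cancellation of the $k_j$'s in the second equation.
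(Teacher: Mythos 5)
Your computation is correct and is exactly the "easy calculation" the paper leaves implicit: reduce $t_{\xi+\alpha^\sharp}$ on $\fh^\ast$ to $\lambda\mapsto\lambda-(\lambda,\xi+\alpha^\sharp)\delta$, write $\xi+\alpha^\sharp=\sum_{j=1}^{m-1}(k_j-\tfrac12)\varepsilon_j-(|\vec{k}|+\tfrac12)\varepsilon_m$, and pair using $(\varepsilon_i,\varepsilon_j)=\delta_{ij}$, $(\varepsilon_i,\delta_l)=0$. All three items check out, including the cancellation of $\sum_j k_j$ against $|\vec{k}|$ in the second identity and the $W$-invariance of $\delta$ already noted in the paper.
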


For an index set $J\subset\{1,  \ldots,  m-1\}$,  set 
    \begin{equation}\label{eq:Z_J^m}
    \Z^{m-1}_J = \left\{\vec{k}=(k_1,  \ldots,  k_{m-1})\in\Z^{m-1} \, 
    \middle|\, J=\{j \mid k_j\leq 0\} \right\}
    \end{equation} 
and define the subset $\Z^{m-1}_{J,  \pm}$ of $\Z^{m-1}_J$ by
    \[
    \Z^{m-1}_{J,+} = \left\{\vec{k}=(k_1,  \ldots,  k_{m-1})\in\Z_J^{m-1} \, \middle|\,
     |\vec{k}|\geq0 \right\}
    \]
and 
    \[
    \Z^{m-1}_{J,-} = \left\{\vec{k}=(k_1,  \ldots,  k_{m-1})\in\Z_J^{m-1} \, \middle|\,
     |\vec{k}|<0 \right\}.
    \]
The right-hand side of \eqref{t_xiA2} becomes 
    \begin{align*}
    & \frac{q^{\frac{m}{4}}}{m!}
    \sum_{J\subset\{1,\ldots,m-1\}} \Biggl\{ \sum_{\vec{k}\in\Z^{m-1}_{J,+}}\sum_{w\in W}\epsilon(w) \\
    & \times w\left( e^{\widehat{\rho}} 
     \sum_{\vec{r}\in\Z_{\ge 0}^m} \left(-q^{\frac12+|\vec{k}|}e^{-\alpha_{2m-1}} \right)^{r_m}
    \prod_{j\in J} (-q^{-(k_j-\frac12)}e^{-\alpha_{2j-1}})^{r_j} \cdot
    \prod_{j\not\in J} \left(-(-q^{k_j-\frac12}e^{\alpha_{2j-1}})^{r_j+1}\right) \right) \\
    &\quad + \sum_{\vec{k}\in\Z^{m-1}_{J,-}}\sum_{w\in W}\epsilon(w) \\
    & \times w\left( e^{\widehat{\rho}} 
     \sum_{\vec{r}\in\Z_{\ge 0}^m} (-1)\left(-q^{-(\frac12+|\vec{k}|)}e^{\alpha_{2m-1}} \right)^{r_m+1}
    \prod_{j\in J} (-q^{-(k_j-\frac12)}e^{-\alpha_{2j-1}})^{r_j} \cdot
    \prod_{j\not\in J} \left(-(-q^{k_j-\frac12}e^{\alpha_{2j-1}})^{r_j+1}\right) \right) \Biggr\}.
    \end{align*}
Note that we have 
    \begin{align*}
    & \sum_{\vec{r}\in\Z_{\ge 0}^m} \left(-q^{\frac12+|\vec{k}|}e^{-\alpha_{2m-1}} \right)^{r_m}
    \prod_{j\in J} (-q^{-(k_j-\frac12)}e^{-\alpha_{2j-1}})^{r_j} \cdot
    \prod_{j\not\in J} \left(-(-q^{k_j-\frac12}e^{\alpha_{2j-1}})^{r_j+1}\right) \\
    & = (-1)^{m-1-|J|}\sum_{r_m\geq0}\left(-q^{\frac12+|\vec{k}|}e^{-\alpha_{2m-1}} \right)^{r_m}
    \sum_{-\vec{r}\in\Z_J^{m-1}} (-1)^{\sum_{j=1}^{m-1}r_j}
    q^{-\sum_{j=1}^{m-1}(k_j-\frac12)r_j}e^{-\sum_{j=1}^{m-1}r_j\alpha_{2j-1}} \\
    & = (-1)^{m-|J\sqcup\{m\}|}\sum_{-\vec{r}\in\Z_{J\sqcup\{m\}}^m}(-1)^{\sum_{j=1}^{m}r_j}
    q^{(\frac12+|\vec{k}|)r_m-\sum_{j=1}^{m-1}(k_j-\frac12)r_j}e^{-\sum_{j=1}^{m}r_j\alpha_{2j-1}}
    \end{align*}
and similarly
    \begin{align*}
    & \sum_{\vec{r}\in\Z_{\ge 0}^m} (-1)\left(-q^{-(\frac12+|\vec{k}|)} e^{\alpha_{2m-1}} \right)^{r_m+1}
    \prod_{j\in J} (-q^{-(k_j-\frac12)} e^{-\alpha_{2j-1}})^{r_j} \cdot
    \prod_{j\not\in J} \left(-(-q^{k_j-\frac12} e^{\alpha_{2j-1}})^{r_j+1}\right) \\
    & = (-1)^{m-|J|} \sum_{r_m<0} \left(-q^{\frac12+|\vec{k}|} e^{-\alpha_{2m-1}} \right)^{r_m}
    \sum_{-\vec{r}\in\Z_J^{m-1}} (-1)^{\sum_{j=1}^{m-1}r_j}
    q^{-\sum_{j=1}^{m-1}(k_j-\frac12)r_j}e^{-\sum_{j=1}^{m-1}r_j\alpha_{2j-1}} \\
    & = (-1)^{m-|J|}\sum_{-\vec{r}\in\Z_{J}^m} (-1)^{\sum_{j=1}^{m}r_j}
    q^{(\frac12+|\vec{k}|)r_m-\sum_{j=1}^{m-1}(k_j-\frac12)r_j} e^{-\sum_{j=1}^{m}r_j\alpha_{2j-1}}.
    \end{align*}
Hence the right-hand side of \eqref{t_xiA2} multiplied with $q^{-m/4}$ equals
    \begin{multline*}\label{heartA2}\tag{$\spadesuit$1}{}
     \frac{1}{m!}
    \sum_{J\subset\{1,\ldots,m\}} (-1)^{|J^c|} \sum_{\substack{\vec{k}\in\Z^{m-1} \\ (\vec{k},  -|\vec{k}|)\in\Z^m_J}}
    \sum_{-\vec{r}\in\Z^m_J} (-1)^{\sum_{j=1}^{m}r_j}
    q^{(\frac12+|\vec{k}|)r_m-\sum_{j=1}^{m-1}(k_j-\frac12)r_j}  \\
     \qquad \times \sum_{w\in W} \epsilon(w)w\left( e^{\widehat{\rho}-\sum_{j=1}^m r_j\alpha_{2j-1}} \right).
    \end{multline*}

On the other hand,  since $t_\xi(\widehat{\rho})=\widehat{\rho}-(m/4)\delta$ and $t_\xi(\str)=\str+(m/2)\delta$,  the left-hand side of \eqref{t_xiA2} multiplied with $q^{-m/4}$ equals
    \begin{multline*}\label{heartA3}\tag{$\spadesuit$2}{}
    e^{\widehat{\rho}}
    \left( \prod_{n=1}^\infty (1-q^n)^{2m-2} (1-(-1)^mq^{n-m/2}e^{\str})(1-(-1)^mq^{n+m/2}e^{-\str}) \right)  \\
    \times \frac{\prod_{\beta\in\Phi_0^+}
    \left(\prod_{n=0}^\infty(1-q^{n-(\beta,  \xi)}e^{-\beta})\cdot
    \prod_{n=1}^\infty(1-q^{n+(\beta,  \xi)}e^{\beta}) \right)}
    {\prod_{\beta\in\Phi_1^+}
    \left(\prod_{n=0}^\infty(1+q^{n-(\beta,  \xi)}e^{-\beta})\cdot
    \prod_{n=1}^\infty(1+q^{n+(\beta,  \xi)}e^{\beta} \right)}.
    \end{multline*}

Let $R_0=\prod_{\beta\in\Phi_0^+}(1-e^{-\beta})$ be the denominator of $\Phi_0$.
From now,  we divide the both \eqref{heartA2} and \eqref{heartA3} by $R_0$ and take the limit $e^{-\alpha_j}\to1$,  ($j=1,  \ldots,  2m-1$). 
 
First,  we deal with \eqref{heartA2}.  

\begin{lemma}\label{lem:RHS-A2}
We have 
    \[
    \lim_{\substack{e^{-\alpha_{j}}\to1 \\ j=1,  \ldots,  2m-1}}
    R_0^{-1}  \sum_{w\in W} \epsilon(w)w \left( e^
    {\widehat{\rho}-\sum_{j=1}^m r_j\alpha_{2j-1}} \right)
    = \frac{(-1)^{m(m-1)/2}}{\left( \prod_{i=1}^{m-1}i! \right)^2} 
    \left( \prod_{1\leq i<j\leq m}(r_j-r_i)^2 \right).
    \]
\end{lemma}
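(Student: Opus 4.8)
The plan is to reduce the alternating sum to a product of two principal specializations of generalized Vandermonde determinants, one in the $\varepsilon$-variables and one in the $\delta$-variables, and then to evaluate each by the classical specialization formula.

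First I would rewrite the weight in the exponent. Since $\widehat{\rho} = -\tfrac12\str$ is the functional $-\tfrac12\bigl(\sum_{i=1}^m\varepsilon_i - \sum_{j=1}^m\delta_j\bigr)$ and $\alpha_{2j-1} = \varepsilon_j-\delta_j$, putting $\mu_j \coloneqq -\tfrac12 - r_j$ we get $\widehat{\rho} - \sum_{j=1}^m r_j\alpha_{2j-1} = \sum_{j=1}^m\mu_j\varepsilon_j - \sum_{j=1}^m\mu_j\delta_j$. Because $W\simeq\mathfrak{S}_m\times\mathfrak{S}_m$ acts separately on $\{\varepsilon_i\}$ and on $\{\delta_j\}$, and $R_0 = R_0^\varepsilon R_0^\delta$ with $R_0^\varepsilon = \prod_{1\le i<j\le m}(1-e^{-(\varepsilon_i-\varepsilon_j)})$ and analogously $R_0^\delta$, the whole expression factors as
\[
R_0^{-1}\sum_{w\in W}\epsilon(w)\,w\bigl(e^{\widehat{\rho}-\sum_j r_j\alpha_{2j-1}}\bigr) = \left(\frac{\sum_{\sigma\in\mathfrak{S}_m}\epsilon(\sigma)\,\sigma\bigl(e^{\sum_j\mu_j\varepsilon_j}\bigr)}{R_0^\varepsilon}\right)\cdot\left(\frac{\sum_{\tau\in\mathfrak{S}_m}\epsilon(\tau)\,\tau\bigl(e^{-\sum_j\mu_j\delta_j}\bigr)}{R_0^\delta}\right).
\]

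Next I would identify each factor. Writing $x_i = e^{\varepsilon_i}$, the numerator of the first factor is the generalized Vandermonde determinant $\det(x_i^{\mu_j})_{1\le i,j\le m}$; after factoring out the common $\prod_i x_i^{-1/2}$ it becomes the genuine Laurent determinant $\det(x_i^{-r_j})$, which, being alternating in the $x_i$, is divisible by $\prod_{i<j}(x_i-x_j)$, while $R_0^\varepsilon = \bigl(\prod_i x_i^{i-m}\bigr)\prod_{i<j}(x_i-x_j)$. Hence the first factor is a Laurent monomial (equal to $1$ at $x=\mathbf{1}$) times a symmetric Laurent polynomial in the $x_i$, so the limit $e^{-\alpha_j}\to1$ — which forces all $x_i\to1$ — is simply its value at $x_1=\dots=x_m=1$; likewise for the second factor in the variables $y_i=e^{\delta_i}$. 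It then remains to evaluate, for a tuple $\lambda=(\lambda_1,\dots,\lambda_m)$, the specialization $\bigl[\det(x_i^{\lambda_j})/\prod_{i<j}(x_i-x_j)\bigr]_{x=(1,\dots,1)}$. I would prove that this equals $\prod_{1\le i<j\le m}\frac{\lambda_i-\lambda_j}{j-i}$ by substituting $x_i=q^{m-i}$, recognizing numerator and denominator as Vandermonde determinants in the variables $q^{\lambda_j}$ and $q^{m-j}$, and letting $q\to1$ (each $q^{\lambda_i}-q^{\lambda_j}\sim(\lambda_i-\lambda_j)(q-1)$ and $q^{m-i}-q^{m-j}\sim(j-i)(q-1)$, so the powers of $q-1$ cancel).

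Finally I would assemble the pieces. Applying the specialization with $\lambda_j=\mu_j$ gives, using $\mu_i-\mu_j=r_j-r_i$ and the elementary identity $\prod_{1\le i<j\le m}(j-i)=\prod_{i=1}^{m-1}i!$, the value $\prod_{i<j}(r_j-r_i)\big/\prod_{i=1}^{m-1}i!$; applying it with $\lambda_j=-\mu_j$ gives $\prod_{i<j}(\mu_j-\mu_i)\big/\prod_{i=1}^{m-1}i! = (-1)^{m(m-1)/2}\prod_{i<j}(r_j-r_i)\big/\prod_{i=1}^{m-1}i!$. Multiplying the two factors produces $\dfrac{(-1)^{m(m-1)/2}}{\bigl(\prod_{i=1}^{m-1}i!\bigr)^2}\prod_{1\le i<j\le m}(r_j-r_i)^2$, as claimed. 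When $r_i=r_j$ for some $i\neq j$ both sides already vanish (two equal rows in the determinant, resp. a vanishing factor), so no degenerate case needs separate treatment. I expect the main nuisance to be the bookkeeping of the row/column-reversal signs that accumulate to the overall $(-1)^{m(m-1)/2}$, together with verifying $\prod_{i<j}(j-i)=\prod_{i=1}^{m-1}i!$; the interchange of limit and summation is harmless here, since once the quotients are recognized as honest Laurent polynomials the ``limit'' is just evaluation at $x=\mathbf{1}$.
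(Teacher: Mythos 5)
Your proposal is correct, and it arrives at exactly the product formula the paper uses, but by a more self-contained route. The paper's proof simply sets $\lambda = -\rho_0 + \widehat{\rho} - \sum_j r_j\alpha_{2j-1}$ and invokes the Weyl dimension formula (citing Humphreys) to get $\lim R_0^{-1}\sum_w \epsilon(w)w(e^{\rho_0+\lambda}) = \prod_{\beta\in\Phi_0^+}(\rho_0+\lambda,\beta)/(\rho_0,\beta)$, then evaluates the pairings ($(\rho_0+\lambda,\varepsilon_i-\varepsilon_j)=r_j-r_i=(\rho_0+\lambda,\delta_i-\delta_j)$, $(\rho_0,\varepsilon_i-\varepsilon_j)=j-i$, etc.). You instead exploit the factorization $W\simeq\fS_m\times\fS_m$ and $R_0=R_0^\varepsilon R_0^\delta$, recognize each factor as a generalized Vandermonde quotient, and prove the principal specialization $\bigl[\det(x_i^{\lambda_j})/\prod_{i<j}(x_i-x_j)\bigr]_{x=\mathbf{1}}=\prod_{i<j}(\lambda_i-\lambda_j)/(j-i)$ directly via $x_i=q^{m-i}$, $q\to 1$ — which is precisely the content of the dimension formula in type $A_{m-1}$, now proved rather than cited. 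What your route buys is independence from the representation-theoretic citation; what it costs is the sign bookkeeping you already flagged. One small imprecision: the limit $e^{-\alpha_j}\to 1$ forces all of $x_1,\dots,x_m,y_1,\dots,y_m$ to a \emph{common} value, not necessarily $1$; this is harmless because the $\varepsilon$-factor and the $\delta$-factor are homogeneous of opposite degrees $\pm(\sum_j\mu_j)$, so their product is degree $0$ and its value on the diagonal equals its value at the all-ones point. (The paper's appeal to Humphreys involves the same implicit normalization.) With that remark added, your argument is complete and the final assembly, giving $(-1)^{m(m-1)/2}\prod_{i<j}(r_j-r_i)^2\big/\bigl(\prod_{i=1}^{m-1}i!\bigr)^2$, matches the lemma.
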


\begin{proof}
Let $\rho_0=\frac12\sum_{\beta\in\Phi_0^+}\beta$ be the Weyl vector of $\Phi_0^+$.  
Set 
    \[
    \lambda = -\rho_0+\widehat{\rho}-\sum_{j=1}^m r_j\alpha_{2j-1}
    = -\rho_0+\frac12\sum_{i=1}^m(\delta_i-\varepsilon_i)-\sum_{j=1}^m r_j\alpha_{2j-1}.
    \]
From the Weyl dimension formula (and its proof,  \cite[Corollary 24.3]{Humphreys}),  we obtain
    \[
    \lim_{\substack{e^{-\alpha_{j}}\to1 \\ j=1,  \ldots,  2m-1}}
    R_0^{-1} \sum_{w\in W} \epsilon(w)w(e^{\rho_0+\lambda}) \quad
    = \prod_{\beta\in\Phi_0^+}\frac{(\rho_0+\lambda,  \beta)}
    {(\rho_0,  \beta)}.
    \]

Since $(\rho_0+\lambda,  \varepsilon_i-\varepsilon_j)=-r_i+r_j=(\rho_0+\lambda,  \delta_i-\delta_j)$ for $1\leq i<j\leq m$,  we get
    \[
    \prod_{\beta\in\Phi_0^+}(\rho_0+\lambda,  \beta)
    = \prod_{1\leq i<j\leq m} (r_j-r_i)^2.
    \]
On the other hand,  since we have 
    \[
    \rho_0=\frac12\sum_{p=1}^m (m+1-2p)\varepsilon_p
    +\frac12\sum_{q=1}^m(m+1-2q)\delta_q,
    \]
the pairing $(\rho_0,  \beta)$ for $\beta\in\Phi_0^+$ is computed as 
\begin{itemize}
\item $(\rho_0,  \varepsilon_i-\varepsilon_j)=j-i$ and 
    \[
    \prod_{1\leq i<j\leq m}(\rho_0,  \varepsilon_i-\varepsilon_j) 
    = \prod_{1\leq i<m}(m-i)! = \prod_{i=1}^{m-1} i! .
    \]
\item $(\rho_0,  \delta_i-\delta_j)=i-j$ and 
    \[
    \prod_{1\leq i<j\leq m}(\rho_0,  \delta_i-\delta_j) 
    =(-1)^{m(m-1)/2}\prod_{i=1}^{m-1} i! .
    \]
\end{itemize}
Hence we have $\prod_{\beta\in\Phi_0^+}(\rho_0,  \beta)=(-1)^{m(m-1)/2}\left(\prod_{i=1}^{m-1}i!\right)^2$.
This completes the proof.
\end{proof}

Combining \cref{lem:RHS-A2} with \eqref{heartA2} and changing the variables $r_i\mapsto-r_i$,  $i=1,  \ldots,  m$,  we obtain the following equation.

\begin{corollary}\label{lem:RHS-A2}
We have
    \begin{align}\label{eq:RHS-A2}
    \begin{split}
    \lim_{\substack{e^{-\alpha_{j}}\to1 \\ j=1,  \ldots,  2m}} 
    \frac{\eqref{heartA2}}{R_0}  
     & = \frac{(-1)^{m(m-1)/2}}{m!\left( \prod_{i=1}^{m-1}i! \right)^2}
     \sum_{J\subset\{1,\ldots,m\}} (-1)^{|J^c|} \\
   & \times \sum_{\substack{\vec{k}\in\Z^{m-1} \\ (\vec{k},  -|\vec{k}|)\in\Z^m_J}}
    \sum_{\vec{r}\in\Z^m_J} (-1)^{\sum_{j=1}^{m}r_j}
    q^{\sum_{j=1}^{m-1}(k_j-\frac12)r_j-(\frac12+|\vec{k}|)r_m} 
    \left( \prod_{1\leq i<j\leq m}(r_i-r_j)^2 \right).
    \end{split}
    \end{align}
\end{corollary}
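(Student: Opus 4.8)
The plan is to read the identity off directly from the preceding lemma together with the explicit expansion \eqref{heartA2}, and then perform the substitution $r_i\mapsto -r_i$. First I would divide \eqref{heartA2} by $R_0$ and move the factor $R_0^{-1}$ inside the triple sum over $J$, $\vec{k}$, $\vec{r}$, so that in each summand the only piece depending on the $e^{-\alpha_j}$ is the alternating sum $R_0^{-1}\sum_{w\in W}\epsilon(w)w(e^{\widehat{\rho}-\sum_{j=1}^m r_j\alpha_{2j-1}})$. By the preceding lemma this converges to $\frac{(-1)^{m(m-1)/2}}{(\prod_{i=1}^{m-1}i!)^2}\prod_{1\le i<j\le m}(r_j-r_i)^2$ as $e^{-\alpha_j}\to1$, while the remaining data in the summand — the signs $(-1)^{|J^c|}$ and $(-1)^{\sum_j r_j}$ and the monomial $q^{(\frac12+|\vec{k}|)r_m-\sum_{j=1}^{m-1}(k_j-\frac12)r_j}$ — is independent of the $\alpha_j$.

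Granting that the limit may be taken term by term (see below), this produces the right-hand side of \eqref{eq:RHS-A2} but with $\sum_{-\vec{r}\in\Z^m_J}$ and the exponent $(\frac12+|\vec{k}|)r_m-\sum_{j=1}^{m-1}(k_j-\frac12)r_j$ in place of the stated ones. The change of variables $r_i\mapsto -r_i$, $1\le i\le m$, then carries $\sum_{-\vec{r}\in\Z^m_J}$ to $\sum_{\vec{r}\in\Z^m_J}$, fixes $(-1)^{\sum_j r_j}$, replaces the exponent by $\sum_{j=1}^{m-1}(k_j-\frac12)r_j-(\frac12+|\vec{k}|)r_m$, and leaves the square $\prod_{i<j}(r_j-r_i)^2=\prod_{i<j}(r_i-r_j)^2$ unchanged; collecting the overall constant $\frac{1}{m!}\cdot\frac{(-1)^{m(m-1)/2}}{(\prod_{i=1}^{m-1}i!)^2}$ yields exactly \eqref{eq:RHS-A2}.

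The one step that requires a genuine argument, and which I expect to be the main (if modest) obstacle, is the interchange of $\lim_{e^{-\alpha_j}\to1}$ with the summation over $J$, $\vec{k}$, $\vec{r}$. The most efficient justification is to note that, before the limit, $\eqref{heartA2}/R_0=\eqref{heartA3}/R_0$ by \eqref{t_xiA2}; since $(\beta,\xi)=0$ for every $\beta\in\Phi_0^+$, the denominator $R_0=\prod_{\beta\in\Phi_0^+}(1-e^{-\beta})$ cancels precisely against the $n=0$ factors of the numerator product in \eqref{heartA3}, leaving a $q$-series each of whose $q$-coefficients is a Laurent polynomial in the $e^{-\alpha_j}$ — in particular a finite sum that converges as $e^{-\alpha_j}\to1$. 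Equivalently, one checks directly that for each fixed power of $q$ only finitely many triples $(J,\vec{k},\vec{r})$ contribute a nonzero term to \eqref{heartA2}: the factor $\prod_{1\le i<j\le m}(r_i-r_j)^2$ forces the entries of $\vec{r}$ to be pairwise distinct, and the shape of the exponent $(\frac12+|\vec{k}|)r_m-\sum_{j<m}(k_j-\frac12)r_j$ together with the sign conditions defining $\Z^m_J$ then bounds $\vec{r}$ and subsequently $\vec{k}$. In either form the termwise passage to the limit is legitimate, which completes the argument.
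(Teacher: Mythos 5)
Your proposal is correct and follows essentially the same route as the paper, whose proof of this corollary is just the one-line observation that one combines the preceding lemma with \eqref{heartA2} and substitutes $r_i\mapsto-r_i$; your bookkeeping of the constant, the sign $(-1)^{\sum_j r_j}$, the exponent, and the invariance of $\prod_{i<j}(r_i-r_j)^2$ all check out. The only addition is your justification of the termwise passage to the limit (which the paper leaves implicit), and your second, direct finiteness argument is the right one — though note it needs the small extra observation that when some $r_j=0$ the corresponding $k_j$ is controlled only through $|\vec{k}|$ via the $r_m$-term, with the Vandermonde factor killing the cases where this would fail.
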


Next we deal with \eqref{heartA3}.
When $m$ is even,  the factor $(1-e^{\str})$ appears in the product $\prod_{n=1}^\infty(1-q^{n-m/2}e^{\str})$ and this factor vanishes under the limit $e^{-\alpha_j}\to1$,  ($j=1,  \ldots,  2m$).
Hence we focus on the case where $m$ is odd. 

\begin{lemma}\label{lem:LHS-A2}
Suppose that $m>1$ is odd.
Then we have 
    \begin{align}\label{eq:LHS-A2}
    \lim_{\substack{e^{-\alpha_{j}}\to1 \\ j=1,  \ldots,  2m}} 
    \frac{\eqref{heartA3}}{R_0} 
    & =  \frac{q^{\frac{m^2-1}{8}}}{1+q^{\frac{m}{2}}} \cdot 
    \left( \prod_{n=1}^\infty \frac{1-q^n}{1+q^{\frac12(2n-1)}} \right)^{2m^2-2}.
    \end{align}
\end{lemma}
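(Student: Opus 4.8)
The plan is to compute the limit of $\eqref{heartA3}/R_0$ by splitting \eqref{heartA3} into three blocks — the prefactor built from $\str$, the product over $\Phi_0^+$, and the product over $\Phi_1^+$ in the denominator — and evaluating each block's limit separately. I would carry this out inside the Laurent series ring $\Z((q^{1/2}))$, so that the finitely many negative powers of $q^{1/2}$ that surface in the middle of the computation cause no trouble (they cancel at the end). Convergence of the infinite products is not an issue, since each coefficient of a power of $q$ is given by a finite expression, so the limit $e^{-\alpha_j}\to 1$ may be taken factor by factor.

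First I would record the elementary facts needed under $e^{-\alpha_j}\to 1$: every $\beta\in\Phi$ is an integral combination of the simple roots, hence $e^{-\beta}\to 1$, while $q=e^{-\delta}$ is fixed; and since $\str=\sum_{k=1}^m\alpha_{2k-1}$ and $\widehat\rho=\rho=-\tfrac12\str$ for $\widehat\gl(m,m)$, both $e^{\widehat\rho}$ and $e^{\pm\str}$ tend to $1$. Then, using the pairing formulas of the preceding lemma together with $\xi=-\tfrac12\sum_{i=1}^m\varepsilon_i$, I would compute $(\beta,\xi)=0$ for every $\beta\in\Phi_0^+$, while $(\varepsilon_i-\delta_j,\xi)=-\tfrac12$ and $(\delta_k-\varepsilon_l,\xi)=\tfrac12$ for the two families of roots in $\Phi_1^+$.

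Next I would evaluate the three blocks. For the $\Phi_0^+$ block, the $n=0$ factor of $\prod_{\beta\in\Phi_0^+}\prod_{n\ge 0}(1-q^ne^{-\beta})$ is exactly $R_0$, so after dividing by $R_0$ and letting $e^{-\beta}\to 1$ this block contributes $\prod_{n\ge 1}(1-q^n)^{2|\Phi_0^+|}=\prod_{n\ge 1}(1-q^n)^{2m(m-1)}$. For the $\Phi_1^+$ block in the denominator, writing $P\coloneqq\prod_{n\ge 1}(1+q^{(2n-1)/2})$, each of the $\tfrac{m(m+1)}2$ factors coming from a root $\varepsilon_i-\delta_j$ tends to $P^2$, and each of the $\tfrac{m(m-1)}2$ factors from a root $\delta_k-\varepsilon_l$ tends to $q^{-1/2}P^2$ — the $q^{-1/2}$ arising by rewriting the $n=0$ term $1+q^{-1/2}=q^{-1/2}(1+q^{1/2})$ — so the block contributes $q^{-m(m-1)/4}P^{2m^2}$. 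Finally, since $m$ is odd, $(-1)^m=-1$, so the prefactor is $\prod_{n\ge 1}(1-q^n)^{2m-2}(1+q^{n-m/2})(1+q^{n+m/2})$; putting $m=2\ell+1$ and extracting the $q^{-1/2}$'s from the $\ell$ factors of $\prod_{n\ge 1}(1+q^{n-m/2})$ with negative exponent collapses this to $\prod_{n\ge 1}(1-q^n)^{2m-2}\cdot\frac{q^{-\ell^2/2}}{1+q^{m/2}}\,P^2$.

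Assembling the three contributions, the exponent of $\prod(1-q^n)$ becomes $2m(m-1)+(2m-2)=2m^2-2$, the exponent of $P$ becomes $2-2m^2=-(2m^2-2)$, the accumulated power of $q$ is $q^{m(m-1)/4}\cdot q^{-\ell^2/2}=q^{(m^2-1)/8}$ since $m(m-1)/4-\ell^2/2=\ell(\ell+1)/2$ for $m=2\ell+1$, and a factor $(1+q^{m/2})^{-1}$ survives, which gives precisely \eqref{eq:LHS-A2}. I would also remark that the hypothesis "$m$ odd" is used solely to force $(-1)^m=-1$: for even $m$ the factor $\prod_{n\ge 1}(1-q^{n-m/2}e^{\str})$ contains the term $1-e^{\str}$ at $n=m/2$, which vanishes in the limit and annihilates the whole expression — this is exactly why the lemma restricts to odd $m$. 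The main obstacle is therefore not conceptual but the bookkeeping of the half-integer and negative powers of $q^{1/2}$: keeping track of which $n=0$ and negative-exponent terms each produce a $q^{-1/2}$, and checking that everything recombines to the exponent $(m^2-1)/8$.
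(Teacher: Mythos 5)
Your proposal is correct and follows essentially the same route as the paper: compute $(\beta,\xi)$ for each root, evaluate the $\Phi_0^+$ block (whose $n=0$ factors reproduce $R_0$), the $\Phi_1^+$ block (with the $q^{-1/2}$ extracted from each root with $(\beta,\xi)=\tfrac12$), and the $\str$-prefactor (where oddness of $m$ prevents the vanishing factor $1-e^{\str}$ and yields the $(1+q^{m/2})^{-1}$), then assemble the exponents. The bookkeeping of the powers of $q^{1/2}$, namely $q^{m(m-1)/4-\ell^2/2}=q^{(m^2-1)/8}$, matches the paper's computation exactly.
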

\begin{proof}
Let $\beta\in\Phi^+$.
It is easy to see that 
    \[
    (\beta,  \xi) = 
        \begin{cases}
        -\frac12 & \text{$\beta\in\{\varepsilon_i-\delta_j,  \ (1\leq i\leq j\leq m) \}$},  \\
        \frac12 & \text{$\beta\in\{\delta_i-\varepsilon_j, \ (1\leq i< j\leq m)\}$},  \\
        0 & \text{otherwise}.
        \end{cases}
    \]
If $(\beta,  \xi)=0$,  then 
    \begin{align*}
    (1-e^{-\beta})^{-1}
    \prod_{n=0}^\infty(1-q^{n-(\beta,  \xi)}e^{-\beta})
    \prod_{n=1}^\infty(1-q^{n+(\beta,  \xi)}e^{\beta}) 
    & = \prod_{n=1}^\infty (1-q^ne^{-\beta})(1-q^ne^\beta) \\
    & \quad \underset{j=1,  \ldots,  2m-1}{\xrightarrow{e^{-\alpha_{j}}\to1}} \quad
    \left(\prod_{n=1}^\infty (1-q^n)\right)^2.
    \end{align*}
If $(\beta,  \xi)=-1/2$,  then 
    \begin{multline*}
    \prod_{n=0}^\infty(1+q^{n-(\beta,  \xi)}e^{-\beta})
    \prod_{n=1}^\infty(1+q^{n+(\beta,  \xi)}e^{\beta})
    = \prod_{n=1}^\infty (1+q^{n-\frac12}e^{-\beta})
    (1+q^{n-\frac12}e^{\beta}) \\
    \underset{j=1,  \ldots,  2m-1}{\xrightarrow{e^{-\alpha_{j}}\to1}} \
     \left(\prod_{n=1}^\infty(1+q^{\frac12(2n-1)})\right)^2.
    \end{multline*}
If $(\beta,  \xi)=1/2$,  then 
    \begin{multline*}
    \prod_{n=0}^\infty(1+q^{n-(\beta,  \xi)}e^{-\beta})
    \prod_{n=1}^\infty(1+q^{n+(\beta,  \xi)}e^{\beta})
    = (1+q^{-\frac12}e^{-\beta})\prod_{n=1}^\infty (1+q^{n-\frac12}e^{-\beta})
    (1+q^{n+\frac12}e^{\beta}) \\
    = q^{-\frac12}e^{-\beta}\cdot 
    \prod_{n=1}^\infty(1+q^{\frac12(2n-1)}e^{-\beta})
    (1+q^{\frac12(2n-1)}e^{\beta}) \
    \underset{j=1,  \ldots,  2m-1}{\xrightarrow{e^{-\alpha_{j}}\to1}} \
     q^{-\frac12}\left(\prod_{n=1}^\infty(1+q^{\frac12(2n-1)})\right)^2.
    \end{multline*}
    
On the other hand,  we see that
    \begin{multline*}
    \prod_{n=1}^\infty(1+q^{n-m/2}e^{\str})(1+q^{n+m/2}e^{-\str}) \\
    \underset{j=1,  \ldots,  2m-1}{\xrightarrow{e^{-\alpha_{j}}\to1}} \
    \prod_{n=1}^\infty(1+q^{n-m/2})(1+q^{n+m/2}) 
    = (1+q^{m/2})^{-1}q^{-\frac12\sum_{j=1}^{(m-1)/2}(2j-1)}  \cdot \prod_{n=1}^\infty(1+q^{\frac12(2n-1)})^2.
    \end{multline*}

Note that $\#\{\beta\in\Phi^+ \mid (\beta,  \xi)=1/2\}=m(m-1)/2$. 
Thus we obtain
    \begin{align*}
    \begin{split}
    \lim_{\substack{e^{-\alpha_{j}}\to1 \\ j=1,  \ldots,  2m}} 
    \frac{\eqref{heartA3}}{R_0} 
    & = (q^{-\frac12})^{-\frac{m(m-1)}{2}+\frac{(m-1)^2}{4}}(1+q^{\frac{m}{2}})^{-1}  \cdot
    \left( \prod_{n=1}^\infty \frac{1-q^n}{1+q^{\frac12(2n-1)}} \right)^{2m^2-2}  \\ 
    & = \frac{q^{\frac{m^2-1}{8}}}{1+q^{\frac{m}{2}}} \cdot 
    \left( \prod_{n=1}^\infty \frac{1-q^n}{1+q^{\frac12(2n-1)}} \right)^{2m^2-2}.
    \end{split}
    \end{align*}
\end{proof}

From \cref{lem:RHS-A2} and \cref{lem:LHS-A2} we obtain the next theorem.

\begin{theorem}\label{thm:denom-id-gl}
Suppose that $m>1$ is odd.
We have 
    \begin{align}\label{result-A2}
    \begin{split}
    \triangle(q)^{2m^2-2} & = \frac{(-1)^{m(m+1)/2}}{m!\left( \prod_{i=1}^{m-1}i! \right)^2} (1-q^m)
    \sum_{J\subset\{1,\ldots,m\}} (-1)^{|J|} \\
    & \times \sum_{\substack{\vec{k}\in\Z^{m-1} \\ (\vec{k},  -|\vec{k}|)\in\Z^m_J}}
    \sum_{\vec{r}\in\Z^m_J} q^{\sum_{j=1}^{m-1}(2k_j-1)r_j -(1+2|\vec{k}|)r_m -\frac{m^2-1}{4}} 
    \left( \prod_{1\leq i<j\leq m}(r_i-r_j)^2 \right).
    \end{split}
    \end{align}
Here, we set $\Z^m_J = \left\{\vec{k}=(k_1,  \ldots,  k_m)\in\Z^m \, \middle|\,
J=\{j \mid k_j\leq0\} \right\}$ and $|\vec{k}|=k_1+k_2+\cdots+k_{m-1}$.
\end{theorem}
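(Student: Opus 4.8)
The plan is to combine the two computations already carried out in this subsection and then apply a sign‑twisting substitution. Since $\eqref{heartA2}$ and $\eqref{heartA3}$ are, respectively, $q^{-m/4}$ times the right‑ and left‑hand sides of $\eqref{t_xiA2}$, they are equal; dividing by $R_0$ and passing to the limit $e^{-\alpha_j}\to 1$ as before, the Corollary preceding this theorem (the right‑hand side of $\eqref{eq:RHS-A2}$) agrees with the right‑hand side of $\eqref{eq:LHS-A2}$. Writing $E_0=E_0(\vec k,\vec r):=\sum_{j=1}^{m-1}(k_j-\tfrac12)r_j-(\tfrac12+|\vec k|)r_m$ and keeping the sums over $J,\vec k,\vec r$ exactly as in $\eqref{eq:RHS-A2}$, this gives
\[
\frac{q^{\frac{m^2-1}{8}}}{1+q^{m/2}}\Bigl(\prod_{n=1}^\infty\frac{1-q^n}{1+q^{\frac12(2n-1)}}\Bigr)^{2m^2-2}
=\frac{(-1)^{m(m-1)/2}}{m!\bigl(\prod_{i=1}^{m-1}i!\bigr)^2}\sum_{J}(-1)^{|J^c|}\sum_{\vec k,\vec r}(-1)^{\sum_{j=1}^m r_j}\,q^{E_0}\prod_{1\le i<j\le m}(r_i-r_j)^2 .
\]
I regard both sides as formal power series in $p:=q^{1/2}$; this is legitimate since $2E_0=\sum_{j=1}^{m-1}(2k_j-1)r_j-(1+2|\vec k|)r_m\in\Z$ and, $m$ being odd, $8\mid m^2-1$, so that $p^{(m^2-1)/4}$ and $p^m$ are honest powers of $p$.

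The key step is to substitute $p\mapsto -p$. On the left, the classical product formula of Gauss $\triangle(p)=\prod_{n\ge1}\frac{1-p^{2n}}{1-p^{2n-1}}$ shows that $\prod_{n\ge1}\frac{1-p^{2n}}{1+p^{2n-1}}$ becomes $\triangle(p)$; the monomial $p^{(m^2-1)/4}$ is unchanged (its exponent is even), and $1+p^m\mapsto 1-p^m$ because $m$ is odd, so the left‑hand side becomes $\frac{p^{(m^2-1)/4}}{1-p^m}\triangle(p)^{2m^2-2}$. On the right, the congruence $2E_0\equiv\sum_{j=1}^m r_j\pmod 2$ yields $(-1)^{\sum_j r_j}(-p)^{2E_0}=(-1)^{\sum_j r_j}(-1)^{2E_0}p^{2E_0}=p^{2E_0}$, so the factor $(-1)^{\sum_j r_j}$ cancels and each term $q^{E_0}=p^{2E_0}$ is simply replaced by $p^{2E_0}$.

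It then remains to rearrange. Multiplying through by $(1-p^m)\,p^{-(m^2-1)/4}$ and absorbing $p^{-(m^2-1)/4}$ into the exponent turns $p^{2E_0}$ into $p^{\sum_{j=1}^{m-1}(2k_j-1)r_j-(1+2|\vec k|)r_m-\frac{m^2-1}{4}}$; and, using the elementary relations $(-1)^{|J^c|}=(-1)^{m-|J|}=-(-1)^{|J|}$ and $(-1)^{m(m-1)/2}=-(-1)^{m(m+1)/2}$ valid for odd $m$, the two sign factors combine into $\tfrac{(-1)^{m(m+1)/2}}{m!(\prod_{i=1}^{m-1}i!)^2}(-1)^{|J|}$. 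Relabeling $p$ as $q$ we recover exactly $\eqref{result-A2}$.

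The only real obstacle is spotting the substitution $q^{1/2}\mapsto -q^{1/2}$: before making it, the left‑hand side above is an eta‑quotient that is not a power of $\triangle(q)$ of any simple argument, and the right‑hand side carries the extraneous sign $(-1)^{\sum_j r_j}$. The substitution resolves both difficulties at once — via Gauss's product formula on the analytic side and via the parity $2E_0\equiv\sum_j r_j\pmod 2$ on the combinatorial side — and the remaining ingredients (Gauss's formula, the parity computation, the bookkeeping of powers of $-1$) are entirely routine.
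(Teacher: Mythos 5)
Your proposal is correct and follows essentially the same route as the paper: equate the limits \eqref{eq:RHS-A2} and \eqref{eq:LHS-A2}, substitute $q^{1/2}\mapsto -q$ (your $p\mapsto -p$), and invoke the product formula \eqref{eq:triangle} for $\triangle(q)$. Your parity check $2E_0\equiv\sum_j r_j\pmod 2$, the observation that $8\mid m^2-1$, and the sign bookkeeping $(-1)^{m(m-1)/2}(-1)^{|J^c|}=(-1)^{m(m+1)/2}(-1)^{|J|}$ are exactly the (unstated) details behind the paper's one-line conclusion.
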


\begin{proof}
From \eqref{eq:RHS-A2} and \eqref{eq:LHS-A2},  we obtain
    \begin{multline*}
    \left( \prod_{n=1}^\infty \frac{1-q^{\frac12\cdot 2n}}{1+q^{\frac12(2n-1)}} \right)^{2m^2-2} 
    = \frac{(-1)^{m(m-1)/2}}{m!\left( \prod_{i=1}^{m-1}i! \right)^2} (1+q^{\frac{m}{2}})
    \sum_{J\subset\{1,\ldots,m\}} (-1)^{|J^c|} \\
    \times \sum_{\substack{\vec{k}\in\Z^{m-1} \\ (\vec{k},  -|\vec{k}|)\in\Z^m_J}}
    \sum_{\vec{r}\in\Z^m_J} (-1)^{\sum_{j=1}^{m}r_j}
    q^{\sum_{j=1}^{m-1}(k_j-\frac12)r_j -(\frac12+|\vec{k}|)r_m -\frac{m^2-1}{8}} 
    \left( \prod_{1\leq i<j\leq m}(r_i-r_j)^2 \right).
    \end{multline*}
Replacing $q^{1/2}$ by $-q$  and applying the well-known identity 
    \begin{equation}\label{eq:triangle}
    \triangle(q) = \prod_{n=1}^\infty\frac{1-q^{2n}}{1-q^{2n-1}},
    \end{equation}
we obtain the equation \eqref{result-A2}.
\end{proof}

\subsection{Case $\widehat{\sl}(m+1,  m)$}\label{sec:sl^(m+1_m)}

Let $\xi=-\frac12\sum_{i=1}^{m+1}\varepsilon_i$ so that $(\alpha_j,  \xi)=(-1)^j/2$ for $j=1,  \ldots,  2m$ and $w(\xi)=\xi$ for all $w\in W$.
Applying $t_\xi$ to the both sides of \eqref{affine-denominator},  we obtain
    \begin{equation}\label{t_xiA}
    t_\xi(e^{\widehat{\rho}}\widehat{R}) 
    = \frac{1}{m!}\sum_{\alpha^\sharp\in M^\sharp}\sum_{w\in W} 
    \epsilon(w)wt_{\xi+\alpha^\sharp}
    \left( \frac{e^{\widehat{\rho}}}{\prod_{\beta\in S}(1+e^{-\beta})} \right).
    \end{equation}

Write $\alpha^\sharp\in M^\sharp$ as $\alpha^\sharp=\sum_{i=1}^m (k_1+\cdots+k_i)(\alpha_{2i-1}+\alpha_{2i})=\sum_{i=1}^mk_i(\varepsilon_i-\varepsilon_{m+1})$ with $\vec{k}=(k_1,  \ldots,  k_m)\in\Z^m$.
We summarize some equations which we use to compute the right-hand side of \eqref{t_xiA}.
Each of them is a consequence of an easy calculation.
\begin{lemma}
We have the following equations: 
\begin{itemize}
\item $t_{\xi+\alpha^\sharp}(\alpha_{2j-1})=\alpha_{2j-1}-(k_j-1/2)\delta$. 
\item $t_{\xi+\alpha^\sharp}(\widehat{\rho})=\gamma+\xi+\alpha^\sharp-((m+1)/8+(\alpha^\sharp,  \alpha^\sharp)/2)\cdot\delta$.
\item $w(\delta)=\delta$ and $w(\gamma)=\gamma$ for $w\in W$.
\item $(\alpha^\sharp,  \alpha^\sharp)=2\sum_{i=1}^m k_i^2+2\sum_{1\leq i<j\leq m}k_ik_j
=2\sum_{1\leq i\leq j\leq m}k_ik_j$.
\end{itemize}
\end{lemma}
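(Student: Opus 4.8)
The plan is to verify each of the four equalities by direct substitution into the definition of $t_\alpha$, namely $t_\alpha(\lambda) = \lambda + \lambda(K)\alpha - \bigl((\lambda,\alpha) + \tfrac12(\alpha,\alpha)\lambda(K)\bigr)\delta$ with $\alpha = \xi + \alpha^\sharp$, using the explicit values of the bilinear form on $\widehat{\fh}^\ast$ recorded in the preceding Lemma and in the construction of $\widehat{\fg}$. The only structural inputs beyond that are $\widehat{\rho} = \gamma$ for $\widehat{\sl}(m+1,m)$, the fact that $\gamma$ is isotropic and orthogonal to $\fh^\ast$ with $\gamma(K) = 1$, and that every $\mu \in \fh^\ast$ satisfies $\mu(K) = 0$ and is orthogonal to $\delta$ and $\gamma$.

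First I would compute $t_{\xi+\alpha^\sharp}(\alpha_{2j-1})$. Since $\alpha_{2j-1} = \varepsilon_j - \delta_j$ lies in $\fh^\ast$, we have $\alpha_{2j-1}(K) = 0$, so the $t_\alpha$-formula collapses to $\alpha_{2j-1} - (\alpha_{2j-1}, \xi + \alpha^\sharp)\delta$. Here $(\alpha_{2j-1}, \xi) = -\tfrac12$ is already known, and $(\varepsilon_j - \delta_j, \alpha^\sharp) = (\varepsilon_j, \sum_{i=1}^m k_i(\varepsilon_i - \varepsilon_{m+1})) = k_j$ for $1 \le j \le m$, using orthogonality of distinct coordinate vectors and $(\varepsilon_i,\varepsilon_i) = 1$. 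This gives the coefficient $k_j - \tfrac12$.

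For $t_{\xi+\alpha^\sharp}(\widehat{\rho}) = t_{\xi+\alpha^\sharp}(\gamma)$, the eigenvalue $\gamma(K) = 1$ produces the term $\xi + \alpha^\sharp$, and the $\delta$-coefficient is $(\gamma, \xi+\alpha^\sharp) + \tfrac12(\xi+\alpha^\sharp, \xi+\alpha^\sharp)$. The first summand vanishes since $\gamma \perp \fh^\ast$, and expanding the second gives $(\xi,\xi) = \tfrac{m+1}{4}$ together with $(\xi,\alpha^\sharp) = 0$ (the $\varepsilon_i$- and $\varepsilon_{m+1}$-contributions cancel term by term), hence $(\xi+\alpha^\sharp,\xi+\alpha^\sharp) = \tfrac{m+1}{4} + (\alpha^\sharp,\alpha^\sharp)$ and the claimed coefficient $\tfrac{m+1}{8} + \tfrac12(\alpha^\sharp,\alpha^\sharp)$. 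The third equality $w(\delta) = \delta$, $w(\gamma) = \gamma$ is immediate from the fact, noted after the definition of the $\widehat{W}$-action, that $\delta$ and $\gamma$ are orthogonal to all roots, so each reflection $s_\beta$ ($\beta \in \Phi_0$) generating $W$ fixes them. Finally, for $(\alpha^\sharp,\alpha^\sharp)$, writing $\alpha^\sharp = \sum_{i=1}^m k_i(\varepsilon_i - \varepsilon_{m+1})$ and using $(\varepsilon_i - \varepsilon_{m+1}, \varepsilon_l - \varepsilon_{m+1}) = \delta_{i,l} + 1$ for $1 \le i,l \le m$ yields $(\alpha^\sharp,\alpha^\sharp) = \sum_i k_i^2 + (\sum_i k_i)^2 = 2\sum_i k_i^2 + 2\sum_{i<j}k_ik_j = 2\sum_{i \le j}k_ik_j$.

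There is no genuine obstacle: the statement bundles together direct computations with the $t_\alpha$ formula and the already-tabulated pairings. The only points needing a little care are keeping track of the eigenvalues $\lambda(K)$ — which is $1$ for $\lambda = \gamma$ but $0$ for any $\lambda \in \fh^\ast$, so that the $\alpha$-term of $t_\alpha(\lambda)$ survives only in the second identity — and the sign convention $(\delta_j,\delta_j) = -1$, which in fact never intervenes here since $\xi$ and $\alpha^\sharp$ involve only the vectors $\varepsilon_i$.
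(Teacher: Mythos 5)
Your verification is correct and is exactly the ``easy calculation'' the paper alludes to without writing out: direct substitution into $t_\alpha(\lambda)=\lambda+\lambda(K)\alpha-((\lambda,\alpha)+\tfrac12(\alpha,\alpha)\lambda(K))\delta$, using $\lambda(K)=0$ on $\fh^\ast$, $\gamma(K)=1$, $\gamma\perp\fh^\ast$, and the tabulated pairings $(\varepsilon_i,\varepsilon_j)=\delta_{ij}$, which gives $(\alpha_{2j-1},\xi+\alpha^\sharp)=k_j-\tfrac12$, $(\xi,\xi)=\tfrac{m+1}{4}$, $(\xi,\alpha^\sharp)=0$, and $(\alpha^\sharp,\alpha^\sharp)=\sum_i k_i^2+(\sum_i k_i)^2$. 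No gaps; the approach coincides with the paper's intended one.
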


For an index set $J\subset\{1,  \ldots,  m\}$,  define $\Z^m_J$ as \eqref{eq:Z_J^m}.
The right-hand side of \eqref{t_xiA} becomes
    \begin{align*}
    & \frac{e^{\gamma}q^{\frac{m+1}{8}}}{m!}
    \sum_{J\subset\{1,\ldots,m\}}\sum_{\vec{k}\in\Z^m_J}
    q^{\sum_{1\leq i\leq j\leq m}k_ik_j} \\
    & \quad \times \sum_{w\in W}\epsilon(w)w\left( 
    e^{\xi+\alpha^\sharp} 
     \sum_{\vec{r}\in\Z_{\ge 0}^m}
    \prod_{j\in J} (-q^{-(k_j-\frac12)}e^{-\alpha_{2j-1}})^{r_j} \cdot
    \prod_{j\not\in J} \left(q^{k_j-\frac12}e^{\alpha_{2j-1}}
    (-q^{k_j-\frac12}e^{\alpha_{2j-1}})^{r_j}\right) \right) \\
    & = \frac{e^{\gamma}q^{\frac{m+1}{8}}}{m!}
    \sum_{J\subset\{1,\ldots,m\}}(-1)^{|J^c|}
    \sum_{\vec{k},  -\vec{r}\in\Z^m_J} (-1)^{\sum_{j=1}^m r_j}
    q^{\sum_{1\leq i\leq j\leq m}k_ik_j -\sum_{j=1}^m (k_j-\frac12)r_j}   \\
    & \qquad \times \sum_{w\in W}\epsilon(w)w\left( 
    e^{\xi+\alpha^\sharp-\sum_{j=1}^m r_j\alpha_{2j-1}} \right).
    \end{align*}

On the other hand,  since $t_\xi(\widehat{\rho})=\gamma+\xi-(m+1)/8\cdot\delta$,  $t_\xi(\delta)=\delta$, and $t_\xi(\beta)=\beta-(\beta,  \xi)\delta$ for $\beta\in\Phi$,  the left-hand side of \eqref{t_xiA} becomes 
    \[
    q^{\frac{m+1}{8}}e^{\gamma+\xi}
    \left( \prod_{n=1}^\infty (1-q^n)^{2m} \right)
    \frac{\prod_{\beta\in\Phi_0^+}
    \left(\prod_{n=0}^\infty(1-q^{n-(\beta,  \xi)}e^{-\beta})\cdot
    \prod_{n=1}^\infty(1-q^{n+(\beta,  \xi)}e^{\beta}) \right)}
    {\prod_{\beta\in\Phi_1^+}
    \left(\prod_{n=0}^\infty(1+q^{n-(\beta,  \xi)}e^{-\beta})\cdot
    \prod_{n=1}^\infty(1+q^{n+(\beta,  \xi)}e^{\beta} \right)}.
    \]
Hence we obtain 
    \begin{equation*}\label{heartA}\tag{$\clubsuit$}{}
    \begin{split}
    e^{\xi}
    \left( \prod_{n=1}^\infty (1-q^n)^{2m} \right)
    \frac{\prod_{\beta\in\Phi_0^+}
    \left(\prod_{n=0}^\infty(1-q^{n-(\beta,  \xi)}e^{-\beta})\cdot
    \prod_{n=1}^\infty(1-q^{n+(\beta,  \xi)}e^{\beta}) \right)}
    {\prod_{\beta\in\Phi_1^+}
    \left(\prod_{n=0}^\infty(1+q^{n-(\beta,  \xi)}e^{-\beta})\cdot
    \prod_{n=1}^\infty(1+q^{n+(\beta,  \xi)}e^{\beta} \right)} \\
    = \frac{1}{m!}
    \sum_{J\subset\{1,\ldots,m\}}(-1)^{|J^c|}
    \sum_{\vec{k},  -\vec{r}\in\Z^m_J} (-1)^{\sum_{j=1}^m r_j}
    q^{\sum_{1\leq i\leq j\leq m}k_ik_j -\sum_{j=1}^m (k_j-\frac12)r_j}   \\
     \times \sum_{w\in W}\epsilon(w)w\left( 
    e^{\xi+\alpha^\sharp-\sum_{j=1}^m r_j\alpha_{2j-1}} \right).
    \end{split}
    \end{equation*}

Let $R_0=\prod_{\beta\in\Phi_0^+}(1-e^{-\beta})$ be the denominator of $\Phi_0$.
From now,  we divide the both sides of \eqref{heartA} by $R_0$ and take the limit $e^{-\alpha_j}\to1$,  ($j=1,  \ldots,  2m$).

First,  we deal with the right-hand side.  

\begin{lemma}\label{lem:RHS-A}
We have 
    \begin{multline*}
    \lim_{\substack{e^{-\alpha_{j}}\to1 \\ j=1,  \ldots,  2m}}
    R_0^{-1}  \sum_{w\in W} \epsilon(w)w \left( e^
    {\xi+\alpha^\sharp-\sum_{j=1}^m r_j\alpha_{2j-1}} \right) \\
    \quad  = \frac{(-1)^{m(m-1)/2}m!}{\left( \prod_{i=1}^{m}i! \right)^2} 
    \left( \prod_{i=1}^m (k_i-r_i+|\vec{k}|) \right) 
    \left( \prod_{1\leq i<j\leq m}(k_i-r_i-k_j+r_j)(r_j-r_i) \right).
    \end{multline*}
\end{lemma}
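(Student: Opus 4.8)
The plan is to run exactly the same argument as in the $\widehat{\gl}(m,m)$ case: reduce the alternating sum to a ratio of products of inner products via the Weyl dimension formula, and then evaluate two elementary products. Concretely, I would set $\rho_0=\frac12\sum_{\beta\in\Phi_0^+}\beta$ (the Weyl vector of $\Phi_0^+$) and $\mu\coloneqq\xi+\alpha^\sharp-\sum_{j=1}^m r_j\alpha_{2j-1}\in\fh^\ast$. By the Weyl denominator identity for $W$ together with the specialization argument in the proof of the Weyl dimension formula (\cite[Corollary 24.3]{Humphreys}), one has, for every $\mu$,
\[
\lim_{\substack{e^{-\alpha_j}\to1\\ j=1,\ldots,2m}} R_0^{-1}\sum_{w\in W}\epsilon(w)\,w\bigl(e^{\mu}\bigr)=\prod_{\beta\in\Phi_0^+}\frac{(\mu,\beta)}{(\rho_0,\beta)},
\]
both sides vanishing when $\mu$ is singular. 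Thus the whole problem reduces to computing these two products.

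Since $\alpha_{2j-1}=\varepsilon_j-\delta_j$ and $\alpha^\sharp=\sum_{i=1}^m k_i(\varepsilon_i-\varepsilon_{m+1})$, I would first expand
\[
\mu=\sum_{i=1}^m\bigl(k_i-r_i-\tfrac12\bigr)\varepsilon_i+\bigl(-\tfrac12-|\vec{k}|\bigr)\varepsilon_{m+1}+\sum_{j=1}^m r_j\delta_j .
\]
(Although $\{\varepsilon_i,\delta_j\}$ is not a basis of $\fh^\ast$ for $\sl(m+1,m)$, the relation $\sum_i\varepsilon_i=\sum_j\delta_j$ is orthogonal to every root in $\Phi_0^+$, so the pairings below do not depend on the chosen representative.) Splitting $\Phi_0^+$ into its $A_m$-block $\{\varepsilon_i-\varepsilon_j:1\le i<j\le m+1\}$ and its $A_{m-1}$-block $\{\delta_i-\delta_j:1\le i<j\le m\}$ and using $(\varepsilon_i,\varepsilon_i)=-(\delta_i,\delta_i)=1$, a direct calculation gives
\[
(\mu,\varepsilon_i-\varepsilon_j)=k_i-r_i-k_j+r_j\ \ (i<j\le m),\qquad (\mu,\varepsilon_i-\varepsilon_{m+1})=k_i-r_i+|\vec{k}|,\qquad (\mu,\delta_i-\delta_j)=r_j-r_i,
\]
so that $\prod_{\beta\in\Phi_0^+}(\mu,\beta)=\bigl(\prod_{i=1}^m(k_i-r_i+|\vec{k}|)\bigr)\prod_{1\le i<j\le m}(k_i-r_i-k_j+r_j)(r_j-r_i)$, which is exactly the root-dependent factor in the assertion. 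For the denominator, writing $\rho_0=\frac12\sum_{p=1}^{m+1}(m+2-2p)\varepsilon_p+\frac12\sum_{q=1}^m(m+1-2q)\delta_q$ gives $(\rho_0,\varepsilon_i-\varepsilon_j)=j-i$ and $(\rho_0,\delta_i-\delta_j)=i-j$, whence
\[
\prod_{1\le i<j\le m+1}(\rho_0,\varepsilon_i-\varepsilon_j)=\prod_{l=1}^m l!,\qquad \prod_{1\le i<j\le m}(\rho_0,\delta_i-\delta_j)=(-1)^{m(m-1)/2}\prod_{l=1}^{m-1}l! .
\]
Hence $\prod_{\beta\in\Phi_0^+}(\rho_0,\beta)=(-1)^{m(m-1)/2}m!\bigl(\prod_{l=1}^{m-1}l!\bigr)^2$, which, using $\prod_{i=1}^m i!=m!\prod_{i=1}^{m-1}i!$, equals $(-1)^{m(m-1)/2}\bigl(\prod_{i=1}^m i!\bigr)^2/m!$. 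Dividing the two products and using $\bigl((-1)^{m(m-1)/2}\bigr)^{-1}=(-1)^{m(m-1)/2}$ produces the constant $(-1)^{m(m-1)/2}m!/\bigl(\prod_{i=1}^m i!\bigr)^2$, which is precisely the claimed expression.

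The computation is essentially bookkeeping, and I do not anticipate a serious obstacle. The two points that need care are the sign conventions forced by $(\delta_i,\delta_i)=-1$ and — the more substantive one — the asymmetry between the two factors of $W\cong\fS_{m+1}\times\fS_m$: the $\varepsilon$-block carries $\fS_{m+1}$ and so contributes $\prod_{l=1}^m l!$, one extra factorial compared with the $\widehat{\gl}(m,m)$ case, and this is exactly what upgrades the overall constant from the $\widehat{\gl}(m,m)$ shape to $(-1)^{m(m-1)/2}m!/\bigl(\prod_{i=1}^m i!\bigr)^2$. The legitimacy of interchanging the specialization $e^{-\alpha_j}\to1$ with all the manipulations is justified exactly as in the $\widehat{\gl}(m,m)$ case.
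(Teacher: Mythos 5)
Your proposal is correct and follows essentially the same route as the paper: both reduce the alternating sum to $\prod_{\beta\in\Phi_0^+}(\mu,\beta)/(\rho_0,\beta)$ via the specialization argument in the proof of the Weyl dimension formula, compute the same pairings $(\mu,\varepsilon_i-\varepsilon_j)$, $(\mu,\varepsilon_i-\varepsilon_{m+1})$, $(\mu,\delta_i-\delta_j)$, and evaluate $\prod_{\beta\in\Phi_0^+}(\rho_0,\beta)=(-1)^{m(m-1)/2}(m!)^{-1}\bigl(\prod_{i=1}^m i!\bigr)^2$ from the explicit form of $\rho_0$. The only difference is cosmetic (you work with $\mu$ directly rather than writing $\mu=\rho_0+\lambda$), and your remark that the $\fS_{m+1}$-block supplies the extra factorial relative to the $\widehat{\gl}(m,m)$ case is exactly what the paper's computation exhibits.
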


\begin{proof}
Let $\rho_0=\frac12\sum_{\beta\in\Phi_0^+}\beta$ be the Weyl vector of $\Phi_0^+$.  
Set 
    \[
    \lambda = -\rho_0
    +\xi+\alpha^\sharp-\sum_{j=1}^m r_j\alpha_{2j-1}.
    \]
From the Weyl dimension formula (and its proof,  \cite[Corollary 24.3]{Humphreys}),  we obtain
    \[
    \lim_{\substack{e^{-\alpha_{j}}\to1 \\ j=1,  \ldots,  2m}}
    R_0^{-1} \sum_{w\in W} \epsilon(w)w(e^{\rho_0+\lambda}) \quad
    = \prod_{\beta\in\Phi_0^+}\frac{(\rho_0+\lambda,  \beta)}
    {(\rho_0,  \beta)}.
    \]

First,  $(\rho_0+\lambda,  \beta)$ for $\beta\in\Phi_0^+$ is computed as follows:
\begin{itemize}
\item $(\rho_0+\lambda,  \varepsilon_i-\varepsilon_j)=k_i-r_i-k_j+r_j$ for $1\leq i<j\leq m$.
\item $(\rho_0+\lambda,  \varepsilon_i-\varepsilon_{m+1})=k_i-r_i+\sum_{j=1}^mk_j$ for $1\leq i\leq m$.
\item $(\rho_0+\lambda,  \delta_i-\delta_j)=r_j-r_i$ for $1\leq i<j\leq m$.
\end{itemize}
Hence we get
    \[
    \prod_{\beta\in\Phi_0^+}(\rho_0+\lambda,  \beta)
    =\left(\prod_{i=1}^m (k_i-r_i+|\vec{k}|) \right) 
    \left(\prod_{1\leq i<j\leq m} (k_i-r_i-k_j+r_j)(r_j-r_i)\right).
    \]

On the other hand,  since we have 
    \[
    \rho_0=\frac12\sum_{p=1}^{m+1} (m+2-2p)\varepsilon_p
    +\frac12\sum_{q=1}^m(m+1-2q)\delta_q,
    \]
the pairing $(\rho_0,  \beta)$ for $\beta\in\Phi_0^+$ is computed as 
\begin{itemize}
\item $(\rho_0,  \varepsilon_i-\varepsilon_j)=j-i$ and 
    \[
    \prod_{1\leq i<j\leq m+1}(\rho_0,  \varepsilon_i-\varepsilon_j) 
    = \prod_{1\leq i<m+1}(m+1-i)! = \prod_{i=1}^{m} i! .
    \]
\item $(\rho_0,  \delta_i-\delta_j)=i-j$ and 
    \[
    \prod_{1\leq i<j\leq m}(\rho_0,  \delta_i-\delta_j) 
    =(-1)^{m(m-1)/2}\prod_{i=1}^{m-1} i! .
    \]
\end{itemize}
Hence we have $\prod_{\beta\in\Phi_0^+}(\rho_0,  \beta)=(-1)^{m(m-1)/2}(m!)^{-1}\left(\prod_{i=1}^{m}i!\right)^2$.
This completes the proof.
\end{proof}

Combining \cref{lem:RHS-A} with \eqref{heartA} and changing the variables $r_i\mapsto-r_i$,  $i=1,  \ldots,  m$,  we obtain the following equation.

\begin{corollary}\label{lem:RHS-A}
We have
    \begin{multline}\label{eq:RHS-A}
    \begin{split}
    \lim_{\substack{e^{-\alpha_{j}}\to1 \\ j=1,  \ldots,  2m}} 
    \frac{\mathrm{RHS}\text{ of }\eqref{heartA}}{R_0}  
     = \frac{(-1)^{m(m-1)/2}}{\left( \prod_{i=1}^{m}i! \right)^2}
    \sum_{J\subset\{1,  \ldots,  m\}}(-1)^{|J^c|}\sum_{\vec{k},  \vec{r}\in\Z_J^m}
    (-1)^{\sum_{j=1}^m r_j}
    q^{\sum_{1\leq i\leq j\leq m}k_ik_j +\sum_{j=1}^m (k_j-\frac12)r_j} \\
     \times \left( \prod_{i=1}^m (k_i+r_i+|\vec{k}|) \right) 
    \left( \prod_{1\leq i<j\leq m}(k_i+r_i-k_j-r_j)(r_i-r_j) \right).
    \end{split}
    \end{multline}
\end{corollary}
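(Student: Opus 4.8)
The plan is to read off \eqref{eq:RHS-A} directly from \eqref{heartA} together with the preceding lemma; no new idea is needed. Starting from the right-hand side of \eqref{heartA}, I would divide through by $R_0=\prod_{\beta\in\Phi_0^+}(1-e^{-\beta})$ and let $e^{-\alpha_j}\to 1$ for $j=1,\dots,2m$. The point is that in that expression every factor other than the inner antisymmetrized sum $\sum_{w\in W}\epsilon(w)w\bigl(e^{\xi+\alpha^\sharp-\sum_{j=1}^m r_j\alpha_{2j-1}}\bigr)$ consists of signs, a rational constant, and a monomial in $q=e^{-\delta}$, none of which involves the finite simple root variables $e^{-\alpha_1},\dots,e^{-\alpha_{2m}}$ and which are therefore inert under the limit. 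Hence the substantive ingredient is the preceding lemma, which evaluates $\lim R_0^{-1}\sum_{w\in W}\epsilon(w)w\bigl(e^{\xi+\alpha^\sharp-\sum r_j\alpha_{2j-1}}\bigr)$ as the explicit product of linear forms in $\vec k$ and $\vec r$ stated there.

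Before invoking the lemma I would justify interchanging the limit with the sum over $J$ and over $\vec k,-\vec r\in\Z^m_J$. In the ring of formal power series in $q^{1/2}$ this is immediate from a size estimate: on $\vec k,-\vec r\in\Z^m_J$ the $q$-exponent $\sum_{1\le i\le j\le m}k_ik_j-\sum_{j=1}^m(k_j-\tfrac12)r_j$ has quadratic part $\tfrac12\bigl(|\vec k|^2+\sum_j k_j^2\bigr)$ in $\vec k$, while each summand $-(k_j-\tfrac12)r_j$ is nonnegative (it is $\ge 0$ for $j\in J$, where $k_j\le 0$ and $r_j\ge 0$, and $\ge \tfrac12$ for $j\notin J$, where $k_j\ge 1$ and $r_j\le -1$). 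Thus the exponent bounds $\sum_j k_j^2$ and then each $r_j$, so each fixed power of $q$ receives contributions from only finitely many $(\vec k,\vec r)$, and the limit may be taken coefficientwise with the lemma applied termwise.

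Carrying this out, the constant $(-1)^{m(m-1)/2}m!/\bigl(\prod_{i=1}^m i!\bigr)^2$ produced by the lemma cancels the prefactor $1/m!$ of \eqref{heartA}, leaving $(-1)^{m(m-1)/2}/\bigl(\prod_{i=1}^m i!\bigr)^2$. The final step is the substitution $r_i\mapsto -r_i$ for all $i$: it turns the constraint $-\vec r\in\Z^m_J$ into $\vec r\in\Z^m_J$, leaves $(-1)^{\sum_j r_j}$ fixed, changes the $q$-exponent into $\sum_{1\le i\le j\le m}k_ik_j+\sum_{j=1}^m(k_j-\tfrac12)r_j$, and replaces the two products by $\prod_{i=1}^m(k_i+r_i+|\vec k|)$ and $\prod_{1\le i<j\le m}(k_i+r_i-k_j-r_j)(r_i-r_j)$, which is precisely \eqref{eq:RHS-A}.

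In short, the only genuine work is the preceding lemma (obtained from the Weyl dimension formula and the explicit evaluation of the pairings $(\rho_0,\beta)$); the sole delicate point inside this corollary is the coefficientwise passage to the limit, and the estimate above settles it, so I do not anticipate any real obstacle.
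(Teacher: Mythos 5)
Your proposal is correct and matches the paper's argument, which likewise obtains \eqref{eq:RHS-A} by applying the preceding lemma termwise to the right-hand side of \eqref{heartA}, cancelling the $1/m!$ against the $m!$ from the lemma, and substituting $r_i\mapsto -r_i$. The only addition is your coefficientwise justification for interchanging the limit with the sums over $J$, $\vec{k}$, $\vec{r}$, which the paper leaves implicit; your estimate showing each power of $q$ receives only finitely many contributions is valid.
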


Next we deal with the left-hand side of \eqref{heartA}.
\begin{lemma}\label{lem:LHS-A}
    \begin{align}\label{eq:LHS-A}
    \lim_{\substack{e^{-\alpha_{j}}\to1 \\ j=1,  \ldots,  2m}} 
    \frac{\mathrm{LHS}\text{ of }\eqref{heartA}}{R_0} 
    & = q^{\frac{m(m+1)}{4}} \cdot 
    \left( \prod_{n=1}^\infty \frac{1-q^{n}}{1+q^{\frac12(2n-1)}} \right)^{2m(m+1)}.
    \end{align}
\end{lemma}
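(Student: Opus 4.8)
The plan is to follow the same template as the proof of \cref{lem:LHS-A2}: evaluate each infinite-product factor on the left-hand side of \eqref{heartA} under the limit $e^{-\alpha_j}\to 1$, after first observing that the division by $R_0$ is harmless. For $\widehat{\sl}(m+1,m)$ one has $\xi=-\tfrac12\sum_{i=1}^{m+1}\varepsilon_i$, so $(\varepsilon_i,\xi)=-\tfrac12$ and $(\delta_j,\xi)=0$; hence $(\beta,\xi)=0$ for every $\beta\in\Phi_0^+$, while on $\Phi_1^+$ one gets $(\beta,\xi)=-\tfrac12$ for the $m(m+1)/2$ roots $\varepsilon_i-\delta_j$ $(1\le i\le j\le m)$ and $(\beta,\xi)=+\tfrac12$ for the $m(m+1)/2$ roots $\delta_k-\varepsilon_l$ $(1\le k<l\le m+1)$. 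Since $(\beta,\xi)=0$ on $\Phi_0^+$, the $n=0$ factors of $\prod_{\beta\in\Phi_0^+}\prod_{n\ge 0}(1-q^{n-(\beta,\xi)}e^{-\beta})$ are exactly $\prod_{\beta\in\Phi_0^+}(1-e^{-\beta})=R_0$, so dividing the left-hand side of \eqref{heartA} by $R_0$ merely removes this factor, leaving a product whose limit as $e^{-\alpha_j}\to 1$ is $\left(\prod_{n\ge1}(1-q^n)\right)^{2|\Phi_0^+|}=\left(\prod_{n\ge1}(1-q^n)\right)^{2m^2}$, using $|\Phi_0^+|=\binom{m+1}{2}+\binom{m}{2}=m^2$. (The $W$-invariant prefactor $e^{\xi}$ occurs on both sides of \eqref{heartA} and factors out of the Weyl sum on the right, so it may be set aside here, exactly as $e^{\widehat\rho}$ was in \cref{lem:LHS-A2}.)

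Next I would record the three limit computations. For $(\beta,\xi)=0$ each factor contributes $\left(\prod_{n\ge1}(1-q^n)\right)^2$; combined with the explicit prefactor $\prod_{n\ge1}(1-q^n)^{2m}$ coming from $\dim\fh=2m$, the numerator of $(\text{LHS of }\eqref{heartA})/R_0$ tends to $\left(\prod_{n\ge1}(1-q^n)\right)^{2m^2+2m}=\left(\prod_{n\ge1}(1-q^n)\right)^{2m(m+1)}$. For $(\beta,\xi)=-\tfrac12$ the factor is $\prod_{n\ge1}(1+q^{\frac12(2n-1)}e^{-\beta})(1+q^{\frac12(2n-1)}e^{\beta})$, with limit $\left(\prod_{n\ge1}(1+q^{\frac12(2n-1)})\right)^2$. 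For $(\beta,\xi)=+\tfrac12$ the same rearrangement as in \cref{lem:LHS-A2} applies: writing $1+q^{-1/2}e^{-\beta}=q^{-1/2}e^{-\beta}(1+q^{1/2}e^{\beta})$ and telescoping, the factor equals $q^{-1/2}e^{-\beta}\prod_{n\ge1}(1+q^{\frac12(2n-1)}e^{-\beta})(1+q^{\frac12(2n-1)}e^{\beta})$, whose limit is $q^{-1/2}\left(\prod_{n\ge1}(1+q^{\frac12(2n-1)})\right)^2$. Since there are $m(m+1)/2$ roots of each of the last two types, the denominator $\prod_{\beta\in\Phi_1^+}(\cdots)$ tends to $q^{-m(m+1)/4}\left(\prod_{n\ge1}(1+q^{\frac12(2n-1)})\right)^{2m(m+1)}$.

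Assembling the pieces yields
\[
\lim \frac{\mathrm{LHS}\text{ of }\eqref{heartA}}{R_0}=\frac{\left(\prod_{n\ge1}(1-q^n)\right)^{2m(m+1)}}{q^{-m(m+1)/4}\left(\prod_{n\ge1}(1+q^{\frac12(2n-1)})\right)^{2m(m+1)}}=q^{\frac{m(m+1)}{4}}\left(\prod_{n\ge1}\frac{1-q^n}{1+q^{\frac12(2n-1)}}\right)^{2m(m+1)},
\]
which is the asserted identity. The interchange of the limit $e^{-\alpha_j}\to1$ with the infinite products is justified by uniform convergence for $q$ in a small disk, exactly as in \cref{lem:LHS-A2}. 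I expect the main obstacle to be nothing deeper than careful bookkeeping: verifying that $\Phi_0^+$ has exactly $m^2$ roots and that $\Phi_1^+$ splits into two blocks of $m(m+1)/2$ roots with $(\beta,\xi)=\pm\tfrac12$, tracking the accumulated $q^{-m(m+1)/4}$ produced by the $(\beta,\xi)=+\tfrac12$ factors, and checking $2m^2+2m=2m(m+1)$ so that the numerator and denominator exponents agree. Since the computation is completely parallel to the already-established $\widehat{\gl}(m,m)$ case, no new idea is needed.
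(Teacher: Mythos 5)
Your proposal is correct and follows essentially the same route as the paper: compute $(\beta,\xi)$ for each positive root (obtaining $0$ on $\Phi_0^+$ and $\pm\tfrac12$ on the two halves of $\Phi_1^+$, each of size $m(m+1)/2$), then reuse the three factor-by-factor limit computations from the $\widehat{\gl}(m,m)$ case, the paper simply citing "similarly as in the proof of \cref{lem:LHS-A2}" where you write the bookkeeping out explicitly. The root counts, the exponent $2m^2+2m=2m(m+1)$, and the accumulated power $q^{-m(m+1)/4}$ all check out.
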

\begin{proof}
Let $\beta\in\Phi^+$.
It is easy to see that 
    \[
    (\beta,  \xi) = 
        \begin{cases}
        -\frac12 & \text{$\beta\in\{\varepsilon_i-\delta_j,  \ (1\leq i\leq j\leq m) \}$},  \\
        \frac12 & \text{$\beta\in\{\delta_i-\varepsilon_j, \ (1\leq i< j\leq m+1)\}$},  \\
        0 & \text{otherwise}.
        \end{cases}
    \]

Note that $\#\{\beta\in\Phi^+ \mid (\beta,  \xi)=1/2\}=m(m+1)/2$.
Similarly as in the proof of \cref{lem:LHS-A2},  we obtain
    \[
    \lim_{\substack{e^{-\alpha_{j}}\to1 \\ j=1,  \ldots,  2m}} 
    \frac{\mathrm{LHS}\text{ of }\eqref{heartA}}{R_0} 
    = q^{\frac{m(m+1)}{4}} \cdot 
    \left( \prod_{n=1}^\infty \frac{1-q^n}{1+q^{\frac12(2n-1)}} \right)^{2m(m+1)}.
    \]
\end{proof}

From \cref{lem:RHS-A} and \cref{lem:LHS-A}  we obtain the next theorem.

\begin{theorem}\label{thm:denom-id-sl}
For $m\in\Z_{>0}$,  we have
    \begin{equation}\label{result-A}
    \begin{split}
    \triangle(q)^{2m(m+1)} = \frac{1}{\left( \prod_{i=1}^{m}i! \right)^2}
    \sum_{J\subset\{1,  \ldots,  m\}}(-1)^{|J|}\sum_{\vec{k},  \vec{r}\in\Z_J^m}
    q^{2\sum_{1\leq i\leq j\leq m}k_ik_j +\sum_{j=1}^m (2k_j-1)r_j
    -\frac{m(m+1)}{2}} \\
     \times \left( \prod_{i=1}^m (k_i+r_i+|\vec{k}|) \right) 
    \left( \prod_{1\leq i<j\leq m}(k_i+r_i-k_j-r_j)(r_i-r_j) \right).
    \end{split}
    \end{equation}
Here, we set $\Z^m_J = \left\{\vec{k}=(k_1,  \ldots,  k_m)\in\Z^m \, \middle|\,
J=\{j \mid k_j\leq 0\} \right\}$.
\end{theorem}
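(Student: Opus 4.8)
The plan is to combine the two limit evaluations already in place — \cref{lem:RHS-A}, which yields the identity \eqref{eq:RHS-A} for the limit of $R_0^{-1}$ times the right-hand side of \eqref{heartA} as $e^{-\alpha_j}\to 1$, and \cref{lem:LHS-A}, which yields \eqref{eq:LHS-A} for the corresponding limit of the left-hand side — exactly as in the proof of \cref{thm:denom-id-gl}. Equating \eqref{eq:RHS-A} with \eqref{eq:LHS-A} and cancelling the common factor $q^{m(m+1)/4}$ gives a closed identity expressing $\bigl(\prod_{n\ge 1}(1-q^n)/(1+q^{(2n-1)/2})\bigr)^{2m(m+1)}$ as the sum over $J$, $\vec{k}$, $\vec{r}$ on the right of \eqref{eq:RHS-A}, carrying the prefactor $q^{-m(m+1)/4}(-1)^{m(m-1)/2}/\bigl(\prod_{i=1}^m i!\bigr)^2$, the internal sign $(-1)^{|J^c|}(-1)^{\sum_j r_j}$, the $q$-exponent $\sum_{1\le i\le j\le m}k_ik_j+\sum_j(k_j-\tfrac12)r_j$, and the $q$-free polynomial factor $\prod_{i=1}^m(k_i+r_i+|\vec{k}|)\cdot\prod_{1\le i<j\le m}(k_i+r_i-k_j-r_j)(r_i-r_j)$.

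The decisive step is then the substitution $q^{1/2}\mapsto -q$, followed by an appeal to the classical product formula \eqref{eq:triangle}. Under $q^{1/2}\mapsto -q$ one has $q^n\mapsto q^{2n}$ and $q^{(2n-1)/2}\mapsto -q^{2n-1}$, so $1+q^{(2n-1)/2}\mapsto 1-q^{2n-1}$ and hence $\prod_{n\ge 1}(1-q^n)/(1+q^{(2n-1)/2})\mapsto\prod_{n\ge 1}(1-q^{2n})/(1-q^{2n-1})=\triangle(q)$, with the exponent $2m(m+1)$ unchanged. On the right-hand side the integer exponent $\sum_{1\le i\le j\le m}k_ik_j$ doubles; the half-integer exponent $\sum_j(k_j-\tfrac12)r_j$ becomes $\sum_j(2k_j-1)r_j$ but contributes an extra sign $(-1)^{\sum_j(2k_j-1)r_j}=(-1)^{\sum_j r_j}$ (since $2k_j-1$ is odd), which cancels the internal $(-1)^{\sum_j r_j}$; the prefactor $q^{-m(m+1)/4}$ becomes $(-1)^{m(m+1)/2}q^{-m(m+1)/2}$, and $(-1)^{m(m+1)/2}(-1)^{m(m-1)/2}=(-1)^{m^2}=(-1)^m$, which combined with $(-1)^{|J^c|}=(-1)^{m-|J|}$ collapses to the single sign $(-1)^{|J|}$ of \eqref{result-A}. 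The polynomial factor is untouched, so collecting everything yields precisely \eqref{result-A}.

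The only point requiring real care is this sign bookkeeping under $q^{1/2}\mapsto -q$: one must track the parity of each half-integer exponent (in particular the terms linear in $r_j$) and verify that $(-1)^{|J^c|}$, $(-1)^{\sum_j r_j}$, and the sign of the prefactor combine exactly as claimed, together with the normalizing constant $1/\bigl(\prod_{i=1}^m i!\bigr)^2$. All analytic input — the existence of the limits, the use of the Weyl dimension formula in \cref{lem:RHS-A}, and the convergence of the resulting $q$-expansion — has already been supplied by \cref{lem:RHS-A} and \cref{lem:LHS-A}, so I expect no genuine obstacle: the argument is a routine transcription of the $\widehat{\gl}(m,m)$ computation of \cref{thm:denom-id-gl}, and the chief risk is merely an arithmetic slip in the signs or constants.
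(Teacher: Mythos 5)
Your proposal is correct and follows essentially the same route as the paper: equate \eqref{eq:RHS-A} with \eqref{eq:LHS-A}, write the resulting $q$-exponent as $\tfrac12\bigl(2\sum_{i\le j}k_ik_j+\sum_j(2k_j-1)r_j-\tfrac{m(m+1)}{2}\bigr)$, substitute $q^{1/2}\mapsto -q$, and apply \eqref{eq:triangle}. Your sign bookkeeping (the cancellation of $(-1)^{\sum_j r_j}$ and the collapse of $(-1)^{m(m+1)/2}(-1)^{m(m-1)/2}(-1)^{|J^c|}$ to $(-1)^{|J|}$) checks out and matches the paper's (terser) computation.
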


\begin{proof}
From \eqref{eq:RHS-A} and \eqref{eq:LHS-A},  we obtain
    \begin{multline*}
    \left( \prod_{n=1}^\infty \frac{1-q^{\frac12\cdot 2n}}{1+q^{\frac12(2n-1)}} \right)^{2m(m+1)} \\
    = \frac{(-1)^{m(m-1)/2}}{\left( \prod_{i=1}^{m}i! \right)^2}
    \sum_{J\subset\{1,  \ldots,  m\}}(-1)^{|J^c|}\sum_{\vec{k},  \vec{r}\in\Z_J^m}
    (-1)^{\sum_{j=1}^m r_j}
    q^{\frac12\left(\sum_{1\leq i\leq j\leq m}2k_ik_j +\sum_{j=1}^m (2k_j-1)r_j
    -\frac{m(m+1)}{2}\right)} \\
     \times \left( \prod_{i=1}^m (k_i+r_i+|\vec{k}|) \right) 
    \left( \prod_{1\leq i<j\leq m}(k_i+r_i-k_j-r_j)(r_i-r_j) \right).
    \end{multline*}
Replacing $q^{1/2}$ by $-q$ and applying \eqref{eq:triangle},  the equation \eqref{result-A} follows.
\end{proof}

\subsection{Case $\widehat{\spo}(2m,  2m)$}\label{sec:spo^(2m_2m)}

Let $\xi=-\sum_{i=1}^m\varepsilon_i$ so that $(\alpha_{2j-1},  \xi)=-1/2$ for $j=1,  \ldots,  m$ and $w(\xi)-\xi\in M^\sharp$ for all $w\in W$. 
Applying $t_\xi$ to the both sides of \eqref{affine-denominator},  we obtain
    \begin{equation}\label{t_xiD}
    t_\xi(e^{\widehat{\rho}}\widehat{R}) 
    = \frac{1}{2^{m-1}m!}\sum_{\alpha^\sharp\in M^\sharp}\sum_{w\in W} 
    \epsilon(w)wt_{\xi+\alpha^\sharp}
    \left( \frac{e^{\widehat{\rho}}}{\prod_{\beta\in S}(1+e^{-\beta})} \right).
    \end{equation}

Write $\alpha^\sharp\in M^\sharp$ as $\alpha^\sharp=\sum_{i=1}^mk_i\cdot 2\varepsilon_i$ with $\vec{k}=(k_1,  \ldots,  k_m)\in\Z^m$. 
We summarize some equations which we use to compute the right-hand side of \eqref{t_xiD}.
Each of them is a consequence of an easy calculation.
\begin{lemma}
We have the following equations: 
\begin{itemize}
\item $t_{\xi+\alpha^\sharp}(\alpha_{2j-1})=\alpha_{2j-1}-(k_j-1/2)\delta$. 
\item $t_{\xi+\alpha^\sharp}(\widehat{\rho})=\gamma+\xi+\alpha^\sharp-(m/4-(k_1+\cdots+k_m)+(\alpha^\sharp,  \alpha^\sharp)/2)\cdot\delta$.
\item $w(\delta)=\delta$ and $w(\gamma)=\gamma$ for $w\in W$.
\item $(\alpha^\sharp,  \alpha^\sharp)=2\sum_{i=1}^m k_i^2$.
\end{itemize}
\end{lemma}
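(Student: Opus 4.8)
The plan is to obtain all four equalities by direct substitution into the definition of $t_\alpha$, using only the values of the invariant form collected earlier: in type $D(m,m)$ one has $(\varepsilon_i,\varepsilon_j)=\tfrac12\delta_{i,j}$, $(\delta_i,\delta_j)=-\tfrac12\delta_{i,j}$, $(\varepsilon_i,\delta_j)=0$, while on the affine directions $(\gamma,\gamma)=(\delta,\delta)=0$, $(\gamma,\delta)=1$, $\gamma(K)=1$, $\delta(K)=0$, and both $\gamma,\delta$ pair trivially with $\fh^\ast$. Throughout I would insert $\xi=-\sum_{i=1}^m\varepsilon_i$ and $\alpha^\sharp=\sum_{i=1}^m 2k_i\varepsilon_i$.

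For the first equality, note $\alpha_{2j-1}=\varepsilon_j-\delta_j\in\fh^\ast$, so $\alpha_{2j-1}(K)=0$ and the formula collapses to $t_{\xi+\alpha^\sharp}(\alpha_{2j-1})=\alpha_{2j-1}-(\alpha_{2j-1},\xi+\alpha^\sharp)\,\delta$; then $(\varepsilon_j-\delta_j,\xi+\alpha^\sharp)=-\tfrac12+k_j$ gives the coefficient $k_j-\tfrac12$. The fourth equality is the one-line computation $(\alpha^\sharp,\alpha^\sharp)=\sum_{i}4k_i^2(\varepsilon_i,\varepsilon_i)=2\sum_{i}k_i^2$. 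For the second equality, recall $\widehat{\rho}=\gamma$ for $\widehat{\spo}(2m,2m)$ and $\gamma(K)=1$; since $\xi+\alpha^\sharp\in\fh^\ast$ is orthogonal to $\gamma$, the formula becomes $t_{\xi+\alpha^\sharp}(\gamma)=\gamma+(\xi+\alpha^\sharp)-\tfrac12(\xi+\alpha^\sharp,\xi+\alpha^\sharp)\,\delta$, and expanding $(\xi+\alpha^\sharp,\xi+\alpha^\sharp)=(\xi,\xi)+2(\xi,\alpha^\sharp)+(\alpha^\sharp,\alpha^\sharp)$ with $(\xi,\xi)=m/2$ and $(\xi,\alpha^\sharp)=-(k_1+\cdots+k_m)$ reproduces the stated $\delta$-coefficient $m/4-(k_1+\cdots+k_m)+\tfrac12(\alpha^\sharp,\alpha^\sharp)$. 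The third equality was already recorded in this section: $\delta$ and $\gamma$ are orthogonal to every root in $\Phi$, hence fixed by all reflections generating $W$.

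I do not anticipate any genuine difficulty; the lemma is pure bookkeeping with the bilinear form. The one point worth flagging is the type-$D$ normalization $(\varepsilon_i,\varepsilon_i)=1/2$, rather than $1$ as in type $A$, which is precisely what produces the half-integer shift $k_j-1/2$ in the first identity and the $m/4$ term in the second.
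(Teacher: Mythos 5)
Your proposal is correct and matches the paper's approach: the paper simply asserts that each item "is a consequence of an easy calculation," and your direct substitution into the definition of $t_\alpha$, using $(\varepsilon_i,\varepsilon_i)=1/2$, $\gamma(K)=1$, $\widehat{\rho}=\gamma$, and the orthogonality of $\delta,\gamma$ to $\fh^\ast$, is exactly that calculation carried out explicitly. All four computations check out.
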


The right-hand side of \eqref{t_xiD} becomes 
    \begin{align*}
    & \frac{e^{\gamma}q^{\frac{m}{4}}}{2^{m-1}m!}
    \sum_{J\subset\{1,\ldots,m\}}\sum_{\vec{k}\in\Z^m_J}
    q^{\sum_{j=1}^mk_j^2-k_j} \\
    & \times \sum_{w\in W}\epsilon(w)w\left( 
    e^{\xi+\alpha^\sharp} 
     \sum_{\vec{r}\in\Z_{\ge 0}^m}
    \prod_{j\in J} (-q^{-(k_j-\frac12)}e^{-\alpha_{2j-1}})^{r_j} \cdot
    \prod_{j\not\in J} \left(q^{k_j-\frac12}e^{\alpha_{2j-1}}
    (-q^{k_j-\frac12}e^{\alpha_{2j-1}})^{r_j}\right) \right) \\
    & = \frac{e^{\gamma}q^{\frac{m}{4}}}{2^{m-1}m!}
    \sum_{J\subset\{1,\ldots,m\}}(-1)^{|J^c|}
    \sum_{\substack{ \vec{k}\in\Z^m_J,   \\
    -\vec{r}\in\Z^m_{J}}} (-1)^{\sum_{j=1}^m r_j}
    q^{\sum_{j=1}^m k_j^2-k_j-k_jr_j+\frac12r_j}  
    \sum_{w\in W}\epsilon(w)w\left( 
    e^{\xi+\alpha^\sharp
    -\sum_{j=1}^m r_j\alpha_{2j-1}} \right).
    \end{align*}
Here,  $\Z_J^m$ is defined as \eqref{eq:Z_J^m}.

On the other hand,  since $t_\xi(\widehat{\rho})=\gamma+\xi-m/4\cdot\delta$,  $t_\xi(\delta)=\delta$, and $t_\xi(\beta)=\beta-(\beta,  \xi)\delta$ for $\beta\in\Phi$,  the left-hand side of \eqref{t_xiD} becomes 
    \[
    q^{\frac{m}{4}}e^{\gamma+\xi}
    \left( \prod_{n=1}^\infty (1-q^n)^{2m} \right)
    \frac{\prod_{\beta\in\Phi_0^+}
    \left(\prod_{n=0}^\infty(1-q^{n-(\beta,  \xi)}e^{-\beta})\cdot
    \prod_{n=1}^\infty(1-q^{n+(\beta,  \xi)}e^{\beta}) \right)}
    {\prod_{\beta\in\Phi_1^+}
    \left(\prod_{n=0}^\infty(1+q^{n-(\beta,  \xi)}e^{-\beta})\cdot
    \prod_{n=1}^\infty(1+q^{n+(\beta,  \xi)}e^{\beta} \right)}.
    \]
Hence we obtain 
    \begin{multline*}\label{heartD}\tag{$\diamondsuit$}{}
    \begin{split}
    e^{\xi}
    \left( \prod_{n=1}^\infty (1-q^n)^{2m} \right)
    \frac{\prod_{\beta\in\Phi_0^+}
    \left(\prod_{n=0}^\infty(1-q^{n-(\beta,  \xi)}e^{-\beta})\cdot
    \prod_{n=1}^\infty(1-q^{n+(\beta,  \xi)}e^{\beta}) \right)}
    {\prod_{\beta\in\Phi_1^+}
    \left(\prod_{n=0}^\infty(1+q^{n-(\beta,  \xi)}e^{-\beta})\cdot
    \prod_{n=1}^\infty(1+q^{n+(\beta,  \xi)}e^{\beta} \right)} \\
    = \frac{1}{2^{m-1}m!}
    \sum_{J\subset\{1,\ldots,m\}}(-1)^{|J^c|}
    \sum_{\substack{\vec{k},\in\Z^m_J  \\ -\vec{r}\in\Z^m_J}} 
    (-1)^{\sum_{j=1}^m r_j} q^{\sum_{j=1}^m k_j^2-k_j-k_jr_j+\frac12r_j}   \\
     \times \sum_{w\in W}\epsilon(w)w\left( 
    e^{\xi+\alpha^\sharp-\sum_{j=1}^m r_j\alpha_{2j-1}} \right).
    \end{split}
    \end{multline*}

Let $R_0=\prod_{\beta\in\Phi_0^+}(1-e^{-\beta})$ be the denominator of $\Phi_0$.
From now,  we divide the both sides of \eqref{heartD} by $R_0$ and take the limit $e^{-\alpha_j}\to1$,  ($j=1,  \ldots,  2m$).

First,  we deal with the right-hand side.  

\begin{lemma}\label{lem:RHS-D}
We have
    \begin{multline*}
    \lim_{\substack{e^{-\alpha_{j}}\to1 \\ j=1,  \ldots,  2m}}
    R_0^{-1}  \sum_{w\in W} \epsilon(w)w \left( e^
    {\xi+\alpha^\sharp-\sum_{j=1}^m r_j\alpha_{2j-1}}\right) \\
    \quad  = \frac{(2m-1)!}{(m-1)!\left(\prod_{i=1}^m(2i-1)!\right)^2} 
    \left(\prod_{i=1}^m (2k_i-r_i-1)\right) 
    \left(\prod_{1\leq i<j\leq m} (r_i-r_j)(r_i+r_j)\right) \\
    \times  
    \left(\prod_{1\leq i<j\leq m}(2k_i-r_i-2k_j+r_j)(2k_i-r_i+2k_j-r_j-2) \right).
    \end{multline*}
\end{lemma}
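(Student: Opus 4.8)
The argument runs parallel to the proofs in \cref{sec:gl^(m_m)} and \cref{sec:sl^(m+1_m)}: the alternating sum over the (full) Weyl group $W$ of $\Phi_0$ is recognized as a limit of a Weyl character for $\Phi_0^+$, to which the Weyl dimension formula \cite[Corollary~24.3]{Humphreys} applies. Let $\rho_0=\tfrac12\sum_{\beta\in\Phi_0^+}\beta$ be the Weyl vector of $\Phi_0^+$. Since $\Phi_0^+$ is of type $C_m$ in the $\varepsilon$-coordinates and of type $D_m$ in the $\delta$-coordinates, one has
\[
	\rho_0=\sum_{p=1}^m(m+1-p)\varepsilon_p+\sum_{q=1}^m(m-q)\delta_q .
\]
Setting $\lambda=-\rho_0+\xi+\alpha^\sharp-\sum_{j=1}^m r_j\alpha_{2j-1}$, the same reasoning as in the previous two subsections gives
\[
	\lim_{\substack{e^{-\alpha_j}\to1\\ j=1,\ldots,2m}}R_0^{-1}\sum_{w\in W}\epsilon(w)\,w\bigl(e^{\rho_0+\lambda}\bigr)=\prod_{\beta\in\Phi_0^+}\frac{(\rho_0+\lambda,\beta)}{(\rho_0,\beta)} .
\]

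The next step is to compute $\rho_0+\lambda$ and the pairings in the numerator. Using $\alpha_{2j-1}=\varepsilon_j-\delta_j$, $\xi=-\sum_{i=1}^m\varepsilon_i$ and $\alpha^\sharp=\sum_{i=1}^m 2k_i\varepsilon_i$, one finds
\[
	\rho_0+\lambda=\xi+\alpha^\sharp-\sum_{j=1}^m r_j\alpha_{2j-1}=\sum_{i=1}^m(2k_i-1-r_i)\varepsilon_i+\sum_{i=1}^m r_i\delta_i .
\]
Evaluating $(\rho_0+\lambda,\beta)$ over the five families of positive roots ($\varepsilon_i-\varepsilon_j$, $\varepsilon_i+\varepsilon_j$, $2\varepsilon_p$ in the $C_m$ part, $\delta_i-\delta_j$, $\delta_i+\delta_j$ in the $D_m$ part) and dividing by the corresponding $(\rho_0,\beta)$ — observing that the common factors $(\varepsilon_i,\varepsilon_i)$, $(\delta_i,\delta_i)$ together with the factor $2$ attached to the long roots $2\varepsilon_p$ cancel in each ratio — produces precisely the linear forms $2k_i-r_i-2k_j+r_j$, $2k_i-r_i+2k_j-r_j-2$, $2k_p-r_p-1$, $r_i-r_j$ and $r_i+r_j$, that is, the three products appearing on the right-hand side.

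It then remains to identify the numerical constant $\prod_{\beta\in\Phi_0^+}(\rho_0,\beta)^{-1}$. Splitting $\Phi_0^+$ into its $C_m$ and $D_m$ parts and using $(\rho_0,\varepsilon_p)=m+1-p$, $(\rho_0,\delta_q)=m-q$ (in the normalization above), this amounts to the identity
\[
	\Bigl(\prod_{1\le i<j\le m}(j-i)\Bigr)^{2}\cdot m!\cdot\prod_{1\le i<j\le m}(2m+2-i-j)\cdot\prod_{1\le i<j\le m}(2m-i-j)=\frac{(m-1)!\bigl(\prod_{i=1}^m(2i-1)!\bigr)^{2}}{(2m-1)!},
\]
which I would prove by a direct computation: $\prod_{i<j}(j-i)=\prod_{i=1}^{m-1}i!$, while the substitutions $i\mapsto m+1-i$ and $i\mapsto m-i$ rewrite the remaining two products as products of factorials (alternatively, one inducts on $m$). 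Taking reciprocals and combining with the numerator computation completes the proof. The main obstacle is exactly this combinatorial identity: it is the one genuinely new ingredient compared with the $\widehat{\gl}(m,m)$ and $\widehat{\sl}(m+1,m)$ cases, where only type-$A$ products occurred — here the long roots $2\varepsilon_p$ and the roots $\varepsilon_i+\varepsilon_j$, $\delta_i+\delta_j$ are responsible both for the more intricate linear factors and for the asymmetric constant $(2m-1)!/(m-1)!$.
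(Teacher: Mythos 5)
Your proposal is correct and follows essentially the same route as the paper: apply the Weyl dimension formula to $\rho_0+\lambda=\sum_{i}(2k_i-1-r_i)\varepsilon_i+\sum_i r_i\delta_i$, evaluate the pairings over the five families of positive roots, and compute the constant $\prod_{\beta\in\Phi_0^+}(\rho_0,\beta)$; your combined combinatorial identity for that constant checks out (e.g.\ it gives $6$ for $m=2$ and $8640$ for $m=3$, matching $(m-1)!\bigl(\prod_{i=1}^m(2i-1)!\bigr)^2/(2m-1)!$), and the paper merely organizes it as separate factorial evaluations of each sub-product rather than as one identity.
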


\begin{proof}
Let $\rho_0=\frac12\sum_{\beta\in\Phi_0^+}\beta$ be the Weyl vector of $\Phi_0^+$.  
Set 
    \[
    \lambda = -\rho_0
    +\xi+\alpha^\sharp-\sum_{j=1}^m r_j\alpha_{2j-1}
    = -\rho_0+\sum_{i=1}^m(2k_i-1)\varepsilon_i
    -\sum_{j=1}^m r_j\alpha_{2j-1}.
    \]
From the Weyl dimension formula (and its proof,  \cite[Corollary 24.3]{Humphreys}),  we obtain
    \[
    \lim_{\substack{e^{-\alpha_{j}}\to1 \\ j=1,  \ldots,  2m}}
    R_0^{-1} \sum_{w\in W} \epsilon(w)w(e^{\rho_0+\lambda}) \quad
    = \prod_{\beta\in\Phi_0^+}\frac{(\rho_0+\lambda,  \beta)}
    {(\rho_0,  \beta)}.
    \]

First,  $(\rho_0+\lambda,  \beta)$ for $\beta\in\Phi_0^+$ is computed as follows:
\begin{itemize}
\item $(\rho_0+\lambda,  \varepsilon_i-\varepsilon_j)=(2k_i-r_i-2k_j+r_j)/2$ for $1\leq i<j\leq m$.
\item $(\rho_0+\lambda,  \varepsilon_i+\varepsilon_j)=(2k_i-r_i+2k_j-r_j-2)/2$ for $1\leq i<j\leq m$.
\item $(\rho_0+\lambda,  2\varepsilon_p)=2k_p-r_p-1$ for $1\leq p\leq m$.
\item $(\rho_0+\lambda,  \delta_i-\delta_j)=(r_j-r_i)/2$ for $1\leq i<j\leq m$.
\item $(\rho_0+\lambda,  \delta_i+\delta_j)=-(r_i+r_j)/2$ for $1\leq i<j\leq m$.
\end{itemize}
Hence we get
    \begin{align*}
    \prod_{\beta\in\Phi_0^+}(\rho_0+\lambda,  \beta)
    =2^{-2m(m-1)} \left(\prod_{i=1}^m (2k_i-r_i-1)\right) 
    \left(\prod_{1\leq i<j\leq m} (r_i-r_j)(r_i+r_j)\right) \\
    \times  
    \left(\prod_{1\leq i<j\leq m}(2k_i-r_i-2k_j+r_j)(2k_i-r_i+2k_j-r_j-2) \right).
    \end{align*}

On the other hand,  since we have $\rho_0=\sum_{p=1}^m (m+1-p)\varepsilon_p+\sum_{q=1}^m(m-q)\delta_q$,  $(\rho_0,  \beta)$ for $\beta\in\Phi_0^+$ is computed as 
\begin{itemize}
\item $(\rho_0,  \varepsilon_i-\varepsilon_j)=(j-i)/2$ and 
    \[
    \prod_{1\leq i<j\leq m}(\rho_0,  \varepsilon_i-\varepsilon_j)
    = 2^{-m(m-1)/2}\prod_{1\leq i<m}(m-i)! = 2^{-m(m-1)/2}\prod_{i=1}^{m-1}i!.
    \]
\item $(\rho_0,  \varepsilon_i+\varepsilon_j)=(2m+2-i-j)/2$ and
    \[
    \prod_{1\leq i<j\leq m}(\rho_0,  \varepsilon_i+\varepsilon_j)
    = 2^{-m(m-1)/2}\prod_{1\leq i<m}\frac{(2m-2i+1)!}{(m+1-i)!}
    = 2^{-m(m-1)/2}\prod_{i=1}^{m-1}\frac{(2i+1)!}{(i+1)!}.
    \]
\item $(\rho_0,  2\varepsilon_p)=m+1-p$ and $\prod_{p=1}^m(\rho_0,  2\varepsilon_p)=m!$.

\item $(\rho_0,  \delta_i-\delta_j)=(i-j)/2$ and $\prod_{1\leq i<j\leq m}(\rho_0,  \delta_i-\delta_j)=(-2)^{-m(m-1)/2}\prod_{i=1}^{m-1}i!$.
\item $(\rho_0,  \delta_i+\delta_j)=(i+j-2m)/2$ and
    \[
    \prod_{1\leq i<j\leq m}(\rho_0,  \delta_i+\delta_j) 
    = (-2)^{-m(m-1)/2}\prod_{1\leq i<m}\frac{(2m-2i-1)!}{(m-i-1)!}
    = (-2)^{-m(m-1)/2}\prod_{i=1}^{m-1}\frac{(2i-1)!}{(i-1)!}.
    \]
\end{itemize}
Hence we have $\prod_{\beta\in\Phi_0^+}(\rho_0,  \beta)=2^{-2m(m-1)}(m-1)!((2m-1)!)^{-1}\left(\prod_{i=1}^{m}(2i-1)!\right)^2$.
This completes the proof.
\end{proof}

Combining \cref{lem:RHS-D} with \eqref{heartD} and changing the variables $r_i\mapsto-r_i$,  $i=1,  \ldots,  m$,  we obtain the following equation.

\begin{corollary}
We have
    \begin{align}\label{eq:RHS-D}
    & \lim_{\substack{e^{-\alpha_{j}}\to1 \\ j=1,  \ldots,  2m}} 
    \frac{\mathrm{RHS}\text{ of }\eqref{heartD}}{R_0}  \\ \nonumber 
    & \quad = \frac{(2m-1)!}{2^{m-1}m!(m-1)!\left(\prod_{i=1}^m(2i-1)!\right)^2} 
    \sum_{J\subset\{1,  \ldots,  m\}} (-1)^{|J^c|}
    \sum_{\vec{k},  \vec{r}\in\Z^m_J} (-1)^{\sum_{j=1}^m r_j}
    q^{\sum_{j=1}^m k_j^2-k_j+k_jr_j-\frac12r_j}   \\ \nonumber
    & \hspace{20pt} \times  \left(\prod_{i=1}^m (r_i+2k_i-1)\right)
    \left(\prod_{1\leq i<j\leq m}(r_i-r_j)(r_i+r_j)(r_i+2k_i-r_j-2k_j)(r_i+2k_i+r_j+2k_j-2) \right).
    \end{align}
\end{corollary}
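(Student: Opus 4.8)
The plan is to start from the right-hand side of \eqref{heartD}, divide it by $R_0$, pass to the limit $e^{-\alpha_j}\to 1$ ($j=1,\dots,2m$) termwise, and then reindex. After division by $R_0$ the only factor in each summand of the right-hand side of \eqref{heartD} that depends on the $e^{-\alpha_j}$ is the alternating Weyl sum $R_0^{-1}\sum_{w\in W}\epsilon(w)\,w\big(e^{\xi+\alpha^\sharp-\sum_{j=1}^m r_j\alpha_{2j-1}}\big)$, which, being $q$-independent, is a genuine Laurent polynomial in the $e^{-\alpha_j}$. First I would observe that for each prescribed power of $q$ only finitely many triples $(J,\vec{k},\vec{r})$ contribute: on $\{\vec{k}\in\Z^m_J,\ -\vec{r}\in\Z^m_J\}$ the exponent $\sum_{j=1}^m(k_j^2-k_j-k_jr_j+\tfrac12 r_j)$ is a sum of nonnegative terms (for $j\in J$ each linear/quadratic piece is $\ge 0$; for $j\notin J$ one writes the term as $k_j(k_j-1)+|r_j|(k_j-\tfrac12)\ge 0$), so it is bounded below and takes any given value only finitely often. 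Hence the limit commutes with the $(J,\vec{k},\vec{r})$-summation and yields a well-defined $q$-series.

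Next I would apply \cref{lem:RHS-D} to each term, replacing the alternating Weyl sum by the explicit polynomial stated there, and combine the global scalar $\tfrac{1}{2^{m-1}m!}$ with $\tfrac{(2m-1)!}{(m-1)!(\prod_{i=1}^m(2i-1)!)^2}$ to obtain the prefactor $\tfrac{(2m-1)!}{2^{m-1}m!(m-1)!(\prod_{i=1}^m(2i-1)!)^2}$ appearing in \eqref{eq:RHS-D}. At this stage the identity reads as in \eqref{eq:RHS-D} but with the summation over $\{\vec{k}\in\Z^m_J,\ -\vec{r}\in\Z^m_J\}$, with exponent $\sum_j(k_j^2-k_j-k_jr_j+\tfrac12 r_j)$, and with polynomial factors $\prod_i(2k_i-r_i-1)$ and $\prod_{i<j}(r_i-r_j)(r_i+r_j)(2k_i-r_i-2k_j+r_j)(2k_i-r_i+2k_j-r_j-2)$.

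Finally I would carry out the substitution $r_i\mapsto -r_i$ for $1\le i\le m$. This sends the constraint $-\vec{r}\in\Z^m_J$ to $\vec{r}\in\Z^m_J$, fixes $(-1)^{\sum_j r_j}$, turns the exponent into $\sum_j(k_j^2-k_j+k_jr_j-\tfrac12 r_j)$, and carries the polynomial factors to $\prod_i(r_i+2k_i-1)$ and $\prod_{i<j}(r_i-r_j)(r_i+r_j)(r_i+2k_i-r_j-2k_j)(r_i+2k_i+r_j+2k_j-2)$; note that $(r_i-r_j)(r_i+r_j)$ is invariant under the flip, since each of its two linear factors merely changes sign. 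Matching with \eqref{eq:RHS-D} finishes the proof. The only step that is not pure bookkeeping is the interchange of limit and summation in the first paragraph; of the routine parts, I expect the transformed product of linear forms in the last step to be the place where a sign slip is most likely, so I would verify it factor by factor.
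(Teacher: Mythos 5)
Your proposal is correct and takes essentially the same route as the paper, which proves the corollary in one line by combining \cref{lem:RHS-D} with \eqref{heartD} and substituting $r_i \mapsto -r_i$; your factor-by-factor sign checks and the resulting exponent and prefactor all agree with \eqref{eq:RHS-D}. The only addition is your explicit verification that the exponent is a sum of nonnegative terms on $\{\vec{k}\in\Z^m_J,\ -\vec{r}\in\Z^m_J\}$, justifying the termwise limit, a point the paper leaves implicit.
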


Next we deal with the left-hand side of \eqref{heartD}.

\begin{lemma}\label{lem:LHS-D}
We have
    \begin{equation}\label{eq:LHS-D}
    \lim_{\substack{e^{-\alpha_{j}}\to1 \\ j=1,  \ldots,  2m}} 
    \frac{\text{LHS of }\eqref{heartD}}{R_0} 
    = (-1)^{\frac{m(m+1)}{2}} q^{\frac{m(m-1)}{4}} \cdot 
    \left(\frac{\prod_{n=1}^\infty(1-q^n)}
    {\prod_{n=1}^\infty\left(1+q^{\frac12(2n-1)}\right)}\right)^{4m^2}.
    \end{equation}
\end{lemma}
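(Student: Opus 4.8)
The plan is to evaluate the left-hand side of \eqref{heartD} divided by $R_0$ factor-by-factor in exactly the same manner as in the proofs of \cref{lem:LHS-A2} and \cref{lem:LHS-A}. The first step is to record $(\beta,\xi)$ for every $\beta\in\Phi^+$ with $\xi=-\sum_{i=1}^m\varepsilon_i$; using $(\varepsilon_i,\varepsilon_i)=-(\delta_j,\delta_j)=1/2$ and the orthogonality of the $\varepsilon$'s and $\delta$'s one finds $(\beta,\xi)=0$ for $\beta\in\{\varepsilon_i-\varepsilon_j,\ \delta_i\pm\delta_j\mid 1\le i<j\le m\}$, $(\beta,\xi)=-1$ for $\beta\in\{\varepsilon_i+\varepsilon_j,\ 2\varepsilon_p\mid 1\le i<j\le m,\ 1\le p\le m\}$, $(\beta,\xi)=-1/2$ for $\beta\in\{\varepsilon_i-\delta_j\mid 1\le i\le j\le m\}\cup\{\varepsilon_r+\delta_s\mid 1\le r,s\le m\}$, and $(\beta,\xi)=1/2$ for $\beta\in\{\delta_k-\varepsilon_l\mid 1\le k<l\le m\}$.

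Next I would compute the limit of each per-root factor as $e^{-\alpha_j}\to 1$. For $\beta\in\Phi_0^+$ with $(\beta,\xi)=0$ the factor $(1-e^{-\beta})$ from $R_0$ cancels the $n=0$ term, leaving $\prod_{n=1}^\infty(1-q^ne^{-\beta})(1-q^ne^\beta)$, whose limit is $\left(\prod_{n=1}^\infty(1-q^n)\right)^2$. For $\beta\in\Phi_0^+$ with $(\beta,\xi)=-1$ one writes $\prod_{n=1}^\infty(1-q^{n-1}e^\beta)=(1-e^\beta)\prod_{n=1}^\infty(1-q^ne^\beta)$ and uses $1-e^\beta=-e^\beta(1-e^{-\beta})$, so the $R_0$-factor cancels up to the sign $-e^\beta\to -1$ and the limit is $-\left(\prod_{n=1}^\infty(1-q^n)\right)^2$. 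For $\beta\in\Phi_1^+$ with $(\beta,\xi)=-1/2$ the corresponding factor of the denominator is $\prod_{n=1}^\infty(1+q^{(2n-1)/2}e^{-\beta})(1+q^{(2n-1)/2}e^\beta)$, with limit $\left(\prod_{n=1}^\infty(1+q^{(2n-1)/2})\right)^2$; for $(\beta,\xi)=1/2$ the extra $n=0$ term $1+q^{-1/2}e^{-\beta}=q^{-1/2}e^{-\beta}(1+q^{1/2}e^\beta)$ absorbs into $\prod_{n\ge1}(1+q^{n+1/2}e^\beta)$ and produces a prefactor $q^{-1/2}$, so the limit is $q^{-1/2}\left(\prod_{n=1}^\infty(1+q^{(2n-1)/2})\right)^2$.

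It remains to count roots and assemble. In $\Phi_0^+$ there are $3\binom{m}{2}=\tfrac{3m(m-1)}{2}$ roots with $(\beta,\xi)=0$ and $\binom{m}{2}+m=\tfrac{m(m+1)}{2}$ roots with $(\beta,\xi)=-1$, so their total contribution is $(-1)^{m(m+1)/2}\left(\prod_{n=1}^\infty(1-q^n)\right)^{2(2m^2-m)}$; in $\Phi_1^+$ there are $\tfrac{m(m+1)}{2}+m^2$ roots with $(\beta,\xi)=-1/2$ and $\tfrac{m(m-1)}{2}$ with $(\beta,\xi)=1/2$, and since these appear in the denominator of \eqref{heartD} their total contribution to the whole expression is $q^{m(m-1)/4}\left(\prod_{n=1}^\infty(1+q^{(2n-1)/2})\right)^{-4m^2}$. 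Multiplying by the prefactor $\prod_{n=1}^\infty(1-q^n)^{2m}$ (and noting $e^\xi\to1$) and using $2(2m^2-m)+2m=4m^2$ gives the stated formula. The only delicate points, entirely parallel to \cref{lem:LHS-A2}, are the correct bookkeeping of these root counts, $q$-powers and the overall sign, together with the justification that the limit may be taken termwise, which follows from the local uniform convergence of the infinite products in a neighborhood of $e^{-\beta}=1$; I do not expect any genuine obstacle beyond this accounting.
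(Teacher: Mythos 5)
Your proposal is correct and follows essentially the same route as the paper: the same table of values $(\beta,\xi)$, the same per-root limits (including the sign $-e^{\beta}\to-1$ for $(\beta,\xi)=-1$ and the prefactor $q^{-1/2}$ for $(\beta,\xi)=1/2$), and the same root counts $\#\{(\beta,\xi)=-1\}=\tfrac{m(m+1)}{2}$ and $\#\{(\beta,\xi)=\tfrac12\}=\tfrac{m(m-1)}{2}$, assembled identically. In fact you spell out more of the bookkeeping than the paper, which delegates most of it to the analogous computation in \cref{lem:LHS-A2}.
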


\begin{proof}
Let $\beta\in\Phi^+$.
It is easy to see that 
    \[
    (\beta,  \xi) = 
        \begin{cases}
        -1 & \text{$\beta\in\{\varepsilon_i+\varepsilon_j, \ (1\leq i<j\leq m),  \
         2\varepsilon_p,  \ (1\leq p\leq m)\}$},  \\
        -\frac12 & \text{$\beta\in\{\varepsilon_i-\delta_j,  \ (1\leq i\leq j\leq m),  \ 
        \varepsilon_i+\delta_j, \ (1\leq i,  j\leq m) \}$},  \\
        \frac12 & \text{$\beta\in\{\delta_i-\varepsilon_j, \ (1\leq i< j\leq m)\}$},  \\
        0 & \text{otherwise}.
        \end{cases}
    \]
If $(\beta,  \xi)=-1$,  then 
    \begin{multline*}
    (1-e^{-\beta})^{-1}
    \prod_{n=0}^\infty(1-q^{n-(\beta,  \xi)}e^{-\beta})
    \prod_{n=1}^\infty(1-q^{n+(\beta,  \xi)}e^{\beta}) 
    = -e^{\beta} \prod_{n=1}^\infty 
    (1-q^ne^{-\beta})(1-q^ne^{\beta}) \\
    \underset{j=1,  \ldots,  2m}{\xrightarrow{e^{-\alpha_{j}}\to1}} \
    - \left(\prod_{n=1}^\infty(1-q^n)\right)^2.
    \end{multline*}

Note that $\#\{\beta\in\Phi^+ \mid (\beta,  \xi)=-1\}=\frac{m(m+1)}{2}$ and $\#\{\beta\in\Phi^+ \mid (\beta,  \xi)=\frac12\}=\frac{m(m-1)}{2}$. 
Similarly as in the proof of \cref{lem:LHS-A2},  we obtain
    \[
    \lim_{\substack{e^{-\alpha_{j}}\to1 \\ j=1,  \ldots,  2m}} 
    \frac{\text{LHS of }\eqref{heartD}}{R_0} 
    = (-1)^{\frac{m(m+1)}{2}} q^{\frac{m(m-1)}{4}} \cdot 
    \left(\frac{\prod_{n=1}^\infty(1-q^n)}
    {\prod_{n=1}^\infty\left(1+q^{\frac12(2n-1)}\right)}\right)^{4m^2}.
    \]
\end{proof}

Replacing $q^{\frac12}$ by $-q$ in \eqref{eq:RHS-D} and \eqref{eq:LHS-D} and applying \eqref{eq:triangle},  we obtain the next theorem.

\begin{theorem}
For $m\in\Z_{>0}$,  we have
    \begin{align*}
    \triangle(q)^{4m^2} & = \frac{(2m-1)!}{2^{m-1}m!(m-1)!\left(\prod_{i=1}^m(2i-1)!\right)^2} \\
    & \quad \times \sum_{J\subset\{1,  \ldots,  m\}} (-1)^{|J|}
    \sum_{\vec{k},  \vec{r}\in\Z_J^m} 
    q^{\sum_{j=1}^m (2(k_j^2+k_jr_j-k_j)-r_j) -\frac{m(m-1)}{2}}  
    \left(\prod_{i=1}^m (r_i+2k_i-1)\right) \\
    & \quad \times  
    \left(\prod_{1\leq i<j\leq m}(r_i-r_j)(r_i+r_j)(r_i+2k_i-r_j-2k_j)(r_i+2k_i+r_j+2k_j-2) \right).
    \end{align*}
Here, we set $\Z^m_J = \left\{\vec{k}=(k_1,  \ldots,  k_m)\in\Z^m \, \middle|\,
J=\{j \mid k_j\leq0\} \right\}$.
\end{theorem}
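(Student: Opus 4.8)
The starting point is identity~\eqref{heartD}: it equates its left-hand side with its right-hand side, so after dividing both by $R_0$ and letting $e^{-\alpha_j}\to 1$ for $j=1,\ldots,2m$, the two limits computed in \cref{lem:LHS-D} and \cref{lem:RHS-D} must agree. Writing $P(\vec{k},\vec{r})$ for the ($q$-independent) polynomial factor appearing in~\eqref{eq:RHS-D}, this gives the single functional identity
\begin{multline*}
(-1)^{\frac{m(m+1)}{2}} q^{\frac{m(m-1)}{4}} \left(\frac{\prod_{n=1}^\infty(1-q^n)}{\prod_{n=1}^\infty\bigl(1+q^{\frac12(2n-1)}\bigr)}\right)^{4m^2} \\
= \frac{(2m-1)!}{2^{m-1}m!(m-1)!\left(\prod_{i=1}^m(2i-1)!\right)^2} \sum_{J\subset\{1,\ldots,m\}}(-1)^{|J^c|} \\
\times \sum_{\vec{k},\vec{r}\in\Z^m_J}(-1)^{\sum_{j}r_j}\, q^{\sum_{j}\left(k_j^2-k_j+k_jr_j-\frac12 r_j\right)} P(\vec{k},\vec{r}).
\end{multline*}
Everything that follows is the substitution $q^{1/2}\mapsto -q$ together with sign bookkeeping.

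Concretely, I would replace $q$ by $q^2$ in every integral power and $q^{a/2}$ by $(-q)^a$ in every half-integral power. On the left, $q^{m(m-1)/4}=(q^{1/2})^{m(m-1)/2}$ becomes $(-1)^{m(m-1)/2}q^{m(m-1)/2}$, and in the infinite product $q^n\mapsto q^{2n}$ while $q^{\frac12(2n-1)}\mapsto(-q)^{2n-1}=-q^{2n-1}$, so by~\eqref{eq:triangle} the product becomes $\prod_{n\geq1}\frac{1-q^{2n}}{1-q^{2n-1}}=\triangle(q)$; hence the left-hand side turns into $(-1)^{m(m+1)/2+m(m-1)/2}q^{m(m-1)/2}\triangle(q)^{4m^2}=(-1)^{m^2}q^{m(m-1)/2}\triangle(q)^{4m^2}$. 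On the right, the only half-integral exponent is $-\frac12 r_j$, which becomes $(-1)^{r_j}q^{-r_j}$; the product of these signs over $j$ equals $(-1)^{\sum_j r_j}$ and cancels the pre-existing factor $(-1)^{\sum_j r_j}$, while the $q$-exponent becomes $\sum_j(2k_j^2-2k_j+2k_jr_j-r_j)$, and $P(\vec{k},\vec{r})$ is left untouched.

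Finally I would tidy the signs and isolate $\triangle(q)^{4m^2}$. Using $(-1)^{|J^c|}=(-1)^m(-1)^{|J|}$ and dividing the whole identity by $(-1)^{m^2}q^{m(m-1)/2}$, the left-hand side becomes $\triangle(q)^{4m^2}$ and the $J$-independent sign on the right collapses because $(-1)^m/(-1)^{m^2}=(-1)^{m-m^2}=(-1)^{m(m-1)}=1$, the exponent $m(m-1)$ being even; the surviving alternating sign is $(-1)^{|J|}$. Absorbing $q^{-m(m-1)/2}$ into the inner sum and writing $2k_j^2-2k_j+2k_jr_j=2(k_j^2+k_jr_j-k_j)$ then yields precisely the exponent $\sum_j\bigl(2(k_j^2+k_jr_j-k_j)-r_j\bigr)-\frac{m(m-1)}{2}$ of the theorem, with the stated constant and polynomial factor. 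The genuinely substantial parts of the argument---the evaluation of the alternating Weyl-group sum via the Weyl dimension formula in \cref{lem:RHS-D} and the regularization of the infinite root-product in \cref{lem:LHS-D}---are already in place, so the only thing demanding care at this stage is the parity bookkeeping: the cancellation of $(-1)^{\sum_j r_j}$ against the sign produced by the half-integral exponent, and the identity $(-1)^{m(m-1)}=1$.
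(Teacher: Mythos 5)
Your proposal is correct and follows exactly the paper's own route: the paper derives the theorem by equating the limits in \cref{lem:LHS-D} and \cref{lem:RHS-D} (i.e.\ \eqref{eq:LHS-D} and \eqref{eq:RHS-D}), substituting $q^{1/2}\mapsto -q$, and applying \eqref{eq:triangle}, which is precisely what you do. Your sign bookkeeping (the cancellation of $(-1)^{\sum_j r_j}$ against the sign from the half-integral exponent, $(-1)^{|J^c|}=(-1)^m(-1)^{|J|}$, and $(-1)^{m(m+1)/2+m(m-1)/2}=(-1)^{m^2}$) checks out and simply makes explicit what the paper leaves to the reader.
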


By changing variables as $k_j = x_j + 1/2$ and $r_j = y_j - x_j$, and using the symmetry, we obtain \cref{thm:Dmm-Eisen}.

\subsection{Case $\widehat{\spo}(2m,  2m+2)$}\label{sec:spo^(2m_2m+2)}

Let $\xi=\sum_{i=1}^m\varepsilon_i$ so that $(\alpha_{2j-1},  \xi)=-1/2$ for $j=1,  \ldots,  m$ and $w(\xi)-\xi\in M^\sharp$ for all $w\in W$.
Applying $t_\xi$ to the both sides of \eqref{affine-denominator},  we obtain
    \begin{equation}\label{t_xiD2}
    t_\xi(\widehat{R}) \prod_{n=1}^\infty(1-q^n)^{-1} 
    = \frac{1}{2^m m!}\sum_{\alpha^\sharp\in M^\sharp}\sum_{w\in W} 
    \epsilon(w)wt_{\xi+\alpha^\sharp}
    \left( \frac{1}{\prod_{\beta\in S}(1+e^{-\beta})} \right).
    \end{equation}

Write $\alpha^\sharp\in M^\sharp$ as $\alpha^\sharp=\sum_{i=1}^mk_i\cdot 2\varepsilon_i$ with $\vec{k}=(k_1,  \ldots,  k_m)\in\Z^m$.
We summarize some equations which we use to compute the right-hand side of \eqref{t_xiD2}.
Each of them is a consequence of an easy calculation.
\begin{lemma}
We have the following equations: 
\begin{itemize}
\item $t_{\xi+\alpha^\sharp}(\alpha_{2j-1})=\alpha_{2j-1}+(k_j+1/2)\delta$. 
\item $w(\delta)=\delta$ and $w(\gamma)=\gamma$ for $w\in W$.
\end{itemize}
\end{lemma}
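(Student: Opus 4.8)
The plan is to obtain both identities by directly unwinding the definitions, since no structural input beyond the explicit shapes of $\xi$ and $\alpha^\sharp$ and the values of the bilinear form on $\fh^\ast$ is needed.

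For the first bullet, I would begin from the definition of the translation operator, $t_\beta(\lambda)=\lambda+\lambda(K)\beta-\big((\lambda,\beta)+\tfrac12(\beta,\beta)\lambda(K)\big)\delta$, applied with $\beta=\xi+\alpha^\sharp$ and $\lambda=\alpha_{2j-1}=\delta_j-\varepsilon_j$. The decisive simplification is that $\alpha_{2j-1}$ lies in $\fh^\ast\subset\widehat{\fh}^\ast$, so $\alpha_{2j-1}(K)=0$: this annihilates the $\beta$-term and reduces the $\delta$-coefficient to $-(\alpha_{2j-1},\xi+\alpha^\sharp)$. Next I would substitute $\alpha^\sharp=\sum_{i=1}^m k_i\cdot 2\varepsilon_i$, so that $\xi+\alpha^\sharp=\sum_{i=1}^m(1+2k_i)\varepsilon_i$, and evaluate the pairing $(\delta_j-\varepsilon_j,\sum_{i=1}^m(1+2k_i)\varepsilon_i)$ using $(\varepsilon_i,\varepsilon_i)=1/2$, $(\varepsilon_i,\varepsilon_j)=0$ for $i\neq j$, and $(\varepsilon_i,\delta_j)=0$ (all immediate from the Lemma recording the restricted form in type $D(m,n)$). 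This yields $-(1+2k_j)/2=-(k_j+1/2)$, and hence $t_{\xi+\alpha^\sharp}(\alpha_{2j-1})=\alpha_{2j-1}+(k_j+1/2)\delta$, as claimed.

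For the second bullet, I would appeal to the observation made when the action of $W$ on $\widehat{\fh}^\ast$ was introduced: $\delta$ and $\gamma$ are orthogonal to every root of $\Phi$. Since $W$ is by definition the Weyl group of $\Phi_0\subset\Phi$, it is generated by the reflections $s_\alpha$ with $\alpha\in\Phi_0$, and each such reflection fixes the hyperplane orthogonal to $\alpha$; as $\delta$ and $\gamma$ lie in all of these hyperplanes, they are fixed by every generator, hence by all of $W$.

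I do not anticipate any genuine obstacle; the verification is routine bookkeeping. The single point that must be handled with care is the normalization $(\varepsilon_i,\varepsilon_i)=1/2$ specific to type $D$, which is precisely what produces the half-integer shift $k_j+1/2$ rather than the shift $2k_j+1$ one would get from the type $A$ normalization $(\varepsilon_i,\varepsilon_i)=1$ — as indeed is visible in the $\widehat{\sl}(m+1,m)$ and $\widehat{\spo}(2m,2m)$ computations carried out earlier.
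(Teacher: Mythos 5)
Your computation is correct and matches what the paper intends: the paper simply asserts that each item "is a consequence of an easy calculation," and your unwinding of the definition of $t_{\xi+\alpha^\sharp}$ (using $\alpha_{2j-1}(K)=0$, $\xi+\alpha^\sharp=\sum_i(1+2k_i)\varepsilon_i$, and $(\varepsilon_i,\varepsilon_i)=1/2$ in type $D$), together with the already-recorded fact that $W$ fixes $\delta$ and $\gamma$ because they are orthogonal to all roots of $\Phi$, is exactly that calculation.
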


The right-hand side of \eqref{t_xiD2} becomes 
    \begin{align*}
    & \frac{1}{2^m m!}
    \sum_{J\subset\{1,\ldots,m\}}\sum_{-\vec{k}\in\Z^m_J} \\
    & \times \sum_{w\in W}\epsilon(w)w\left( 
     \sum_{\vec{r}\in\Z_{\ge 0}^m}
    \prod_{j\in J} (-q^{k_j+\frac12}e^{-\alpha_{2j-1}})^{r_j} \cdot
    \prod_{j\not\in J} \left(q^{-(k_j+\frac12)}e^{\alpha_{2j-1}}
    (-q^{-(k_j+\frac12)}e^{\alpha_{2j-1}})^{r_j}\right) \right) \\
    & = \frac{1}{2^m m!}
    \sum_{J\subset\{1,\ldots,m\}}(-1)^{|J^c|}
    \sum_{-\vec{k}, -\vec{r} \in\Z^m_J} (-1)^{\sum_{j=1}^m r_j}
    q^{\sum_{j=1}^m k_jr_j + \frac12r_j}  
    \sum_{w\in W}\epsilon(w)w\left( 
    e^{-\sum_{j=1}^m r_j\alpha_{2j-1}} \right).
    \end{align*}
Here,  $\Z_J^m$ is defined as \eqref{eq:Z_J^m}.

On the other hand,  since $t_\xi(\delta)=\delta$ and $t_\xi(\beta)=\beta-(\beta,  \xi)\delta$ for $\beta\in\Phi$,  the left-hand side of \eqref{t_xiD2} becomes 
    \[
    \left( \prod_{n=1}^\infty (1-q^n)^{2m} \right)
    \frac{\prod_{\beta\in\Phi_0^+}
    \left(\prod_{n=0}^\infty(1-q^{n-(\beta,  \xi)}e^{-\beta})\cdot
    \prod_{n=1}^\infty(1-q^{n+(\beta,  \xi)}e^{\beta}) \right)}
    {\prod_{\beta\in\Phi_1^+}
    \left(\prod_{n=0}^\infty(1+q^{n-(\beta,  \xi)}e^{-\beta})\cdot
    \prod_{n=1}^\infty(1+q^{n+(\beta,  \xi)}e^{\beta} \right)}.
    \]
Hence we obtain 
    \begin{multline*}\label{heartD2}\tag{$\heartsuit$}{}
    \begin{split}
    \left( \prod_{n=1}^\infty (1-q^n)^{2m} \right)
    \frac{\prod_{\beta\in\Phi_0^+}
    \left(\prod_{n=0}^\infty(1-q^{n-(\beta,  \xi)}e^{-\beta})\cdot
    \prod_{n=1}^\infty(1-q^{n+(\beta,  \xi)}e^{\beta}) \right)}
    {\prod_{\beta\in\Phi_1^+}
    \left(\prod_{n=0}^\infty(1+q^{n-(\beta,  \xi)}e^{-\beta})\cdot
    \prod_{n=1}^\infty(1+q^{n+(\beta,  \xi)}e^{\beta} \right)} \\
    = \frac{1}{2^m m!}
    \sum_{J\subset\{1,\ldots,m\}}(-1)^{|J^c|}
    \sum_{-\vec{k}, -\vec{r} \in\Z^m_J} 
    (-1)^{\sum_{j=1}^m r_j} q^{\sum_{j=1}^m k_jr_j + \frac12r_j}   \\
     \times \sum_{w\in W}\epsilon(w)w\left( 
    e^{-\sum_{j=1}^m r_j\alpha_{2j-1}} \right).
    \end{split}
    \end{multline*}

Let $R_0=\prod_{\beta\in\Phi_0^+}(1-e^{-\beta})$ be the denominator of $\Phi_0$.
From now,  we divide the both sides of \eqref{heartD2} by $R_0$ and take the limit $e^{-\alpha_j}\to1$,  ($j=1,  \ldots,  2m$).
 
First,  we deal with the right-hand side.  

\begin{lemma}\label{lem:RHS-D2}
We have
    \begin{multline*}
    \lim_{\substack{e^{-\alpha_{j}}\to1 \\ j=1,  \ldots,  2m}}
    R_0^{-1}  \sum_{w\in W} \epsilon(w)w \left( e^
    {-\sum_{j=1}^m r_j\alpha_{2j-1}}\right)  \\
    \quad  = \frac{1}{m!\left(\prod_{i=1}^m(2i-1)!\right)^2} 
    \left(\prod_{i=1}^m r_i^3 \right) 
    \left(\prod_{1\leq i<j\leq m} (r_i-r_j)^2(r_i+r_j)^2\right).
    \end{multline*}
\end{lemma}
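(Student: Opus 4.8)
The plan is to follow the same strategy as in the analogous results of the preceding subsections — in particular \cref{lem:RHS-D} — namely to apply the Weyl dimension formula to the alternating sum over $W$. Since $\widehat{\rho} = 0$ for $\widehat{\fg} = \widehat{\spo}(2m, 2m+2)$, the exponent appearing inside the sum is simply $-\sum_{j=1}^m r_j\alpha_{2j-1}$, with no extra $\xi$ or $\alpha^\sharp$ contribution; this makes the present computation a simplified variant of \cref{lem:RHS-D}. Writing $\rho_0 = \frac12\sum_{\beta\in\Phi_0^+}\beta$ for the Weyl vector of $\Phi_0^+$ and setting $\lambda = -\rho_0 - \sum_{j=1}^m r_j\alpha_{2j-1}$, so that $\rho_0 + \lambda = -\sum_{j=1}^m r_j\alpha_{2j-1}$, the specialization of the Weyl character formula \cite[Corollary 24.3]{Humphreys} gives
\[
\lim_{\substack{e^{-\alpha_j}\to1\\ j=1,\ldots,2m}} R_0^{-1}\sum_{w\in W}\epsilon(w)w\bigl(e^{\rho_0+\lambda}\bigr) = \prod_{\beta\in\Phi_0^+}\frac{(\rho_0+\lambda,\beta)}{(\rho_0,\beta)},
\]
and it remains to compute the two products.

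For the numerator, I would use $\alpha_{2j-1} = \delta_j - \varepsilon_j$ to write $\rho_0 + \lambda = \sum_{j=1}^m r_j\varepsilon_j - \sum_{j=1}^m r_j\delta_j$, and then evaluate each pairing $(\rho_0+\lambda,\beta)$ for $\beta\in\Phi_0^+$ using $(\varepsilon_i,\varepsilon_i) = \frac12 = -(\delta_j,\delta_j)$ together with the orthogonality of the $\varepsilon$'s and $\delta$'s. Every positive root of $\Phi_0$ contributes $\frac12$ times a linear form in the $r_j$: the $C_m$-roots $\varepsilon_i\pm\varepsilon_j$, $2\varepsilon_p$ yield (up to a power of $2$) the factor $\prod_p r_p\cdot\prod_{1\le i<j\le m}(r_i-r_j)(r_i+r_j)$; the roots $\delta_i\pm\delta_j$ with $1\le i<j\le m$ yield a second copy of $\prod_{1\le i<j\le m}(r_i-r_j)(r_i+r_j)$; and the remaining roots $\delta_i\pm\delta_{m+1}$ $(1\le i\le m)$ each contribute $\frac12 r_i$, supplying two more copies of $\prod_i r_i$. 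Collecting the powers of $2$ then gives $\prod_{\beta\in\Phi_0^+}(\rho_0+\lambda,\beta) = 2^{-2m^2}\bigl(\prod_{i=1}^m r_i^3\bigr)\prod_{1\le i<j\le m}(r_i-r_j)^2(r_i+r_j)^2$.

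For the denominator, I would take $\rho_0 = \sum_{p=1}^m(m+1-p)\varepsilon_p + \sum_{q=1}^m(m+1-q)\delta_q$, the $C_m$-Weyl vector in the $\varepsilon$'s and the $D_{m+1}$-Weyl vector in the $\delta$'s (whose $\delta_{m+1}$-coefficient vanishes). The $\varepsilon$-contribution is exactly the one computed in the proof of \cref{lem:RHS-D}, while the $\delta$-contribution is the corresponding $D_{m+1}$-computation; multiplying everything out and simplifying the powers of $2$ and factorials gives $\prod_{\beta\in\Phi_0^+}(\rho_0,\beta) = 2^{-2m^2}\,m!\bigl(\prod_{i=1}^m(2i-1)!\bigr)^2$, where the underlying elementary identity $\bigl(\prod_{i=1}^{m-1}i!\bigr)\bigl(\prod_{i=1}^{m-1}\tfrac{(2i+1)!}{(i+1)!}\bigr)^2 (m!)^2\prod_{k=1}^m k! = m!\bigl(\prod_{i=1}^m(2i-1)!\bigr)^2$ is checked by induction on $m$. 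Dividing the two products gives the asserted formula.

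The main obstacle is purely bookkeeping rather than conceptual: compared with \cref{lem:RHS-D}, the rank-$m$ even component $D_m$ is replaced by $D_{m+1}$, so one must carefully track the extra roots involving $\delta_{m+1}$ — in the numerator these turn a square $\prod_i r_i^2$ into the cube $\prod_i r_i^3$, and in the denominator they alter the factorial product — while verifying that all powers of $2$ cancel, leaving the clean ratio $2^{-2m^2}/2^{-2m^2} = 1$.
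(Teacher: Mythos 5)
Your proposal is correct and follows essentially the same route as the paper: the specialization of the Weyl dimension formula with $\lambda=-\rho_0-\sum_j r_j\alpha_{2j-1}$, the same root-by-root evaluation of $(\rho_0+\lambda,\beta)$ and $(\rho_0,\beta)$ over $\Phi_0^+$ (including the extra $\delta_i\pm\delta_{m+1}$ roots that upgrade $\prod_i r_i^2$ to $\prod_i r_i^3$), and the same cancellation of $2^{-2m^2}$ in numerator and denominator. Your restated factorial identity is just the paper's product rewritten via $\prod_{i=1}^m\frac{(2i-1)!}{(i-1)!}=m!\prod_{i=1}^{m-1}\frac{(2i+1)!}{(i+1)!}$, so it agrees with the paper's bookkeeping.
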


\begin{proof}
Let $\rho_0=\frac12\sum_{\beta\in\Phi_0^+}\beta$ be the Weyl vector of $\Phi_0^+$.  
Set 
    \[
    \lambda = -\rho_0 -\sum_{j=1}^m r_j\alpha_{2j-1}.
    \]
From the Weyl dimension formula (and its proof,  \cite[Corollary 24.3]{Humphreys}),  we obtain
    \[
    \lim_{\substack{e^{-\alpha_{j}}\to1 \\ j=1,  \ldots,  2m}}
    R_0^{-1} \sum_{w\in W} \epsilon(w)w(e^{\rho_0+\lambda}) \quad
    = \prod_{\beta\in\Phi_0^+}\frac{(\rho_0+\lambda,  \beta)}
    {(\rho_0,  \beta)}.
    \]

First,  $(\rho_0+\lambda,  \beta)$ for $\beta\in\Phi_0^+$ is computed as follows:
\begin{itemize}
\item $(\rho_0+\lambda,  \varepsilon_i-\varepsilon_j)=(r_i-r_j)/2$ for $1\leq i<j\leq m$.
\item $(\rho_0+\lambda,  \varepsilon_i+\varepsilon_j)=(r_i+r_j)/2$ for $1\leq i<j\leq m$.
\item $(\rho_0+\lambda,  2\varepsilon_p)=r_p$ for $1\leq p\leq m$.
\item $(\rho_0+\lambda,  \delta_i-\delta_j)=
    \begin{cases}
    (r_i-r_j)/2 & \text{$1\leq i<j\leq m$,} \\
    r_i/2 & \text{$1\leq i\leq m$,  $j=m+1$.}
    \end{cases}$
\item $(\rho_0+\lambda,  \delta_i+\delta_j)=
    \begin{cases}
    (r_i+r_j)/2 & \text{$1\leq i<j\leq m$,} \\
    r_i/2 & \text{$1\leq i\leq m$,  $j=m+1$.}
    \end{cases}$
\end{itemize}
Hence we get
    \[
    \prod_{\beta\in\Phi_0^+}(\rho_0+\lambda,  \beta)
    = 2^{-2m^2} \left( \prod_{i=1}^m r_i^3 \right)
    \left(\prod_{1\leq i<j\leq m}(r_i-r_j)^2(r_i+r_j)^2 \right).
    \]

On the other hand,  since we have $\rho_0=\sum_{p=1}^m (m+1-p)\varepsilon_p+\sum_{q=1}^{m+1}(m+1-q)\delta_q$,  $(\rho_0,  \beta)$ for $\beta\in\Phi_0^+$ is computed as 
\begin{itemize}
\item $(\rho_0,  \varepsilon_i-\varepsilon_j)=(j-i)/2$ and 
    \[
    \prod_{1\leq i<j\leq m}(\rho_0,  \varepsilon_i-\varepsilon_j)
    = 2^{-m(m-1)/2}\prod_{1\leq i<m}(m-i)! = 2^{-m(m-1)/2}\prod_{i=1}^{m-1}i!.
    \]
\item $(\rho_0,  \varepsilon_i+\varepsilon_j)=(2m+2-i-j)/2$ and
    \[
    \prod_{1\leq i<j\leq m}(\rho_0,  \varepsilon_i-\varepsilon_j)
    = 2^{-m(m-1)/2}\prod_{1\leq i<m}\frac{(2m-2i+1)!}{(m+1-i)!}
    = 2^{-m(m-1)/2}\prod_{i=1}^{m-1}\frac{(2i+1)!}{(i+1)!}.
    \]
\item $(\rho_0,  2\varepsilon_p)=m+1-p$ and $\prod_{p=1}^m(\rho_0,  2\varepsilon_p)=m!$.

\item $(\rho_0,  \delta_i-\delta_j)=(i-j)/2$ and 
    \[
    \prod_{1\leq i<j\leq m+1}(\rho_0,  \delta_i-\delta_j)
    = (-2)^{-m(m+1)/2}\prod_{1\leq i<m+1} (m+1-i)!
    = (-2)^{-m(m+1)/2}\prod_{i=1}^m i!.
    \]
\item $(\rho_0,  \delta_i+\delta_j)=(i+j-2m-2)/2$ and
    \[
    \prod_{1\leq i<j\leq m+1}(\rho_0,  \delta_i+\delta_j) 
    = (-2)^{-m(m+1)/2}\prod_{1\leq i<m+1}\frac{(2m-2i+1)!}{(m-i)!}
    = (-2)^{-m(m+1)/2}\prod_{i=1}^m\frac{(2i-1)!}{(i-1)!}.
    \]
\end{itemize}
Hence we have $\prod_{\beta\in\Phi_0^+}(\rho_0,  \beta)=2^{-2m^2}m!\left(\prod_{i=1}^{m}(2i-1)!\right)^2$.
This completes the proof.
\end{proof}

Combining \cref{lem:RHS-D2} with \eqref{heartD2} and changing the variables $r_i\mapsto-r_i$, $k_i \mapsto -k_i$, $i=1,  \ldots,  m$,  we obtain the following equation.

\begin{corollary}
We have
    \begin{align}\label{eq:RHS-D2}
    & \lim_{\substack{e^{-\alpha_{j}}\to1 \\ j=1,  \ldots,  2m}} 
    \frac{\mathrm{RHS}\text{ of }\eqref{heartD2}}{R_0}  \\ \nonumber 
    & \quad = \frac{1}{2^m(m!)^2\left(\prod_{i=1}^m(2i-1)!\right)^2} 
    \sum_{J\subset\{1,  \ldots,  m\}} (-1)^{|J|}
    \sum_{\vec{k},  \vec{r}\in\Z^m_J} (-1)^{\sum_{j=1}^m r_j}
    q^{\sum_{j=1}^m k_jr_j-\frac12r_j}   \\ \nonumber
    & \hspace{100pt} \times \left( \prod_{i=1}^m r_i^3 \right)
    \left(\prod_{1\leq i<j\leq m}(r_i-r_j)^2(r_i+r_j)^2 \right).
    \end{align}
\end{corollary}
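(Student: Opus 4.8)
The plan is to substitute the evaluation of \cref{lem:RHS-D2} termwise into the right-hand side of \eqref{heartD2} divided by $R_0$, and then carry out the stated change of summation variables. First I would note that both sides of \eqref{heartD2} are power series in $q^{1/2}$, and that for each fixed power of $q^{1/2}$ only finitely many tuples $(\vec{k},\vec{r})$ with $-\vec{k},-\vec{r}\in\Z^m_J$ contribute (the linear form $\sum_j k_jr_j+\tfrac12 r_j$ is bounded below on that index set, which is exactly the convergence fact already used in deriving \eqref{heartD2}). Hence the limit $e^{-\alpha_j}\to 1$ can be taken coefficient by coefficient, that is, inside the sums over $J$, $\vec{k}$, and $\vec{r}$. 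Since the sign $(-1)^{|J^c|}(-1)^{\sum_j r_j}$ and the monomial $q^{\sum_j k_jr_j+\frac12 r_j}$ do not involve the $e^{-\alpha_j}$, the only factor affected by the limit is $R_0^{-1}\sum_{w\in W}\epsilon(w)w(e^{-\sum_j r_j\alpha_{2j-1}})$, and \cref{lem:RHS-D2} replaces it by $\frac{1}{m!(\prod_{i=1}^m(2i-1)!)^2}\bigl(\prod_{i=1}^m r_i^3\bigr)\bigl(\prod_{1\le i<j\le m}(r_i-r_j)^2(r_i+r_j)^2\bigr)$. Absorbing the overall constant $\frac{1}{2^m m!}$ from the right-hand side of \eqref{heartD2} then produces the prefactor $\frac{1}{2^m(m!)^2(\prod_{i=1}^m(2i-1)!)^2}$.

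It then remains to apply the substitution $k_i\mapsto -k_i$, $r_i\mapsto -r_i$ ($1\le i\le m$) and track how each factor transforms. Recalling the definition \eqref{eq:Z_J^m}, the constraint $-\vec{k},-\vec{r}\in\Z^m_J$ becomes $\vec{k},\vec{r}\in\Z^m_J$; the sign $(-1)^{\sum_j r_j}$ is unchanged; the $q$-exponent $\sum_j k_jr_j+\tfrac12 r_j$ becomes $\sum_j k_jr_j-\tfrac12 r_j$; the factor $\prod_{i=1}^m r_i^3$ picks up $(-1)^{3m}=(-1)^m$; and $\prod_{1\le i<j\le m}(r_i-r_j)^2(r_i+r_j)^2$ is invariant. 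Since $(-1)^{|J^c|}=(-1)^{m-|J|}$, the two factors $(-1)^m$ cancel, leaving $(-1)^{|J|}$ as the sign of the $J$-summand. Assembling the pieces gives precisely \eqref{eq:RHS-D2}.

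There is essentially no serious obstacle here: the identity is a formal consequence of \cref{lem:RHS-D2} and \eqref{heartD2}, and the limit interchange is of the same nature as the one implicit in \eqref{heartD2}. The part most prone to slips is the sign bookkeeping in the change of variables — in particular noticing that the $(-1)^m$ coming from $\prod_{i=1}^m r_i^3$ exactly cancels the $(-1)^m$ hidden inside $(-1)^{|J^c|}$ — but this is routine once written out carefully.
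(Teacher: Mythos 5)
Your proposal is correct and follows the same route as the paper, which simply states that the corollary is obtained by combining \cref{lem:RHS-D2} with \eqref{heartD2} and changing variables $k_i\mapsto -k_i$, $r_i\mapsto -r_i$. Your sign bookkeeping — the $(-1)^m$ from $\prod_i r_i^3$ cancelling against the $(-1)^m$ in $(-1)^{|J^c|}=(-1)^{m-|J|}$ to leave $(-1)^{|J|}$ — is exactly the computation the paper leaves implicit, and it checks out.
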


Next we deal with the left-hand side of \eqref{heartD2}.

\begin{lemma}
We have
    \begin{equation}\label{eq:LHS-D2}
    \lim_{\substack{e^{-\alpha_{j}}\to1 \\ j=1,  \ldots,  2m}} 
    \frac{\text{LHS of }\eqref{heartD2}}{R_0} 
    = (-1)^{\frac{m(m+1)}{2}} q^{\frac{m(m+1)}{4}} \cdot 
    \left(\frac{\prod_{n=1}^\infty(1-q^n)}
    {\prod_{n=1}^\infty\left(1+q^{\frac12(2n-1)}\right)}\right)^{4m^2+4m}.
    \end{equation}
\end{lemma}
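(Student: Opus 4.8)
The plan is to follow the template of \cref{lem:LHS-D} essentially verbatim, replacing the root data of $\spo(2m,2m)$ by that of $\spo(2m,2m+2)$ and using the choice $\xi=\sum_{i=1}^m\varepsilon_i$ fixed above. First I would record the pairing $(\beta,\xi)$ for every $\beta\in\Phi^+$. Using $(\varepsilon_i,\varepsilon_i)=-(\delta_j,\delta_j)=1/2$ and $(\varepsilon_i,\delta_j)=0$, a direct check gives
\[
(\beta,\xi)=\begin{cases}
1 & \beta\in\{\varepsilon_i+\varepsilon_j\ (1\le i<j\le m),\ 2\varepsilon_p\ (1\le p\le m)\},\\
\tfrac12 & \beta\in\{\varepsilon_k-\delta_l\ (1\le k<l\le m+1),\ \varepsilon_r+\delta_s\ (1\le r\le m,\ 1\le s\le m+1)\},\\
-\tfrac12 & \beta\in\{\delta_i-\varepsilon_j\ (1\le i\le j\le m)\},\\
0 & \text{otherwise},
\end{cases}
\]
so that $\#\{\beta\in\Phi^+\mid(\beta,\xi)=1\}=\tfrac{m(m+1)}2$, the fermionic roots split into $\tfrac{3m(m+1)}2$ roots with $(\beta,\xi)=\tfrac12$ and $\tfrac{m(m+1)}2$ roots with $(\beta,\xi)=-\tfrac12$, no fermionic root has $(\beta,\xi)=0$, and the remaining $\tfrac{m(3m+1)}2$ bosonic roots all have $(\beta,\xi)=0$.

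Next I would divide the left-hand side of \eqref{heartD2} by $R_0=\prod_{\beta\in\Phi_0^+}(1-e^{-\beta})$ and compute the limit $e^{-\alpha_j}\to1$ one factor at a time, grouping by the value of $(\beta,\xi)$, exactly as in the proof of \cref{lem:LHS-D} (and \cref{lem:LHS-A2}). The four local limits needed are: a bosonic factor with $(\beta,\xi)=0$ tends to $\bigl(\prod_{n\ge1}(1-q^n)\bigr)^2$; a bosonic factor with $(\beta,\xi)=1$ tends to $-q^{-1}\bigl(\prod_{n\ge1}(1-q^n)\bigr)^2$, using $1-q^{-1}e^{-\beta}=-q^{-1}e^{-\beta}(1-qe^\beta)$ (this is the exact mirror of the $(\beta,\xi)=-1$ computation there); a fermionic factor with $(\beta,\xi)=-\tfrac12$ tends to $\bigl(\prod_{n\ge1}(1+q^{\frac12(2n-1)})\bigr)^2$; and a fermionic factor with $(\beta,\xi)=\tfrac12$ tends to $q^{-1/2}\bigl(\prod_{n\ge1}(1+q^{\frac12(2n-1)})\bigr)^2$, using $1+q^{-1/2}e^{-\beta}=q^{-1/2}e^{-\beta}(1+q^{1/2}e^\beta)$.

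Finally I would assemble the contributions. The prefactor $\prod_{n\ge1}(1-q^n)^{2m}$ together with the $(\beta,\xi)\in\{0,1\}$ bosonic factors produces $\prod_{n\ge1}(1-q^n)$ to the power $2m+2\cdot\tfrac{m(3m+1)}2+2\cdot\tfrac{m(m+1)}2=4m^2+4m$; the $2m(m+1)$ fermionic factors produce $\prod_{n\ge1}(1+q^{\frac12(2n-1)})$ in the denominator to the power $4m(m+1)$; each of the $\tfrac{m(m+1)}2$ bosonic factors with $(\beta,\xi)=1$ contributes a sign $-1$, giving $(-1)^{m(m+1)/2}$; and the powers of $q$ combine as $q^{-m(m+1)/2}$ (from those same factors) times $q^{+\frac12\cdot\frac{3m(m+1)}2}$ (each $(\beta,\xi)=\tfrac12$ fermionic factor contributes $q^{-1/2}$ in the denominator, hence $q^{+1/2}$ overall), i.e.\ $q^{-m(m+1)/2+3m(m+1)/4}=q^{m(m+1)/4}$. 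This yields the asserted identity. The only genuinely delicate step — as already in \cref{lem:LHS-D} — is the regularization of the products $\prod_{n\ge0}(1-q^{n-1}e^{-\beta})$ and $\prod_{n\ge0}(1+q^{n-1/2}e^{-\beta})$ near $e^{-\beta}=1$: the $n=0$ factor becomes singular there, but after cancelling against the matching factor $1-e^{-\beta}$ in $R_0$ the quotient stays finite, and this cancellation should be performed exactly as in that lemma rather than reproduced in detail.
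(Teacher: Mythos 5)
Your proposal is correct and follows essentially the same route as the paper: compute $(\beta,\xi)$ for each $\beta\in\Phi^+$, count the roots in each class (your counts $\tfrac{m(m+1)}{2}$, $\tfrac{3m(m+1)}{2}$, $\tfrac{m(m+1)}{2}$, $\tfrac{m(3m+1)}{2}$ all match), and evaluate the four local limits exactly as in the proofs of the analogous lemmas for $\widehat{\gl}(m,m)$ and $\widehat{\spo}(2m,2m)$, which is precisely what the paper does (it simply cites those earlier computations rather than redoing them). The only quibble is your closing remark: for $(\beta,\xi)=1$ the factor that vanishes at $e^{-\beta}=1$ and cancels against $R_0$ is the $n=1$ term $1-e^{-\beta}$ (the $n=0$ term $1-q^{-1}e^{-\beta}$ stays finite), and the fermionic products need no cancellation against $R_0$ at all — but your actual computations handle these correctly, so this does not affect the argument.
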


\begin{proof}
Let $\beta\in\Phi^+$.
It is easy to see that 
    \[
    (\beta,  \xi) = 
        \begin{cases}
        1 & \text{$\beta\in\{\varepsilon_i+\varepsilon_j, \ (1\leq i<j\leq m),  \
         2\varepsilon_p,  \ (1\leq p\leq m)\}$},  \\
        \frac12 & \text{$\beta\in\{\varepsilon_i-\delta_j,  \ (1\leq i< j\leq m+1),  \ 
        \varepsilon_i+\delta_j, \ (1\leq i\leq m,  \ 1\leq j\leq m+1) \}$},  \\
        -\frac12 & \text{$\beta\in\{\delta_i-\varepsilon_j, \ (1\leq i\leq j\leq m)\}$},  \\
        0 & \text{otherwise}.
        \end{cases}
    \]

Note that $\#\{\beta\in\Phi^+ \mid (\beta,  \xi)=1\}=\frac{m(m+1)}{2}$ and $\#\{\beta\in\Phi^+ \mid (\beta,  \xi)=\frac12\}=\frac{3m(m+1)}{2}$.
Similarly as in the proof of \cref{lem:LHS-A2} and \cref{lem:LHS-D},  we obtain
    \[
    \lim_{\substack{e^{-\alpha_{j}}\to1 \\ j=1,  \ldots,  2m}} 
    \frac{\text{LHS of }\eqref{heartD2}}{R_0} 
    = (-1)^{\frac{m(m+1)}{2}} q^{\frac{m(m+1)}{4}} \cdot 
    \left(\frac{\prod_{n=1}^\infty(1-q^n)}
    {\prod_{n=1}^\infty\left(1+q^{\frac12(2n-1)}\right)}\right)^{4m^2+4m}.
    \]
\end{proof}

Replacing $q^{\frac12}$ by $-q$ in \eqref{eq:RHS-D2} and \eqref{eq:LHS-D2} and applying \eqref{eq:triangle},  we obtain the next theorem.

\begin{theorem}
For $m\in\Z_{>0}$,  we have
    \begin{align*}
    \triangle(q)^{4m^2+4m} & = \frac{1}{2^m(m!)^2\left(\prod_{i=1}^m(2i-1)!\right)^2} \\
    & \quad \times \sum_{J\subset\{1,  \ldots,  m\}} (-1)^{|J|}
    \sum_{\vec{k},  \vec{r}\in\Z_J^m} 
    q^{\sum_{j=1}^m (2k_jr_j-r_j) -\frac{m(m+1)}{2}}  
    \left(\prod_{i=1}^m r_i^3 \right) 
    \left(\prod_{1\leq i<j\leq m}(r_i-r_j)^2(r_i+r_j)^2 \right).
    \end{align*}
Here,  we set $\Z^m_J = \left\{\vec{k}=(k_1,  \ldots,  k_m)\in\Z^m \, \middle|\,
J=\{j \mid k_j\leq0\} \right\}$.
\end{theorem}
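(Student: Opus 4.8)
The plan is to read the theorem off directly from the two limit formulas \eqref{eq:RHS-D2} and \eqref{eq:LHS-D2} that were just established; no new idea is needed, only the substitution $q^{1/2}\mapsto -q$ together with careful bookkeeping of signs. Since \eqref{heartD2} is an identity, we may divide both of its sides by $R_0=\prod_{\beta\in\Phi_0^+}(1-e^{-\beta})$ and let $e^{-\alpha_j}\to 1$ for $j=1,\ldots,2m$ (this is the specialization carried out via the Weyl dimension formula in \cref{lem:RHS-D2} and in the computation of the left-hand side). Equating the expressions on the right of \eqref{eq:RHS-D2} and \eqref{eq:LHS-D2} and dividing through by $(-1)^{m(m+1)/2}q^{m(m+1)/4}$ then gives
\begin{align*}
	\left(\frac{\prod_{n=1}^\infty(1-q^n)}{\prod_{n=1}^\infty\bigl(1+q^{\frac12(2n-1)}\bigr)}\right)^{4m^2+4m}
	&= \frac{(-1)^{\frac{m(m+1)}{2}}}{2^m(m!)^2\left(\prod_{i=1}^m(2i-1)!\right)^2}\sum_{J\subset\{1,\ldots,m\}}(-1)^{|J|} \\
	&\quad\times\sum_{\vec{k},\vec{r}\in\Z^m_J}(-1)^{\sum_j r_j}\bigl(q^{1/2}\bigr)^{\sum_j(2k_jr_j-r_j)-\frac{m(m+1)}{2}}\left(\prod_{i=1}^m r_i^3\right)\prod_{1\le i<j\le m}(r_i-r_j)^2(r_i+r_j)^2.
\end{align*}

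The next step is to substitute $q^{1/2}\mapsto -q$ and apply the classical identity \eqref{eq:triangle}. On the left, $1-q^n=1-(q^{1/2})^{2n}$ becomes $1-q^{2n}$ and $1+q^{\frac12(2n-1)}=1+(q^{1/2})^{2n-1}$ becomes $1-q^{2n-1}$, so the infinite product becomes $\bigl(\prod_{n\ge 1}\frac{1-q^{2n}}{1-q^{2n-1}}\bigr)^{4m^2+4m}=\triangle(q)^{4m^2+4m}$. On the right, $\bigl(q^{1/2}\bigr)^{\sum_j(2k_jr_j-r_j)-m(m+1)/2}$ becomes $(-1)^{\sum_j(2k_jr_j-r_j)-m(m+1)/2}\,q^{\sum_j(2k_jr_j-r_j)-m(m+1)/2}$, and since $\sum_j 2k_jr_j$ is even this sign equals $(-1)^{\sum_j r_j}(-1)^{m(m+1)/2}$.

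The only step requiring attention is then the arithmetic of signs, and it is routine: the factor $(-1)^{\sum_j r_j}$ produced by the substitution cancels the $(-1)^{\sum_j r_j}$ already present in \eqref{eq:RHS-D2}, while the two copies of $(-1)^{m(m+1)/2}$ — one from clearing the prefactor of \eqref{eq:LHS-D2}, the other from the substitution — multiply to $(-1)^{m(m+1)}=1$ because $m(m+1)$ is always even. What remains is exactly the asserted identity, with constant $\bigl(2^m(m!)^2\prod_{i=1}^m(2i-1)!^2\bigr)^{-1}$ and polynomial factor $\prod_{i=1}^m r_i^3\cdot\prod_{1\le i<j\le m}(r_i-r_j)^2(r_i+r_j)^2$. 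Hence there is no genuine obstacle: the statement is a purely formal consequence of \eqref{eq:RHS-D2} and \eqref{eq:LHS-D2}, and the plan is simply to carry out this substitution and cancellation with care, exactly as in the analogous arguments for $\widehat{\gl}(m,m)$, $\widehat{\sl}(m+1,m)$, and $\widehat{\spo}(2m,2m)$ above.
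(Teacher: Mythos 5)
Your proposal is correct and is precisely the paper's own argument: the paper derives the theorem by equating \eqref{eq:RHS-D2} with \eqref{eq:LHS-D2}, replacing $q^{1/2}$ by $-q$, and applying \eqref{eq:triangle}, exactly as you do. Your sign bookkeeping — the cancellation of the two factors $(-1)^{\sum_j r_j}$ and of the two factors $(-1)^{m(m+1)/2}$ — is accurate and in fact spells out details the paper leaves implicit.
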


By changing variables as $x_j = r_j$ and $y_j = k_j - 1/2$, and using the symmetry, we obtain the second formula in \cref{thm:Zagier-Q}.

\subsection{Remark on \cref{thm:denom-id-gl} and \cref{thm:denom-id-sl}}\label{sec:Final-remark}

To summarize, the second identity in \cref{thm:Zagier-Q} can be derived from the denominator identity for $\widehat{\mathfrak{spo}}(2m,2m+2)$, while \cref{thm:Dmm-Eisen} follows from that for $\widehat{\mathfrak{spo}}(2m,2m)$. Furthermore, although not discussed here, the first and second identities in \cref{thm:Zagier-Q} arise from the denominator identities for $\widehat{\mathfrak{q}}(2m-1)$ and $\widehat{\mathfrak{q}}(2m)$, respectively.
We expect that we can prove the two $q$-series identities obtained from $\widehat{\gl}(m, m)$ and $\widehat{\sl}(m+1,m)$ using the theory of indefinite theta functions, as in the other cases.
However,  these two cases differ from other cases at some points outlined below and we have not yet been able to obtain such proofs.

We begin by considering \cref{thm:denom-id-sl}. 
The identity is equivalent to
\begin{align*}
	&\theta_\triangle(\tau)^{2m(m+1)}\\
	&= \frac{1}{2^m \prod_{j=1}^m j!^2} \sum_{\vec{k}, \vec{r} \in \Z^m} \prod_{j=1}^m \bigg(\sgn(k_j) + \sgn(r_j)\bigg) g(\vec{k}, \vec{r}) q^{\sum_{1 \le i \le j \le m} k_i k_j + \sum_{j=1}^m (k_j-1/2)r_j - \frac{m(m+1)}{8}},
\end{align*}
where
    \[
	g(\vec{k}, \vec{r}) \coloneqq \prod_{j=1}^m (k_j + r_j + |\vec{k}|) \cdot \prod_{1 \le i < j \le m} (k_i + r_i - k_j - r_j)(r_i-r_j).
    \]
By changing variables as $k_j = x_j + 1/2$ and $r_j = y_j - \sum_{j \le i \le m} x_i - \frac{m+1}{2}$, the exponent of $q$ simplifies as
    \[
	\sum_{1 \le i \le j \le m} k_i k_j + \sum_{j=1}^m (k_j-1/2)r_j - \frac{m(m+1)}{8} = \sum_{j=1}^m x_j y_j.
    \]
For instance, when $m=2$, by taking some symmetries into account, we observe that
\begin{align*}
	\theta_\triangle(\tau)^{12} &= \frac{1}{32} \sum_{\substack{x_1, x_2, y_1 \in 1/2 + \Z \\ y_2 \in \Z}} \bigg(\sgn(x_1) + \sgn(y_1)\bigg)\bigg(\sgn(x_2) + \sgn(y_2 - x_2 ) \bigg) g_0(\vec{\bm{x}}) q^{x_1 y_1 + x_2 y_2},
\end{align*}
where
\[
	g_0(\vec{\bm{x}}) \coloneqq  (x_1 + y_1)(x_1 + x_2 + y_2)(x_1 - y_1 + y_2) (x_2 - y_1 + y_2).
\]
For $\vec{\bm{a}} = (\smat{1/2 \\ 1/2}, \smat{1/2 \\ 0}), \vec{\bm{c}}_0 = (\smat{0 \\ 1}, \smat{0 \\ 1}) \in \cS^2$, and $\vec{\bm{c}}_1 = (\smat{-1 \\ 0}, \smat{-1 \\ 1}) \in \cS \times \cC$, the right-hand side is related to the indefinite that function $\theta_{\vec{\bm{a}}, \vec{\bm{0}}}^{\vec{\bm{c}}_0, \vec{\bm{c}}_1}[g_0](\tau)$. 
However, since $\smat{-1 \\ 1} \in \cC$, the theta function is, a priori, a non-holomorphic modular form involving error functions. 
To establish an alternative proof in the framework of indefinite theta functions, one must verify that all non-holomorphic contributions, such as the $\beta$-terms arising from the decomposition of the error function as in~\eqref{eq:Error-beta}, and the parts arising from derivatives of the error functions, cancel out completely. 
The resulting theta function must be actually a holomorphic modular form.

The situation is more subtle in the case of \cref{thm:denom-id-gl}. 
At first glance, the identity seems to lie outside the framework developed in this article, as the summation variables have mismatched dimensions, namely, $\vec{k} \in \Z^{m-1}$ and $\vec{r} \in \Z^m$. 
A more careful discussion may be necessary to address these cases.

\bibliographystyle{amsalpha}
\bibliography{References}

\end{document}